\documentclass[10pt,reqno,a4paper]{amsart}
\usepackage{amsthm}
\usepackage{mathtools}
\usepackage{amssymb}
\usepackage{mathscinet}
\usepackage{mathrsfs}
\usepackage{hyperref}
\usepackage{enumitem}
\usepackage{graphicx}
\usepackage{xcolor}
\usepackage{kotex}
\usepackage{comment}
\usepackage{bm}

\numberwithin{equation}{section}
\numberwithin{figure}{section}
\allowdisplaybreaks

\theoremstyle{plain}
\newtheorem{thm}{\protect\theoremname}[section]
\newtheorem{lem}[thm]{\protect\lemmaname}
\newtheorem{prop}[thm]{\protect\propositionname}

\theoremstyle{remark}
\newtheorem{rem}[thm]{\protect\remarkname}

\providecommand{\theoremname}{Theorem}
\providecommand{\corollaryname}{Corollary}
\providecommand{\lemmaname}{Lemma}
\providecommand{\remarkname}{Remark}
\providecommand{\propositionname}{Proposition}

\begin{document}
\global\long\def\Im{\mathrm{Im}}%
\global\long\def\Re{\mathrm{Re}}%
\global\long\def\Right{\Rightarrow}%
\global\long\def\Left{\Leftarrow}%
\global\long\def\rad{\mathrm{rad}}%
\global\long\def\jp#1{\langle#1\rangle}%
\global\long\def\norm#1{\|#1\|}%
\global\long\def\ol#1{\overline{#1}}%
\global\long\def\wt#1{\widehat{#1}}%
\global\long\def\br#1{(#1)}%
\global\long\def\Bb#1{\Big(#1\Big)}%
\global\long\def\bb#1{\big(#1\big)}%
\global\long\def\lr#1{\left(#1\right)}%
\global\long\def\la{\lambda}%
\global\long\def\al{\alpha}%
\global\long\def\be{\beta}%
\global\long\def\ga{\gamma}%
\global\long\def\La{\Lambda}%
\global\long\def\De{\Delta}%
\global\long\def\na{\nabla}%
\global\long\def\fl{\flat}%
\global\long\def\sh{\sharp}%
\global\long\def\calN{\mathcal{N}}%
\global\long\def\avg{\mathrm{avg}}%
\global\long\def\bbR{\mathbf{\mathbb{R}}}%
\global\long\def\bbC{\mathbf{\mathbb{C}}}%
\global\long\def\bbZ{\mathbf{\mathbb{Z}}}%
\global\long\def\bbN{\mathbf{\mathbb{N}}}%
\global\long\def\bbT{\mathbb{T}}%
\global\long\def\bbM{\mathbb{M}}%
\global\long\def\bbP{\mathbb{P}}%
\global\long\def\bbS{\mathbb{S}}%
\global\long\def\bfA{\mathbf{A}}%
\global\long\def\bfD{\mathbf{D}}%
\global\long\def\bfL{\mathbf{L}}%
\global\long\def\bfu{\mathbf{u}}%
\global\long\def\bfv{\mathbf{v}}%
\global\long\def\calF{\mathcal{F}}%
\global\long\def\calH{\mathcal{H}}%
\global\long\def\calI{\mathcal{I}}%
\global\long\def\calL{\mathcal{L}}%
\global\long\def\calE{\mathcal{E}}%
\global\long\def\calZ{\mathcal{Z}}%
\global\long\def\calO{\mathcal{O}}%
\global\long\def\calR{\mathcal{R}}%
\global\long\def\calD{\mathcal{D}}%
\global\long\def\calT{\mathcal{T}}%
\global\long\def\calY{\mathcal{Y}}%
\global\long\def\calA{\mathcal{A}}%
\global\long\def\calB{\mathcal{B}}%
\global\long\def\calC{\mathcal{C}}%
\global\long\def\calG{\mathcal{G}}%
\global\long\def\calM{\mathcal{M}}%
\global\long\def\calP{\mathcal{P}}%
\global\long\def\calS{\mathcal{S}}%
\global\long\def\calU{\mathcal{U}}%
\global\long\def\Lmb{\Lambda}%
\global\long\def\eps{\varepsilon}%
\global\long\def\lmb{\lambda}%
\global\long\def\gmm{\gamma}%
\global\long\def\rd{\partial}%
\global\long\def\chf{\mathbf{1}}%
\global\long\def\td#1{\widetilde{#1}}%
\global\long\def\sgn{\mathrm{sgn}}%
\global\long\def\blambda{\boldsymbol\lambda}%
\global\long\def\biota{\boldsymbol\iota}%
\global\long\def\Red#1{\textcolor{red}{#1}}%
\global\long\def\NL{\mathrm{NL}}%
\global\long\def\calK{\mathcal{K}}%
\global\long\def\calV{\mathcal{V}}%
\global\long\def\calW{\mathcal{W}}%
\global\long\def\calX{\mathcal{X}}%

\global\long\def\frakX{\mathfrak{X}}%
\title[Smooth Type II blow-up for 3D NLS]{Smooth radial quantized Type II blow-up for the three-dimensional energy-critical nonlinear Schr\"odinger equation}

\subjclass[2020]{35B44 (primary), 35Q55, 35B33}

\author{Taegyu Kim}
\email{k1216300@kias.re.kr}
\address{School of Mathematics, Korea Institute for Advanced Study, 85 Hoegiro Dongdaemun-gu, Seoul 02455, Korea}

\begin{abstract}
	We consider the focusing energy-critical nonlinear Schr\"odinger equation in three dimensions. In \cite{Schmid2026CVPDE}, Schmid constructed finite-time type II blow-up solutions with a continuum of blow-up rates, while the existence of smooth finite-time type II blow-up solutions remained open. We resolve this problem by constructing, for each integer $L\ge1$, smooth radial solutions concentrating a single ground state with the blow-up rate $\lambda(t)=(\ell+o(1))(T-t)^{2L}$ for some $\ell>0$.
	
	Our proof follows a forward construction based on modulation analysis with the energy method. The main obstacle is the lack of a repulsive structure that can be directly exploited to control the remainder. To overcome this obstacle, we adapt Kato's smoothing theory from asymptotic stability analysis to the forward construction. This yields a global spacetime Morawetz estimate for the linearized flow, providing the spacetime control needed to bound the higher-order energy in place of the monotonicity derived from repulsive structures in previous blow-up constructions.
\end{abstract}

\maketitle
\setcounter{tocdepth}{1}
\tableofcontents

\section{Introduction}

We consider the initial value problem for the focusing energy-critical nonlinear Schr\"odinger equation in dimensions $d\ge3$:
\begin{align}\label{eq:NLS}
	\begin{cases}
		i\partial_tu+\Delta u+|u|^{\frac4{d-2}}u=0,\quad (t,x)\in\mathbb R\times\mathbb R^d,\\
		u(0)=u_0,
	\end{cases}
\end{align}
where $u:[-T,T]\times \mathbb{R}^d\to \mathbb{C}$.
The Cauchy problem for \eqref{eq:NLS} is locally well-posed in $\dot H^1(\mathbb R^d)$, and for sufficiently small initial data, the corresponding solutions are global and scatter in both time directions; see \cite{CazenaveWeissler1990}.

We briefly introduce symmetries and conservation laws. The equation conserves the energy and, for $H^1$ solutions, the mass and momentum:
\begin{equation}\label{eq:conservation laws}
	\begin{aligned}
		E(u)&\coloneqq\frac12\int_{\mathbb R^d}|\nabla u|^2dx-\frac{d-2}{2d}\int_{\mathbb R^d}|u|^{\frac{2d}{d-2}}dx,\\
		M(u)&\coloneqq\int_{\mathbb R^d}|u|^2dx,\qquad P(u)\coloneqq\Im\int_{\mathbb R^d}\ol u\nabla u dx.
	\end{aligned}
\end{equation}
Moreover, \eqref{eq:NLS} is invariant under scaling, phase rotation, and time and space translations:
\begin{equation*}
	u(t,x)\mapsto \frac1{\lambda^{\frac{d-2}{2}}}e^{i\gamma}u\left(\frac{t-t_0}{\lambda^2},\frac{x-x_0}{\lambda}\right),
\end{equation*}
where $\lambda>0$, $\gamma,t_0\in\mathbb R$, and $x_0\in\mathbb R^d$. The equation is also invariant under spatial rotations $u(t,x)\mapsto u(t,Rx)$, $R\in SO(d)$, and time reversal $u(t,x)\mapsto\ol{u(-t,x)}$. Since the scaling preserves both the $\dot H^1$ norm and the energy, \eqref{eq:NLS} is called energy-critical.

The \emph{ground state} or \emph{soliton} $W$ is also called the Aubin--Talenti bubble. It is the positive radial solution, normalized by $W(0)=1$, of
\begin{equation}
	-\Delta W=W^{\frac{d+2}{d-2}},\qquad W(x)\coloneqq \left(1+\frac{|x|^2}{d(d-2)}\right)^{-\frac{d-2}{2}}.
\end{equation}
The ground state $W$ attains equality in the sharp Sobolev inequality and marks the scattering threshold. Kenig and Merle \cite{KenigMerle2006Invent} proved global existence and scattering in both time directions for radial data in $3\le d\le5$ satisfying $E(u_0)<E(W)$ and $\|u_0\|_{\dot H^1}<\|W\|_{\dot H^1}$. 
Nonradial scattering was subsequently established for $d\ge5$ under $\sup_{t\in I}\|u(t)\|_{\dot H^1}<\|W\|_{\dot H^1}$ in \cite{KillipVisan2010AJM}, for $d=4$ in \cite{Dodson2019ASENS}, and recently, independently for $d=3$, in \cite{SuZhao2026arXivScattering} and \cite{ChungHan2026arXivScattering}.
At the threshold energy $E(u_0)=E(W)$, the radial dynamics were classified for $3\le d\le5$ in \cite{DuyckaertsMerle2009Threshold} and extended to $d\ge6$ in \cite{LiZhang2009JFA}. For nonradial threshold classifications, see \cite{SuZhao2023,MMMZ2025arXiv}.

Above the threshold, more varied dynamics are possible. For radial solutions in $d=3$ with energy slightly above $E(W)$, it was shown in \cite{NakanishiRoy2016CPAA} that solutions either remain in a tube around the ground-state family or leave it without returning and subsequently scatter or blow up in finite time. For finite-variance $H^{1,1}$ data, scattering and finite-time blow-up criteria at arbitrarily large energies were established in \cite{DuyckaertsRoudenko2015CMP}.

These results describe possible behaviors of solutions but do not characterize the nature of blow-up. For a solution with maximal forward existence time $T\in(0,\infty]$, we distinguish type I and type II solutions according to
\begin{equation*}
	\text{type I:}\quad\sup_{0\le t<T}\|u(t)\|_{\dot H^1}=\infty,\qquad
	\text{type II:}\quad\sup_{0\le t<T}\|u(t)\|_{\dot H^1}<\infty.
\end{equation*}

We briefly recall some representative constructions, beginning with $d=3$. Rapha\"el and Szeftel \cite{RaphaelSzeftel2009CMP} constructed stable type I standing-ring blow-up at the log-log rate for an open set of radial $H^3$ initial data; see also \cite{Zwiers2011APDE,HolmerRoudenko2012APDE}. Ortoleva and Perelman \cite{OrtolevaPerelman2013} constructed infinite-time type II blow-up solutions with single-bubble concentration, as well as nondispersive vanishing solutions. In higher dimensions, pure two-bubble solutions with one bubble concentrating as $t\to\infty$ were constructed for $d\ge7$ \cite{Jendrej2017AnalPDE} and for $d=6$ \cite{JendrejLiXu2026arXivNLS}. However, the existence of concentric bubble trees with two or more concentrating bubbles remains open for all $d\ge3$. See also \cite{JendrejKrieger2025arXiv,Kim2026arXivNobubbletree} for related results.

For finite-time type II blow-up in $d=3$, Schmid \cite{Schmid2026CVPDE} gave the first construction, obtaining single-bubble solutions with a continuum of blow-up rates. However, smoothness was not established in this construction, leaving the existence of smooth type II blow-up solutions open. In this paper, we construct smooth radial single-bubble type II blow-up solutions in $d=3$. In contrast to Schmid's continuum of blow-up rates, our solutions exhibit quantized rates.

We write $H^\infty_{\rad}\coloneqq \bigcap_{k\ge0}H^k_{\rad}$ and $f_{\lambda,\gamma}(x)\coloneqq\lambda^{-\frac12}e^{i\gamma}f(x/\lambda)$. Our main result is as follows.

\begin{thm}[Smooth quantized blow-up]\label{thm:main}
	For any integer $L\ge1$ and $\delta^*>0$, there exists a family of initial data in $H^\infty_{\rad}(\mathbb R^3)$ such that, for each datum $u_0$ in this family, the corresponding solution $u(t,x)$ to \eqref{eq:NLS} satisfies the following properties:
	\begin{enumerate}
		\item (Finite-time blow-up) $u$ blows up in finite time $T=T(u_0)\in(0,\infty)$.

		\item (Sharp description of the blow-up) There exist $C^1$ functions $\lambda(t)>0$ and $\gamma(t)\in\mathbb R$ such that
		\begin{equation}\label{eq:thm tube stability}
			\sup_{0\le t<T}\|u(t)-W_{\lambda(t),\gamma(t)}\|_{\dot H^1}<\delta^*.
		\end{equation}
		For some $\ell>0$, $\gamma_*\in\mathbb R$, and $u^*\in H^{2L+\frac12-}_{\rad}$, we have, as $t\to T$,
		\begin{equation}\label{eq:thm main law}
			\lambda(t)=(\ell+o(1))(T-t)^{2L},\qquad \gamma(t)\to\gamma_*,
		\end{equation}
		and
		\begin{equation}\label{eq:thm main remainder}
			u(t)\to u^*\quad\text{in}\quad L^2,\qquad
			u(t)-W_{\lambda(t),\gamma(t)}\to u^*\quad\text{in}\quad \dot H^1.
		\end{equation}

		\item (Asymptotic profile) There exists $a\in\mathbb C\setminus\{0\}$ such that
		\begin{equation}\label{eq:thm origin asymptotics}
			u^*(r)=ar^{2L-1}+O(r^{2L-\frac12})\quad\text{as }r\to0.
		\end{equation}
	\end{enumerate}
\end{thm}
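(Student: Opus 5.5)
The plan is a forward construction in the style of Merle--Rapha\"el--Rodnianski for the energy-critical problem, adapted to the Schr\"odinger setting in $d=3$.

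\textbf{Step 1: approximate solution.} We write $u(t,x)=\lambda^{-1/2}e^{i\gamma}(P+\epsilon)(s,y)$ with $y=x/\lambda$, $ds/dt=\lambda^{-2}$. We build a modified profile $P=P(b_1,\dots,b_L;y)$ by solving the linearized equation around $W$ inductively. The linearized operator $\calL$ on $\bbC\simeq\bbR^2$ has generalized kernel $\Lambda W$, $iW$. Its iterated inverses produce profiles $T_k$ with growth $T_k\sim y^{2k-1}$ at infinity, since in $d=3$ we have $\Lambda W\sim y^{-1}$.

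The modulation parameters satisfy a finite-dimensional system of the form
\begin{equation*}
	(b_k)_s+(2k-1+c_k)b_1b_k-b_{k+1}=0,\qquad -\frac{\lambda_s}{\lambda}=b_1 .
\end{equation*}
Here the constants $c_k$ are fixed by the slow decay of $\Lambda W$, which is nonintegrable in $L^2$ in $d=3$. For this reason the profile is localized at scale $B_0\sim b_1^{-1/2}$, generating logarithmic-type corrections to the flux computation.

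This system admits the special solution $b_1\sim c/s$, which integrates to $\lambda(t)\sim(T-t)^{2L}$. Its linearization has $L-1$ unstable directions, handled by a topological (Brouwer) shooting argument on the initial $b_k$'s. The quantization of the rate comes exactly from the choice of $L$. The tail of $P$ outside the self-similar region, of size $b_L y^{2L-1}$ in original variables, is what produces the singular profile $u^*(r)\sim a r^{2L-1}$.

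\textbf{Step 2: modulation and bootstrap.} We impose orthogonality conditions of $\epsilon$ against $\calL^{*k}$-adapted directions localized at a large fixed radius. We then derive modulation equations with errors bounded by a high-order energy
\begin{equation*}
	\calE_{2L+1}=\|\calL^{L}\epsilon\|^2_{\dot H^1}\quad\text{(or a suitable adapted version)}.
\end{equation*}
The bootstrap bound is $\calE_{2L+1}\lesssim b_1^{2L+1+\eta}$, together with a low-order bound $\|\epsilon\|_{\dot H^1}\le\delta$ from energy conservation and coercivity. Smoothness of the data is built in: we take $\epsilon(0)$ smooth and compactly supported, and propagate $H^k$ regularity by persistence of regularity. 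This works because the solution stays $\dot H^1$-close to the tube and the bootstrap controls high norms in rescaled variables.

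\textbf{Step 3: higher-order energy estimate — main obstacle.} For NLS there is no repulsivity giving monotone control of $\calE_{2L+1}$, as there is for wave maps or parabolic problems. Differentiating the energy produces a term
\begin{equation*}
	\frac{\lambda_s}{\lambda}\,\Re\int \Lambda(\text{potential})\,|\calL^L\epsilon|^2,
\end{equation*}
whose sign is not controlled. Our plan is to control it by a global spacetime Morawetz/local smoothing estimate for the linearized flow $i\rd_s+\calL$, conjugated to the super-symmetric factorized operator $\calL^L$. This estimate is obtained via Kato smoothing: from the limiting absorption principle / resolvent bounds for the linearized matrix operator on the radial, spectrally-projected subspace, away from the generalized kernel and the threshold resonance, which is $\Lambda W$ in $d=3$.

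With this estimate, the bad term is integrated in time:
\begin{equation*}
	\int b_1\|\jp y^{-1-}\calL^L\epsilon\|^2ds\lesssim \sup b_1\cdot(\text{data}+\text{forcing}),
\end{equation*}
with the radiation forcing from $\Psi_b$ bounded by $b_1^{2L+2}$ in weighted norms. The difficulty is handling the time-dependent modulation terms and the threshold resonance, which make the Kato estimate only available after projecting out the modulated directions. We would try to treat those as perturbations using Duhamel with the smoothing estimate, with losses absorbed by $b_1\to0$.

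\textbf{Step 4: conclusion.} Closing the bootstrap and performing the Brouwer argument gives the solution. Integrating $-\lambda_s/\lambda=b_1$ together with $\lambda_t=-b_1/\lambda$ gives $\lambda=(\ell+o(1))(T-t)^{2L}$, and $\gamma_s$ is integrable, so $\gamma\to\gamma_*$. The $\dot H^1$ and $L^2$ convergence to $u^*$ follows from control outside a shrinking ball. The expansion \eqref{eq:thm origin asymptotics} and the regularity $u^*\in H^{2L+\frac12-}$ follow from the explicit tail $b_L\lambda^{\cdot}T_L$ frozen at the blow-up time, with $a\neq0$ coming from the nonvanishing limit of $b_L\lambda^{-(2L-1)}$-type quantities in the special solution.
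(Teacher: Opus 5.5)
Your architecture matches the paper's: iterated-inverse profiles, the special solution $b_k\sim c_ks^{-k}$, an energy of order $2L+1$, Kato smoothing in place of repulsivity, and Brouwer shooting. As stated, however, the bootstrap cannot close, because two pieces of the spectral structure of $-iH$ are missing.

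\textbf{The phase chain.} The generalized kernel consists of \emph{two} chains with the same growth: $\mathsf V_k=H^{-1}(i\mathsf V_{k-1})$ starting from $\mathsf V_0=\Lambda W$, and $\mathsf U_k=H^{-1}(i\mathsf U_{k-1})$ starting from $\mathsf U_0=\frac i2W$, with $\mathsf U_k\approx -i\mathsf V_k\sim r^{2k-1}$. You need parameters $\eta_k$ along the second chain. The formal law is then the coupled system $\gamma_s=\frac12\eta_1$, $(b_k)_s=b_{k+1}-(2k-\frac12)b_1b_k-\frac12\eta_1\eta_k$, $(\eta_k)_s=\eta_{k+1}-(2k-\frac12)b_1\eta_k+\frac12\eta_1b_k$. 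Your system is only its restriction to $\eta\equiv0$, which the full flow does not preserve. This breaks your scheme in two ways:
\begin{itemize}
\item Without the $\eta_k$ you cannot impose the $2L+2$ orthogonality conditions that coercivity needs. The kernel of $(-iH)^L$ on $\dot{\calH}^{2L+1}_{\rad}$ is $\mathrm{span}\{\mathsf V_j,\mathsf U_j:0\le j<L\}$, and $(-iH)^L\eps$ must also have no $iW$ component.
\item The linearization around $\eta^e=0$ has eigenvalues $k/(4L-1)>0$, $k=1,\dots,L$, on the $\eta$-block, so all these directions are unstable.
\end{itemize}

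\textbf{The real eigenvalues.} $-iH$ has real eigenvalues $\pm e_0$ with Schwartz eigenfunctions $Y_\pm$. The $Y_+$ component grows like $e^{e_0s}$ and survives $(-iH)^L$. The $H$-energy is indefinite on $\mathrm{span}\{Y_+,Y_-\}$, since $(HY_\pm,Y_\pm)=0$ and $(HY_+,Y_-)=-1$. These modes must be removed from both the energy and the smoothing estimate (the paper's projection $\Pi$). Then $\alpha_-$ is integrated as a stable mode and $\alpha_+$ is added to the shooting. The Brouwer argument must therefore run over $(L-1)+L+1=2L$ parameters. Shooting only the $b_k$ leaves trajectories that exit through the $\eta$ or $Y_+$ directions.

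\textbf{Quantitative steps that fail as written.}
\begin{itemize}
\item \emph{Bootstrap exponent.} $\calE_{2L+1}\lesssim b_1^{2L+1+\eta}$ is incompatible with scaling. It gives $\|\nabla^{2L+1}\eps_{\lambda,\gamma}\|_{L^2}=\lambda^{-2L}\|\nabla^{2L+1}\eps\|_{L^2}\lesssim s^{-\frac{2L-1}{2(4L-1)}-\frac\eta2}\to0$. Then $u^*$ would differ from the limit of the rescaled profile by a polynomial, which is impossible for $u^*\in L^2$ with $u^*\sim ar^{2L-1}$, $a\ne0$. The consistent bound is $\|\eps\|_{\dot{\calH}^{2L+1}}\le Ks^{-2Lc_1}$ with $c_1=\frac{2L}{4L-1}$, i.e.\ bounded $\dot H^{2L+1}$ in the original variables.
\item \emph{Last modulation law.} With the correct bound, the error for $b_L,\eta_L$ from fixed-radius orthogonality is $O(s^{-2Lc_1})$, not $o(s^{-L-1})$. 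The last law must be refined; the paper uses $\td b_L=b_L+(\chi_RW,\eps_L)/D_R$ and the analogous $\td\eta_L$, with $R=s^{1/2-\kappa}$.
\item \emph{Scaling term.} $b_1\Lambda$ cannot be treated perturbatively in Duhamel with the smoothing estimate. $\Lambda$ is not bounded in the Morawetz duality, and $\int b_1\,ds=\infty$, so losses are not absorbed by $b_1\to0$. The paper applies the smoothing estimate only on intervals $[S,(1+\delta)S]$ after undoing the rescaling. There only $O(\delta)$ errors in the Morawetz duality remain, and it sums over these intervals using the decaying weights ($s^{-1}\|\mathfrak f\|_{\mathrm{Mor}}^2$ and its analogues) in the energy identity.
\item \emph{Resolvent bound.} The uniform resolvent bound for the non-self-adjoint $-iH$ is not a routine limiting absorption principle. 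It requires the absence of eigenvalues of $-iH$ on $i\mathbb R\setminus\{0\}$, which the paper takes from a companion work.
\end{itemize}

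A minor point: $c_k=\frac12$ exactly, from $\Lambda r^{2k-1}=(2k-\frac12)r^{2k-1}$, with no logarithmic corrections in $d=3$. The paper does not localize $P$ at scale $b_1^{-1/2}$ at all. It keeps $\td\eps=P+\eps\in\dot H^1$ for the energy and $\eps\in\dot{\calH}^{2L+1}$ for the higher-order analysis.
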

This family has formal `codimension $2L$' stability; see Remark~\ref{rem:codimension}.

\subsection{Discussion of the results}

\begin{rem}[Method and novelty]
	Our approach follows the forward construction scheme based on modulation analysis with the energy method; see, for instance, \cite{RodnianskiSterbenz2010,RaphaelRodnianski2012,MerleRaphaelRodnianski2013Invention,RaphaelSchweyer2013CPAMHeat,RaphaelSchweyer2014APDE,MerleRaphaelRodnianski2015CambJMath,Kim2025JEMS,JeongKim2024arXiv}. The main difficulty in applying this scheme to our setting is the lack of a repulsive structure that can be directly exploited to control the remainder, unlike in previous blow-up constructions.
	
	Our main novelty is to overcome this difficulty by applying Kato's smoothing theory, as used in asymptotic stability analysis, to the forward construction. For the development of these methods, see, for instance, \cite{BuslaevPerelman1992,BuslaevPerelman1995,Cuccagna2001CPAM,KriegerSchlag2006JAMS,KairzhanPusateri2023PAA,Schlag2009annals,ErdoganSchlag2}. In particular, following \cite{Schlag2009annals,ErdoganSchlag2}, we establish a global spacetime Morawetz estimate for the linearized NLS flow, using the absence of nonzero embedded eigenvalues proved in our companion work \cite{Kim2026arXivNoEmbeeded}. This estimate provides the spacetime control needed to bound the higher-order energy, replacing the monotonicity derived from repulsive structures in previous blow-up constructions.
	
	We also note that, conversely, repulsive structures developed in blow-up analysis have been used to establish asymptotic stability \cite{Martel2022PMP,Martel2024invent}.
\end{rem}

\begin{rem}[Quantized blow-up]
	The study of quantized type II blow-up originated in nonlinear heat equations, beginning with the discovery of formal blow-up mechanisms \cite{HerreroVelazquez1992,HerreroVelazquez1994CRASPSI,FilippasHerreroVelazquez2000} and subsequent classification results for radial nonnegative solutions \cite{Mizoguchi2004AdvDE,Mizoguchi2007Math.Ann.,Mizoguchi2011TranAMS}; see also further constructions in \cite{Collot2017AnalPDE,CollotMerleRaphael2020JAMS}. Quantized blow-up was also studied in critical parabolic equations, including corotational harmonic map heat flow \cite{RaphaelSchweyer2013CPAMHeat,RaphaelSchweyer2014APDE}, the radial Stefan problem \cite{HadzicRaphael2019JEMS}, and Keller--Segel systems \cite{RaphaelSchweyer2014MathAnn,CollotGhoulMasmoudiNguyen2022CPAM}. Further examples for the energy-critical nonlinear heat equation were constructed in \cite{Schweyer2012JFA,PMW2019,delPinoMussoWeiZhang2020HeatQuantized,Harada2020AIHPC}.
	
	Beyond parabolic equations, smooth quantized blow-up was constructed for the energy-supercritical focusing NLS \cite{MerleRaphaelRodnianski2015CambJMath} and subsequently for energy-supercritical wave equations and wave maps \cite{Collot2018MemAmer,GhoulIbrahimNguyen2018JDE}. Blow-up constructions for compressible fluids and the energy-supercritical defocusing NLS were developed in \cite{MerleRaphaelRodnianskiSzeftel2022-1Ann.Math.,MerleRaphaelRodnianskiSzeftel2022Invention,MerleRaphaelRodnianskiSzeftel2022-2Ann.Math.}. In the energy-critical setting, smooth quantized blow-up was constructed for corotational wave maps \cite{Jeong2025AnalPDE} and the Calogero--Moser derivative nonlinear Schr\"odinger equation (CM-DNLS) \cite{JeongKim2024arXiv}. The single-bubble finite-time blow-up dynamics of (CM-DNLS) were subsequently classified into quantized and faster exotic regimes \cite{JeongKimKimKwon2026arXiv}. Very recently, smooth quantized blow-up was also constructed for the $1$-equivariant energy-critical Schr\"odinger map flow \cite{JXYZ2026arXiv}.
\end{rem}

\begin{rem}[Comparison with mass-critical NLS]
	We recall results on blow-up for the focusing mass-critical nonlinear Schr\"odinger equation
	\begin{equation*}
		i\partial_tu+\Delta u+|u|^{4/d}u=0,\qquad (t,x)\in\mathbb R\times\mathbb R^d.
	\end{equation*}
	At the threshold mass, blow-up uniqueness was proved in \cite{Merle1993Duke,RS2011JAMSinhomo}. Slightly above the threshold mass, Bourgain and Wang \cite{BourgainWang1997} constructed blow-up solutions with $\lambda(t)\sim T-t$; see also \cite{MRS2013AJM,MartelNaumkin2023Tunis}. Another class of blow-up solutions, exhibiting the log-log rate, was first constructed in \cite{Perelman2001}. Subsequently, Merle and Rapha\"el \cite{MerleRaphael2005AnnMath,MerleRaphael2003GAFA,Raphael2005MathAnnalen,MerleRaphael2004Invent,MerleRaphael2006JAMS,MerleRaphael2005CMP} established the stability of log-log blow-up and a dichotomy between $\lambda(t)\sim\sqrt{2\pi(T-t)/\log|\log(T-t)|}$ and $\lambda(t)\lesssim T-t$. More recently, Kwak and Kwon \cite{KwakKwon2026arXiv} extended the study beyond the single-bubble regime, establishing a universal log-log bound for radial solutions and ruling out the self-similar blow-up rate for general mass-critical NLS blow-up solutions.
	
	For the energy-critical NLS \eqref{eq:NLS} in $d=3$, log-log blow-up has been constructed in the standing-ring regime \cite{RaphaelSzeftel2009CMP,HolmerRoudenko2012APDE}, while the existence or exclusion of self-similar blow-up remains open. In the single-bubble regime, Schmid \cite{Schmid2026CVPDE} constructed solutions with a continuum of blow-up rates, and our result establishes quantized blow-up rates even for smooth solutions. Nevertheless, a rigidity result classifying the possible single-bubble blow-up rates remains open.
\end{rem}

\subsection{Strategy of the proof}
We use the notation introduced in Section~\ref{sec:notation}.
Our forward construction largely follows the modulation analysis developed in \cite{RodnianskiSterbenz2010, RaphaelRodnianski2012, MerleRaphaelRodnianski2013Invention, RaphaelSchweyer2013CPAMHeat, RaphaelSchweyer2014APDE, MerleRaphaelRodnianski2015CambJMath}.
However, unlike in these works, we do not have a repulsive structure that can be directly used to establish the required smallness of the remainder.
To overcome this difficulty, we adapt techniques from asymptotic stability theory, particularly those developed in \cite{Schlag2009annals,ErdoganSchlag2}. This is possible because forward construction and asymptotic stability share the same underlying principles of stability analysis.

We begin by examining the linearized flow around $W$. Writing $u=W+\eps$, we obtain the linearized equation
\begin{equation*}
	\partial_t\eps+iH\eps=0,\quad\text{where}\quad Hf\coloneqq-\Delta f-W^4f-4W^4\Re f.
\end{equation*}
The scaling and phase symmetries yield $H\Lambda W=H(iW)=0$, giving rise to the generalized kernel profiles defined recursively by
\begin{equation*}
	-iH\mathsf V_j=\mathsf V_{j-1},\qquad -iH\mathsf U_j=\mathsf U_{j-1},\qquad \mathsf V_0=\Lambda W,\quad \mathsf U_0=\tfrac{i}{2}W.
\end{equation*}
The operator $-iH$ also has eigenmodes $Y_\pm$ satisfying $-iHY_\pm=\pm e_0Y_\pm$, $e_0>0$, corresponding to exponentially growing and decaying solutions. These directions lead us to consider, for a fixed $L\ge1$, the spaces
\begin{align*}
	E_g(-iH)&\coloneqq\operatorname{span}_{\mathbb R}\{\mathsf V_j,\mathsf U_j:0\le j\le L\}\oplus\operatorname{span}_{\mathbb R}\{Y_+,Y_-\},\\
	E_g(-Hi)^\perp&=\{i\mathsf V_j,i\mathsf U_j:0\le j\le L\}^{\perp}\cap\{HY_+,HY_-\}^{\perp}.
\end{align*}
Here $E_g$ denotes the space spanned by the generalized kernel profiles and eigenmodes of the indicated operator, and orthogonality is taken with respect to the real inner product. Formally, both spaces are invariant under the linearized flow.

This decomposition leads us to control two distinct components of the perturbation.
For the part corresponding to the generalized kernel and eigenmodes, we use the modulation parameters $\lambda,\gamma,b_j,\eta_j$ to track the former and control the coefficients of the latter separately.
For the remaining part, we seek a Kato smoothing estimate for the linearized Schr\"odinger equation with decaying potentials.
Combined with the energy method, this estimate provides the spacetime control needed to propagate the smallness of the remainder.

To carry out this strategy at the nonlinear level, we need to reduce the errors arising from the approximate solution. To this end, we construct a modified profile $W+P$ of the form
\begin{equation*}
	P=\sum_{j=1}^L(b_j\mathsf V_j+\eta_j\mathsf U_j+S_j),
\end{equation*}
where $S_j$ are nonlinear corrections. We require $[W+P]_{\lambda,\gamma}$ to approximately solve \eqref{eq:NLS}. Writing $F(w)\coloneqq i(\Delta w+|w|^4w)$, this leads to the profile equation
\begin{equation*}
	\partial_sP-F(W+P)-\frac{\lambda_s}{\lambda}\Lambda(W+P)+i\gamma_s(W+P)\approx0,\qquad \frac{ds}{dt}=\frac{1}{\lambda^2}.
\end{equation*}
This is achieved by successively canceling the leading terms and the remaining nonlinear errors.

We first determine the formal modulation laws from the terms with the leading spatial growth.
The generalized kernel profiles $\mathsf V_j,\mathsf U_j$ grow like $r^{2j-1}$, and $\Lambda r^{2j-1}=(2j-\tfrac12)r^{2j-1}$.
Using these asymptotics and the generalized kernel relations, we obtain
\begin{gather*}
	\frac{\lambda_s}{\lambda}+b_1=0,\quad
	\gamma_s-\frac{\eta_1}{2}=0,
	\\
	\begin{aligned}
		(b_j)_s &= b_{j+1}-(2j-\tfrac12)b_1b_j-\tfrac12\eta_1\eta_j,
		\quad b_{L+1}\coloneqq0,
		\\
		(\eta_j)_s &= \eta_{j+1}-(2j-\tfrac12)b_1\eta_j+\tfrac12\eta_1b_j,
		\quad \eta_{L+1}\coloneqq0,
	\end{aligned}
	\qquad 1\le j\le L.
\end{gather*}
These laws admit a special solution
\begin{equation*}
	b_j^e(s)=c_js^{-j},\qquad \eta_j^e(s)=0,\qquad
	c_1=\frac{2L}{4L-1},\quad c_L\ne0.
\end{equation*}
The scale law gives $\lambda(s)\sim s^{-c_1}$, which, together with $dt/ds=\lambda^2$, yields $\lambda(t)\sim(T-t)^{2L}$.

While the modulation laws cancel the leading terms, nonlinear errors with lower spatial growth remain. Although these errors have better spatial decay, they are still too large for the remainder estimates. We therefore determine the nonlinear corrections $S_j$ recursively by applying $H^{-1}$ to eliminate these errors. Since these corrections have lower spatial growth than the generalized kernel profiles, they do not affect the leading modulation laws.
This procedure, which we call the \emph{tail computation}, yields an approximate profile consistent with the desired quantized blow-up dynamics.

We now turn to the construction of a nonlinear solution following the modified profile. Since $P$ grows spatially and does not belong to $\dot H^1$, we use different decompositions in the energy and higher-order topologies:
\begin{gather*}
	u=[w(s)]_{\lambda,\gamma},\qquad w=W+\td\eps=W+P+\eps,\\
	\td\eps\in\dot H^1,\qquad \eps\in\dot{\calH}^{2L+1},\qquad
	(\phi_j,\eps)=(\varphi_j,\eps)=0\quad(0\le j\le L).
\end{gather*}
Here $\phi_j,\varphi_j$ are fixed smooth compactly supported dual tests for $\mathsf V_j,\mathsf U_j$, and the orthogonality conditions determine the modulation parameters. The two decompositions play different roles. In the higher-order topology, we work with $\eps$ to exploit the cancellations built into the modified profile. In the energy topology, we instead work with the full perturbation $\td\eps=P+\eps$, avoiding the need to cut off the growing profile and the associated cutoff errors. This approach was introduced in \cite{KimKimKwon2024arxiv}.

We apply the energy method to the adapted radiation $\eps_L\coloneqq(-iH)^L\eps$.
Under this transformation, the generalized kernel profiles reduce to stationary kernel directions or vanish, while the eigenmodes persist.
Since the quadratic energy is not coercive in these directions, we introduce the projection $\Pi$ defined in \eqref{eq:projection} and control the eigenmode coefficients separately:
\begin{equation*}
	\alpha_\pm\coloneqq(-HY_\mp,\eps_L)=(\pm e_0)^L(-HY_\mp,\eps).
\end{equation*}
Combining coercivity on the projected space, we obtain
\begin{equation*}
	\|\eps\|_{\dot{\calH}^{2L+1}}\lesssim\|\Pi\eps_L\|_{\dot H^1}+|\alpha_+|+|\alpha_-|.
\end{equation*}
To propagate this control in time, we consider the evolution of the adapted radiation, which takes the approximate form
\begin{equation*}
	\partial_s\eps_L+iH\eps_L-\frac{\lambda_s}{\lambda}(\Lambda+2L)\eps_L+i\gamma_s\eps_L\approx0.
\end{equation*}
Along the formal dynamics, $\lambda_s/\lambda$ and $\gamma_s$ tend to zero. We therefore treat the scale and phase terms as perturbations and seek the required spacetime estimate for the projected linearized flow
\begin{equation*}
	\partial_tf=\Pi(-iH)f.
\end{equation*}
Denoting this flow by $U_c(t)$, we establish the global Morawetz estimate
\begin{equation*}
	\|U_c(t)f_0\|_{L^2_t\mathrm{Mor}_x(\mathbb R)}\lesssim\|f_0\|_{\dot H^1}.
\end{equation*}
Following Kato's smoothing theory, we derive this estimate from a uniform resolvent estimate \eqref{eq:uniform stationary esti} for
\begin{equation*}
	(z-\Pi(-iH))f=F,\qquad \Re z>0.
\end{equation*}
For large $|z|$, the estimate follows from Green function estimates. For bounded $|z|$, we argue by contradiction, showing that failure of the estimate would produce a nonzero solution of the homogeneous equation. Such a solution is ruled out by the projection conditions and the spectral properties of $-iH$, including the absence of embedded eigenvalues established in \cite{Kim2026arXivNoEmbeeded}.

Combining the Morawetz estimate with the higher-order energy and modulation estimates, we propagate the required bounds for the radiation and stable parameters. Finally, we select the unstable initial parameters through a topological shooting argument, thereby constructing a solution with the asymptotics stated in Theorem~\ref{thm:main}.

\vspace{5pt}
\noindent\mbox{\textbf{Acknowledgements.~}}The author would like to thank Kihyun Kim and Seunghwan Hwang for valuable discussions, and Soonsik Kwon for helpful comments. The author is supported by a KIAS Individual Grant (MG105201) at Korea Institute for Advanced Study.

\section{Notation and preliminaries}\label{sec:notation}

\subsection{Notation}

Unless otherwise specified, functions are complex-valued and radial on $\mathbb R^3$. We write $\mathbb R^+\coloneqq(0,\infty)$, $r\coloneqq |x|$, and $\langle x\rangle\coloneqq (1+|x|^2)^{\frac{1}{2}}$, or $\langle r\rangle\coloneqq (1+r^2)^{\frac{1}{2}}$. For a radial function, we write $f_r\coloneqq \partial_rf$, $f_{rr}\coloneqq \partial_r^2f$, and
\begin{equation*}
	\Delta f=f_{rr}+\frac2r f_r.
\end{equation*}
The notation $\nabla^jf$ denotes the full tensor of Cartesian derivatives of order $j$, with $\nabla^0f\coloneqq f$.
The Kronecker delta is $\delta_{ij}\coloneqq 1$ when $i=j$ and $\delta_{ij}\coloneqq 0$ otherwise.

For quantities $A\in\mathbb C$ and $B\geq0$, we write $A\lesssim B$ or $A=O(B)$ if $|A|\leq CB$ holds for some implicit constant $C$. For $A,B\geq0$, we write $A\sim B$ if $A\lesssim B$ and $B\lesssim A$. If $C$ depends on some parameters, then we write them as a subscript. For $B>0$, we write $A=o(B)$ if $A/B\to0$ in the limit under consideration.

We denote by $L^p= L^p(\mathbb R^3)$ the usual Lebesgue spaces. For radial functions and $1\le p<\infty$, we use the normalization
\begin{equation*}
	\|f\|_{L^{p}}^p\coloneqq \int_0^\infty|f(r)|^pr^2dr=\frac1{4\pi}\int_{\mathbb R^3}|f(x)|^pdx.
\end{equation*}
For nonradial functions and vector or tensor fields, we use the measure $dx/(4\pi)$. For a radial interval $I\subset\mathbb R^+$, $L^p(I,dr)$ uses $dr$; $L^p(dr)$ means $L^p((0,\infty),dr)$.

We denote by $H^k(\mathbb R^3)$ and $\dot H^k(\mathbb R^3)$ the usual inhomogeneous and homogeneous Sobolev spaces, respectively, and write $H^\infty(\mathbb R^3)\coloneqq\bigcap_{k\ge0}H^k(\mathbb R^3)$.
We denote the Schwartz space by $\calS(\mathbb R^3)$ and the radial subspace by a subscript $\rad$. We often omit $\mathbb R^3$ and the subscript $\rad$ when there is no confusion.
We use the adapted Sobolev spaces $\dot{\calH}^{m}$, with norms defined by
\begin{equation}\label{eq:dot calH def}
	\|f\|_{\dot{\calH}^{m}}^2\coloneqq \sum_{j=0}^m\|\langle x\rangle^{-(m-j)}\nabla^jf(x)\|_{L^2}^2.
\end{equation}
We use the radial subspace and note that $\dot{\calH}^{1}=\dot H^1$ with equivalent norms.
We also use the real Morawetz inner product and its induced norm:
\begin{align*}
	(f,g)_{\mathrm{Mor}}&\coloneqq\Re\int_0^\infty
	\left(\frac{f_r\ol{g_r}}{\langle r\rangle^2}
	+\frac{f\ol g}{\langle r\rangle^4}\right)r^2dr,\\
	\|f\|_{\mathrm{Mor}}^2&\coloneqq(f,f)_{\mathrm{Mor}}
	=\|\langle r\rangle^{-1}\nabla f\|_{L^2}^2
	+\|\langle r\rangle^{-2}f\|_{L^2}^2.
\end{align*}
For a radial interval $I\subset\mathbb R^+$, we write
\begin{equation*}
	\|f\|_{\mathrm{Mor}(I)}^2\coloneqq\int_I
	\left(\frac{|f_r|^2}{\langle r\rangle^2}
	+\frac{|f|^2}{\langle r\rangle^4}\right)r^2dr.
\end{equation*}
We write $B_R\coloneqq\{x\in\mathbb R^3:|x|<R\}$ and use $H^k(r<R)$ for $H^k(B_R)$.
We fix a nonnegative smooth radial cutoff $\chi$, equal to one on $B_1$ and zero outside $B_2$, and write $\chi_R(x)\coloneqq\chi(x/R)$.

 For a spatial Banach space $X$ and a time interval $I$, we write
\begin{equation*}
	\|f\|_{L^p_tX_x(I)}\coloneqq \left(\int_I\|f(t)\|_X^p dt\right)^{\frac{1}{p}}\qquad(1\le p<\infty),
\end{equation*}
with the usual modification for $p=\infty$.

For $\lambda>0$ and $\gamma\in\mathbb R$, we define the scaling and phase rotation by
\begin{equation}
	f_{\lambda,\gamma}(x)\coloneqq \lambda^{-\frac{1}{2}}e^{i\gamma}f(x/\lambda),\qquad
	f_\lambda\coloneqq f_{\lambda,0},\qquad \Lambda f\coloneqq \frac12f+x\cdot\nabla f.
\end{equation}
For radial $f$, $\Lambda f=\frac12f+rf_r$. We have
\begin{gather*}
	\lambda\partial_\lambda f_{\lambda,\gamma}=-(\Lambda f)_{\lambda,\gamma},\qquad
	\partial_\gamma f_{\lambda,\gamma}=if_{\lambda,\gamma},\\
	\|f_{\lambda,\gamma}\|_{L^{p}}=\lambda^{\frac{3}{p}-\frac{1}{2}}\|f\|_{L^{p}},\qquad
	\|f_{\lambda,\gamma}\|_{\dot H^j}=\lambda^{1-j}\|f\|_{\dot H^j}.
\end{gather*}
We use the real inner product
\begin{equation}
	(\phi,f)\coloneqq \Re\int_0^\infty\phi(r)\ol{f(r)}r^2dr.
\end{equation}
For vector or tensor fields, we use the componentwise real inner product.
Integrating by parts, we obtain
\begin{equation}\label{eq:notation dilation adjoint}
	(\Lambda f,g)=-(f,(\Lambda+2)g).
\end{equation}
For a radial function $f$ and an integer $k\ge0$, we write
\begin{equation}\label{eq:notation pointwise size}
	|f|_k(r)\coloneqq \max_{0\le j\le k}|r^j\partial_r^jf(r)|.
\end{equation}
Leibniz's rule gives $|fg|_k\lesssim|f|_k|g|_k$, and $|\Lambda f|_k\lesssim|f|_{k+1}$.

We recall the ground state $W$ of \eqref{eq:NLS}. It is the positive radial stationary solution normalized by $W(0)=1$, given by
\begin{equation*}
	W(r)\coloneqq (1+r^2/3)^{-\frac{1}{2}},\qquad -\Delta W=W^5.
\end{equation*}

The linearized operator at $W$ and the nonlinear evolution are
\begin{equation*}
	Hf\coloneqq -\Delta f-W^4f-4W^4\Re f,\qquad 
	F(u)\coloneqq i(\Delta u+|u|^4u).
\end{equation*}
The operator $H$ is symmetric for the real inner product. Products of operators denote composition; in particular, $(Hi)f=H(if)$ and $(-iH)^*f=H(if)$.
For complex-valued $f,g$, we define the nonlinear remainder by
\begin{equation}\label{eq:notation nonlinearity}
	\calN_f(g)\coloneqq i\big[|f+g|^4(f+g)-|f|^4f-3|f|^4g-2|f|^2f^2\ol g\big].
\end{equation} 

\subsection{Preliminary estimates}

We record the growth bound and mapping property needed to use the profiles in the adapted Sobolev spaces \eqref{eq:dot calH def}, and the commutator estimate used in the higher-order energy argument.

\begin{lem}\label{lem:dot calH pointwise}
	Let $k\ge2$ be an integer. For $f\in \dot{\calH}^{k}$, there is a continuous representative satisfying
	\begin{equation}\label{eq:dot cal pointwise}
		|f(x)|\lesssim\|f\|_{\dot{\calH}^{k}}\langle x\rangle^{k-\frac{3}{2}}.
	\end{equation}
	If $P$ is smooth and radial with $|P|_k\lesssim\langle r\rangle^{k-2}$, then $P\in \dot{\calH}^{k}$. Moreover, for $k\ge3$, $iH:\dot{\calH}^{k}\to \dot{\calH}^{k-2}$ is bounded. In particular,
	\begin{equation}\label{eq:dot calH linear map}
		\|(iH)^Lf\|_{\dot H^1}\lesssim\|f\|_{\dot{\calH}^{2L+1}}.
	\end{equation}
\end{lem}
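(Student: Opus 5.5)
The plan has three parts: a pointwise bound, a check that the profile $P$ lies in $\dot{\calH}^k$, and the mapping property of $iH$.

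\textbf{Pointwise bound.} We split $\mathbb R^3$ into $B_2$ and the dyadic shells $A_R=\{R\le|x|\le 2R\}$, $R\ge1$.

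On $B_2$ the weights $\langle x\rangle^{-(k-j)}$ are comparable to $1$. Hence $\|f\|_{H^k(B_2)}\lesssim\|f\|_{\dot{\calH}^k}$; the zeroth-order term $\|\langle x\rangle^{-k}f\|_{L^2}$ is included in the norm. Since $k\ge2>3/2$, Sobolev embedding on $B_2$ gives a continuous representative with $|f|\lesssim\|f\|_{\dot{\calH}^k}$ there.

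On $A_R$ we rescale $g(y)=f(Ry)$ with $y\in A_1$. Then
\[
\|\nabla^j_y g\|_{L^2(A_1)}=R^{j-\frac32}\|\nabla^jf\|_{L^2(A_R)}\lesssim R^{j-\frac32}R^{k-j}\|f\|_{\dot{\calH}^k}=R^{k-\frac32}\|f\|_{\dot{\calH}^k},
\]
because $\langle x\rangle\sim R$ on $A_R$. Applying $H^k(A_1)\hookrightarrow L^\infty(A_1)$ gives $|f(x)|\lesssim R^{k-\frac32}\|f\|_{\dot{\calH}^k}$ on $A_R$, which is \eqref{eq:dot cal pointwise}. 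The representatives from overlapping pieces agree a.e., so they glue to a single continuous function.

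\textbf{$P\in\dot{\calH}^k$.} From $|P|_k\lesssim\langle r\rangle^{k-2}$ and the radial structure, the Cartesian derivatives satisfy $|\nabla^jP|\lesssim\langle r\rangle^{k-2-j}$ for $r\ge1$. This uses that $\nabla^j$ of a radial function is a combination of $r^{-(j-i)}\partial_r^iP$ with bounded angular coefficients, for $1\le i\le j$. Near the origin $P$ is smooth, so all derivatives are bounded there.

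Each term of the norm is then controlled by
\[
\int\langle r\rangle^{-2(k-j)}\langle r\rangle^{2(k-2-j)}r^2\,dr=\int\langle r\rangle^{-4}r^2\,dr<\infty.
\]

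\textbf{Mapping property of $iH$.} Write $iHf=-i\Delta f-iW^4f-4iW^4\Re f$ and treat the two kinds of terms separately.

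For the Laplacian, $\|\langle x\rangle^{-(k-2-j)}\nabla^j\Delta f\|_{L^2}\le\|\langle x\rangle^{-(k-(j+2))}\nabla^{j+2}f\|_{L^2}$, which is one of the terms in $\|f\|_{\dot{\calH}^k}$.

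For the potential terms, note that $|\nabla^i W^4|\lesssim\langle x\rangle^{-4-i}$. By Leibniz,
\[
\langle x\rangle^{-(k-2-j)}|\nabla^j(W^4f)|\lesssim\sum_{i\le j}\langle x\rangle^{-(k-2-j)-4-(j-i)}|\nabla^if|\le\sum_{i\le j}\langle x\rangle^{-(k-i)}|\nabla^if|,
\]
since $-(k-2-j)-4-(j-i)=-(k-i)-2$. The same bound holds with $\Re f$ in place of $f$.

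For \eqref{eq:dot calH linear map} we iterate this mapping $L$ times: $\dot{\calH}^{2L+1}\to\dot{\calH}^{2L-1}\to\cdots\to\dot{\calH}^{1}$. The hypothesis $k\ge3$ holds at each step, since the exponents along the chain are $2L+1,2L-1,\dots,3$. We finish with $\dot{\calH}^1=\dot H^1$.

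\textbf{Main point to check.} The only delicate step is the conversion between the radial quantity $|P|_k$ and the Cartesian derivatives $\nabla^jP$ near $r=0$. There the factors $r^{-(j-i)}$ are singular, so the bound $|P|_k\lesssim\langle r\rangle^{k-2}$ alone is not enough. I would handle this by using the smoothness of $P$ on $B_1$ directly, and applying the radial-to-Cartesian conversion only on $r\ge1$.
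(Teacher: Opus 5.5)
Your proposal is correct and follows essentially the same route as the paper. Both use a rescaled Sobolev embedding at scale $R\sim\langle x\rangle$ for the pointwise bound; the paper uses the cutoffs $\chi_R$ with $H^2\hookrightarrow L^\infty$ where you use annuli with $H^k(A_1)\hookrightarrow L^\infty$. For $P$, both convert radial derivatives to Cartesian ones on $r\ge1$ and use smoothness near the origin. For $iH$, both apply Leibniz with $|\nabla^jW^4|\lesssim\langle x\rangle^{-4-j}$ and iterate down to $\dot{\calH}^1=\dot H^1$. One cosmetic point: closed dyadic shells meet only in null spheres, so "agree a.e. on overlaps" does not by itself glue the pieces. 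It is cleaner to note that $f\in H^2_{\mathrm{loc}}$ already has a global continuous representative, to which your local bounds then apply.
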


\begin{proof}
	By the Sobolev embedding of $H^2$ into $L^\infty$, for $R\geq1$, we have
	\begin{equation*}
		\|\chi_{R}f\|_{L^\infty}
		\lesssim\sum_{j=0}^2 R^{j-\frac{3}{2}}\|\chi_{2R}\nabla^jf\|_{L^2}
		\lesssim R^{k-\frac{3}{2}}\|f\|_{\dot{\calH}^{k}},
	\end{equation*}
	which proves \eqref{eq:dot cal pointwise}.

	From \eqref{eq:notation pointwise size}, $|P|_k\lesssim\langle r\rangle^{k-2}$ implies $|\nabla^jP|\lesssim r^{k-2-j}$ for $r>1$. Thus $P\in \dot{\calH}^{k}$.

	To prove \eqref{eq:dot calH linear map}, we note that the Laplacian $\Delta:\dot{\calH}^{k}\to\dot{\calH}^{k-2}$ is bounded by the definition of the norms. The decay $|\nabla^jW^4|\lesssim\langle x\rangle^{-4-j}$ gives the same boundedness for the map $f\mapsto W^4f+4W^4\Re f$. Thus $iH:\dot{\calH}^{k}\to\dot{\calH}^{k-2}$ is bounded for $k\ge3$. Iterating $L$ times from $\dot{\calH}^{2L+1}$ to $\dot{\calH}^{1}=\dot H^1$ proves \eqref{eq:dot calH linear map}.
\end{proof}

For $m=2L+1$, these spaces contain profiles growing like $r^{2L-1}$. Lemma~\ref{lem:weighted elliptic} gives the converse to \eqref{eq:dot calH linear map} after removing the kernel of $(iH)^L$.

The following lemma gives pointwise bounds for the profiles and weighted bounds for the remainder. For operators $A,B$, we write $[A,B]f\coloneqq A(Bf)-B(Af)$.

\begin{lem}[Commutator bounds]
	For integers $j\ge1$ and $m\ge0$ and a smooth function $f$, we have
	\begin{equation}\label{eq:lin commutator coefficients}
		|\nabla^m([(iH)^j,\Lambda]f-2j(iH)^jf)|
		+|\nabla^m[(iH)^j,i]f|
		\lesssim_{j,m}\sum_{d=0}^{2j-2+m}\frac{|\nabla^df|}{\langle r\rangle^{2j+2+m-d}}.
	\end{equation}
	We also have
	\begin{equation}\label{eq:lin iterated products}
		|\nabla^m(iH)^jf|\lesssim_{j,m}\sum_{d=0}^{2j+m}\frac{|\nabla^df|}{\langle r\rangle^{2j+m-d}}.
	\end{equation}
	For $f\in\dot{\calH}^{2j+1}$,
	\begin{equation}\label{eq:lin weighted commutators}
		\begin{aligned}
			&\|\langle r\rangle\{[(iH)^j,\Lambda]f-2j(iH)^jf\}\|_{L^2}\\
			&+\|\langle r\rangle^2\nabla\{[(iH)^j,\Lambda]f-2j(iH)^jf\big\}\|_{L^2}
			\lesssim_j\|f\|_{\dot{\calH}^{2j+1}},\\
			&\|\langle r\rangle[(iH)^j,i]f\|_{L^2}
			+\|\langle r\rangle^2\nabla[(iH)^j,i]f\|_{L^2}
			\lesssim_j\|f\|_{\dot{\calH}^{2j+1}}.
		\end{aligned}
	\end{equation}
	The same estimates hold with $iH$ replaced by $-iH$.
\end{lem}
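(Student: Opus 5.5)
The plan is to write $iH=i(-\Delta)-V$, where $V$ acts by $Vf\coloneqq i(W^4f+4W^4\Re f)$. The operator $V$ is real-linear. It has the form $V=W^4(a\,\mathrm{Id}+b\,\mathcal C)$ with $\mathcal C$ complex conjugation and constant coefficients, and its coefficients obey $|\nabla^kW^4|\lesssim\langle r\rangle^{-4-k}$.

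\textbf{Step 1: the pointwise bound \eqref{eq:lin iterated products}.} I argue by induction on $j$. Expanding $(iH)^j$ produces words in the letters $i\Delta$ and $V$. A word containing $p$ copies of $V$ has $2(j-p)$ derivatives, and each $V$ contributes decay $\langle r\rangle^{-4}$, which is better than the $\langle r\rangle^{-2}$ that the two missing derivatives would require. Leibniz's rule then gives $|\nabla^m(\text{word})f|\lesssim\sum_d\langle r\rangle^{-(4p-2p)}\langle r\rangle^{-(2j-2p+m-d)}|\nabla^df|$, using that each derivative falling on a $W$ factor gains $\langle r\rangle^{-1}$. This is at most the right-hand side of \eqref{eq:lin iterated products}.

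\textbf{Step 2: the commutator with $i$.} We have $[i\Delta,i]=0$, and $[V,i]f=iW^4\cdot 4(\Re(if)-i\Re f)$ is a multiplication-type operator of order $0$ with decay $\langle r\rangle^{-4}$. Expanding
\begin{equation*}
[(iH)^j,i]=\sum_{k}(iH)^{k}[iH,i](iH)^{j-1-k},
\end{equation*}
each term carries at most $2j-2$ derivatives and decay $\langle r\rangle^{-4}$ in place of two derivatives. Applying Step 1 to the outer factors and Leibniz's rule to the inner ones gives the bound with $d\le 2j-2+m$ and weight $\langle r\rangle^{-(2j+2+m-d)}$.

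\textbf{Step 3: the commutator with $\Lambda$.} The scaling identity $[\Delta,\Lambda]=2\Delta$ gives $[i\Delta,\Lambda]=2i\Delta$. Also $[V,\Lambda]=-(r\partial_rW^4)(a+b\mathcal C)$, since $\Lambda$ commutes with $\mathcal C$. Hence $[iH,\Lambda]=2iH+R$, where
\begin{equation*}
R=2V-(r\partial_rW^4)(a+b\mathcal C)
\end{equation*}
is order $0$ with decay $\langle r\rangle^{-4}$. Telescoping,
\begin{equation*}
[(iH)^j,\Lambda]=\sum_k(iH)^k(2iH+R)(iH)^{j-1-k}=2j(iH)^j+\sum_k(iH)^kR(iH)^{j-1-k}.
\end{equation*}
The remainder sum has the same structure as in Step 2 and is estimated identically.

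\textbf{Step 4: the weighted bounds \eqref{eq:lin weighted commutators}.} Take the pointwise bound with $m=0,1$, multiply by $\langle r\rangle$ (respectively $\langle r\rangle^2$), and square-integrate. This gives at most $\sum_{d\le 2j-1}\|\langle r\rangle^{-(2j+1-d)}\nabla^df\|_{L^2}$. Each term is bounded by $\|f\|_{\dot{\calH}^{2j+1}}$ directly from \eqref{eq:dot calH def}.

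The case of $-iH$ is identical after flipping signs. The main (mild) obstacle is the careful bookkeeping of the real-linear conjugation in $V$, in particular checking that it commutes with $\Lambda$ and $\Delta$.
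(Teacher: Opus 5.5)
Your proposal is correct and follows essentially the same route as the paper. You identify $[iH,i]$ and $[iH,\Lambda]-2iH$ as zeroth-order operators with coefficients satisfying $|\nabla^k(\cdot)|\lesssim\langle r\rangle^{-4-k}$, telescope $[(iH)^j,\cdot\,]$, bound each term by the iterated Leibniz estimate \eqref{eq:lin iterated products}, and then multiply by $\langle r\rangle$ or $\langle r\rangle^2$ and integrate. One harmless slip: the correct remainder is $R=2V+(r\partial_rW^4)(a+b\mathcal C)$, i.e.\ $Rf=4iW^3(\Lambda W)(3f+2\ol f)$ as in the paper, not $2V-(r\partial_rW^4)(a+b\mathcal C)$; since only the coefficient bounds are used, nothing downstream changes.
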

\begin{proof}
	Since $iHf=-i\Delta f-3iW^4f-2iW^4\ol f$, from Leibniz's rule, we have
	\begin{equation*}
		|\nabla^m(iH)f|\lesssim_m|\nabla^{m+2}f|+\sum_{d=0}^m\frac{|\nabla^df|}{\langle r\rangle^{4+m-d}}.
	\end{equation*}
	Iterating this estimate, we get \eqref{eq:lin iterated products}. Since $[\Delta,\Lambda]=2\Delta$, we also have
	\begin{equation}\label{eq:lin basic commutators}
		[iH,\Lambda]f-2iHf=4iW^3(\Lambda W)(3f+2\ol f),\qquad
		[iH,i]f=-4W^4\ol f.
	\end{equation}
	Since we have $|\nabla^mW^4|+|\nabla^m(4W^3\Lambda W)|\lesssim_m\langle r\rangle^{-4-m}$ together with \eqref{eq:lin iterated products}, we obtain, for $k,m\ge0$,
	\begin{equation}\label{eq:coefficient products}
		|\nabla^m(iH)^k(W^4f)|+|\nabla^m(iH)^k(4W^3(\Lambda W)f)|
		\lesssim_{k,m}\sum_{d=0}^{2k+m}\frac{|\nabla^df|}{\langle r\rangle^{4+2k+m-d}}.
	\end{equation}
	For $j\ge1$, by a direct computation, we have
	\begin{equation}
		\begin{aligned}
			[(iH)^j,\Lambda]-2j(iH)^j
			&=\sum_{k=0}^{j-1}(iH)^k\big([iH,\Lambda]-2iH\big)(iH)^{j-1-k},\\
			[(iH)^j,i]
			&=\sum_{k=0}^{j-1}(iH)^k[iH,i](iH)^{j-1-k}.
		\end{aligned}\label{eq:lin power commutators}
	\end{equation}
	Combining \eqref{eq:lin basic commutators}--\eqref{eq:lin power commutators} with \eqref{eq:lin iterated products}, we obtain
	\begin{align*}
		&|\nabla^m([(iH)^j,\Lambda]f-2j(iH)^jf)|+|\nabla^m[(iH)^j,i]f|\\
		&\lesssim_{j,m}\sum_{k=0}^{j-1}\sum_{n=0}^{2k+m}\frac{|\nabla^n(iH)^{j-1-k}f|}{\langle r\rangle^{4+2k+m-n}}
		\lesssim_{j,m}\sum_{d=0}^{2j-2+m}\frac{|\nabla^df|}{\langle r\rangle^{2j+2+m-d}},
	\end{align*}
	which proves \eqref{eq:lin commutator coefficients}.
	
	Multiplying \eqref{eq:lin commutator coefficients} by $\langle r\rangle$ or $\langle r\rangle^{2}$, taking $L^2$ norms, and using \eqref{eq:dot calH def}, we conclude \eqref{eq:lin weighted commutators}.
\end{proof}

\section{Linearized operators and coercivity}

We first recall the well-known spectral properties of the linearized operators from \cite{DuyckaertsMerle2009Threshold}. We then construct the generalized kernel using a regular Green inverse and prove coercivity in the adapted Sobolev spaces.

\subsection{Spectral properties and generalized kernel}

We first recall the coercivity of $H$ and the two real eigenvalues of $-iH$, which determine the coordinates in \eqref{eq:eigenmode mode def}.

\begin{lem}[Spectral properties and coercivity of $H$]\label{lem:first coercivity}
	The following properties hold.
	\begin{itemize}
		\item (Kernel) The energy-space kernel is
		\begin{equation}\label{eq:energy kernel}
			\begin{gathered}
				\{f\in\dot H^1:Hf=0\}=\operatorname{span}_{\mathbb R}\{\Lambda W,iW,\nabla W\},\\
				\{f\in\dot H^1_{\rad}:Hf=0\}=\operatorname{span}_{\mathbb R}\{\Lambda W,iW\}.
			\end{gathered}
		\end{equation}

		\item (Negative direction) The quadratic form $(Hf,f)$ has exactly one negative direction, and
		\begin{equation*}
			(HW,W)=-4\|W\|_{\dot H^1}^2.
		\end{equation*}

		\item (Eigenmodes of $-iH$) There exist $e_0>0$ and nonzero $Y_+,Y_-\in\calS_{\rad}$ such that
		\begin{equation}\label{eq:eigenfunctions}
			-iHY_\pm=\pm e_0Y_\pm,\qquad Y_-=\ol{Y_+}
		\end{equation}
		with
		\begin{equation}\label{eq:eigenmode normalization}
			(HY_+,Y_+)=(HY_-,Y_-)=0,\qquad (HY_+,Y_-)=-1.
		\end{equation}

		\item (Energy coercivity) Let $f\in\dot H^1_{\rad}$ satisfy $(\nabla W,\nabla f)=0$, $(\nabla\Lambda W,\nabla f)=0$, and $(\nabla(iW),\nabla f)=0$. Then
		\begin{equation*}
			(Hf,f)\sim\|f\|_{\dot H^1}^2.
		\end{equation*}

		\item (Coercivity modulo the kernel) For $f\in\dot H^1_{\rad}$ with $(HY_+,f)=(HY_-,f)=0$, we have
		\begin{equation}\label{eq:continuous coercivity}
			(Hf,f)\sim
			\inf_{a_1,a_2\in\mathbb R}
			\|f-a_1\Lambda W-a_2iW\|_{\dot H^1}^2.
		\end{equation}
		The infimum is attained uniquely at
		\begin{equation*}
			a_1=\frac{(\nabla\Lambda W,\nabla f)}
			{\|\Lambda W\|_{\dot H^1}^2},
			\qquad
			a_2=\frac{(\nabla(iW),\nabla f)}
			{\|W\|_{\dot H^1}^2}.
		\end{equation*}
	\end{itemize}
\end{lem}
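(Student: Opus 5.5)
The proof splits $H$ into its real and imaginary parts and treats each item in turn. For $f=f_1+if_2$ we have
\begin{equation*}
	Hf=L_+f_1+iL_-f_2,\qquad L_+=-\Delta-5W^4,\qquad L_-=-\Delta-W^4,
\end{equation*}
and $(Hf,f)=(L_+f_1,f_1)+(L_-f_2,f_2)$. All items then reduce to known facts about $L_\pm$ from \cite{DuyckaertsMerle2009Threshold}.

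\emph{Kernel.} We have $L_-W=0$ because $-\Delta W=W^5$. Differentiating the family $W_\lambda$ in $\lambda$ gives $L_+\Lambda W=0$, and differentiating under translations gives $L_+\nabla W=0$. The reverse inclusion is the nondegeneracy of $W$: $\ker L_+\cap\dot H^1=\operatorname{span}\{\Lambda W,\nabla W\}$. One proves it by decomposing into spherical harmonics and using ODE (Sturm) arguments; in the radial sector the only $\dot H^1$ solution of $L_+g=0$ regular at $0$ is $\Lambda W$, since the second solution is singular. The operator $L_-$ is nonnegative with kernel $\operatorname{span}\{W\}$, because $W>0$ is a ground state of $L_-$ (ground state substitution $f=Wg$).

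\emph{Negative direction.} Using $-\Delta W=W^5$, we get $(L_+W,W)=-4\int W^6=-4\|W\|_{\dot H^1}^2$. There is only one negative direction: $L_-\ge0$, and $L_+$ has exactly one negative eigenvalue. The latter follows by Sturm oscillation, since $\Lambda W$ has exactly one zero. In the nonradial sectors $L_+\ge0$, by comparison with $\nabla W$ in the $\ell=1$ sector and positivity for $\ell\ge2$.

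\emph{Eigenmodes.} These are constructed as in Duyckaerts--Merle. One seeks $Y_+=y_1+iy_2$ with $L_-y_2=e_0y_1$ and $L_+y_1=-e_0y_2$, which amounts to $L_-L_+y_1=-e_0^2y_1$. Equivalently, $L_-^{1/2}L_+L_-^{1/2}$ on $W^\perp$ has a negative eigenvalue. Its existence follows from the negativity of $L_+$ on $W$ together with a min-max/compactness argument, since $5W^4$ is relatively compact. Exponential decay and smoothness then give $Y_\pm\in\calS$.

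The symmetry $Y_-=\ol{Y_+}$ comes from conjugation: $\ol{-iHf}=iH\ol f$, because $H$ commutes with conjugation. Antisymmetry gives $(HY_\pm,Y_\pm)=0$. Indeed, $(-iH)^*=Hi$ gives
\begin{equation*}
	\pm e_0(HY_\pm,Y_\pm)=(HY_\pm,-iHY_\pm)=0,
\end{equation*}
since $(g,-ig)=0$. If $(HY_+,Y_-)=0$, the form would vanish on $\operatorname{span}\{Y_+,Y_-\}$ and $H$-orthogonally to it would have a nonnegative complement, contradicting the negative direction. So $(HY_+,Y_-)\neq0$, and rescaling normalizes it to $-1$; its sign is forced by the negative direction.

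\emph{Coercivity.} The energy coercivity is the standard radial statement. The negative direction is essentially $W$, and the orthogonality $(\nabla W,\nabla f)=0$ together with $(L_+W,W)<0$ and the one-negative-eigenvalue property gives $(L_+f_1,f_1)\ge0$. The orthogonality to $\Lambda W$ and $iW$ removes the kernels. Strict coercivity in $\dot H^1$ then follows by a compactness argument: if a sequence with $(Hf_n,f_n)\to0$ and $\|f_n\|_{\dot H^1}=1$ existed, its weak limit would be nonzero, because $W^4$ is compact from $\dot H^1$ to $\dot H^{-1}$, and would be a nonpositive direction orthogonal to everything, which is a contradiction.

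For \eqref{eq:continuous coercivity}, subtract the projections $a_1\Lambda W+a_2iW$; this does not change $(Hf,f)$. On $\{HY_\pm\}^\perp$ the form is nonnegative, by the index count on the invariant decomposition $\operatorname{span}\{Y_\pm\}\oplus\{HY_\pm\}^\perp$. The same compactness contradiction then yields coercivity modulo the kernel. The formulas for $a_1,a_2$ are the $\dot H^1$-orthogonal projection coefficients; $\Lambda W$ and $iW$ are $\dot H^1$-orthogonal as real and imaginary functions, which gives uniqueness of the minimizer. The main obstacle is this last index step, showing the form is positive on $\{HY_\pm\}^\perp$, which uses the hyperbolic-plane structure from the normalization \eqref{eq:eigenmode normalization}.
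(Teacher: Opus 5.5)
Your outline is the standard Duyckaerts--Merle argument. That is all the paper relies on here: it gives no proof and simply cites \cite{DuyckaertsMerle2009Threshold}. The kernel, negative-direction and both coercivity steps go through as you sketch them.

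\textbf{The sign in \eqref{eq:eigenmode normalization} is not justified as written.} Since $-iH$ is only $\mathbb R$-linear, the only admissible rescaling is $Y_\pm\mapsto cY_\pm$ with $c\in\mathbb R$. This multiplies $(HY_+,Y_-)$ by $c^2>0$, so the sign is intrinsic and cannot be fixed by normalization. An index count cannot detect it either: $(H(aY_++bY_-),aY_++bY_-)=2ab\,(HY_+,Y_-)$ has exactly one negative and one positive direction whichever sign occurs.

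What forces the sign is that the negative direction sits in the real part. Write $Y_\pm=y_1\pm iy_2$ with $y_1,y_2$ real. Then $(HY_+,Y_\pm)=(L_+y_1,y_1)\pm(L_-y_2,y_2)$. So $(HY_+,Y_+)=0$ gives $(HY_+,Y_-)=-2(L_-y_2,y_2)\le0$, using $L_-\ge0$. The inequality is strict because $y_2\neq0$ (otherwise $y_1=e_0^{-1}L_-y_2=0$), and a Schwartz function is not a nonzero multiple of $W\notin L^2$.

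This one computation also gives $(HY_+,Y_-)\neq0$. It replaces your isotropic-subspace argument, whose phrasing does not actually produce a contradiction. The correct version of that argument runs as follows. A $2$-dimensional isotropic subspace meets the nonnegative $H$-orthogonal complement of the negative direction in a nonzero vector. Cauchy--Schwarz puts that vector into $\ker H$, which is impossible since $-iH$ is injective on $\operatorname{span}_{\mathbb R}\{Y_+,Y_-\}$.

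\textbf{A smaller point concerns the existence of $e_0$.} In $d=3$ one has $W\notin L^2$, so $L_-$ is already injective on $L^2$ and the restriction to $W^\perp$ is vacuous. Moreover, $W$ is the zero-energy resonance of $L_-$, not an admissible test function for $P=L_-^{1/2}L_+L_-^{1/2}$. So $(L_+W,W)<0$ does not by itself produce a $g$ with $(Pg,g)<0$.

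You need an additional approximation step: elements $h_n=L_-^{1/2}g_n$ with $h_n\to W$ in $\dot H^1$, which is a density statement for the range of $L_-^{1/2}$ in $\dot H^1$. After that, continuity of $h\mapsto(L_+h,h)$ on $\dot H^1$ gives $(Pg_n,g_n)<0$ for large $n$, and Weyl's theorem gives a negative eigenvalue.
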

The proof of Lemma~\ref{lem:first coercivity} can be found in \cite{DuyckaertsMerle2009Threshold}.
By \eqref{eq:energy kernel} and \eqref{eq:eigenmode normalization}, we have
\begin{equation}\label{eq:dual mode tests}
	(-HY_\pm,Y_\pm)=(-HY_\pm,\Lambda W)=(-HY_\pm,iW)=0.
\end{equation}

To construct the generalized kernel, we define a regular inverse of $H$ by its Green function, following the constructions in \cite{RaphaelSchweyer2014APDE}. The fundamental solutions and weights below are computed for the present NLS operator.

For the real and imaginary parts of $H$, we write
\begin{equation}\label{eq:definition H}
	H_+\coloneqq -\Delta-5W^4,\qquad H_-\coloneqq -\Delta-W^4,
	\qquad Hf=H_+\Re f+iH_-\Im f.
\end{equation}
For each $\iota\in\{+,-\}$, the Green function uses two linearly independent solutions of the homogeneous radial equation $H_\iota f=0$: the regular solution $h_\iota$, with $h_\iota(0)=1$, and the singular solution $\Gamma_\iota$, with $\Gamma_\iota(r)=-r^{-1}+O(r)$ as $r\to0$. They are given by
\begin{align}
	h_+(r)&\coloneqq 2(\Lambda W)(r)=\frac{\sqrt3(3-r^2)}{(r^2+3)^{\frac{3}{2}}},&
	\Gamma_+(r)&\coloneqq \frac{-r^4+18r^2-9}{\sqrt3\,r(r^2+3)^{\frac{3}{2}}},\notag\\
	h_-(r)&\coloneqq W(r)=\frac{\sqrt3}{(r^2+3)^{\frac{1}{2}}},&
	\Gamma_-(r)&\coloneqq \frac{r^2-3}{\sqrt3\,r(r^2+3)^{\frac{1}{2}}}.\label{eq:green fundamentals}
\end{align}
Direct differentiation gives, for $\iota\in\{+,-\}$,
\begin{gather}\label{eq:green wronskian}
	H_\iota h_\iota=H_\iota\Gamma_\iota=0,\qquad
	h_\iota(0)=1,\qquad
	r^2(h_\iota\partial_r\Gamma_\iota-\partial_r h_\iota\Gamma_\iota)=1,\\
	|h_\iota|_j\lesssim\langle r\rangle^{-1},\qquad
	|\Gamma_\iota|_j\lesssim\frac{\langle r\rangle}{r}.
	\label{eq:green size}
\end{gather}
In particular, $\Gamma_+$ is smooth at $r=\sqrt3$, where $\Lambda W=0$.

For $0<\rho<r$, we define
\begin{equation}\label{eq:green kernel}
	G_\iota(r,\rho)\coloneqq h_\iota(r)\Gamma_\iota(\rho)-\Gamma_\iota(r)h_\iota(\rho),
	\qquad
	(H_\iota^{-1}g)(r)\coloneqq \int_0^rG_\iota(r,\rho)g(\rho)\rho^2\,d\rho.
\end{equation}
We use the real-linear inverse
\begin{equation}\label{eq:green inverse}
	H^{-1}g\coloneqq H_+^{-1}(\Re g)+iH_-^{-1}(\Im g).
\end{equation}
The normalization is $H^{-1}g(0)=0$; \eqref{eq:energy kernel} prevents invertibility on the energy space.

\begin{lem}[Inversion of $H$]\label{lem:inverse}
	For a smooth regular radial $g$, the function $f\coloneqq H^{-1}g$ in \eqref{eq:green inverse} is the unique smooth regular radial solution of
	\begin{equation}
		Hf=g,\qquad f(0)=0.
	\end{equation}
	Moreover, the following bounds hold for all integers $j\ge0$:
	\begin{itemize}
		\item If $|g|_j\lesssim\langle r\rangle^p$ for all $j\ge0$ and some $p\ge0$, then
		\begin{equation}\label{eq:inverse growth}
			|H^{-1}g|_j\lesssim r^2\langle r\rangle^p.
		\end{equation}

		\item If $|g-cr^{-1}|_j\lesssim(r\langle r\rangle^2)^{-1}$ for all $j\ge0$ and some $c\in\mathbb C$, then
		\begin{equation}\label{eq:first inverse tail}
			\left|H^{-1}g+\frac c2r\right|_j\lesssim\frac r{\langle r\rangle}.
		\end{equation}

		\item If $|g-cr^p|_j\lesssim\langle r\rangle^{p-1}$ for all $j\ge0$ and some $p\ge1$, then
		\begin{equation}\label{eq:higher inverse tail}
			\left|H^{-1}g+\frac{c}{(p+2)(p+3)}r^{p+2}\right|_j
			\lesssim r^2\langle r\rangle^{p-1}.
		\end{equation}
	\end{itemize}
\end{lem}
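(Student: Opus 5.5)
Because $H$ acts as $H_+$ on real parts and $H_-$ on imaginary parts, I will prove everything separately for each scalar operator $H_\iota$, $\iota\in\{+,-\}$, acting on $\Re g$ and $\Im g$. Existence, regularity and uniqueness come from variation of parameters, and the three bounds come from direct size estimates on the kernel $G_\iota$.

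\emph{Solution property and uniqueness.} I will differentiate \eqref{eq:green kernel} under the integral. Since $G_\iota(r,r)=0$, the first derivative is
\[
\partial_r f=\int_0^r\partial_rG_\iota(r,\rho)g\rho^2\,d\rho .
\]
The Wronskian identity \eqref{eq:green wronskian} gives $r^2\partial_rG_\iota(r,\rho)|_{\rho=r}=1$. Together with $H_\iota h_\iota=H_\iota\Gamma_\iota=0$, this yields $H_\iota f=g$.

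For regularity near $0$, I will use the expansions $h_\iota=1+O(\rho^2)$ and $\Gamma_\iota=-\rho^{-1}+O(\rho)$, with even/odd structure, and a Taylor expansion of $g$. This gives $f=O(r^2)$ and shows $f$ is a smooth even function of $r$, hence regular. An alternative is to rewrite $f$ as $\int_0^1 G_\iota(r,r\sigma)g(r\sigma)r^3\sigma^2\,d\sigma$ and check smoothness in $r^2$.

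For uniqueness, the difference of two regular solutions solves $H_\iota u=0$ and is regular, so it is a multiple of $h_\iota$. The condition $h_\iota(0)=1\neq0$ together with $u(0)=0$ forces $u=0$.

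\emph{Growth bounds.} By \eqref{eq:green size}, $|G_\iota(r,\rho)|\lesssim \langle r\rangle\langle\rho\rangle/(\rho\langle r\rangle\langle \rho\rangle)+\dots$. More precisely, for $\rho<r$ I will use
\[
|G_\iota(r,\rho)|\lesssim \langle r\rangle^{-1}\langle\rho\rangle\rho^{-1}+\langle r\rangle r^{-1}\langle\rho\rangle^{-1}.
\]
For $r\le1$ I will instead use the Taylor cancellation, which gives $|G_\iota|\lesssim \rho^{-1}\cdot$ bounded. Integrating against $\rho^2\langle\rho\rangle^p$ over $(0,r)$ gives $\lesssim r^2$ for $r\le 1$ and $\lesssim r^{p+2}$ for $r\ge1$. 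This is \eqref{eq:inverse growth} for $j=0$. It uses $p\ge0$, so that $\int_0^r\rho\langle\rho\rangle^{p+1}\langle r\rangle^{-1}\,d\rho\lesssim r^{p+2}$ and the other piece is similar.

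For derivatives I will use $|\cdot|_j$, i.e. powers of $r\partial_r$. Rescaling $\rho=r\sigma$ turns $r\partial_r$ into derivatives falling on $h_\iota(r)$, $\Gamma_\iota(r)$, $g(r\sigma)$ and the factor $r^3$. Each of these has symbol-type bounds by \eqref{eq:green size} and the hypothesis on $|g|_j$. The same estimate then closes for every $j$. Alternatively, the ODE $f_{rr}+\tfrac2rf_r=-g-V_\iota f$ (with $V_+=5W^4$, $V_-=W^4$) can be used inductively: it expresses $r^2f_{rr}$ through $rf_r$, $f$, and $r^2g$.

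\emph{Tail asymptotics.} These I view as the main point requiring care. The approach is to subtract an explicit solution. For \eqref{eq:higher inverse tail}, let $\chi$ be the fixed cutoff and set $\phi=(1-\chi)\frac{-c}{(p+2)(p+3)}r^{p+2}$. Since $-\Delta r^{p+2}=-(p+2)(p+3)r^p$, we get
\[
H_\iota\phi=cr^p(1-\chi)+O(\langle r\rangle^{p-2})
\]
at infinity, plus a compactly supported error; the $O(\langle r\rangle^{p-2})$ comes from $V_\iota\phi$. Then $H^{-1}g-\phi=H^{-1}(g-H_\iota\phi)+(\text{a regular homogeneous solution }a h_\iota)$, where the right-hand side of the first term is $O(\langle r\rangle^{p-1})$ with symbol bounds. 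The regular homogeneous part is $O(\langle r\rangle^{-1})$, and $\phi$ itself is smooth since it vanishes near $0$. Applying \eqref{eq:inverse growth} with exponent $p-1\ge0$ then gives the $r^2\langle r\rangle^{p-1}$ error.

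For \eqref{eq:first inverse tail}, I will take $\phi=-(1-\chi)\frac c2 r$. Here $-\Delta r=-2/r$, so $H_\iota\phi-cr^{-1}$ is $O(r^{-3})$ at infinity. The remaining source $\tilde g$ is therefore $O(\langle r\rangle^{-3})$ globally once the singular term $c r^{-1}$ near $0$ is accounted for. The subtle step is the borderline decay: it must be checked directly from the kernel bound that a source of size $(r\langle r\rangle^2)^{-1}$ gives a solution bounded by $r/\langle r\rangle$. I expect this to work: for $r\ge1$ the integral $\int_1^r \rho^{-1}\rho^{2}\langle\rho\rangle^{-3}\langle\rho\rangle\rho^{-1}\cdots\,d\rho$ is $O(1)$, with no logarithm appearing because of the $\langle r\rangle^{-1}$ factor in front of $h$. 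The $|\cdot|_j$ bounds follow as before.
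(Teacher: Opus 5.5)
Your proposal is correct and follows essentially the paper's route: variation of parameters with the Taylor expansion at the origin, the kernel bounds \eqref{eq:green size}, and subtraction of the explicit leading tail followed by reapplication of those bounds. The differences are only cosmetic: you cut off the subtracted profile so the new source is smooth and uniqueness applies verbatim, and you get the $|\cdot|_j$ bounds via $\rho=r\sigma$ instead of iterating the radial ODE. Two small fixes are needed. First, $\partial_rG_\iota(r,\rho)|_{\rho=r}=h_\iota'\Gamma_\iota-\Gamma_\iota' h_\iota=-r^{-2}$, not $+r^{-2}$; this sign is exactly what makes $-\partial_r^2 f$ produce $+g$. Second, in the first tail estimate a $\log r$ does appear in $\int_0^r\Gamma_\iota\tilde g\,\rho^2\,d\rho$, and it is harmless only because the prefactor $h_\iota(r)\sim r^{-1}$ kills it.
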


\begin{proof}
	Equation \eqref{eq:green wronskian} gives $G_\iota(r,r)=0$ and $\partial_rG_\iota(r,\rho)|_{r=\rho}=-\rho^{-2}$. Therefore, differentiating \eqref{eq:green kernel} twice, we deduce $H_\iota H_\iota^{-1}g=g$. The function $h_\iota$ is smooth and even, and $\Gamma_\iota=-r^{-1}+O(r)$ is odd near $r=0$. Substituting their Taylor expansions in \eqref{eq:green kernel}, we have
	\begin{equation*}
		(H_\iota^{-1}g)(r)=-\frac{g(0)}6r^2+O(r^4).
	\end{equation*}
	Thus, if $g$ is smooth, regular, and radial, then $H_\iota^{-1}g$ is also smooth, regular, and radial, and vanishes at the origin. Uniqueness follows since each regular homogeneous solution is a multiple of $h_\iota$, with $h_\iota(0)=1$.

	For \eqref{eq:inverse growth}, \eqref{eq:green size} gives, when $r\ge1$,
	\begin{equation*}
		|H_\iota^{-1}g(r)|
		\lesssim \frac{1}{\langle r\rangle}\int_0^r \rho\langle \rho\rangle^{p+1}\,d\rho
		+\frac{\langle r\rangle}{r}\int_0^r \rho^2\langle \rho\rangle^{p-1}\,d\rho
		\lesssim r^2\langle r\rangle^p.
	\end{equation*}
	Similarly, differentiating \eqref{eq:green kernel}, we get $|r\partial_r H_\iota^{-1}g|\lesssim r^2\langle r\rangle^p$. For higher weighted derivatives, we use
	\begin{equation}\label{eq:radial inverse equation}
		\partial_r^2H^{-1}g
		=-\frac2r\partial_rH^{-1}g-g
		-W^4\big(H^{-1}g+4\Re(H^{-1}g)\big),
	\end{equation}
	which follows from $H(H^{-1}g)=g$.
	Iterating \eqref{eq:radial inverse equation} yields \eqref{eq:inverse growth}.

	Next, we prove \eqref{eq:first inverse tail}. For $h$ satisfying $|h|_j\lesssim(r\langle r\rangle^2)^{-1}$ for all $j\ge0$, the Green formula and its first derivative give, for $r\ge1$,
	\begin{equation*}
		|H^{-1}h|_1
		\lesssim \frac{1}{\langle r\rangle}\int_0^r\langle \rho\rangle^{-1}\,d\rho
		+\frac{\langle r\rangle}{r}\int_0^r \rho\langle \rho\rangle^{-3}\,d\rho
		\lesssim \frac{r}{\langle r\rangle}.
	\end{equation*}
	Higher derivatives follow from \eqref{eq:radial inverse equation} as above, proving
	\begin{equation}\label{eq:green decaying source}
		|H^{-1}h|_j\lesssim r\langle r\rangle^{-1}.
	\end{equation}
	We now set $h\coloneqq H^{-1}g+cr/2$. Then
	\begin{equation*}
		Hh=g-\frac c r-\frac r2W^4(c+4\Re (c)),\qquad
		|Hh|_j\lesssim (r\langle r\rangle^2)^{-1}.
	\end{equation*}
	Since $h(0)=0$, we have $h=H^{-1}(Hh)$. By \eqref{eq:green decaying source}, this proves \eqref{eq:first inverse tail}.

	To prove \eqref{eq:higher inverse tail}, we set $h\coloneqq H^{-1}g+cr^{p+2}/((p+2)(p+3))$. Then
	\begin{equation*}
		Hh=g-cr^p-\frac{r^{p+2}W^4}{(p+2)(p+3)}(c+4\Re c),\qquad
		|Hh|_j\lesssim\langle r\rangle^{p-1}.
	\end{equation*}
	As above, with $h=H^{-1}(Hh)$, we get \eqref{eq:higher inverse tail}.
\end{proof}

Using \eqref{eq:green inverse}, we define
\begin{equation}\label{eq:g kernels}
	\begin{aligned}
		\mathsf V_0&\coloneqq \Lambda W,\qquad &\mathsf U_0&\coloneqq \tfrac{i}{2}W,\\
		\mathsf V_k&\coloneqq H^{-1}(i\mathsf V_{k-1}),\qquad
		&\mathsf U_k&\coloneqq H^{-1}(i\mathsf U_{k-1})\quad(k\ge1).
	\end{aligned}
\end{equation}
Thus $H\mathsf V_k=i\mathsf V_{k-1}$, $H\mathsf U_k=i\mathsf U_{k-1}$, and $\mathsf V_k(0)=\mathsf U_k(0)=0$ for $k\ge1$.

We denote $C_k\coloneqq -\frac{\sqrt3}{2}\frac{(-i)^k}{(2k)!}$ for $k\ge1$.

\begin{lem}[Generalized-kernel bounds]
	For $k\ge1$ and $j\ge0$,
	\begin{gather}
		|\mathsf V_k|_j+|\mathsf U_k|_j
		\lesssim r^{2k}\langle r\rangle^{-1},\label{eq:kernel growth}\\
		\left|\mathsf V_k-C_kr^{2k-1}\right|_j
		+\left|\mathsf U_k+iC_kr^{2k-1}\right|_j
		\lesssim r^{2k-1}\langle r\rangle^{-1}.\label{eq:kernel tails}
	\end{gather}
	Consequently,
	\begin{equation}\label{eq:tail cancellations}
		\begin{gathered}
			\left|\big(\Lambda-2k+\tfrac12\big)\mathsf V_k\right|_j
			+\left|\big(\Lambda-2k+\tfrac12\big)\mathsf U_k\right|_j
			\lesssim r^{2k}\langle r\rangle^{-2},\\
			|i\mathsf V_k+\mathsf U_k|_j+|i\mathsf U_k-\mathsf V_k|_j
			\lesssim r^{2k}\langle r\rangle^{-2}.
		\end{gathered}
	\end{equation}
\end{lem}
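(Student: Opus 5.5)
We argue by induction on $k$, applying Lemma~\ref{lem:inverse} at each step, starting from the explicit profiles $\mathsf V_0=\Lambda W$ and $\mathsf U_0=\tfrac i2W$.

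\emph{Asymptotics of the initial profiles.} From the explicit formulas,
\begin{equation*}
\Lambda W=\tfrac12h_+=-\tfrac{\sqrt3}{2}r^{-1}+O(r^{-3}),\qquad W=\sqrt3 r^{-1}+O(r^{-3})
\end{equation*}
as $r\to\infty$, with the corresponding weighted-derivative bounds $|\cdot|_j$ for the remainders. Hence
\begin{equation*}
i\mathsf V_0=-\tfrac{\sqrt3}{2}i\,r^{-1}+O(r^{-3}),\qquad i\mathsf U_0=-\tfrac{\sqrt3}{2}r^{-1}+O(r^{-3}),
\end{equation*}
and both remainders satisfy $|\cdot|_j\lesssim (r\langle r\rangle^2)^{-1}$ near infinity. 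Near the origin both are smooth and bounded. The bound $(r\langle r\rangle^2)^{-1}$ stated in Lemma~\ref{lem:inverse} blows up at $r=0$, so the hypothesis there is satisfied.

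\emph{Base case $k=1$.} Apply \eqref{eq:first inverse tail} with $c=-\tfrac{\sqrt3}{2}i$ for $\mathsf V_1$ and $c=-\tfrac{\sqrt3}{2}$ for $\mathsf U_1$. This gives
\begin{equation*}
\mathsf V_1=\tfrac{\sqrt3}{4}i\,r+O_j(r\langle r\rangle^{-1}),\qquad \mathsf U_1=\tfrac{\sqrt3}{4}r+O_j(r\langle r\rangle^{-1}).
\end{equation*}
Check the constants: $C_1=-\tfrac{\sqrt3}{2}\cdot\tfrac{-i}{2}=\tfrac{\sqrt3}{4}i$, so $\mathsf V_1\approx C_1r$ and $\mathsf U_1\approx -iC_1r=\tfrac{\sqrt3}{4}r$, as required. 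The growth bound \eqref{eq:kernel growth} at $k=1$ needs smallness near $0$ too. Since $\mathsf V_1(0)=0$ and $\mathsf V_1$ is smooth and even, $|\mathsf V_1|_j\lesssim r^2$ near $0$. Away from $0$ the growth bound follows from the tail bound, which gives $\lesssim r\sim r^2\langle r\rangle^{-1}$.

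\emph{Induction step.} Assume \eqref{eq:kernel tails} at level $k$. Then $g=i\mathsf V_k$ satisfies $|g-iC_kr^{2k-1}|_j\lesssim \langle r\rangle^{2k-2}$ for $r\ge1$, and near $0$ this holds trivially by smoothness. Apply \eqref{eq:higher inverse tail} with $p=2k-1\ge1$ and $c=iC_k$. This yields the leading term
\begin{equation*}
-\frac{iC_k}{(2k+1)(2k+2)}r^{2k+1}
\end{equation*}
with error $r^2\langle r\rangle^{2k-2}\lesssim r^{2k+1}\langle r\rangle^{-1}$ at infinity. The identity
\begin{equation*}
-\frac{iC_k}{(2k+1)(2k+2)}=-\frac{\sqrt3}{2}\,\frac{(-i)^{k+1}}{(2k+2)!}=C_{k+1}
\end{equation*}
holds, since $-i(-i)^k=(-i)^{k+1}$. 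The same computation for $\mathsf U_{k+1}$, with $c=i(-iC_k)=C_k$, gives the coefficient $-C_k/((2k+1)(2k+2))=-iC_{k+1}$. Near the origin, the error bound $r^2\langle r\rangle^{2k-2}$ does not vanish to order $r^{2k+1}$, so the tail bound near $0$ needs a separate argument. We absorb it by noting that on $r\le1$ both $\mathsf V_{k+1}$ and $C_{k+1}r^{2k+1}$ are $O(r^2)$ in $|\cdot|_j$. The stated weight $r^{2k+1}\langle r\rangle^{-1}\sim r^{2k+1}$ there is the same thing one gets for $k=1$.

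The main obstacle is exactly this small-$r$ normalization of \eqref{eq:kernel tails} for $k\ge2$. There $\mathsf V_k\sim r^2$ near zero while $r^{2k-1}\langle r\rangle^{-1}$ is much smaller. I would resolve it by either of the following:
\begin{itemize}
\item interpreting the bound as a bound for $r\ge1$ (for $r\le1$ everything is $O(1)$ and smooth); or
\item checking that the intended weights are compatible with \eqref{eq:higher inverse tail}.
\end{itemize}
The growth bound \eqref{eq:kernel growth} follows directly from \eqref{eq:inverse growth} near $0$ and from the tail asymptotics for $r\ge1$.

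\emph{Consequences.} Each estimate in \eqref{eq:tail cancellations} follows from the corresponding exact identity for the leading term:
\begin{itemize}
\item $(\Lambda-2k+\tfrac12)r^{2k-1}=0$, together with $|\Lambda f|_j\lesssim|f|_{j+1}$ applied to the remainder $O(r^{2k-1}\langle r\rangle^{-1})$;
\item $i(C_kr^{2k-1})+(-iC_kr^{2k-1})=0$, and similarly $i(-iC_k)-C_k=0$.
\end{itemize}
In each case the remainder bound $r^{2k-1}\langle r\rangle^{-1}$ is $\lesssim r^{2k}\langle r\rangle^{-2}$ for $r\ge1$. Near $0$, the growth bound \eqref{eq:kernel growth} gives the same conclusion.
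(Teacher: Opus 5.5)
\textbf{There is a genuine gap, at the origin.} Your large-$r$ argument matches the paper's: the base case via \eqref{eq:first inverse tail}, the step via \eqref{eq:higher inverse tail} with $C_{k+1}=-iC_k/((2k+1)(2k+2))$, and the consequences from exact cancellation of the leading terms are all correct. The problem near $r=0$ is not a matter of normalization.

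Your claim that $\mathsf V_k\sim r^2$ near $0$ is false for $k\ge2$. Your assertion that \eqref{eq:kernel growth} near $0$ ``follows directly from \eqref{eq:inverse growth}'' also fails. That estimate only gives $|H^{-1}g|_j\lesssim r^2$ on $r\le1$, whereas \eqref{eq:kernel growth} requires $r^{2k}$ there. Restricting the bounds to $r\ge1$, as you suggest, does not prove the stated lemma. Moreover, several of your own steps near the origin need exactly this missing vanishing order:
\begin{itemize}
\item \eqref{eq:kernel tails} on $r\le1$ requires $|\mathsf V_k|_j\lesssim r^{2k-1}$ there, not merely $O(r^2)$;
\item your derivation of \eqref{eq:tail cancellations} on $r\le1$ invokes the growth bound.
\end{itemize}

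\textbf{The missing idea is that $H^{-1}$ raises the order of vanishing at the origin by two.} By \eqref{eq:green kernel} and \eqref{eq:green size}, for $0<\rho<r\le1$ we have $|G_\iota(r,\rho)|+|r\partial_rG_\iota(r,\rho)|\lesssim\rho^{-1}$, and $G_\iota(r,r)=0$. Hence, if $|g|_j\lesssim r^m$ on $r\le1$ for all $j$, then
\[
|H^{-1}g(r)|+|r\partial_rH^{-1}g(r)|\lesssim\int_0^r\rho\,|g(\rho)|\,d\rho\lesssim r^{m+2}.
\]
Iterating \eqref{eq:radial inverse equation} upgrades this to $|H^{-1}g|_j\lesssim r^{m+2}$.

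Since $\mathsf V_0,\mathsf U_0$ are bounded, induction gives $|\mathsf V_k|_j+|\mathsf U_k|_j\lesssim r^{2k}$ on $r\le1$; this is how the paper handles $r\le1$. This is \eqref{eq:kernel growth} there. Then \eqref{eq:kernel tails} on $r\le1$ holds because both $\mathsf V_k$ and $C_kr^{2k-1}$ are $O(r^{2k-1})$ in $|\cdot|_j$. Finally, \eqref{eq:tail cancellations} on $r\le1$ follows from $r^{2k}\lesssim r^{2k}\langle r\rangle^{-2}$.
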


\begin{proof}
	We prove \eqref{eq:kernel growth} and \eqref{eq:kernel tails} by induction on $k$. The explicit formula for $W$ gives, for $j\ge0$,
	\begin{equation}\label{eq:W tails}
		|W-\sqrt{3}r^{-1}|_j
		+|\Lambda W+\tfrac{\sqrt3}{2}r^{-1}|_j
		\lesssim (r\langle r\rangle^2)^{-1}.
	\end{equation}
	By \eqref{eq:first inverse tail}, we have \eqref{eq:kernel tails} for $k=1$. Since $\mathsf V_1$ and $\mathsf U_1$ are smooth regular radial functions vanishing at the origin, $|\mathsf V_1|_j+|\mathsf U_1|_j\lesssim r^2$ for $r\le1$. Together with \eqref{eq:kernel tails}, this proves \eqref{eq:kernel growth} for $k=1$.

	For $k\ge2$, we assume that \eqref{eq:kernel growth} and \eqref{eq:kernel tails} hold for $k-1$. Since $r^{2k-3}\langle r\rangle^{-1}\le\langle r\rangle^{2k-4}$, we can apply \eqref{eq:higher inverse tail} to $i\mathsf V_{k-1}$ and $i\mathsf U_{k-1}$ with $p=2k-3$. Using
	\begin{equation*}
		C_k=-\frac{iC_{k-1}}{(2k-1)(2k)},
	\end{equation*}
	we obtain
	\begin{equation*}
		\left|\mathsf V_k-C_kr^{2k-1}\right|_j
		+\left|\mathsf U_k+iC_kr^{2k-1}\right|_j
		\lesssim r^2\langle r\rangle^{2k-4}.
	\end{equation*}
	This gives \eqref{eq:kernel growth} and \eqref{eq:kernel tails} for $r\ge1$. For $r\le1$, \eqref{eq:kernel growth} with $k-1$ and the Green formula \eqref{eq:green kernel} yield
	\begin{equation*}
		|\mathsf V_k|_j+|\mathsf U_k|_j\lesssim r^{2k}.
	\end{equation*}
	This completes the induction for \eqref{eq:kernel growth} and \eqref{eq:kernel tails}.

	For \eqref{eq:tail cancellations}, it remains to consider $r\ge1$, since \eqref{eq:kernel growth} applies on $r\le1$. Since
	\begin{equation*}
		\big(\Lambda-2k+\tfrac12\big)r^{2k-1}=0,
	\end{equation*}
	\eqref{eq:kernel tails} at order $j+1$ gives the first bound for $r\ge1$. The leading terms also cancel in $i\mathsf V_k+\mathsf U_k$ and $i\mathsf U_k-\mathsf V_k$, giving the second bound there.
\end{proof}

\subsection{Coercivity in adapted Sobolev spaces}

The generalized kernel in \eqref{eq:g kernels} is the obstruction to coercivity of $(iH)^L$. We construct compact dual tests to remove it and prove \eqref{eq:weighted reconstruction}. The same tests fix the modulation parameters in Section~\ref{sec:decomposition}.

For each integer $L\ge1$, we define
\begin{equation}
	\calK_L\coloneqq \operatorname{span}_{\mathbb R}\{\mathsf V_j,\mathsf U_j:0\le j<L\}.
\end{equation}
We define
\begin{equation}\label{eq:base tests}
	\td\phi\coloneqq \frac{\chi\Lambda W}{\int_0^\infty\chi(\Lambda W)^2r^2dr},\qquad
	\td\varphi\coloneqq \frac{\chi(iW/2)}{\frac14\int_0^\infty\chi W^2r^2dr}.
\end{equation}

We also define
\begin{equation}\label{eq:adapted tests}
	\phi_L\coloneqq(Hi)^L\td\phi,\qquad
	\varphi_L\coloneqq(Hi)^L\td\varphi.
\end{equation}

\begin{lem}[Kernel and compact dual tests]\label{lem:dual tests}
	The kernel of $(iH)^L:\dot{\calH}^{2L+1}_{\rad}\to\dot H^1_{\rad}$ is
	\begin{equation}\label{eq:regular kernel}
		\{f\in\dot{\calH}^{2L+1}_{\rad}:(iH)^Lf=0\}=\calK_L.
	\end{equation}
	The functions $\{\mathsf V_j,\mathsf U_j:0\le j\le L\}$ form a real basis of $\calK_{L+1}\subset\dot{\calH}^{2L+1}_{\rad}$. There exist fixed radial functions $\phi_j,\varphi_j\in C_c^\infty$, $0\le j<L$, such that, for $0\le j,k\le L$,
	\begin{equation}\label{eq:duality}
		(\phi_j,\mathsf V_k)=(\varphi_j,\mathsf U_k)=\delta_{jk},\qquad
		(\phi_j,\mathsf U_k)=(\varphi_j,\mathsf V_k)=0.
	\end{equation}
\end{lem}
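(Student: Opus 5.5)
The proof has three parts: identify the kernel of $(iH)^L$ by induction on $L$ using the Green-function description of regular solutions of $Hf=g$ from Lemma~\ref{lem:inverse}; check that the $2L+2$ profiles lie in $\dot{\calH}^{2L+1}_{\rad}$ and are real-linearly independent by comparing their growth at infinity; and obtain the compactly supported dual tests by a finite-dimensional duality argument. (I read the index range of the tests in \eqref{eq:duality} as $0\le j\le L$, which is the range needed for the duality relations to make sense.)

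\emph{Membership and independence.} For $1\le k\le L$, \eqref{eq:kernel growth} gives $|\mathsf V_k|_j+|\mathsf U_k|_j\lesssim r^{2k}\langle r\rangle^{-1}\lesssim\langle r\rangle^{2L-1}$. For $k=0$ the functions $\Lambda W$ and $iW/2$ are smooth and $O(\langle r\rangle^{-1})$ with all weighted derivatives. Hence Lemma~\ref{lem:dot calH pointwise} with $k=2L+1$ puts all $\mathsf V_k,\mathsf U_k$, $0\le k\le L$, in $\dot{\calH}^{2L+1}_{\rad}$.

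For real independence, suppose $\sum_{k\le L}(a_k\mathsf V_k+b_k\mathsf U_k)=0$ with $a_k,b_k\in\mathbb R$, and let $K$ be the largest index with $(a_K,b_K)\ne0$. By \eqref{eq:kernel tails} and \eqref{eq:W tails}, the coefficient of $r^{2K-1}$ as $r\to\infty$ is $(a_K-ib_K)C_K$ for $K\ge1$, and $-\tfrac{\sqrt3}{2}a_K+\tfrac{i\sqrt3}{2}b_K$ for $K=0$. All lower-index terms grow strictly slower, so this coefficient must vanish. Since $C_K\neq0$, this forces $a_K=b_K=0$, a contradiction.

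\emph{Kernel.} I argue by induction on $L$, the key fact being that a radial solution of $Hf=g$ that is locally $H^1$ (in particular, $f\in\dot{\calH}^{m}$ with $m\ge3$) has no component along the singular solutions $\Gamma_\pm\sim -r^{-1}$. Any radial solution of $H_\iota f=g$ equals $H_\iota^{-1}g+\alpha h_\iota+\beta\Gamma_\iota$. The function $\Gamma_\iota$ behaves like $-r^{-1}$ near $0$, so it is not locally in $\dot H^1$; distributionally $\Delta(1/r)$ produces a delta mass, and this is also excluded. Hence $\beta=0$, and the regular solutions of $Hf=0$ are exactly $\operatorname{span}_{\mathbb R}\{\Lambda W,iW\}=\calK_1$.

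For $L=1$ this is \eqref{eq:regular kernel}, since $iHf=0$ if and only if $Hf=0$.

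For the inductive step, let $(iH)^Lf=0$ with $f\in\dot{\calH}^{2L+1}$. Set $g\coloneqq iHf$. Lemma~\ref{lem:dot calH pointwise} gives $g\in\dot{\calH}^{2L-1}$, and $(iH)^{L-1}g=0$, so by induction $g=\sum_{j<L-1}(a_j\mathsf V_j+b_j\mathsf U_j)$. Using $H\mathsf V_{j+1}=i\mathsf V_j$ and $H\mathsf U_{j+1}=i\mathsf U_j$, the function
\begin{equation*}
	f_p\coloneqq-\sum_{j<L-1}(a_j\mathsf V_{j+1}+b_j\mathsf U_{j+1})
\end{equation*}
satisfies $iHf_p=g$. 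Thus $f-f_p$ is a regular solution of $Hf=0$, so $f-f_p\in\calK_1$ and $f\in\calK_L$. Conversely, $(iH)^L$ annihilates $\calK_L$, because $(iH)^k\mathsf V_k$ and $(iH)^k\mathsf U_k$ are constant multiples of $\mathsf V_0$ and $\mathsf U_0$.

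\emph{Dual tests.} Consider the real-linear map
\begin{equation*}
	T:C^\infty_{c,\rad}\to(\calK_{L+1})^*,\qquad T\psi\coloneqq(\psi,\cdot)\big|_{\calK_{L+1}},
\end{equation*}
which is well defined since the profiles are locally integrable. If $T$ were not surjective, there would be a nonzero $\kappa\in\calK_{L+1}$ with $(\psi,\kappa)=0$ for all $\psi$. Testing with both $\psi$ and $i\psi$ gives $\int\psi\ol\kappa\, r^2dr=0$ for all $\psi$, so $\kappa=0$, a contradiction. Surjectivity lets us choose $\phi_j,\varphi_j$ realizing the dual basis of the real basis $\{\mathsf V_k,\mathsf U_k\}_{k\le L}$, which is exactly \eqref{eq:duality}.

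The main obstacle I expect is the regularity step: justifying that membership in $\dot{\calH}^{2L+1}$ excludes the singular solutions $\Gamma_\pm$ at the origin. It is handled by the local $\dot H^1$ (indeed $H^3$) regularity near $0$, together with uniqueness of the regular solution from Lemma~\ref{lem:inverse}.
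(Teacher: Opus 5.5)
Your kernel induction and your membership and independence arguments are correct. The kernel part is essentially the paper's induction. The difference is that you exclude $\Gamma_\pm$ directly by local $H^1$ integrability, whereas the paper first invokes elliptic regularity and then the uniqueness clause of Lemma~\ref{lem:inverse}. For independence, you read off the leading $r^{2K-1}$ coefficient from \eqref{eq:kernel tails}. The paper instead applies $(iH)^L$ to a relation, which leaves only $(-1)^L(a_L\mathsf V_0+b_L\mathsf U_0)$, and then descends. Both work.

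The gap is in the dual tests. It comes from reinterpreting the index range, which is not a misprint. Just before the lemma, \eqref{eq:adapted tests} already fixes $\phi_L=(Hi)^L\td\phi$ and $\varphi_L=(Hi)^L\td\varphi$. So the lemma only asks you to construct $\phi_j,\varphi_j$ for $j<L$, and the $j=L$ relations in \eqref{eq:duality} are claims about these prescribed functions. Your surjectivity argument produces some $\phi_L,\varphi_L$ realizing the dual basis. Nothing makes them equal to $(Hi)^L\td\phi$ and $(Hi)^L\td\varphi$, and you never check \eqref{eq:duality} for the latter.

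This matters later in the paper. The orthogonality $(\phi_L,\eps)=(\varphi_L,\eps)=0$ is used in the form $(\td\phi,\eps_L)=(\td\varphi,\eps_L)=0$. That is exactly why $\eps_L-\Pi\eps_L$ involves only $\alpha_\pm$ in Lemma~\ref{lem:energy coercivity} and in \eqref{eq:continuous decomposition}; an arbitrary $\phi_L$ would break this.

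The repair is short. Use $(Hi)^*=-iH$; there are no boundary terms since $\td\phi,\td\varphi$ have compact support. Use also $-iH\mathsf V_k=\mathsf V_{k-1}$, $-iH\mathsf U_k=\mathsf U_{k-1}$, and $-iH\mathsf V_0=-iH\mathsf U_0=0$. Then for $0\le k\le L$,
\[
(\phi_L,\mathsf V_k)=(\td\phi,(-iH)^L\mathsf V_k)=\delta_{kL}(\td\phi,\Lambda W)=\delta_{kL},\qquad
(\phi_L,\mathsf U_k)=\delta_{kL}(\td\phi,\tfrac i2W)=0.
\]
Likewise $(\varphi_L,\mathsf U_k)=\delta_{kL}(\td\varphi,\tfrac i2W)=\delta_{kL}$ and $(\varphi_L,\mathsf V_k)=\delta_{kL}(\td\varphi,\Lambda W)=0$, by the normalizations in \eqref{eq:base tests}.

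With $\phi_L,\varphi_L$ fixed this way, your surjectivity argument still correctly produces $\phi_j,\varphi_j$ for $j<L$ as preimages of the dual functionals. This is a legitimate alternative to the paper's route. The paper builds these tests explicitly: it starts from $(Hi)^j\td\phi$ and $(Hi)^j\td\varphi$, which already satisfy the relations for $k\le j$. It then subtracts real combinations of $\phi_k,\varphi_k$ with $k>j$, for $j=L-1,\ldots,0$.
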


\begin{proof}
	We first prove \eqref{eq:regular kernel}. Elliptic regularity gives $f\in C^\infty_{\rad}$ whenever $f\in\dot{\calH}^{2L+1}_{\rad}$ and $(iH)^Lf=0$. For smooth radial functions, we show by induction that $(iH)^jf=0$ implies $f\in\calK_j$. The case $j=1$ follows from Lemma~\ref{lem:inverse}, after subtracting the real linear combination of $\mathsf V_0,\mathsf U_0$ with the same value at the origin. For $j\ge2$, the induction hypothesis gives $iHf\in\calK_{j-1}$. By \eqref{eq:g kernels}, we can choose $g\in\calK_j$ with $iHg=iHf$, and then $f-g\in\calK_1$. This proves the required inclusion with $j=L$.

	Conversely, $\calK_{L+1}\subset\dot{\calH}^{2L+1}_{\rad}$ by Lemma~\ref{lem:dot calH pointwise} and \eqref{eq:kernel growth}. Since $(iH)^Lf=0$ for $f\in\calK_L$ by \eqref{eq:g kernels}, this proves \eqref{eq:regular kernel}. Applying $(iH)^L$ to a linear relation among $\mathsf V_j,\mathsf U_j$, $0\le j\le L$, makes the top two coefficients vanish. Repeating with lower powers proves linear independence.

	We next construct the compact tests satisfying \eqref{eq:duality}. By \eqref{eq:g kernels}, for $0\le k\le j\le L$,
	\begin{equation*}
		((Hi)^j\td\phi,\mathsf V_k)=((Hi)^j\td\varphi,\mathsf U_k)=\delta_{jk},\qquad
		((Hi)^j\td\phi,\mathsf U_k)=((Hi)^j\td\varphi,\mathsf V_k)=0.
	\end{equation*}
	Starting from \eqref{eq:adapted tests}, we subtract suitable real linear combinations of $\phi_k,\varphi_k$, $k>j$, from $(Hi)^j\td\phi,(Hi)^j\td\varphi$ for $j=L-1,\ldots,0$. The resulting smooth compactly supported radial tests satisfy \eqref{eq:duality}.
\end{proof}

The compact tests in \eqref{eq:duality} give the converse to \eqref{eq:dot calH linear map}.

\begin{lem}[Coercivity for $(iH)^L$]\label{lem:weighted elliptic}
	For $f\in\dot{\calH}^{2L+1}_{\rad}$,
	\begin{equation}\label{eq:weighted reconstruction}
		\|f\|_{\dot{\calH}^{2L+1}}
		\sim \|(iH)^Lf\|_{\dot H^1}+\sum_{j=0}^{L-1}\big(|(\phi_j,f)|+|(\varphi_j,f)|\big).
	\end{equation}
\end{lem}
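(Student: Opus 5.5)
The upper bound $\lesssim$ is immediate from Lemma~\ref{lem:dot calH pointwise}. Indeed, \eqref{eq:dot calH linear map} controls $\|(iH)^Lf\|_{\dot H^1}$. Since $\phi_j,\varphi_j\in C_c^\infty$, the pairings satisfy $|(\phi_j,f)|\lesssim\|\chi_R f\|_{L^2}\lesssim\|f\|_{\dot{\calH}^{2L+1}}$. The content is the lower bound, which I would prove by a compactness–contradiction argument in the spirit of standard coercivity lemmas. Before that, I would establish a \emph{weak} reconstruction estimate, namely a version of the bound with a compact lower-order error term:
\begin{equation*}
	\|f\|_{\dot{\calH}^{2L+1}}\lesssim\|(iH)^Lf\|_{\dot H^1}+\|\chi_{R}f\|_{L^2}
\end{equation*}
for some fixed large $R$.

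I would prove the weak estimate by induction on the number of applications of $iH$, using one step $\dot{\calH}^{k}\leftarrow\dot{\calH}^{k-2}$ at a time. For $g=iHf$ with $f\in\dot{\calH}^{k}$, $k\ge3$, I want
\begin{equation*}
	\|f\|_{\dot{\calH}^{k}}\lesssim\|g\|_{\dot{\calH}^{k-2}}+\|\chi_Rf\|_{L^2}.
\end{equation*}
For $|x|\ge R$ the potential terms $W^4f$ are of size $\langle r\rangle^{-4}|f|$. They are small relative to the weight $\langle r\rangle^{-k}$ and can be absorbed once $R$ is large. So the question reduces to a weighted estimate for $-\Delta$, namely $\|\langle r\rangle^{-(k-j)}\nabla^jf\|_{L^2}\lesssim\|\Delta f\|_{\dot{\calH}^{k-2}}$ on the exterior region modulo boundary terms. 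I would prove this radially via the Hardy inequalities $\int r^{-2a}|f|^2r^2dr\lesssim\int r^{-2a+2}|f_r|^2r^2dr$. These hold for weights away from the critical exponent, and the exponents relevant here, $a=k-j$ with $k$ odd, are non-critical in 3D (the critical case is $2a-2=1$, which is non-integer). Each such application is supplemented by a cutoff error localized near $|x|\sim R$, which is bounded by $\|\chi_{2R}f\|_{H^{k}}$. Interior elliptic regularity for $H$ then bounds this by $\|\chi_{4R}g\|_{H^{k-2}}+\|\chi_{4R}f\|_{L^2}$. Iterating $L$ times from $\dot H^1$ up to $\dot{\calH}^{2L+1}$ gives the weak estimate, with the local $L^2$ errors of intermediate functions $(iH)^mf$ again controlled by $\|\chi_{R'}f\|_{H^{2m}}$ and interior regularity.

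Next comes the contradiction step. Suppose $f_n$ satisfies $\|f_n\|_{\dot{\calH}^{2L+1}}=1$ while $\|(iH)^Lf_n\|_{\dot H^1}\to0$ and $(\phi_j,f_n),(\varphi_j,f_n)\to0$ for all $0\le j<L$. By weak compactness, pass to a weak limit $f_n\rightharpoonup f$ in $\dot{\calH}^{2L+1}_{\rad}$. Rellich's theorem gives $\chi_Rf_n\to\chi_Rf$ in $L^2$. In the limit, $(iH)^Lf=0$ distributionally, so $f\in\calK_L$ by \eqref{eq:regular kernel}. Writing $f=\sum_{j<L}(a_j\mathsf V_j+b_j\mathsf U_j)$, the duality relations \eqref{eq:duality} together with the vanishing of the limiting pairings force $a_j=b_j=0$. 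Hence $f=0$. The weak estimate then gives $1\lesssim o(1)+\|\chi_Rf_n\|_{L^2}\to0$, which is a contradiction.

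The main obstacle is the weak estimate, specifically making the weighted Hardy-type inversion of $\Delta$ rigorous at infinity with the correct weights. This requires checking that $f\in\dot{\calH}^{k}$ has no harmonic-type tails ($r^{-1}$ or constants) escaping the bound. In the radial setting this is handled by integrating $(r^2f_r)_r=r^2\Delta f$ explicitly from infinity, using the decay encoded in the $\dot{\calH}^{k}$ norm to fix the integration constants.
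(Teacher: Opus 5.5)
Your proposal is correct and is essentially the paper's argument: a compact-error estimate built from exterior weighted Hardy inequalities and the principal part $\Delta$, with the $O(\langle r\rangle^{-4})$ potential terms absorbed for large $R$, followed by a Rellich compactness contradiction that uses $\{f:(iH)^Lf=0\}=\calK_L$ and the duality relations \eqref{eq:duality}. The differences are only organizational: the paper compares $\|f\|_{\dot H^{2L+1}}$ with $\|(iH)^Lf\|_{\dot H^1}$ in one step and shows the normalized sequence is strongly Cauchy, whereas you induct on single applications of $iH$ using interior elliptic regularity and pass to a weak limit.

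One small correction to your last paragraph. Constants and $r^{-1}$ tails do belong to $\dot{\calH}^k$ for $k\ge2$, and $r^2f_r$ need not decay when $f$ grows. So these tails are handled by the cutoff error and by running Hardy from the top derivative down, where the $\dot{\calH}^k$ norm does force the needed decay; they are not handled by integrating from infinity at the level of $f$.
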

\begin{proof}
	We claim that for a sufficiently large fixed $R$,
	\begin{equation}\label{eq:elliptic compact error}
		\|f\|_{\dot{\calH}^{2L+1}}\lesssim\|(iH)^Lf\|_{\dot H^1}+C_R\|\chi_{R}f\|_{H^{2L}}.
	\end{equation}
	For an integer $m\ge1$ and smooth compactly supported $g$ vanishing near zero, integration by parts and Cauchy--Schwarz give
	\begin{equation}\label{eq:weighted Hardy}
		\||x|^{-m}g\|_{L^2}^2
		=-\frac{2}{3-2m}(|x|^{-2m}g,x\cdot\nabla g)
		\lesssim \||x|^{1-m}\nabla g\|_{L^2}^2.
	\end{equation}
	Iterating \eqref{eq:weighted Hardy} with $g=(1-\chi_R)\nabla^jf$ and $m=2L+1-j$ for $j=2L,\ldots,0$, we obtain
	\begin{equation}\label{eq:iterated Hardy}
		\|f\|_{\dot{\calH}^{2L+1}}\lesssim\|f\|_{\dot H^{2L+1}}+C_R\|\chi_{R}f\|_{H^{2L}}
	\end{equation}
	with $C_R\to \infty$ as $R\to \infty$.

	We next compare $\|f\|_{\dot H^{2L+1}}$ with $\|(iH)^Lf\|_{\dot H^1}$. The principal part of $\nabla(iH)^Lf$ is $\nabla(-i\Delta)^Lf$, whose $L^2$ norm is equivalent to $\|f\|_{\dot H^{2L+1}}$, and the remaining terms are $a_\alpha(x)\partial^\alpha f$, $|\alpha|\le2L-1$, with
	\begin{equation}\label{eq:operator coefficients}
		|a_\alpha(x)|\lesssim\langle x\rangle^{-(2L+3-|\alpha|)}.
	\end{equation}
	Indeed, each lower-order term contains $k\ge1$ factors of $W^4$. If a total of $j$ derivatives fall on these factors, then $|\alpha|=2L+1-2k-j$ and the coefficient is $O(\langle x\rangle^{-4k-j})$. Since $4k+j=2L+1-|\alpha|+2k\ge2L+3-|\alpha|$, we deduce \eqref{eq:operator coefficients}. Therefore, we obtain
	\begin{equation*}
		\|f\|_{\dot H^{2L+1}}\lesssim \|(iH)^Lf\|_{\dot H^1} +O(R^{-2}\|f\|_{\dot{\calH}^{2L+1}}) +O(\|\chi_{R}f\|_{H^{2L-1}})
	\end{equation*}
	Taking $R$ large in \eqref{eq:iterated Hardy}, we prove \eqref{eq:elliptic compact error}.

	Now, we finish the proof. The bound $(\gtrsim)$ follows from \eqref{eq:dot calH linear map} and the Cauchy--Schwarz inequality.
	For $(\lesssim)$, we argue by contradiction. If it failed, there would exist a sequence $f_n\in\dot{\calH}^{2L+1}_{\rad}$ such that
	\begin{equation*}
		\|f_n\|_{\dot{\calH}^{2L+1}}=1,\qquad
		\|(iH)^Lf_n\|_{\dot H^1}
		+{\textstyle\sum_{j=0}^{L-1}}(|(\phi_j,f_n)|+|(\varphi_j,f_n)|) \to 0.
	\end{equation*}
	For the fixed $R$ in \eqref{eq:elliptic compact error}, $\chi_{R}f_n$ is bounded in $H^{2L+1}$. By the Rellich--Kondrachov compactness theorem, it is Cauchy in $H^{2L}$, up to a subsequence. Applying \eqref{eq:elliptic compact error} to $f_n-f_k$, we obtain
	\begin{equation*}
		\|f_n-f_k\|_{\dot{\calH}^{2L+1}}
		\lesssim \|(iH)^L(f_n-f_k)\|_{\dot H^1}
		+C_R\|\chi_{R}(f_n-f_k)\|_{H^{2L}}
		\to0
	\end{equation*}
	as $n,k\to\infty$. Thus $f_n\to f$ strongly in $\dot{\calH}^{2L+1}$ with $\|f\|_{\dot{\calH}^{2L+1}}=1$. Passing to the limit, we get
	\begin{equation*}
		\nabla(iH)^Lf=0,\qquad
		(\phi_j,f)=(\varphi_j,f)=0\quad(0\le j<L).
	\end{equation*}
	Since $(iH)^Lf\in\dot{\calH}^1$ and nonzero constants do not belong to $\dot{\calH}^1$, we have $(iH)^Lf=0$. Lemma~\ref{lem:dual tests} therefore gives $f\in\calK_L$. Writing $f=\sum_{j=0}^{L-1}(a_j\mathsf V_j+b_j\mathsf U_j)$, \eqref{eq:duality} gives $a_j=(\phi_j,f)=0$ and $b_j=(\varphi_j,f)=0$ for $0\le j<L$. Hence $f=0$, contradicting $\|f\|_{\dot{\calH}^{2L+1}}=1$.
\end{proof}

\section{Global spacetime Morawetz estimate}
In this section, we prove a global Morawetz-type estimate for the linearized NLS flow. Our argument is closely related to the arguments in \cite{ErdoganSchlag2,Schlag2009annals}, which use the absence of embedded eigenvalues to establish linear estimates for the linearized NLS flow. This estimate provides the spacetime control for the higher-order energy.

A key spectral input is the absence of nonzero embedded eigenvalues of $-iH$. Schmid \cite[Proposition~5.4]{Schmid2026CVPDE} previously provided numerical evidence for this property in the radial setting. In our companion work \cite{Kim2026arXivNoEmbeeded}, we establish the following rigorous result.

\begin{prop}[Absence of nonzero embedded eigenvalues, \cite{Kim2026arXivNoEmbeeded}]\label{prop:embedded eigenvalues}
	The operator $-iH$ on $L^2(\mathbb R^3;\mathbb C^2)$, with domain $H^2(\mathbb R^3;\mathbb C^2)$, has no eigenvalues in $i\mathbb R\setminus\{0\}$.
\end{prop}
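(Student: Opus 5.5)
To prove Proposition~\ref{prop:embedded eigenvalues}, I would argue by contradiction. Suppose $-iHf=i\mu f$ with $\mu\in\mathbb R\setminus\{0\}$ and $0\ne f\in H^2$. By time reversal/conjugation symmetry ($Y_-=\ol{Y_+}$ type relations), I may assume $\mu>0$. Write $f=u+iv$ with $u,v$ real-valued componentwise. In the matrix form $H=\mathrm{diag}(H_+,H_-)$ acting on $(\Re f,\Im f)$, the system becomes
\begin{equation*}
	H_-v=\mu u',\qquad H_+u=-\mu v',
\end{equation*}
up to complexification of the real-linear system. Passing to the standard $2\times2$ Schrödinger system $\sigma_3(-\Delta+\mu)-V$ with exponentially decaying off-diagonal structure, one component solves $(-\Delta-\mu)\psi_1=V_{11}\psi_1+V_{12}\psi_2$ at energy $\mu$ embedded in the continuous spectrum. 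The other solves $(-\Delta+\mu)\psi_2=\dots$, which is elliptic at energy $-\mu$.

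\textbf{Step 1 (decay).} From the elliptic equation, $\psi_2$ decays exponentially once $\psi_1$ is controlled. For $\psi_1$, I would use Agmon's bootstrap: since $V=O(\langle x\rangle^{-4})$, an $L^2$ solution of the Helmholtz-type equation with decaying forcing must decay faster than any polynomial. This follows from the Agmon--Kato--Kuroda argument, namely the vanishing of the boundary term in the radiation condition $\Im\langle(-\Delta-\mu-i0)^{-1}g,g\rangle=0$, together with the Fourier restriction trace lemma. Iterating then gives $\langle x\rangle^N f\in L^2$ for all $N$.

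\textbf{Step 2 (rigidity).} Because the potential decays only polynomially ($\langle x\rangle^{-4}$), Kato's theorem (no positive eigenvalues for $o(|x|^{-1})$ potentials) must be adapted to the non-self-adjoint system. I expect this to be the main obstacle. My first attempt would be a Carleman/virial argument in the radial variable. I would rewrite the radial ODE $-(r\psi_1)''-\mu r\psi_1=rV\psi$ and use a Prüfer-type energy $E(r)=|(r\psi_1)'|^2+\mu|r\psi_1|^2$. The term coupling to $\psi_2$ is superexponentially small compared with $E$ unless everything vanishes, because $\psi_2$ is driven at rate $e^{-\sqrt\mu r}$ by the elliptic equation. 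Then $\frac{d}{dr}\log E=O(r^{-4})$ is integrable, so $E$ tends to a positive limit. This contradicts the rapid decay from Step 1 unless $\psi\equiv0$ for large $r$.

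\textbf{Step 3 (unique continuation).} Unique continuation for the radial ODE system extends the vanishing to all $r>0$, so $f=0$, which is the desired contradiction.

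If the radial Prüfer argument fails to control the $\psi_2$ coupling, the fallback is the paper's companion strategy. One would use the explicit Green functions \eqref{eq:green fundamentals}-type structure at energy $\pm\mu$ and a rigorous interval-arithmetic check of a Wronskian or Evans function for moderate $\mu$. For large $\mu$, a perturbative Born-series bound would handle the remaining range.
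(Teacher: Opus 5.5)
Your Step~2 fails, and the obstruction is structural rather than technical, even in the radial setting you work in.

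\textbf{The coupling is not exponentially small.} Since $W\sim\sqrt3\,r^{-1}$, every entry of the potential, including the off-diagonal $2W^4$, is only $O(r^{-4})$. The elliptic equation therefore gives $\psi_2$ of size $\mu^{-1}r^{-4}|\psi_1|$ plus a homogeneous mode $c\,e^{-\sqrt\mu r}/r$. The exponential decay of the kernel of $(-\Delta+\mu)^{-1}$ does not make $\psi_2$ small relative to $\psi_1$.

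\textbf{A decaying branch survives at infinity.} For $r>R$ the problem is a four-dimensional ODE system for $(\phi,\chi)=(r\psi_1,r\psi_2)$. It is an integrable $O(r^{-4})$ perturbation of a constant-coefficient system, so by Levinson's theorem it has solutions asymptotic to $e^{\pm i\sqrt\mu r}(1,0)$ and $e^{\pm\sqrt\mu r}(0,1)$. Your Step~1, or your Pr\"ufer energy, can only eliminate the two oscillatory branches. For every $\mu>0$ a nonzero $L^2$ branch remains, with $\chi\sim e^{-\sqrt\mu r}$ and $\phi=O(r^{-4}e^{-\sqrt\mu r})$. On this branch $\psi_1$ is produced entirely by the coupling term $rV_{12}\psi_2$, which is comparable to $\mu\phi$. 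Hence $\frac{d}{dr}\log E\to-2\sqrt\mu$, not $O(r^{-4})$, and $E\to0$ exponentially.

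\textbf{Consequence.} Nothing forces vanishing on an exterior region, so Step~3 has nothing to continue. An embedded eigenvalue is exactly a match between this one-dimensional decaying branch and the two-dimensional space of solutions regular at $r=0$. That is a global connection problem, and no analysis at infinity can decide it. This is why Kato's theorem has no analogue for linearized NLS systems, and why the property is usually assumed or only checked numerically, as in \cite{Schmid2026CVPDE}.

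\textbf{What is missing.} You need a global identity that uses the specific structure of $W$ on all of $\mathbb R^3$. The companion paper \cite{Kim2026arXivNoEmbeeded} obtains it through the Madelung transform $u=\sqrt\rho\,e^{i\theta}$. Linearizing the fluid system at $W$ and eliminating the density perturbation gives a biharmonic wave equation for the phase gradient $\nabla\theta$. A virial identity for that equation then reveals a repulsive structure that rules out such eigenfunctions.

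\textbf{Your fallback.} It is not this strategy, and as written it is only a program. The explicit functions in \eqref{eq:green fundamentals} solve the zero-energy problem and give no closed form at energy $\pm\mu$. Interval arithmetic on a compact $\mu$-range plus a Born series for large $\mu$ leaves the regime $\mu\to0$ uncovered. There $W$ and $\Lambda W$ are zero-energy resonances (not in $L^2$), so a separate threshold analysis would be required.
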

The main idea is to seek a virial identity for the phase gradient, rather than for the density $|u|^2$ as in the usual NLS virial identity. To this end, we use the Madelung transform $u=\sqrt{\rho}e^{i\theta}$, where $\rho$ is the fluid density and $2\nabla\theta$ is the fluid velocity. Linearizing the resulting fluid system around the soliton and eliminating the density perturbation, we obtain a biharmonic wave equation for the phase gradient $\nabla\theta$. A virial identity for this equation reveals a repulsive structure, which allows us to prove Proposition~\ref{prop:embedded eigenvalues}.

Even after excluding nonzero embedded eigenvalues, the eigenmodes $Y_\pm$, $\Lambda W$, and $iW$ remain an obstruction to the global Morawetz estimate. Indeed, by \eqref{eq:energy kernel} and \eqref{eq:eigenfunctions},
\begin{equation*}
	(\partial_t+iH)(e^{\pm e_0t}Y_\pm)=0,\qquad
	(\partial_t+iH)\Lambda W=(\partial_t+iH)(iW)=0.
\end{equation*}
Since these nonzero solutions do not belong to $L^2_t\mathrm{Mor}_x(\mathbb R)$, we remove the corresponding eigenmodes and apply Kato's smoothing theory \cite{Kato1965} to establish the global Morawetz estimate; see also \cite{LinaresPonce}.

We first define the projection $\Pi$ to remove the eigenmodes. Using the tests in \eqref{eq:base tests} for the scale and phase directions, we set
\begin{equation}\label{eq:projection}
	\begin{aligned}
		\Pi f\coloneqq&f-(-HY_-,f)Y_+-(-HY_+,f)Y_-\\
		&-\bigg[(\td\phi,f)-\sum_{\iota=\pm}(-HY_{-\iota},f)(\td\phi,Y_\iota)\bigg]\Lambda W\\
		&-\frac12\bigg[(\td\varphi,f)-\sum_{\iota=\pm}(-HY_{-\iota},f)(\td\varphi,Y_\iota)\bigg]iW,
		\qquad f\in\dot H^1_{\rad}.
	\end{aligned}
\end{equation}
By \eqref{eq:eigenmode normalization}, \eqref{eq:dual mode tests}, and \eqref{eq:base tests}, we have
\begin{equation}\label{eq:projection kernel}
	\Pi^2=\Pi,\qquad
	\ker\Pi=\operatorname{span}_{\mathbb R}\{Y_+,Y_-,\Lambda W,iW\},
\end{equation}
with range
\begin{equation}
	\Pi\dot H^1_{\rad}
	=\{f\in\dot H^1_{\rad}:(HY_+,f)=(HY_-,f)=(\td\phi,f)=(\td\varphi,f)=0\}.
\end{equation}
Since the tests in \eqref{eq:projection} belong to $L^{6/5}$, $\Pi$ is bounded on $\dot H^1_{\rad}$. Furthermore, for $f\in\Pi\dot H^1_{\rad}$, the boundedness of $\Pi$, together with \eqref{eq:continuous coercivity} and \eqref{eq:projection kernel}, yields
\begin{equation}\label{eq:projection coercivity}
	\begin{aligned}
		\|f\|_{\dot H^1}^2
		&=\inf_{a_1,a_2\in\mathbb R}\|\Pi(f-a_1\Lambda W-a_2iW)\|_{\dot H^1}^2\\
		&\lesssim\inf_{a_1,a_2\in\mathbb R}\|f-a_1\Lambda W-a_2iW\|_{\dot H^1}^2
		\sim(Hf,f)\lesssim\|f\|_{\dot H^1}^2.
	\end{aligned}
\end{equation}
Moreover, using the orthogonality to $HY_\pm$ and $H\Lambda W=H(iW)=0$, we obtain
\begin{equation}\label{eq:projection bounds}
	(Hf,\Pi g)=(Hf,g),\qquad
	\|\Pi g\|_{\mathrm{Mor}}\lesssim\|g\|_{\mathrm{Mor}}.
\end{equation}
For $f_0\in\Pi\dot H^1_{\rad}$, we denote by $U_c(t)f_0$ the solution of
\begin{equation}\label{eq:continuous flow}
	\partial_t f=\Pi(-iH)f,\qquad f(0)=f_0.
\end{equation}
From \eqref{eq:projection bounds}, we have
\begin{equation}\label{eq:energy conservation}
	U_c(t+\sigma)=U_c(t)U_c(\sigma), \qquad (HU_c(t)g,U_c(t)h)=(Hg,h).
\end{equation}
We will prove the following spacetime estimates for $U_c(t)$.

\begin{prop}[Global spacetime estimates]\label{prop:global spacetime}
	The following estimates hold:
	\begin{enumerate}
		\item For $f_0\in\Pi\dot H^1_{\rad}$,
		\begin{equation}\label{eq:weighted smoothing}
			\|U_c(t)f_0\|_{L^2_t\mathrm{Mor}_x(\mathbb R)}
			\lesssim\|f_0\|_{\dot H^1}.
		\end{equation}
		\item Let $I=[t_1,t_2]$, $t_1<t_2\le\infty$, and $F\in L^1_{t,\mathrm{loc}}\Pi\dot H^1_{\rad}(I)$. If a nonnegative $\psi_1\in L^2_t(I)$ satisfies
		\begin{equation}\label{eq:retarded source}
			|(Hg,F(t))|\le\psi_1(t)\|g\|_{\mathrm{Mor}}
			\qquad(g\in\Pi\dot H^1_{\rad})
		\end{equation}
		for almost every $t\in I$, then
		\begin{equation}\label{eq:retarded kernel bound}
			\left\|\int_{t_1}^tU_c(t-\sigma)F(\sigma)\,d\sigma\right\|_{L^2_t\mathrm{Mor}_x(I)}
			\lesssim\|\psi_1\|_{L^2_t(I)}.
		\end{equation}
	\end{enumerate}
\end{prop}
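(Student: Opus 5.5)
We follow Kato's smoothing theory, working in the Hilbert space $\Pi\dot H^1_{\rad}$ with the inner product $\langle f,g\rangle_H\coloneqq(Hf,g)$. By \eqref{eq:projection coercivity} this inner product is equivalent to the $\dot H^1$ one, and by \eqref{eq:energy conservation} the generator $A\coloneqq\Pi(-iH)$ is skew-adjoint for it. Thus $U_c(t)$ is a unitary group on $(\Pi\dot H^1_{\rad},\langle\cdot,\cdot\rangle_H)$. Kato's theorem reduces \eqref{eq:weighted smoothing} to a uniform resolvent bound. Concretely, for $\Re z>0$ we have $\int_0^\infty e^{-zt}U_c(t)f_0\,dt=(z-A)^{-1}f_0$. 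Plancherel in $\Im z$, with $\Re z=\epsilon\downarrow0$, together with a $TT^*$ duality argument, then shows that \eqref{eq:weighted smoothing} follows from
\begin{equation*}
	\sup_{\Re z\neq0}\|(z-A)^{-1}F\|_{\mathrm{Mor}}\lesssim\|F\|_{\mathrm{Mor}^*_H},
\end{equation*}
where $\|F\|_{\mathrm{Mor}^*_H}\coloneqq\sup\{|(Hg,F)|:\|g\|_{\mathrm{Mor}}\le1\}$. The case $\Re z<0$ is handled by time reversal, i.e.\ conjugation. Equivalently, one needs $\sup_z|\langle (z-A)^{-1}F,G\rangle_H|\lesssim\|F\|_{\mathrm{Mor}^*_H}\|G\|_{\mathrm{Mor}^*_H}$, the standard $H$-smoothing criterion.

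Part (2) then follows from part (1) by the Christ--Kiselev-type argument available in Kato's framework. Because the source bound \eqref{eq:retarded source} is measured in exactly the dual norm $\mathrm{Mor}^*_H$, the retarded operator $\int_{t_1}^tU_c(t-\sigma)F(\sigma)\,d\sigma$ is $(z-A)^{-1}$ after Laplace transform in $t$. The uniform resolvent bound therefore gives \eqref{eq:retarded kernel bound} directly by Plancherel, with no loss from the time truncation, since the truncation $\sigma>t_1$ is causal. The truncation $t\le t_2$ is handled by extending $F$ by zero.

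The resolvent bound splits into large and bounded $|z|$. For $|z|\ge M$, we write $(z-A)f=F$ as a Schr\"odinger resolvent equation $(\mp i\Delta-z)f=F+V f+(\text{finite rank})$. We use the explicit free Green function in radial coordinates, whose kernel is $e^{-\sqrt{\mp iz}|x-y|}/(4\pi|x-y|)$, together with standard weighted $L^2$ (Agmon-type) bounds of size $|z|^{-1/2}$ on $\langle x\rangle^{-1}$-weighted spaces. The potential terms $W^4f$ and the finite-rank parts of $\Pi$ are then perturbative for $M$ large, since $W^4\lesssim\langle r\rangle^{-4}$ decays faster than the $\mathrm{Mor}$ weights.

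For $|z|\le M$ we argue by contradiction. Take $z_n$ with $\Re z_n\neq0$ and $z_n\to z_\ast\in\{\Re z\ge0\}$, and $\|f_n\|_{\mathrm{Mor}}=1$ with $\|(z_n-A)f_n\|_{\mathrm{Mor}^*_H}\to0$. We first obtain a local $\dot H^1$ bound from the equation via local elliptic regularity and the energy identity $\Re\langle (z-A)f,f\rangle_H=\Re z\|f\|_H^2$. We then extract a local limit $f$ and show it is nonzero, excluding concentration at infinity by an outgoing radiation (Sommerfeld) estimate on the exterior region, where the potential is negligible. The limit solves $(z_\ast-A)f=0$ and satisfies the projection conditions. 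If $\Re z_\ast>0$, skew-adjointness forces $f=0$. If $z_\ast\in i\mathbb R\setminus\{0\}$, the outgoing condition together with a Rellich/Agmon argument shows $f\in L^2$, hence $f$ is an embedded eigenfunction; this is excluded by Proposition~\ref{prop:embedded eigenvalues}. If $z_\ast=0$, then $-iHf\in\ker\Pi$, so $f$ lies in the generalized kernel with $\mathrm{Mor}$-finite growth. Lemma~\ref{lem:first coercivity} and the orthogonality defining $\Pi\dot H^1_{\rad}$ then force $f=0$. Here the threshold $z=0$ needs care, because $\Lambda W$ and $iW$ decay only like $r^{-1}$, and this is exactly where the $\langle r\rangle^{-2}$ weight in $\mathrm{Mor}$ is used. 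In each case we contradict $f\neq0$.

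The main obstacle is this low-frequency contradiction argument, especially near $z=0$ and on $i\mathbb R$. The difficulties are proving non-vanishing of the limit (no mass escaping to infinity in the $\mathrm{Mor}$ norm) and upgrading the limit to $L^2$ so that Proposition~\ref{prop:embedded eigenvalues} applies. I would first try to use the explicit exterior Green function $e^{-\kappa r}/r$ with $\kappa^2=\mp iz$ to write $f$ outside a large ball as an outgoing wave plus a small error, and read off decay from it.
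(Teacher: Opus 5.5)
Your overall architecture is the paper's. You use a uniform resolvent bound in $\mathrm{Mor}$ for $\Re z>0$ (Lemma~\ref{lem:resolvent esti}), Plancherel with $\Re z=\epsilon\downarrow0$ plus Fatou for the retarded estimate, a $TT^*$ duality for \eqref{eq:weighted smoothing}, and a contradiction argument split into $z_*=0$, $z_*\in i\mathbb R\setminus\{0\}$ and $|z_n|\to\infty$. Two side remarks: the paper proves (2) first and deduces (1), so $\Re z<0$ is never needed; and Christ--Kiselev is unavailable at the $L^2_t\to L^2_t$ endpoint, though your direct Laplace-transform argument is the right one.

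The gap is in the step you flag as the main obstacle, and the tool you propose would not get through it as stated. Because $\Pi f=f$, the resolvent equation reads
\begin{equation*}
	(z-(-iH))f=F-k,\qquad k\coloneqq(1-\Pi)(-iH)f\in\operatorname{span}_{\mathbb C}\{\Lambda W,iW\}.
\end{equation*}
The $Y_\pm$ components vanish since $(HY_\pm,f)=0$. The coefficients of $k$ are compactly supported tests against $f$, so they are $O(\|f\|_{\mathrm{Mor}})$ but not small, and $k\sim r^{-1}$. In the reduced variable $u=rf$ this is a constant source at infinity. The exterior resolvent gives $\calR_{\kappa,R}1=\kappa^{-2}(1-e^{-\kappa(r-R)})$, whose exterior Morawetz size is about $|\kappa|^{-1}R^{-1/2}=|z|^{-1/2}R^{-1/2}$. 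So ``outgoing wave plus small error, potential negligible'' is not uniform as $z_n\to0$. The exterior bound acquires a term $|z_n|^{-1/2}\|f_n\|_{L^2(r<2)}$, which is compatible with $f_n\to0$ locally and $\|f_n\|_{\mathrm{Mor}}=1$, so nonvanishing of the limit is not obtained. For the same reason, when $z_*=i\omega$ the limit $f$ is an eigenfunction of $\Pi(-iH)$, not of $-iH$. It is $g=f+z_*^{-1}k$ that solves $(-iH)g=z_*g$, and $f$ differs from $g$ by an $r^{-1}$ tail that is not in $L^2$ unless $k=0$. So ``$f\in L^2$, hence an embedded eigenfunction'' is the wrong target.

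The missing idea is to apply $H$ to the equation. Since $H\Lambda W=H(iW)=0$, one gets the exact fourth-order system $H(z+iH)f=HF$ with no finite-rank term. The paper proceeds as follows.
\begin{itemize}
	\item It diagonalizes the system via $u_\sigma=r(f_1-\sigma if_2)$ and uses $\partial_r^4-\kappa^2\partial_r^2=-\partial_r(-\partial_r^2+\kappa^2)\partial_r$ to integrate once.
	\item $L^2$ integrability kills the integration constant and the growing mode, and $\partial_ru_\sigma$ is represented through the Dirichlet exterior kernel $K_R$.
	\item Working with $\partial_r(rf)$ neutralizes the $r^{-1}$ tails. The constant part of $rf$ is recovered from its boundary value by the exterior Hardy inequality.
	\item This yields the $z$-uniform bound \eqref{eq:compact source bound}, and hence a nonzero local limit.
\end{itemize}
For $z_*=i\omega$, the same integrated representation (now with $\kappa^2=-\sigma\omega$) gives $rHf$ as one outgoing oscillation plus $O(r^{-1})$. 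The flux $\Im[r^2(\ol{(Hf)_1}\partial_r(Hf)_1+\ol{(Hf)_2}\partial_r(Hf)_2)]$ is constant and vanishes at the origin, which forces the outgoing amplitude to be zero. Hence $Hf=O(r^{-2})$ and $Hf\in H^2$. Proposition~\ref{prop:embedded eigenvalues} is then applied to $(-iH)f$, which satisfies $(z_*+iH)((-iH)f)=0$ exactly, giving $Hf=0$ and then $f=0$.

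A smaller point: the energy identity does not give the local bound on $f_n$, since $\Re z_n\to0$. That bound comes from $\|Hf_n\|_{\mathrm{Mor}}\lesssim1$ together with elliptic regularity.
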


\begin{rem}
	The estimate \eqref{eq:weighted smoothing} can also be obtained using the distorted Fourier transform. See \cite{GermainPusateri2022Pi,KairzhanPusateri2023PAA} and the references therein for related applications of the distorted Fourier transform to linear estimates.
\end{rem}

To prove these estimates, we first establish a resolvent estimate. For this purpose and the subsequent time Fourier transform, we identify each complex-valued scalar function $h$ with the real vector $(\Re h,\Im h)^\top$ and complexify the resulting real vector space. Thus, $f=(f_1,f_2)^\top\in\dot H^1_{\rad}(\mathbb R^3;\mathbb C^2)$ has two independent complex-valued components. We extend the real inner product and the Morawetz norm componentwise:
\begin{equation*}
	(f,g)\coloneqq\Re\int_0^\infty(\ol{f_1}g_1+\ol{f_2}g_2)r^2dr,\qquad
	\|f\|_{\mathrm{Mor}}^2\coloneqq\sum_{j=1}^2\|f_j\|_{\mathrm{Mor}}^2.
\end{equation*}
The Sobolev norms are understood componentwise as well.

We use the same notation for the complex-linear extensions of the operators, with
\begin{equation*}
	H=\begin{pmatrix}H_+&0\\0&H_-\end{pmatrix},\qquad
	-iH=\begin{pmatrix}0&H_-\\-H_+&0\end{pmatrix},
\end{equation*}
where $iH$ is the negative of the displayed matrix operator and $z$ acts by scalar multiplication on both components. The projection $\Pi$ is extended complex-linearly by
\begin{equation*}
	\Pi(f+ig)\coloneqq\Pi f+i\Pi g
	\qquad(f,g\in\dot H^1_{\rad}(\mathbb R^3;\mathbb R^2)).
\end{equation*}
The flow $U_c(t)$ is also extended complex-linearly. The space $\Pi\dot H^1_{\rad}$ denotes the range of the extended projection in $\dot H^1_{\rad}(\mathbb R^3;\mathbb C^2)$. The identities and bounds in \eqref{eq:projection coercivity} and \eqref{eq:projection bounds} extend by applying them to the real and imaginary parts separately. These conventions apply only in this section; from Section~\ref{sec:profiles} onward, we return to the original scalar notation.

\begin{lem}[Resolvent estimates]\label{lem:resolvent esti}
	Let $\Re z>0$, $F\in\Pi\dot H^1_{\rad}$, and $\psi\ge0$. If
	\begin{equation}\label{eq:resolvent source}
		|(Hg,F)|\le\psi\|g\|_{\mathrm{Mor}}\qquad(g\in\Pi\dot H^1_{\rad}),
	\end{equation}
	then the solution $f\in\Pi\dot H^1_{\rad}$ of
	\begin{equation}\label{eq:resolvent equ}
		(z-\Pi(-iH))f=F
	\end{equation}
	satisfies, uniformly in $z$,
	\begin{equation}\label{eq:uniform stationary esti}
		\|f\|_{\mathrm{Mor}}\lesssim\psi.
	\end{equation}
\end{lem}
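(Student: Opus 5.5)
The plan has three parts: high frequency, a compactness argument for bounded $z$, and a spectral identification of the limit. The first step handles $|z|\ge R_0$ with $\Re z>0$, for a large fixed $R_0$. Write \eqref{eq:resolvent equ} as $(z+iH)f=F+(\Pi-1)(-iH)f$. By \eqref{eq:projection}, the term $(\Pi-1)(-iH)f$ is a finite combination of $Y_\pm,\Lambda W,iW$. Its coefficients are pairings such as $(-HY_\mp,-iHf)$ and $(\td\phi,-iHf)$; after moving $H$ onto the decaying tests, each is bounded by $\|f\|_{\mathrm{Mor}}$. Pairing these coefficients with the equation itself shows they are actually $O(|z|^{-1}(\psi+\|f\|_{\mathrm{Mor}}))$. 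It then remains to estimate $(z+iH)^{-1}$ from the weak dual class defined by \eqref{eq:resolvent source} into $\mathrm{Mor}$. Diagonalizing the matrix $-\Delta$ part reduces this to the free resolvents $(-\Delta\mp iz)^{-1}$, whose radial Green functions are explicit. They give the Kato--Agmon bound $\|\langle r\rangle^{-1}\nabla R_0(\zeta)\langle r\rangle^{-1}\|+\|\langle r\rangle^{-2}R_0(\zeta)\langle r\rangle^{-2}\|\lesssim1$ uniformly in $\zeta$ off the spectrum. In energy-dual form this is $\|R_0 F\|_{\mathrm{Mor}}\lesssim\psi$.

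The potential terms $W^4$ decay like $\langle r\rangle^{-4}$, so they map $\mathrm{Mor}$ into the source class with a small constant once $|z|$ is large. The gain comes from the $|\zeta|^{-1/2}$ decay of the weighted free resolvent between $\langle r\rangle^{-2}$ weights. A Neumann series then closes the estimate for $|z|\ge R_0$. The pairing in \eqref{eq:resolvent source} uses $(Hg,F)$, not $(g,F)$, so before comparing with free estimates I would check that $H$ is equivalent to the energy inner product on $\Pi\dot H^1$. This equivalence follows from \eqref{eq:projection coercivity}.

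The second step handles $|z|\le R_0$, $\Re z>0$, by contradiction. Suppose there are $z_n$, $F_n$, $\psi_n$ with $\|f_n\|_{\mathrm{Mor}}=1$ and $\psi_n\to0$. Pairing the equation with $Hf_n$ and using \eqref{eq:projection bounds} and the symmetry of $H$ gives $\Re z_n(Hf_n,f_n)=\Re(Hf_n,F_n)\le\psi_n\|f_n\|_{\mathrm{Mor}}$. Local elliptic regularity in the equation $-iHf_n=z_nf_n-F_n+\text{finite rank}$ gives local $H^1$ bounds. Combined with the weight $\langle r\rangle^{-2}$ on the tail, and with an outgoing-radiation estimate at infinity from the free Green function, this yields compactness in $\mathrm{Mor}$, possibly after restricting to bounded radii and controlling the tail by step one's free estimate.

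Passing to the limit, take $z_n\to z_*$ with $\Re z_*\ge0$ and $f_n\to f_*\ne0$ in $\mathrm{Mor}$, where $(z_*-\Pi(-iH))f_*=0$. Then $f_*$ has the form $(-iH)f_*=z_*f_*+c_1Y_++c_2Y_-+c_3\Lambda W+c_4iW$. Applying the dual tests $-HY_\mp$ and $\td\phi,\td\varphi$, and using $f_*\in\Pi\dot H^1$, I would show all $c_k=0$ when $z_*\notin\{0,\pm e_0\}$. Since $\Re z_*\ge0$, the only problematic value $\pm e_0$ is $z_*=e_0$, which is excluded because $f_*\perp HY_-$.

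So $f_*$ is an eigenfunction, or a generalized resonance, of $-iH$ with eigenvalue $z_*$. If $\Re z_*>0$, then $f_*\in L^2$ and the spectral facts in Lemma~\ref{lem:first coercivity} show $z_*=e_0$, which is contradicted by the projection. If $z_*\in i\mathbb R\setminus\{0\}$, the limit of the outgoing resolvents gives a Sommerfeld-type decay that upgrades $f_*$ to $L^2$. Proposition~\ref{prop:embedded eigenvalues} then excludes it. If $z_*=0$, then $Hf_*\in\ker\Pi$ up to the finite-rank part. In $\dot H^1_{\rad}$, \eqref{eq:energy kernel} gives $f_*\in\operatorname{span}\{\Lambda W,iW\}$, and the conditions $(\td\phi,f_*)=(\td\varphi,f_*)=0$ force $f_*=0$. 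Since $\dot H^1$ control may fail in the limit, I would instead use $\Re z_n(Hf_n,f_n)\to0$ together with energy coercivity.

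I expect the main obstacle to be the threshold cases: the zero energy point and the imaginary-axis points. The difficulty is getting enough compactness and decay in the limit to place $f_*$ in a space where Proposition~\ref{prop:embedded eigenvalues} and \eqref{eq:energy kernel} apply. The $r^{-1}$ tail of $\Lambda W$ and $W$ is a resonance-type borderline in $d=3$. I would try first to show that $f_*$ is a distributional solution with $\langle r\rangle^{-1}\nabla f_*,\langle r\rangle^{-2}f_*\in L^2$. I would then use the explicit radial ODE, with solutions behaving like $e^{\pm\sqrt{\pm iz}r}/r$, to conclude that the outgoing part vanishes and $f_*$ decays exponentially.
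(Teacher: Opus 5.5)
Your overall architecture matches the paper's: contradiction, a perturbative free-resolvent argument for large $|z|$, local compactness for bounded $z$, and spectral exclusion of the limit. The large-$|z|$ step is a workable variant of the paper's Step~3. Smallness holds only for the $\langle r\rangle^{-2}f$ component, so it is a two-step bootstrap rather than a Neumann series in $\mathrm{Mor}$. Your identity $\Re z_n(Hf_n,f_n)\le\psi_n$, combined with $(Hf_n,f_n)\gtrsim\|f_n\|_{\mathrm{Mor}}^2=1$, already forces $\Re z_n\to0$, so the case $\Re z_*>0$ never arises.

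The genuine gaps are in the two threshold cases, which you name as the main obstacle but do not resolve.

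\textbf{The case $z_*=i\omega\neq0$.} An outgoing (Sommerfeld) condition does not put anything in $L^2$: $e^{i\sqrt{|\omega|}r}/r$ is outgoing and not square integrable. The radial ODE alone cannot force its coefficient to vanish. What is missing is a conserved flux linking the origin to infinity.
\begin{itemize}
\item First apply $-iH$ to $z_*f_*=\Pi(-iH)f_*$ and use $H(1-\Pi)(-iH)f_*=0$. This gives a genuine eigen-equation, $(z_*+iH)(-iH)f_*=0$.
\item Your claim that testing against $-HY_\mp,\td\phi,\td\varphi$ kills all coefficients of $(1-\Pi)(-iH)f_*$ is not right. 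The $Y_\pm$ coefficients vanish automatically. Testing the $\Lambda W,iW$ coefficients against $\td\phi,\td\varphi$ is tautological; those terms are removed only by applying $H$.
\item For $h=Hf_*$, the eigen-equation gives $\partial_r\Im[r^2(\ol{h_1}\partial_rh_1+\ol{h_2}\partial_rh_2)]=0$. The constant is $0$ by regularity at $r=0$. At infinity it equals $-2\,\mathrm{sgn}(\omega)\sqrt{|\omega|}\,|c|^2$, where $c$ is the amplitude of the single oscillatory mode $e^{-i\,\mathrm{sgn}(\omega)\sqrt{|\omega|}r}$ selected in the limit by $\Re\kappa_n>0$.
\item Only after this does one get $Hf_*=O(r^{-2})$, hence $Hf_*\in L^2$ and then $H^2$. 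Proposition~\ref{prop:embedded eigenvalues} then applies to $(-iH)f_*$, not to $f_*$, which may carry $r^{-1}$ tails.
\end{itemize}

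\textbf{The case $z_*=0$.} The limit is known only to lie in the Morawetz class, not in $\dot H^1$, as you note yourself. Moreover, the limiting equation is $(-iH)f_*=a_1iW+a_2\Lambda W$, not $Hf_*=0$. So \eqref{eq:energy kernel} cannot be invoked. Your substitute ($\Re z_n(Hf_n,f_n)\to0$ plus coercivity) yields nothing about $f_*$ beyond $\Re z_n\to0$.

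What must be excluded is the Jordan direction, i.e.\ linearly growing solutions of $\mathsf V_1,\mathsf U_1$ type. Integrating the radial equation gives $\partial_rf_*=\frac{\sqrt3}{4}(2a_1,a_2)^\top+O((1+|\log r|)r^{-2})$. This contradicts $\partial_rf_*\in L^2((1,\infty),dr)$, which follows from $\|f_*\|_{\mathrm{Mor}}\le1$, unless $a_1=a_2=0$. Then the regular solutions of $H_\pm u=0$ give $f_*\in\operatorname{span}\{\Lambda W,iW\}$, and $\Pi f_*=f_*$ forces $f_*=0$.

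\textbf{Nonvanishing of the limit.} You also need a tail bound uniform in $z$ down to the threshold, with the source only in the weak class \eqref{eq:resolvent source}. Your ``free estimate plus cutoff'' sketch does not carry this out. The paper obtains it from the exterior estimate \eqref{eq:compact source bound}. That estimate is proved by integrating the fourth-order equation $H(z+iH)f=HF$ once and inverting $-\partial_r^2+\kappa^2$ with a Dirichlet condition on $(R,\infty)$.
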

The main obstruction to a uniform resolvent estimate is the presence of eigenmodes, which allow the resolvent to become arbitrarily large as $z$ approaches an eigenvalue. The projection $\Pi$ removes the known eigenmodes, while Proposition~\ref{prop:embedded eigenvalues} rules out nonzero embedded eigenvalues. In contrast, the essential spectrum is associated with radiation that can propagate to spatial infinity, where the Morawetz weights weaken its contribution. This suggests that uniform resolvent control may still be possible in the Morawetz norm, despite the presence of the essential spectrum.

\begin{proof}[Proof of Lemma~\ref{lem:resolvent esti}]
	We argue by contradiction. If no constant independent of $z$ satisfies \eqref{eq:uniform stationary esti}, then for each $n$ there are $z_n$, $f_n=(f_{1,n},f_{2,n})^\top$, $F_n=(F_{1,n},F_{2,n})^\top$, and $\psi_n$ satisfying the hypotheses with $\|f_n\|_{\mathrm{Mor}}>n\psi_n$. Dividing $f_n$, $F_n$, and $\psi_n$ by $\|f_n\|_{\mathrm{Mor}}$ preserves \eqref{eq:resolvent source} and \eqref{eq:resolvent equ}. We therefore obtain sequences satisfying, for any $g\in\Pi\dot H^1_{\rad}$,
	\begin{equation}\label{eq:resolvent contradiction seq}
		\begin{gathered}
			(z_n-\Pi(-iH))f_n=F_n,\qquad \Re z_n>0,\qquad \|f_n\|_{\mathrm{Mor}}=1,\\
			|(Hg,F_n)|\le\psi_n\|g\|_{\mathrm{Mor}},\qquad 0\le\psi_n<1/n.
		\end{gathered}
	\end{equation}
	By \eqref{eq:projection coercivity}, \eqref{eq:projection bounds}, and \eqref{eq:resolvent contradiction seq} with $g=f_n$, we have
	\begin{equation*}
		0<\Re z_n\lesssim(\Re z_n)(Hf_n,f_n)=(Hf_n,F_n)\le\psi_n\to0.
	\end{equation*}
	Passing to a subsequence, either $z_n\to z_*$ with $z_*\in i\mathbb R$, or $|z_n|\to\infty$. We first prove an exterior estimate uniform in $z$ to obtain a nonzero local limit in the first case. We then exclude this limit using the conditions imposed by $\Pi$ and Proposition~\ref{prop:embedded eigenvalues}. In the second case, direct estimates of the Green function give a contradiction.

	\textbf{Step 1.} We first prove the following exterior estimate for a solution $f$ of \eqref{eq:resolvent equ}, uniformly for $\Re z>0$ and $R\ge1$:
	\begin{equation}\label{eq:compact source bound}
		\|f\|_{\mathrm{Mor}(R,\infty)}\lesssim\psi+R^{-2}\|f\|_{\mathrm{Mor}(R,\infty)}
		+R^{-3/2}|Rf(R)|+R^{-1/2}|\left.\partial_r(rf)\right|_{r=R}|.
	\end{equation}
	We will use this to show that the sequence $f_n$ in \eqref{eq:resolvent contradiction seq} has a nonzero limit.
	
	We write
	\begin{equation*}
		(-iH)f=zf-F+(1-\Pi)(-iH)f\in\dot H^1.
	\end{equation*}
	By $\Pi f=f$ and \eqref{eq:eigenfunctions}, we have $(1-\Pi)(-iH)f\in\operatorname{span}_{\mathbb C}\{\Lambda W,iW\}$. Thus, applying $H$ to \eqref{eq:resolvent equ}, we arrive at
	\begin{equation*}
		H(z+iH)f=HF.
	\end{equation*}
	Using $r\Delta f=\partial_r^2(rf)$, we obtain
	\begin{equation}\label{eq:stationary fourth order}
		\begin{pmatrix}-\partial_r^2-5W^4&0\\0&-\partial_r^2-W^4\end{pmatrix}
		\begin{pmatrix}z&\partial_r^2+W^4\\-\partial_r^2-5W^4&z\end{pmatrix}(rf)=rHF.
	\end{equation}
	To obtain an integral representation using a Green function, we first rewrite \eqref{eq:stationary fourth order} as one-dimensional biharmonic wave equations, instead of computing a Green function for the system directly.
	
	For $\sigma=\pm1$, we write 
	\begin{equation*}
		u_\sigma\coloneqq r(f_1-\sigma if_2),\qquad \kappa^2=\sigma iz,\qquad \Re\kappa>0,\qquad HF=((HF)_1,(HF)_2)^\top.
	\end{equation*}
	Expanding \eqref{eq:stationary fourth order}, we obtain
	\begin{equation}\label{eq:stationary biharmonic equation}
		\begin{aligned}
			(\partial_r^4-\kappa^2\partial_r^2)u_\sigma
			=&\sigma ir((HF)_1-\sigma i(HF)_2)
			+\kappa^2W^4(3u_\sigma+2u_{-\sigma})
			\\
			&-\partial_r^2(W^4(3u_\sigma+2u_{-\sigma}))-W^4\partial_r^2(3u_\sigma-2u_{-\sigma})-5W^8u_\sigma.
		\end{aligned}
	\end{equation}
	Since $(\partial_r^4-\kappa^2\partial_r^2)h=-\partial_r[(-\partial_r^2+\kappa^2)\partial_r h]$, we solve the once-integrated equation for $\partial_r u_\sigma$ on $(R,\infty)$.
	For $R\ge0$, we use the Green's function in \cite[Section~7.5, (7.58)]{Teschl2014Quantum} and odd reflection about $r=R$ to define
	\begin{equation}\label{eq:free Dirichlet resolvent}
		(\calR_{\kappa,R}h)(r)\coloneqq\int_R^\infty K_R(r,\rho)h(\rho)d\rho,\quad K_R(r,\rho)\coloneqq \frac{e^{-\kappa|r-\rho|}-e^{-\kappa(r+\rho-2R)}}{2\kappa}.
	\end{equation}
	Then $(-\partial_r^2+\kappa^2)(\calR_{\kappa,R}h)=h$ and $(\calR_{\kappa,R}h)(R)=0$ for $h\in L^2((R,\infty),dr)$.
	Integrating \eqref{eq:stationary biharmonic equation} once, we obtain, for some constant $C_\sigma$,
	\begin{equation}\label{eq:stationary biharmonic integrate}
		(-\partial_r^2+\kappa^2)\partial_r u_\sigma=C_\sigma+\calI_1+\calI_2+\calI_3,
	\end{equation}
	where
	\begin{gather*}
		\calI_1\coloneqq\sigma i\int_r^\infty\rho((HF)_1-\sigma i(HF)_2)(\rho)d\rho,\quad
		\calI_2\coloneqq\kappa^2\int_r^\infty W^4(3u_\sigma+2u_{-\sigma})d\rho,\\
		\calI_3\coloneqq\partial_r(W^4(3u_\sigma+2u_{-\sigma}))
		-\int_r^\infty(W^4\partial_\rho^2(3u_\sigma-2u_{-\sigma})+5W^8u_\sigma)d\rho.
	\end{gather*}
	Here the general solution of $(-\partial_r^2+\kappa^2)h=C_\sigma$ is
	\begin{equation*}
		h=\kappa^{-2}C_\sigma+a_\sigma e^{-\kappa(r-R)}+b_\sigma e^{\kappa(r-R)}.
	\end{equation*}
	Hence, \eqref{eq:stationary biharmonic integrate} implies
	\begin{equation}\label{eq:integrated wave general}
		\partial_r u_\sigma-\calR_{\kappa,R}(\calI_1+\calI_2+\calI_3)
		=\kappa^{-2}C_\sigma+a_\sigma e^{-\kappa(r-R)}+b_\sigma e^{\kappa(r-R)}.
	\end{equation}
	Since $f,F\in\dot H^1$, Hardy's inequality and integration by parts in $\calI_1$ and $\calI_3$ show that $\partial_r u_\sigma,\calI_1,\calI_2,\calI_3\in L^2((R,\infty),dr)$. Moreover, $\calR_{\kappa,R}$ is bounded on this space since $\Re\kappa>0$. Thus, the left-hand side of \eqref{eq:integrated wave general} belongs to $L^2((R,\infty),dr)$, which implies $C_\sigma=b_\sigma=0$.
	Evaluating at $R$ then gives $a_\sigma=(\partial_r u_\sigma)(R)$. 
	Hence, \eqref{eq:stationary biharmonic integrate} becomes
	\begin{equation}\label{eq:integrated wave}
		\partial_r u_\sigma(r)=(\partial_r u_\sigma)(R)e^{-\kappa(r-R)}+\calR_{\kappa,R}(\calI_1+\calI_2+\calI_3).
	\end{equation}

	We now estimate the terms in \eqref{eq:integrated wave} to prove \eqref{eq:compact source bound}.
	The boundary term satisfies
	\begin{equation*}
		\|r^{-1}(\partial_r u_\sigma)(R)e^{-\kappa(r-R)}\|_{L^2((R,\infty),dr)}\le R^{-1/2}|(\partial_r u_\sigma)(R)|\lesssim R^{-1/2}|\left.\partial_r(rf)\right|_{r=R}|.
	\end{equation*}
	For the integral terms, we have $|K_R(r,\rho)|\le\min\{r-R,\rho-R\}$. From Schur's test with $\int_R^\infty\frac{\rho^{-1/2}}{\max\{r,\rho\}}d\rho\le4r^{-1/2}$, we obtain
	\begin{equation}\label{eq:free weighted resolvent}
		\|r^{-1}\calR_{\kappa,R}h\|_{L^2((R,\infty),dr)}\lesssim \|rh\|_{L^2((R,\infty),dr)}.
	\end{equation}
	We also use the one-dimensional Hardy inequalities
	\begin{equation}\label{eq:exterior Hardy}
		\begin{aligned}
			\|r^{-2}u\|_{L^2((R,\infty),dr)}&\lesssim R^{-3/2}|u(R)|+\|r^{-1}\partial_r u\|_{L^2((R,\infty),dr)},\\
			\left\|r\int_r^\infty h(\rho)d\rho\right\|_{L^2((R,\infty),dr)}&\le\frac23\|r^2h\|_{L^2((R,\infty),dr)}.
		\end{aligned}
	\end{equation}

	To estimate $\calR_{\kappa,R}\calI_1$, we use the following equivalent norm to apply the Riesz representation theorem:
	\begin{equation}\label{eq:half line weights}
		\|f\|_{\mathrm{Mor}}^2\sim\int_0^\infty\left(\frac{|\partial_r(rf)|^2}{\langle r\rangle^2}+\frac{|rf|^2}{\langle r\rangle^4}\right)dr.
	\end{equation}
	By \eqref{eq:projection bounds}, \eqref{eq:resolvent source} holds, up to a fixed constant, for $g\in\dot H^1_{\rad}$. Then, for the map $g\mapsto (Hg,F)$ on the Hilbert space equipped with the norm in \eqref{eq:half line weights}, we apply the Riesz representation theorem to obtain a unique $q_0$ with $\|q_0\|_{\mathrm{Mor}}\lesssim\psi$ such that, for any $g\in\dot H^1_{\rad}$,
	\begin{equation}\label{eq:half line Riesz represent}
		(Hg,F)=\Re\int_0^\infty
		\left[\frac{\ol{\partial_r(rg)}\cdot q_1}{\langle r\rangle}+\frac{\ol{rg}\cdot q_2}{\langle r\rangle^2}\right]dr,
	\end{equation}
	where $q_1\coloneqq\partial_r(rq_0)/\langle r\rangle$ and $q_2\coloneqq rq_0/\langle r\rangle^2$. Hence
	\begin{equation}\label{eq:half line source}
		rHF=-\partial_r\left(\frac{q_1}{\langle r\rangle}\right)+\frac{q_2}{\langle r\rangle^2},\qquad
		\|q_1\|_{L^2(dr)}+\|q_2\|_{L^2(dr)}\lesssim\psi.
	\end{equation}
	Applying \eqref{eq:free weighted resolvent} and the second inequality in \eqref{eq:exterior Hardy} to \eqref{eq:half line source}, we obtain
	\begin{equation}\label{eq:calI1 esti}
		\begin{aligned}
			\|r^{-1}\calR_{\kappa,R}\calI_1\|_{L^2((R,\infty),dr)}\lesssim&\|r\calI_1\|_{L^2((R,\infty),dr)}\\
			\lesssim&\left\|r\left(\frac{q_1}{\langle r\rangle}+\int_r^\infty\frac{q_2(\rho)}{\langle\rho\rangle^2}d\rho\right)\right\|_{L^2((R,\infty),dr)}\\
			\lesssim&\|q_1\|_{L^2(dr)}+\|q_2\|_{L^2(dr)}\lesssim\psi.
		\end{aligned}
	\end{equation}

	For $\calI_2$, with $u=u_\sigma$ or $u_{-\sigma}$, we use
	\begin{equation*}
		\kappa^2\calR_{\kappa,R}\left[\int_r^\infty W^4u d\rho\right]
		=\int_r^\infty W^4u d\rho-e^{-\kappa(r-R)}\int_R^\infty W^4u d\rho
		-\calR_{\kappa,R}[\partial_r(W^4u)].
	\end{equation*}
	Using \eqref{eq:free weighted resolvent} for the last term, $W^4\lesssim r^{-4}$, $|\partial_r W^4|\lesssim r^{-5}$, and Cauchy--Schwarz, we obtain
	\begin{align*}
		&\|r^{-1}\calR_{\kappa,R}\calI_2\|_{L^2((R,\infty),dr)}\\
		&\lesssim R^{-2}(\|\tfrac1{r^2}u\|_{L^2((R,\infty),dr)}+\|\tfrac1r\partial_r u\|_{L^2((R,\infty),dr)})
		\lesssim R^{-2}\|f\|_{\mathrm{Mor}(R,\infty)}.
	\end{align*}

	For $\calI_3$, we have
	\begin{equation}\label{eq:calI3 identity}
		\int_r^\infty W^4(\partial_\rho^2u) d\rho=-W^4\partial_r u-\int_r^\infty(\partial_\rho W^4)(\partial_\rho u)d\rho.
	\end{equation}
	Thus \eqref{eq:free weighted resolvent} and \eqref{eq:exterior Hardy} imply
	\begin{align*}
		&\|r^{-1}\calR_{\kappa,R}\calI_3\|_{L^2((R,\infty),dr)}\\
		&\lesssim\|r\partial_r(W^4u)\|_{L^2((R,\infty),dr)}+\|rW^4\partial_r u\|_{L^2((R,\infty),dr)}\\
		&+\|r^2(\partial_r W^4)(\partial_r u)\|_{L^2((R,\infty),dr)}+\|r^2W^8u\|_{L^2((R,\infty),dr)}
		\lesssim R^{-2}\|f\|_{\mathrm{Mor}(R,\infty)}.
	\end{align*}
	Since $R\ge1$ and $\partial_r f=r^{-1}\partial_r(rf)-f/r$,
	\begin{equation*}
		\|f\|_{\mathrm{Mor}(R,\infty)}\sim\|r^{-1}\partial_r(rf)\|_{L^2((R,\infty),dr)}+\|r^{-1}f\|_{L^2((R,\infty),dr)}.
	\end{equation*}
	Substituting the preceding estimates into \eqref{eq:integrated wave} and summing over $\sigma=\pm1$, we obtain
	\begin{equation*}
		\|r^{-1}\partial_r(rf)\|_{L^2((R,\infty),dr)}
		\lesssim\psi+R^{-1/2}|\left.\partial_r(rf)\right|_{r=R}|
		+R^{-2}\|f\|_{\mathrm{Mor}(R,\infty)}.
	\end{equation*}
	Applying the first inequality in \eqref{eq:exterior Hardy} with $u=rf$ to estimate $\|r^{-1}f\|_{L^2((R,\infty),dr)}$, we conclude \eqref{eq:compact source bound}.

	\textbf{Step 2.} We consider $z_n\to z_*$. By \eqref{eq:projection coercivity} and \eqref{eq:resolvent source} with $g=F$,
	\begin{equation}\label{eq:stationary source energy}
		\|F\|_{\dot H^1}^2\lesssim(HF,F)\le\psi\|F\|_{\mathrm{Mor}}\lesssim\psi\|F\|_{\dot H^1}.
	\end{equation}
	Thus $\|F_n\|_{\dot H^1}\lesssim\psi_n\to0$. Using \eqref{eq:projection} in \eqref{eq:resolvent equ}, we obtain, for every $R_1\ge1$,
	\begin{equation*}
		\begin{gathered}
			\|(1-\Pi)(-iH)f_n\|_{\dot H^1}+\|Hf_n\|_{\mathrm{Mor}}\lesssim1,\\
			-\Delta f_n=Hf_n+W^4\begin{pmatrix}5f_{1,n}\\f_{2,n}\end{pmatrix},\qquad 
			\|f_n\|_{H^2(r<R_1)}\lesssim_{R_1}1.
		\end{gathered}
	\end{equation*}
	Passing to a subsequence, $f_n\to f$ in $H^1_{\mathrm{loc}}$ and $C^1_{\mathrm{loc}}((0,\infty))$. Thus, by \eqref{eq:projection}, $(1-\Pi)(-iH)f_n$ converges in $H^1_{\mathrm{loc}}$. Moreover, since we have
	\begin{equation*}
		rHf_n=\begin{pmatrix}0&-1\\1&0\end{pmatrix}
		\left[z_nrf_n-rF_n+r(1-\Pi)(-iH)f_n\right],
	\end{equation*}
	we get $Hf_n\to Hf$ in $H^1_{\mathrm{loc}}$. Using the preceding convergence, Fatou's lemma, and passing to the limit in \eqref{eq:resolvent equ} with $\|F_n\|_{\dot H^1}\lesssim\psi_n\to0$ and \eqref{eq:compact source bound}, we obtain, for $R$ fixed sufficiently large,
	\begin{equation}\label{eq:stationary limiting sol}
		\begin{gathered}
			\|f\|_{\mathrm{Mor}}\le1,\qquad \Pi f=f,\qquad z_*f=\Pi(-iH)f,\\
			1\lesssim_R\|f\|_{H^1(r<R)}+|Rf(R)|+|\left.\partial_r(rf)\right|_{r=R}|,\qquad f\neq 0.
		\end{gathered}
	\end{equation}
	
	If $z_*=0$, \eqref{eq:eigenfunctions}, \eqref{eq:projection}, and \eqref{eq:stationary limiting sol} imply $(-iH)f=a_1iW+a_2\Lambda W$ for some $a_1,a_2\in\mathbb C$. Integrating this equation with $\|f\|_{\mathrm{Mor}}\le1$, we have
	\begin{align*}
		\partial_r f=\frac{1}{r^2}\int_0^r\rho^2
		\begin{pmatrix}
			a_1W-5W^4f_1\\-a_2\Lambda W-W^4f_2
		\end{pmatrix}d\rho
		=\frac{\sqrt3}4
		\begin{pmatrix}
			2a_1\\a_2
		\end{pmatrix}+O((1+|\log r|)r^{-2}).
	\end{align*}
	Since $\partial_rf_1,\partial_rf_2\in L^2((1,\infty),dr)$ by $\|f\|_{\mathrm{Mor}}\le1$, we have $a_1=a_2=0$. Thus $(-iH)f=0$. By \eqref{eq:green fundamentals}, $f\in\operatorname{span}_{\mathbb C}\{\Lambda W,iW\}$, hence $f=\Pi f=0$ by \eqref{eq:projection kernel} and \eqref{eq:stationary limiting sol}. This contradicts \eqref{eq:stationary limiting sol}.
	
	We now consider $z_*=i\omega$ with $\omega\in\mathbb R\setminus\{0\}$. To derive a contradiction, we will apply Proposition~\ref{prop:embedded eigenvalues} to $(-iH)f$. This requires $Hf\in H^2$. We establish this by analyzing the asymptotic behavior of $Hf$ through the identity
	\begin{equation}\label{eq:stationary Hf identity}
		r((Hf)_1-\sigma i(Hf)_2)=-\partial_r^2u_\sigma-W^4(3u_\sigma+2u_{-\sigma}).
	\end{equation}
	Hence, we first determine the asymptotic behavior of $u_\sigma$.
	We apply \eqref{eq:integrated wave} to $f_n$, with $\kappa_n^2=\sigma iz_n$ and $\Re\kappa_n>0$. To prove the convergence of $\calR_{\kappa,R}(\calI_1+\calI_2+\calI_3)$, we control the part of its defining integral over the exterior region uniformly in $n$.
	For $R_1\ge R$, \eqref{eq:calI1 esti} and Cauchy--Schwarz give
	\begin{equation}\label{eq:calI1 L1 esti}
		\|\calI_1\|_{L^1((R_1,\infty),dr)}\lesssim R_1^{-1/2}\|r\calI_1\|_{L^2((R_1,\infty),dr)}\lesssim R_1^{-1/2}\psi_n \to 0.
	\end{equation}
	For $\calI_2$, we use its definition, change the order of integration, and apply $\sup_n|z_n|<\infty$ and $\|f_n\|_{\mathrm{Mor}}=1$ to obtain
	\begin{equation}\label{eq:calI2 L1 esti}
		\begin{aligned}
			\|\calI_2\|_{L^1((R_1,\infty),dr)}
			&\lesssim |z_n|\int_{R_1}^\infty\int_r^\infty W(\rho)^4|u(\rho)|d\rho dr
			\lesssim\int_{R_1}^\infty r^{-3}|u|dr\\
			&\lesssim R_1^{-1/2}\|f_n\|_{\mathrm{Mor}}
			\lesssim R_1^{-1/2}.
		\end{aligned}
	\end{equation}
	To estimate $\calI_3$, we first use \eqref{eq:calI3 identity} to remove $\partial_r^2u$ and then change the order of integration, using $W^4\lesssim r^{-4}$, $|\partial_r W^4|\lesssim r^{-5}$, and $\|f_n\|_{\mathrm{Mor}}=1$:
	\begin{equation}\label{eq:calI3 L1 esti}
		\begin{aligned}
			\|\calI_3\|_{L^1((R_1,\infty),dr)}
			&\lesssim\int_{R_1}^\infty\big[r^{-5}|u|+r^{-4}|\partial_r u|+(r-R_1)(r^{-5}|\partial_r u|+r^{-8}|u|)\big]dr\\
			&\lesssim R_1^{-5/2}\|f_n\|_{\mathrm{Mor}}
			\lesssim R_1^{-5/2}.
		\end{aligned}
	\end{equation}
	Using \eqref{eq:calI1 L1 esti}, \eqref{eq:calI2 L1 esti}, and \eqref{eq:calI3 L1 esti}, together with $f_n\to f$ in $H^1_{\mathrm{loc}}$ and $C^1_{\mathrm{loc}}((0,\infty))$, we pass to the limit in \eqref{eq:integrated wave} and obtain
	\begin{equation}\label{eq:stationary first order}
		\partial_ru_\sigma=(\partial_ru_\sigma)(R)e^{-\kappa(r-R)}+\calR_{\kappa,R}(\calI_2+\calI_3),
	\end{equation}
	where $u_\sigma=r(f_1-\sigma if_2)$ and $\kappa=\lim_{n\to\infty}\kappa_n$, so $\kappa^2=\sigma iz_*=-\sigma \omega$. Thus, we have
	\begin{equation*}
		\kappa=
		\begin{cases}
			i\operatorname{sgn}(\omega)\sqrt{|\omega|},&\sigma=\operatorname{sgn}(\omega),\\
			\sqrt{|\omega|},&\sigma=-\operatorname{sgn}(\omega).
		\end{cases}
	\end{equation*}
	On the other hand, applying $-\partial_r^2+\kappa^2$ to \eqref{eq:stationary first order}, we get
	\begin{equation}\label{eq:stationary third order}
		\partial_r^3u_\sigma=\kappa^2\partial_ru_\sigma-\calI_2-\calI_3.
	\end{equation}
	
	The bounds \eqref{eq:calI2 L1 esti} and \eqref{eq:calI3 L1 esti}, with $R_1=R$, pass to the limiting function $f$ by local convergence and Fatou's lemma. Thus $\calI_2+\calI_3\in L^1((R,\infty),dr)$. Using \eqref{eq:stationary first order} with $|K_R(r,\rho)|\le |\kappa|^{-1}$ and then integrating the bound on $\partial_ru_\sigma$, we obtain, for $r\ge R$,
	\begin{equation}\label{eq:stationary pointwise}
		\begin{aligned}
			|\partial_ru_\sigma(r)|\le&|(\partial_ru_\sigma)(R)|
			+|\kappa|^{-1}\|\calI_2+\calI_3\|_{L^1((R,\infty),dr)}=O(1),\\
			|u_\sigma(r)|=&O(r).
		\end{aligned}
	\end{equation}
	Using \eqref{eq:stationary pointwise}, we estimate $\calI_2$ directly from its definition, while for $\calI_3$ we first apply \eqref{eq:calI3 identity} to remove the second derivatives:
	\begin{equation}\label{eq:stationary integrals pointwise}
		\begin{aligned}
			|\calI_2(r)|
			\le&|\kappa|^2\int_r^\infty W^4(3|u_\sigma|+2|u_{-\sigma}|)d\rho
			\lesssim|\kappa|^2\int_r^\infty\rho^{-3}d\rho\lesssim r^{-2},\\
			|\calI_3(r)|
			\lesssim&|\partial_rW^4|(|u_\sigma|+|u_{-\sigma}|)+W^4|\partial_ru_\sigma|\\
			&+\int_r^\infty[|\partial_\rho W^4|(|\partial_\rho u_\sigma|+|\partial_\rho u_{-\sigma}|)+W^8|u_\sigma|]d\rho\lesssim r^{-4}.
		\end{aligned}
	\end{equation}
	
	For the case $\kappa=i\operatorname{sgn}(\omega)\sqrt{|\omega|}$, we separate the integrals over $(R,r)$ and $(r,\infty)$ in \eqref{eq:stationary first order}. Collecting the terms proportional to $e^{-\kappa r}$, we obtain, for some $c_\sigma\in\mathbb C$,
	\begin{equation}\label{eq:stationary oscillatory representation}
		\begin{aligned}
			\partial_ru_\sigma
			=&c_\sigma e^{-\kappa r}+\int_r^\infty\frac{e^{-\kappa(\rho-r)}-e^{\kappa(\rho-r)}}{2\kappa}(\calI_2+\calI_3)(\rho)d\rho,\\
			\partial_r^2u_\sigma
			=&-\kappa c_\sigma e^{-\kappa r}+\frac12\int_r^\infty(e^{-\kappa(\rho-r)}+e^{\kappa(\rho-r)})(\calI_2+\calI_3)(\rho)d\rho.
		\end{aligned}
	\end{equation}
	By \eqref{eq:stationary integrals pointwise}, both integrals in \eqref{eq:stationary oscillatory representation} are $O(r^{-1})$.
	
	For the other case $\kappa=\sqrt{|\omega|}$, we use $|K_R(r,\rho)|+|\partial_rK_R(r,\rho)|\lesssim_\kappa e^{-\kappa|r-\rho|}$ in \eqref{eq:stationary first order}. Differentiating once in $r$, splitting the integral at $\rho=r/2$, and applying \eqref{eq:stationary integrals pointwise}, we obtain, for $r\ge2R$,
	\begin{equation}\label{eq:stationary positive root estimate}
		\begin{aligned}
			|\partial_ru_\sigma(r)|+|\partial_r^2u_\sigma(r)|
			\lesssim_\kappa&|(\partial_ru_\sigma)(R)|e^{-\kappa(r-R)}\\
			&+\int_R^\infty e^{-\kappa|r-\rho|}|\calI_2+\calI_3|d\rho=O(r^{-2}).
		\end{aligned}
	\end{equation}
	Thus, by \eqref{eq:stationary third order} and \eqref{eq:stationary integrals pointwise}--\eqref{eq:stationary positive root estimate}, we obtain, for $\sigma=\operatorname{sgn}(\omega)$ and $\kappa=i\sigma\sqrt{|\omega|}$,
	\begin{equation}\label{eq:stationary oscillatory asymptotics}
		\partial_r^ju_\sigma=c_\sigma(-\kappa)^{j-1}e^{-\kappa r}+O(r^{-1}),\qquad 
		\partial_r^ju_{-\sigma}=O(r^{-2})\qquad(1\le j\le3).
	\end{equation}
	By \eqref{eq:stationary Hf identity}, \eqref{eq:stationary pointwise}, and \eqref{eq:stationary oscillatory asymptotics}, we obtain, for some $c\in\mathbb C$ and $j=0,1$,
	\begin{equation}\label{eq:stationary Hf asymp}
		\begin{aligned}
			\partial_r^j(r(Hf)_1)=&c(-i\operatorname{sgn}(\omega)\sqrt{|\omega|})^je^{-i\operatorname{sgn}(\omega)\sqrt{|\omega|}r}+O(r^{-1}),\\
			\partial_r^j(r(Hf)_2)=&i\operatorname{sgn}(\omega)\partial_r^j(r(Hf)_1)+O(r^{-1}).
		\end{aligned}
	\end{equation}
	To exclude the oscillatory term, we use
	\begin{equation}\label{eq:stationary Hf equation}
		(z_*+iH)((-iH)f)=0.
	\end{equation}
	Indeed, by \eqref{eq:energy kernel}, \eqref{eq:eigenfunctions}, and \eqref{eq:projection}, $H(1-\Pi)(-iH)f=0$. Applying $-iH$ to the equation in \eqref{eq:stationary limiting sol}, we obtain \eqref{eq:stationary Hf equation}. Since $z_*=i\omega$, once we establish $Hf\in H^2$, we can apply Proposition~\ref{prop:embedded eigenvalues} to $(-iH)f$ to obtain a contradiction. On the other hand, \eqref{eq:stationary Hf equation} with $z_*=i\omega$ implies
	\begin{align*}
		&\partial_r\Im[r^2(\ol{(Hf)_1}\partial_r(Hf)_1+\ol{(Hf)_2}\partial_r(Hf)_2)]\\
		&=\Im[i\omega r^2(\ol{(Hf)_1}(Hf)_2-\ol{(Hf)_2}(Hf)_1)]=0.
	\end{align*}
	Thus, for some constant $C$,
	\begin{equation}\label{eq:HF cross}
		\Im[r^2(\ol{(Hf)_1}\partial_r(Hf)_1+\ol{(Hf)_2}\partial_r(Hf)_2)]=C.
	\end{equation}
	Since $Hf\in H^1_{\mathrm{loc}}$, its radial equation \eqref{eq:stationary Hf equation} implies $(rHf)(0)=0$ and $\partial_r(rHf)=O(1)$ at the origin. Thus, $C=0$. Taking $r\to\infty$ in \eqref{eq:HF cross} with \eqref{eq:stationary Hf asymp}, we get
	\begin{equation*}
		0=-2\operatorname{sgn}(\omega)\sqrt{|\omega|}|c|^2,\qquad c=0.
	\end{equation*}
	Consequently, by \eqref{eq:stationary Hf asymp}, we have $Hf=O(r^{-2})$ as $r\to\infty$, and hence $Hf\in L^2$. Then \eqref{eq:stationary Hf equation} implies $Hf\in H^2$. Therefore, \eqref{eq:stationary Hf equation} and Proposition~\ref{prop:embedded eigenvalues} yield $(-iH)f=0$, hence $Hf=0$. Now, \eqref{eq:stationary limiting sol} implies $f=0$, contradicting $f\ne0$.

	\textbf{Step 3.} We now consider $|z_n|\to\infty$. We write $h_n\coloneqq(-iH)f_n=(h_{1,n},h_{2,n})^\top$. By \eqref{eq:resolvent contradiction seq} and $H(1-\Pi)(-iH)f_n=0$,
	\begin{equation*}
		(z_n+iH)h_n=(-iH)F_n,\qquad f_n=z_n^{-1}\Pi(F_n+h_n).
	\end{equation*}
	For $\sigma=\pm1$, we choose $\kappa_n^2=\sigma iz_n$ with $\Re\kappa_n>0$. The equation for $h_n$ becomes
	\begin{equation*}
		(-\partial_r^2+\kappa_n^2)[r(h_{1,n}-\sigma ih_{2,n})]
		=rW^4(5h_{1,n}-\sigma ih_{2,n})-r((HF_n)_1-\sigma i(HF_n)_2).
	\end{equation*}
	Since $h_n\in\dot H^1$, we have $(rh_n)(0)=0$ and $h_n/\langle r\rangle\in L^2$. Applying \eqref{eq:free Dirichlet resolvent} with $R=0$, we obtain
	\begin{equation}\label{eq:stationary large z sol}
		\begin{aligned}
			r(h_{1,n}-\sigma ih_{2,n})
			=&\calR_{\kappa_n,0}[rW^4(5h_{1,n}-\sigma ih_{2,n})]\\
			&-\calR_{\kappa_n,0}[r((HF_n)_1-\sigma i(HF_n)_2)].
		\end{aligned}
	\end{equation}
	Here, $h_n/\langle r\rangle\in L^2$ excludes $e^{\kappa_n r}$, while $(rh_n)(0)=0$ excludes $e^{-\kappa_n r}$. We have $|K_0(r,\rho)|\le|\kappa_n|^{-1}$ and $|\partial_rK_0(r,\rho)|,|\partial_\rho K_0(r,\rho)|\le1$. Its mixed derivative is
	\begin{equation*}
		\partial_r\partial_\rho K_0(r,\rho)
		=\delta(r-\rho)-\tfrac12 \kappa_n(e^{-\kappa_n|r-\rho|}+e^{-\kappa_n(r+\rho)}),
	\end{equation*}
	where $\delta$ is the Dirac delta.
	Using \eqref{eq:half line source} with $F=F_n$ and $\psi=\psi_n$, we obtain
	\begin{equation*}
		\calR_{\kappa_n,0}(rHF_n)(r)
		=\int_0^\infty\left[
		\partial_\rho K_0(r,\rho)\frac{q_1(\rho)}{\langle\rho\rangle}
		+K_0(r,\rho)\frac{q_2(\rho)}{\langle\rho\rangle^2}
		\right]d\rho.
	\end{equation*}
	By Cauchy--Schwarz and $\langle r\rangle^{-1}\in L^2(dr)$, we get
	\begin{equation}\label{eq:stationary large z forcing esti}
		\begin{aligned}
			\|\langle r\rangle^{-1}\calR_{\kappa_n,0}(rHF_n)\|_{L^2(dr)}
			&\lesssim\|q_1\|_{L^2(dr)}+|\kappa_n|^{-1}\|q_2\|_{L^2(dr)},\\
			\|\langle r\rangle^{-1}\partial_r\calR_{\kappa_n,0}(rHF_n)\|_{L^2(dr)}
			&\lesssim(1+|\kappa_n|)\|q_1\|_{L^2(dr)}+\|q_2\|_{L^2(dr)}.
		\end{aligned}
	\end{equation}
	For the terms containing $W^4h_n$, the same bounds on $K_0(r,\rho)$, with $\langle r\rangle W^4\in L^2(dr)$, imply
	\begin{equation}\label{eq:stationary large z potential esti}
		\begin{aligned}
			\|\langle r\rangle^{-1}\calR_{\kappa_n,0}(rW^4h_n)\|_{L^2(dr)}
			&\lesssim|\kappa_n|^{-1}\|\langle r\rangle^{-1}rh_n\|_{L^2(dr)},\\
			\|\langle r\rangle^{-1}\partial_r\calR_{\kappa_n,0}(rW^4h_n)\|_{L^2(dr)}
			&\lesssim\|\langle r\rangle^{-1}rh_n\|_{L^2(dr)}.
		\end{aligned}
	\end{equation}
	Using \eqref{eq:half line weights} and \eqref{eq:half line source}, we take norms in \eqref{eq:stationary large z sol} for each $\sigma=\pm1$ and apply \eqref{eq:stationary large z forcing esti} and \eqref{eq:stationary large z potential esti}. Summing the resulting bounds, we obtain, for $|\kappa_n|=|z_n|^{1/2}\ge1$,
	\begin{equation}\label{eq:stationary large z esti}
		\begin{aligned}
			\|\langle r\rangle^{-1}rh_n\|_{L^2(dr)}
			&\lesssim\psi_n+|z_n|^{-1/2}\|\langle r\rangle^{-1}rh_n\|_{L^2(dr)},\\
			\|h_n\|_{\mathrm{Mor}}
			&\lesssim |z_n|^{1/2}\psi_n+\|\langle r\rangle^{-1}rh_n\|_{L^2(dr)}.
		\end{aligned}
	\end{equation}
	All constants in \eqref{eq:stationary large z forcing esti}--\eqref{eq:stationary large z esti} are independent of $n$. Since $|z_n|\to\infty$, the first inequality in \eqref{eq:stationary large z esti} gives $\|\langle r\rangle^{-1}rh_n\|_{L^2(dr)}\lesssim\psi_n$, and the second gives $\|h_n\|_{\mathrm{Mor}}\lesssim |z_n|^{1/2}\psi_n$. Using \eqref{eq:projection bounds} and \eqref{eq:stationary source energy} in $f_n=z_n^{-1}\Pi(F_n+h_n)$, we conclude
	\begin{equation*}
		1=\|f_n\|_{\mathrm{Mor}}=|z_n|^{-1}\|\Pi(F_n+h_n)\|_{\mathrm{Mor}}
		\lesssim |z_n|^{-1/2}\psi_n\to0.
	\end{equation*}
	This contradiction completes the proof of \eqref{eq:uniform stationary esti}.
\end{proof}

\begin{proof}[Proof of Proposition~\ref{prop:global spacetime}]
	We first prove \eqref{eq:retarded kernel bound} by Plancherel's theorem in time, and then deduce \eqref{eq:weighted smoothing} by duality. We initially assume that $F$ has compact support in time. To apply the Fourier transform in time on $\mathbb R$, we set $F(t)=\psi_1(t)=0$ for $t\notin I$. For $\epsilon>0$, we define
	\begin{equation*}
		f_\epsilon(t)\coloneqq\int_{-\infty}^t e^{-\epsilon(t-\sigma)}U_c(t-\sigma)F(\sigma)d\sigma.
	\end{equation*}
	By \eqref{eq:projection coercivity} and \eqref{eq:energy conservation}, $f_\epsilon\in L^2_t\dot H^1_x(\mathbb R)$. We write $\wt{\cdot}$ for the componentwise unitary Fourier transform in time. Using \eqref{eq:continuous flow}, we obtain
	\begin{equation*}
		(\epsilon+i\tau-\Pi(-iH))\wt{f_\epsilon}(\tau)=\wt F(\tau).
	\end{equation*}
	To apply Lemma~\ref{lem:resolvent esti}, we need to check that $\wt F$ satisfies \eqref{eq:resolvent source}.
	By the assumption \eqref{eq:retarded source} on $F$, we can apply the Riesz representation \eqref{eq:half line Riesz represent} to $F$, obtaining $q_1,q_2$ measurable in $t$. By \eqref{eq:half line source}, we have $\|q_1(t)\|_{L^2(dr)}+\|q_2(t)\|_{L^2(dr)}\lesssim\psi_1(t)$. Taking the time Fourier transform in \eqref{eq:half line source} and applying Cauchy--Schwarz together with \eqref{eq:half line weights}, we obtain, for $g\in\Pi\dot H^1_{\rad}$,
	\begin{align*}
		|(Hg,\wt{F}(\tau))|\lesssim
		\|g\|_{\mathrm{Mor}}(\|\wt{q_1}(\tau)\|_{L^2(dr)}+\|\wt{q_2}(\tau)\|_{L^2(dr)}).
	\end{align*}
	We can therefore apply Lemma~\ref{lem:resolvent esti} with $z=\epsilon+i\tau$. The resulting estimate and Plancherel's theorem in time give
	\begin{align*}
		\|\wt{f_\epsilon}(\tau)\|_{\mathrm{Mor}}
		&\lesssim\|\wt{q_1}(\tau)\|_{L^2(dr)}+\|\wt{q_2}(\tau)\|_{L^2(dr)},\\
		\|f_\epsilon\|_{L^2_t\mathrm{Mor}_x(\mathbb R)}^2
		&\lesssim\int_{\mathbb R}(\|\wt{q_1}(\tau)\|_{L^2(dr)}^2+\|\wt{q_2}(\tau)\|_{L^2(dr)}^2)d\tau\\
		&=\int_{\mathbb R}(\|q_1(t)\|_{L^2(dr)}^2+\|q_2(t)\|_{L^2(dr)}^2)dt
		\lesssim\|\psi_1\|_{L^2_t(\mathbb R)}^2.
	\end{align*}
	As $\epsilon\to0$, $f_\epsilon(t)\to\int_{-\infty}^tU_c(t-\sigma)F(\sigma)d\sigma$ in $\dot H^1$ for each $t$, so Fatou's lemma proves \eqref{eq:retarded kernel bound} for compactly supported $F$. If $t_2=\infty$, we apply \eqref{eq:retarded kernel bound} to $\mathbf 1_{[t_1,t_3]}F$. This truncation leaves the Duhamel integral unchanged for $t_1\le t\le t_3$, so letting $t_3\to \infty$ proves the estimate on $I$.

	To prove \eqref{eq:weighted smoothing}, we take compactly supported $h\in L^2_t\Pi\dot H^1_{\rad}(\mathbb R)$. Using Hardy's inequality and \eqref{eq:projection coercivity}, we apply the Riesz representation theorem for the energy inner product to obtain $F(t)\in\Pi\dot H^1_{\rad}$ such that, for any $g\in\Pi\dot H^1_{\rad}$,
	\begin{equation}\label{eq:dual Riesz}
		(h(t),g)_{\mathrm{Mor}}=(Hg,F(t)).
	\end{equation}
	Thus, \eqref{eq:retarded source} holds with $\psi_1(t)=\|h(t)\|_{\mathrm{Mor}}$. By \eqref{eq:energy conservation} and \eqref{eq:dual Riesz}, we have
	\begin{equation*}
		\int_{\mathbb R}(h(t),U_c(t)f_0)_{\mathrm{Mor}}dt
		=(Hq,f_0)\lesssim \|q\|_{\dot H^1}\|f_0\|_{\dot H^1}, \qquad q\coloneqq\int_{\mathbb R}U_c(-t)F(t)dt.
	\end{equation*}
	Expanding $(Hq,q)$ as a double integral and separating the regions $\sigma<t$ and $\sigma>t$, we obtain from \eqref{eq:projection coercivity}, \eqref{eq:energy conservation}, and \eqref{eq:retarded kernel bound}
	\begin{equation*}
		\|q\|_{\dot H^1}^2\lesssim(Hq,q)
		=2\int_{\mathbb R}(h(t),\int_{-\infty}^tU_c(t-\sigma)F(\sigma)d\sigma)_{\mathrm{Mor}}dt
		\lesssim\|h\|_{L^2_t\mathrm{Mor}_x(\mathbb R)}^2.
	\end{equation*}
	Hence,
	\begin{equation*}
		\left|\int_{\mathbb R}(h(t),U_c(t)f_0)_{\mathrm{Mor}}dt\right|\lesssim\|f_0\|_{\dot H^1}\|h\|_{L^2_t\mathrm{Mor}_x(\mathbb R)}.
	\end{equation*}
	Taking $h(t)=\mathbf 1_{[-T,T]}(t)U_c(t)f_0$ and letting $T\to\infty$ proves \eqref{eq:weighted smoothing}.
\end{proof}

\section{Formal modulation law and modified profiles}\label{sec:profiles}
The linearized NLS flow is governed by $\partial_t+iH$, whose dynamics can be separated into the generalized-kernel directions, the eigenmodes, and the radiation part. The generalized kernel relations \eqref{eq:g kernels} suggest introducing modulation parameters $b_k,\eta_k$ to describe the leading-order dynamics along these directions. Following the forward constructions in \cite{RodnianskiSterbenz2010,RaphaelRodnianski2012,MerleRaphaelRodnianski2013Invention,RaphaelSchweyer2013CPAMHeat,RaphaelSchweyer2014APDE,MerleRaphaelRodnianski2015CambJMath}, we seek a modified profile near $W$ and the corresponding formal modulation laws governing the desired quantized blow-up dynamics.

To construct the modified profile, we first determine the formal modulation laws by canceling the terms with the leading spatial growth along the generalized kernel directions. These laws govern the leading profile terms and the desired quantized blow-up dynamics. However, nonlinear interactions produce additional errors with lower spatial growth, which are still too large to close the remainder estimates. We therefore construct the nonlinear corrections $S_n$ recursively through $H^{-1}$ to eliminate these errors. Since these corrections have lower spatial growth than the generalized kernel profiles, they do not affect the leading modulation laws. This procedure, which we call the \emph{tail computation}, yields a modified profile with sufficiently small errors for the subsequent analysis.

We first pass to renormalized variables by defining $(s,y)$ and the renormalized solution $w$ as follows:
\begin{equation}\label{eq:renormal}
	s(t)=s_0+\int_0^t\frac{1}{\lambda(\tau)^{2}}d\tau,\qquad y=\frac r\lambda,\qquad
	w(s,y)=\lambda^{\frac{1}{2}}e^{-i\gamma}u(t,\lambda y)|_{t=t(s)}.
\end{equation}
Then
\begin{equation}\label{eq:w equ}
	\partial_s w-F(w)-\frac{\lambda_s}{\lambda}\Lambda w+i\gamma_s w=0,
	\qquad F(w)=i(\Delta w+|w|^4w).
\end{equation}
We look for a modified profile $W+P$. Its error is given by
\begin{equation}\label{eq:profile error}
	\Psi\coloneqq \partial_sP-F(W+P)-\frac{\lambda_s}{\lambda}\Lambda(W+P)+i\gamma_s(W+P).
\end{equation}
We will choose $P$ so that, after expanding $\Psi$ in the parameters, only the leading tails associated with the generalized kernel directions are retained, and the remaining error is sufficiently small for the subsequent analysis.

To organize this expansion, fix $L\ge1$. For notational convenience, we assign weight $k$ to $b_k,\eta_k$ and write $\{A\}_n$ for the part of a parameter polynomial $A$ of weight $n$:
\begin{equation}\label{eq:weighted homogeneity}
	\{A\}_n(\theta b_1,\theta\eta_1,\ldots,\theta^Lb_L,\theta^L\eta_L)
	=\theta^n\{A\}_n(b_1,\eta_1,\ldots,b_L,\eta_L).
\end{equation}
We first determine the formal evolution of $b_k,\eta_k$ from the generalized kernel directions. By \eqref{eq:g kernels}, the terms along $\Lambda W,iW$ are
\begin{equation}\label{eq:weight one}
	iH(b_1\mathsf V_1+\eta_1\mathsf U_1)-\frac{\lambda_s}{\lambda}\Lambda W+i\gamma_s W=-\left(\frac{\lambda_s}{\lambda}+b_1\right)\Lambda W+\left(\gamma_s-\frac{\eta_1}{2}\right)iW.
\end{equation}
Cancelling these terms gives the first formal modulation laws
\begin{equation}\label{eq:first formal law}
	\frac{\lambda_s}{\lambda}=-b_1,\qquad \gamma_s=\frac{\eta_1}{2}.
\end{equation}
For $1\le k\le L$, under \eqref{eq:first formal law}, the leading $r^{2k-1}$ tails come from
\begin{equation*}
	(\partial_s b_k-b_{k+1})\mathsf V_k+(\partial_s\eta_k-\eta_{k+1})\mathsf U_k+(b_1\Lambda+\tfrac{i}{2}\eta_1)(b_k\mathsf V_k+\eta_k\mathsf U_k).
\end{equation*}
Using \eqref{eq:kernel tails}, vanishing of the $r^{2k-1}$ tail yields the modulation laws
\begin{equation}\label{eq:formal mod laws}
	\begin{aligned}
		\partial_s b_k-b_{k+1}+(2k-\tfrac12)b_1b_k+\tfrac12\eta_1\eta_k&=0,\\
		\partial_s\eta_k-\eta_{k+1}+(2k-\tfrac12)b_1\eta_k-\tfrac12\eta_1b_k&=0,
	\end{aligned}\qquad 1\le k\le L.
\end{equation}
Here $b_{L+1}=\eta_{L+1}=0$. In \eqref{eq:first formal law} and \eqref{eq:formal mod laws}, the coefficients are grouped according to the leading spatial tails of the corresponding profiles. From this viewpoint, $\lambda_s/\lambda,\gamma_s$ can be regarded as having weight $1$, and $\partial_s b_j,\partial_s\eta_j$ as having weight $j+1$.

With the formal laws now fixed, the cancellation of the leading tails leaves only better-decaying errors. We introduce correction profiles $S_n$ to remove these remaining errors and seek
\begin{equation}\label{eq:profile form}
	P=\sum_{n=1}^L P_n,\qquad
	P_n=b_n\mathsf V_n+\eta_n\mathsf U_n+S_n,\qquad S_1=0.
\end{equation}
We write
\begin{equation*}
	\mathbf{Mod}\coloneqq
	\begin{pmatrix}
		\lambda_s/\lambda\\\gamma_s\\
		\partial_s b_1\\\partial_s\eta_1\\
		\vdots\\
		\partial_s b_L\\\partial_s\eta_L
	\end{pmatrix}
	+\mathbf{Mod}_0,
	\qquad
	\mathbf{Mod}_0\coloneqq
	\begin{pmatrix}
		b_1\\ -\frac12\eta_1\\
		-b_2+\frac32 b_1^2+\frac12\eta_1^2\\
		-\eta_2+b_1\eta_1\\
		\vdots\\
		(2L-\frac12)b_1b_L+\frac12\eta_1\eta_L\\
		(2L-\frac12)b_1\eta_L-\frac12\eta_1b_L
	\end{pmatrix},
\end{equation*}
and, for $1\le j\le L$,
\begin{equation*}
	\begin{aligned}
		\mathbf{Mod}_\lambda\coloneqq&\frac{\lambda_s}{\lambda}+b_1,\\
		\mathbf{Mod}_\gamma\coloneqq&\gamma_s-\frac{\eta_1}{2},\\
		\mathbf{Mod}_{b_j}\coloneqq&\partial_s b_j-b_{j+1}\\
		&+(2j-\tfrac12)b_1b_j+\tfrac12\eta_1\eta_j,\\
		\mathbf{Mod}_{\eta_j}\coloneqq&\partial_s\eta_j-\eta_{j+1}\\
		&+(2j-\tfrac12)b_1\eta_j-\tfrac12\eta_1b_j,
	\end{aligned}
	\quad
	\textbf{v}\coloneqq
	\begin{pmatrix}
		\Lambda(W+P)\\
		-i(W+P)\\
		-\partial_{b_1}P\\
		-\partial_{\eta_1}P\\
		\vdots\\
		-\partial_{b_L}P\\
		-\partial_{\eta_L}P
	\end{pmatrix},
	\,\,
	\textbf{v}_n\coloneqq
	\begin{pmatrix}
		\Lambda P_n\\
		-iP_n\\
		-\partial_{b_1}P_n\\
		-\partial_{\eta_1}P_n\\
		\vdots\\
		-\partial_{b_L}P_n\\
		-\partial_{\eta_L}P_n
	\end{pmatrix}.
\end{equation*}

To construct the corrections, write
\begin{equation}\label{eq:nonlinearity}
	\calN_W(\td\eps)=F(W+\td\eps)+iH\td\eps
	=i\{|W+\td\eps|^4(W+\td\eps)-W^5-W^4(3\td\eps+2\ol{\td\eps})\}.
\end{equation}
Then, \eqref{eq:profile error} becomes
\begin{equation*}
	\Psi=-\mathbf{Mod}\cdot\textbf{v}
	+\mathbf{Mod}_0\cdot\textbf{v}+iHP-\calN_W(P).
\end{equation*}
We retain the modulation term $-\mathbf{Mod}\cdot\textbf{v}$ and choose $S_n$ to cancel the remaining terms successively in weight. The weight-$1$ terms cancel by \eqref{eq:weight one}. For $2\le n\le L$, suppose that $S_2,\ldots,S_{n-1}$ have been constructed. The weight-$n$ error to be cancelled is
\begin{equation}
	\{\Psi+\mathbf{Mod}\cdot\textbf{v}\}_n=\Theta_n+iHS_n,
\end{equation}
where
\begin{equation}\label{eq:theta}
	\Theta_n\coloneqq\mathbf{Mod}_0\cdot\textbf{v}_{n-1}
	+iH(b_n\mathsf V_n+\eta_n\mathsf U_n)
	-\bigg\{\calN_W\bigg(\sum_{j=1}^{n-1}P_j\bigg)\bigg\}_n.
\end{equation}
Since $\calN_W$ has no linear term and the $b_n,\eta_n$ terms cancel by \eqref{eq:g kernels}, $\Theta_n$ is determined by the previously constructed profiles and lower-index parameters. To cancel the weight-$n$ error, we set
\begin{equation}\label{eq:Sn equation}
	S_n\coloneqq H^{-1}(i\Theta_n),\qquad -iHS_n=\Theta_n.
\end{equation}
After iterating up to $n=L$, define $\Theta_{L+1}$ by \eqref{eq:theta} with $n=L+1$ and $b_{L+1}=\eta_{L+1}=0$,
\begin{equation}\label{eq:Theta L1 def}
	\Theta_{L+1}\coloneqq
	\{\Psi+\mathbf{Mod}\cdot\textbf{v}\}_{L+1}
	=\mathbf{Mod}_0\cdot\textbf{v}_{L}
	-\bigg\{\calN_W\bigg(\sum_{j=1}^{L}P_j\bigg)\bigg\}_{L+1}.
\end{equation}
Collecting the remaining weights gives
\begin{equation}\label{eq:residual expansion}
	\Psi=-\mathbf{Mod}\cdot\textbf{v}+\td\Psi,
\end{equation}
where $\Theta_n$ is the weight-$n$ error,
\begin{equation}\label{eq:Psi td def}
	\td\Psi\coloneqq\Theta_{L+1}+\sum_{n=L+2}^{5L}\{\Psi\}_n,\qquad
	\{\Psi\}_n=-\{\calN_W(P)\}_n\quad(L+2\le n\le5L).
\end{equation}

We now record the resulting properties of the modified profile and its error.

\begin{prop}[Modified profiles]\label{prop:finite profile}
	For $2\le n\le L$, the corrections $S_n$ defined by \eqref{eq:theta} and \eqref{eq:Sn equation} are polynomials of weight $n$, depending only on $b_j,\eta_j$ with $j<n$, and \eqref{eq:profile form} satisfies
	\begin{equation}\label{eq:profile cancellation}
		\{\Psi\}_n=-\{\mathbf{Mod}\cdot\textbf{v}\}_n\quad(0\le n\le L),\qquad
		\{\Psi\}_n=0\quad(n>5L).
	\end{equation}
	The radial profiles $f$ in $S_n$ and $\td\Psi$ are smooth and regular, satisfying, for every integer $m\ge0$,
	\begin{equation}\label{eq:correction pointwise}
		|f|_m\lesssim_m
		\begin{cases}
			\begin{aligned}
				r^2\langle r\rangle^{2n-4}
				&\quad \text{in}\quad S_n,
				&& 2\le n\le L,\\
				r^2\langle r\rangle^{2L-4}
				&\quad \text{in}\quad \Theta_{L+1},
				&\\
				r^4\langle r\rangle^{2n-9}
				&\quad \text{in}\quad \{\Psi\}_n,
				&& L+2\le n\le 5L.
			\end{aligned}
		\end{cases}
	\end{equation}
	Their adapted derivatives satisfy
	\begin{equation}\label{eq:lin profile decay}
		\begin{aligned}
			|\nabla^m((-iH)^nf)|&\lesssim_m\langle r\rangle^{-3-m}
			&&\text{in}\quad S_n, &&2\le n\le L,\\
			|\nabla^m((-iH)^Lf)|&\lesssim_m\langle r\rangle^{-3-m}
			&&\text{in} \quad \Theta_{L+1}.
		\end{aligned}
	\end{equation}
	Every profile $f$ in $\td\Psi$ also satisfies
	\begin{equation}\label{eq:weighted residual}
		\sum_{j=0}^m\|\langle x\rangle^{-10L+3+j}\nabla^jf\|_{L^2}
		+\|\langle x\rangle^{-10L+3}f\|_{H^m}\lesssim_m 1.
	\end{equation}
\end{prop}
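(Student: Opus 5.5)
We argue by induction on the weight $n$, with one inductive statement that carries the polynomial structure of $S_n$ and the pointwise bound \eqref{eq:correction pointwise} for $S_n$ together. The algebraic claims come first. By construction $\calN_W$ has no linear term, $\mathbf{Mod}_0$ has entries that are parameter polynomials of weight one more than their row index (row $\lambda,\gamma$ weight $1$, row $b_j,\eta_j$ weight $j+1$), and $\textbf{v}_{n-1}$ involves only $P_{n-1}$. Hence $\Theta_n$ in \eqref{eq:theta} is a weight-$n$ polynomial whose profile coefficients are built from $\mathsf V_j,\mathsf U_j,S_j$ with $j<n$. The $b_n,\eta_n$ contributions are exactly $iH(b_n\mathsf V_n+\eta_n\mathsf U_n)=-(b_n i\mathsf V_{n-1}\cdot i\ldots)$, and they cancel against the $-b_n\mathsf V_{n-1}$, $-\eta_n\mathsf U_{n-1}$ entries of $\mathbf{Mod}_0\cdot\textbf{v}_{n-1}$, using $-\partial_{b_{n-1}}P_{n-1}=-\mathsf V_{n-1}$. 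Therefore $S_n=H^{-1}(i\Theta_n)$ depends only on lower-index parameters. The identities in \eqref{eq:profile cancellation} then follow by unwinding the definitions: $\{\Psi+\mathbf{Mod}\cdot\textbf{v}\}_n=\Theta_n+iHS_n=0$ for $n\le L$. The vanishing $\{\Psi\}_n=0$ for $n>5L$ holds because $P$ has weight at most $L$ and $\calN_W(P)$ is a polynomial of degree at most $5$ in $P$.

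For the pointwise bounds I would prove, by induction, the stronger statement that $|\Theta_n|_m\lesssim r^2\langle r\rangle^{2n-6}$ after the leading tails cancel. Lemma~\ref{lem:inverse} (specifically \eqref{eq:inverse growth}, or \eqref{eq:higher inverse tail}) then gives $|S_n|_m\lesssim r^2\langle r\rangle^{2n-4}$. The key input is the tail cancellation \eqref{eq:tail cancellations}. In $\mathbf{Mod}_0\cdot\textbf{v}_{n-1}$, the terms $b_1\Lambda P_{n-1}$ and $-\tfrac{1}{2}\eta_1 iP_{n-1}$, together with the $(2j-\tfrac12)b_1b_j$ pieces, combine into $(\Lambda-2(n-1)+\tfrac12)\mathsf V_{n-1}$ and $i\mathsf V+\mathsf U$-type expressions. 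These gain a factor $\langle r\rangle^{-1}$ relative to the naive growth $r^{2n-3}$.

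The nonlinear part is bounded separately. $\calN_W(Q)$ is at least quadratic in $Q$, and each factor $W^k$ with $k\le 3$ comes with the remaining powers of $Q$. Since $|P_j|\lesssim\langle r\rangle^{2j-1}$ and $W\sim r^{-1}$, a product of weight $n$ with $p\ge2$ factors of $P$ and $5-p$ factors of $W$ grows like $\langle r\rangle^{2n-p-(5-p)}=\langle r\rangle^{2n-5}$. The vanishing at the origin comes from $\mathsf V_j(0)=\mathsf U_j(0)=S_j(0)=0$, which gives at least a factor $r^2$ for each factor of $P$. The same counting, applied at weight $L+1$ and at weights $n\ge L+2$, gives the bounds on $\Theta_{L+1}$ and $\{\Psi\}_n$, where two $P$-factors provide $r^4$. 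Bound \eqref{eq:weighted residual} is then immediate from \eqref{eq:correction pointwise}, since $2n-9+4\le 10L-5$ and the weight $\langle x\rangle^{-10L+3}$ makes everything square integrable.

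I expect \eqref{eq:lin profile decay} to be the main obstacle. The plan is to apply $(-iH)^{n}$ to $S_n$, using $-iHS_n=\Theta_n$, and to exploit the fact that $\Theta_n$ is itself generated from $\mathsf V_j,\mathsf U_j,S_j$ with $(-iH)^j$ reducing $\mathsf V_j$ to $\Lambda W$. The key step is to show inductively that $(-iH)^{n-1}\Theta_n$ decays like $\langle r\rangle^{-3}$ with symbol-type derivatives. Each application of $-iH$ lowers the growth by two through \eqref{eq:lin iterated products}, so naive counting from $\langle r\rangle^{2n-5}$ after $n-1$ steps gives $\langle r\rangle^{-3}$. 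This requires that no exact homogeneous power $r^{2k-1}$ (which $\Delta$ sends to a lower power rather than annihilating) or logarithm obstructs the count. Logarithms might arise from $H^{-1}$ acting on $r^{-1}$-tails, and they are the point I would check most carefully. I would use the commutator structure of \eqref{eq:lin commutator coefficients} to handle the $\Lambda$ and $i$ pieces in $\mathbf{Mod}_0\cdot\textbf{v}_{n-1}$. The case of $\Theta_{L+1}$ is analogous.
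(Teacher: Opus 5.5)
Your algebraic bookkeeping, your proof of \eqref{eq:correction pointwise}, and your proof of \eqref{eq:weighted residual} follow the paper. For \eqref{eq:correction pointwise} this means tail cancellation via \eqref{eq:tail cancellations} for the kernel terms, the inductive bound for the $S_{n-1}$ terms, weight counting for $\calN_W$, and then \eqref{eq:inverse growth}.

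The gap is \eqref{eq:lin profile decay}. You flag it as the main obstacle, but the counting you sketch does not close. Starting the count at $\langle r\rangle^{2n-5}$ is legitimate only for the nonlinear part of $\Theta_n$. The linear part is only $O(\langle r\rangle^{2n-4})$. It consists of $b_1(\Lambda-2n+\frac52)(b_{n-1}\mathsf V_{n-1}+\eta_{n-1}\mathsf U_{n-1})$, the $\eta_1$-terms with $i\mathsf V_{n-1}+\mathsf U_{n-1}$ and $i\mathsf U_{n-1}-\mathsf V_{n-1}$, and the terms $(b_1\Lambda+\frac i2\eta_1)S_{n-1}$, $\partial_{b_j}S_{n-1}$, $\partial_{\eta_j}S_{n-1}$. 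The bound $O(\langle r\rangle^{2n-4})$ is exactly your own estimate $r^2\langle r\rangle^{2n-6}$, and this growth is attained in general. For instance, $\mathsf V_1-C_1r$ tends to a nonzero constant, so $(\Lambda-\frac32)\mathsf V_1\not\to0$. Applying \eqref{eq:lin iterated products} $n-1$ times then gives only $\langle r\rangle^{-2-m}$, one power short. The missing power comes from an exact cancellation that bound-level counting cannot detect. Logarithms from $H^{-1}$ are not the issue, since here $-iH$ is only applied forward.

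The missing idea is to compute $(-iH)^{n-1}$ of the linear terms exactly. Write $k=n-1$ and use
$(-iH)^k\mathsf V_k=\Lambda W$, $(-iH)^k\mathsf U_k=\frac i2W$, and $(-iH)^k\Lambda=(\Lambda+2k)(-iH)^k+\big([(-iH)^k,\Lambda]-2k(-iH)^k\big)$. One finds
\begin{gather*}
(-iH)^k(\Lambda-2k+\tfrac12)\mathsf V_k=(\Lambda+\tfrac12)\Lambda W+\big([(-iH)^k,\Lambda]-2k(-iH)^k\big)\mathsf V_k,\\
(-iH)^k(i\mathsf V_k+\mathsf U_k)=i(\Lambda+\tfrac12)W+[(-iH)^k,i]\mathsf V_k,
\end{gather*}
and similarly for $\mathsf U_k$ and $i\mathsf U_k-\mathsf V_k$.

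The commutator terms are $O(\langle r\rangle^{-3-m})$ after $m$ derivatives by \eqref{eq:lin commutator coefficients}. This is because $[iH,\Lambda]-2iH$ and $[iH,i]$ are multiplications by coefficients of size $O(\langle r\rangle^{-4})$. The main terms decay like $\langle r\rangle^{-3}$ only because $\Lambda+\frac12$ annihilates $r^{-1}$: explicitly $(\Lambda+\frac12)W=W^3$ and $(\Lambda+\frac12)\Lambda W=3W^5-\frac52W^3$.

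The $S_k$-terms need $(-iH)^k\Lambda S_k=(\Lambda+2k)(-iH)^kS_k+(\text{commutator})$, together with the decay of $(-iH)^kS_k$ itself. So \eqref{eq:lin profile decay} has to be carried in the same induction as the pointwise bound on $S_n$, rather than derived afterwards. The bound for $\Theta_{L+1}$ is then the case $k=L$.
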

\begin{proof}
	For convenience, we omit the coefficients involving $b_j,\eta_j$ in the spatial estimates. We first prove the first bounds in \eqref{eq:correction pointwise} and \eqref{eq:lin profile decay} for $S_n$, $2\le n\le L$, simultaneously by induction on $n$. Suppose that these bounds hold up to $S_n$ for some $1\le n<L$, with $S_1=0$.
	
	Substituting $P_n=b_n\mathsf V_n+\eta_n\mathsf U_n+S_n$ into \eqref{eq:theta} and using \eqref{eq:g kernels}, we obtain
	\begin{equation}
		\begin{aligned}
			\Theta_{n+1}=&b_1(\Lambda-2n+\tfrac12)(b_n\mathsf V_n+\eta_n\mathsf U_n)
			+\tfrac{\eta_1}2[b_n(i\mathsf V_n+\mathsf U_n)+\eta_n(i\mathsf U_n-\mathsf V_n)]\\
			&+(b_1\Lambda+\tfrac i2\eta_1)S_n
			-\sum_{j=1}^{n-1}\big[(\mathbf{Mod}_0)_{b_j}\partial_{b_j}S_n
			+(\mathbf{Mod}_0)_{\eta_j}\partial_{\eta_j}S_n\big]\\
			&-\bigg\{\calN_W\bigg(\sum_{j\le n}P_j\bigg)\bigg\}_{n+1}.
		\end{aligned}\label{eq:linear profile cancellation}
	\end{equation}
	By \eqref{eq:tail cancellations}, the first line has $|\cdot|_m$ size bounded by $r^{2n}\langle r\rangle^{-2}\le r^2\langle r\rangle^{2n-4}$. The second line involving $S_n$ satisfies the same bound by the first bound in \eqref{eq:correction pointwise} at order $n$. For the nonlinear term, \eqref{eq:kernel growth} and the induction hypothesis give
	\begin{equation}\label{eq:nonlinear global weight}
		\bigg|\bigg\{\calN_W\bigg(\sum_{j\le n}P_j\bigg)\bigg\}_{n+1}\bigg|_m
		\lesssim_m\sum_{d=2}^5r^{2d}\langle r\rangle^{2n-2d-3}
		\lesssim_m r^4\langle r\rangle^{2n-7}.
	\end{equation}
	Thus $|\Theta_{n+1}|_m\lesssim_m r^2\langle r\rangle^{2n-4}\lesssim_m\langle r\rangle^{2n-2}$. By \eqref{eq:Sn equation} and \eqref{eq:inverse growth}, we obtain
	\begin{equation*}
		|S_{n+1}|_m\lesssim_m r^2\langle r\rangle^{2n-2},
	\end{equation*}
	which proves the first bound in \eqref{eq:correction pointwise} at order $n+1$. Equation \eqref{eq:linear profile cancellation} also shows that $\Theta_{n+1}$, and hence $S_{n+1}$, has weight $n+1$ and depends only on parameters with indices at most $n$.
	
	We next prove the first bound in \eqref{eq:lin profile decay} at order $n+1$. Since $(-iH)^{n+1}S_{n+1}=(-iH)^n\Theta_{n+1}$, it suffices to estimate $(-iH)^n\Theta_{n+1}$. Applying $(-iH)^n$ to the first two lines of \eqref{eq:linear profile cancellation} and using \eqref{eq:g kernels}, we have
	\begin{equation*}
		\begin{aligned}
			(-iH)^n(\Lambda-2n+\tfrac12)\mathsf V_n
			&=(\Lambda+\tfrac12)\Lambda W
			+\big([(-iH)^n,\Lambda]-2n(-iH)^n\big)\mathsf V_n,\\
			(-iH)^n(\Lambda-2n+\tfrac12)\mathsf U_n
			&=\tfrac i2(\Lambda+\tfrac12)W
			+\big([(-iH)^n,\Lambda]-2n(-iH)^n\big)\mathsf U_n,\\
			(-iH)^n(i\mathsf V_n+\mathsf U_n)
			&=i(\Lambda+\tfrac12)W+[(-iH)^n,i]\mathsf V_n,\\
			(-iH)^n(i\mathsf U_n-\mathsf V_n)
			&=-(\Lambda+\tfrac12)W+[(-iH)^n,i]\mathsf U_n.
		\end{aligned}
	\end{equation*}
	We use the identities
	\begin{equation}\label{eq:ground state cancellations}
		(\Lambda+\tfrac12)W=W^3,\qquad
		(\Lambda+\tfrac12)\Lambda W=3W^5-\tfrac52 W^3.
	\end{equation}
	The first terms are $O(\langle r\rangle^{-3-m})$ after $m$ derivatives by \eqref{eq:ground state cancellations}, and the same bound holds for the commutator terms by \eqref{eq:lin commutator coefficients} and \eqref{eq:kernel growth}.
	
	For the terms involving $S_n$, the induction hypothesis applies to $(-iH)^n\partial_{b_j}S_n$ and $(-iH)^n\partial_{\eta_j}S_n$. Moreover,
	\begin{equation*}
		\begin{aligned}
			(-iH)^n\Lambda S_n
			&=(\Lambda+2n)(-iH)^nS_n
			+\big([(-iH)^n,\Lambda]-2n(-iH)^n\big)S_n,\\
			(-iH)^n(iS_n)
			&=i(-iH)^nS_n+[(-iH)^n,i]S_n.
		\end{aligned}
	\end{equation*}
	The first terms satisfy the required bound by induction. By \eqref{eq:lin commutator coefficients} and the first bound in \eqref{eq:correction pointwise}, the commutator terms are $O(\langle r\rangle^{-4-m})$ after $m$ derivatives. Finally, \eqref{eq:lin iterated products} and \eqref{eq:nonlinear global weight} give
	\begin{equation*}
		\bigg|\nabla^m(-iH)^n
		\bigg\{\calN_W\bigg(\sum_{j\le n}P_j\bigg)\bigg\}_{n+1}\bigg|
		\lesssim_m\sum_{k=0}^{2n+m}
		\frac{\langle r\rangle^{2n-3-k}}{\langle r\rangle^{2n+m-k}}
		\lesssim_m\langle r\rangle^{-3-m}.
	\end{equation*}
	This proves the first bound in \eqref{eq:lin profile decay} at order $n+1$ and completes the induction.
	
	The preceding estimates for $\Theta_{n+1}$ with $n=L$ prove the second bounds in \eqref{eq:correction pointwise} and \eqref{eq:lin profile decay}, including $L=1$ since $S_1=0$. For $L+2\le n\le5L$, \eqref{eq:residual expansion},
	\eqref{eq:kernel growth}, and the first bound in
	\eqref{eq:correction pointwise} give
	\begin{equation*}
		|\{\Psi\}_n|_m=|\{\calN_W(P)\}_n|_m
		\lesssim_m\sum_{d=2}^5r^{2d}\langle r\rangle^{2n-2d-5}
		\lesssim_m r^4\langle r\rangle^{2n-9},
	\end{equation*}
	which proves the last bound in \eqref{eq:correction pointwise}.	The profiles are smooth and regular by \eqref{eq:theta}, \eqref{eq:Sn equation}, and Lemma~\ref{lem:inverse}. Finally, \eqref{eq:residual expansion} and \eqref{eq:correction pointwise} give \eqref{eq:profile cancellation} and \eqref{eq:weighted residual}, respectively.
\end{proof}
As a consequence of this proposition, we have $P_j\in\dot{\calH}^{2L+1}$ for $j\le L$ by \eqref{eq:correction pointwise} and Lemma~\ref{lem:dot calH pointwise}.

To obtain the quantized blow-up rate $\lambda(t)\sim(T-t)^{2L}$, we first identify a special solution of the formal ODE system \eqref{eq:formal mod laws} that generates this rate. We will later construct solutions whose modulation parameters remain close to this special solution by a topological shooting argument. For this purpose, we linearize \eqref{eq:formal mod laws} around the special solution and separate the stable and unstable directions. Since \eqref{eq:formal mod laws} coincides with the modulation system in \cite{JeongKim2024arXiv}, we recall the corresponding special solution and its linearization. We observe that the following solution
\begin{equation}\label{eq:special}
	\begin{gathered}
		b_k^e(s)\coloneqq\frac{c_k}{s^k},\qquad \eta_k^e(s)\coloneqq0,
		\\
		c_1=\frac{2L}{4L-1},\qquad
		c_{k+1}=-\frac{L-k}{4L-1}c_k,
	\end{gathered}\qquad 1\le k\le L
\end{equation}
solves \eqref{eq:formal mod laws}. With $\lambda_s/\lambda=-b_1^e$ and $dt/ds=\lambda^2$, integration gives
\begin{equation*}
	\lambda(s)\sim s^{-\frac{2L}{4L-1}},\qquad
	T-t(s)\sim s^{-\frac{1}{4L-1}},\qquad
	\lambda(t)\sim(T-t)^{2L}.
\end{equation*}
We measure the deviation from \eqref{eq:special} by
\begin{equation}\label{eq:fluctuations}
	\mathfrak A_b\coloneqq \left(s^k(b_k-b_k^e)\right)_{k=1}^L,\qquad
	\mathfrak A_\eta\coloneqq \left(s^k\eta_k\right)_{k=1}^L.
\end{equation}
\begin{lem}[Linearized modulation system]
	There are fixed real invertible matrices $\mathsf S_b,\mathsf S_\eta$ such that, for a solution of \eqref{eq:formal mod laws}, the transformed fluctuations
	\begin{equation}\label{eq:eigen coordinates}
		\calA_b\coloneqq\mathsf S_b\mathfrak A_b,\qquad
		\calA_\eta\coloneqq\mathsf S_\eta\mathfrak A_\eta
	\end{equation}
	satisfy
	\begin{equation}\label{eq:linear maps}
		s\partial_s \calA_b
		=\mathsf D_b \calA_b+O(|\calA_b|^2+|\calA_\eta|^2),\quad
		s\partial_s \calA_\eta
		=\mathsf D_\eta \calA_\eta+O(|\calA_b|^2+|\calA_\eta|^2),
	\end{equation}
	where
	\begin{equation}
		\mathsf D_b=\operatorname{diag}\left(-1,\left(\frac{k}{4L-1}\right)_{k=2}^L\right), \qquad
		\mathsf D_\eta=\operatorname{diag}\left(\left(\frac{k}{4L-1}\right)_{k=1}^L\right).
		\label{eq:ODE diagonal}
	\end{equation}
\end{lem}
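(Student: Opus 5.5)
We plan to linearize \eqref{eq:formal mod laws} around \eqref{eq:special} in the self-similar variables \eqref{eq:fluctuations}, then diagonalize the two resulting constant-coefficient matrices. Write $b_k=b_k^e+s^{-k}A_k$ and $\eta_k=s^{-k}B_k$, so that $\mathfrak A_b=(A_k)$ and $\mathfrak A_\eta=(B_k)$. Then $\partial_sb_k=\partial_sb_k^e+s^{-k-1}(s\partial_sA_k-kA_k)$. Every term in \eqref{eq:formal mod laws} has weight $k+1$, and $b_1b_k$, $\eta_1\eta_k$, $b_{k+1}$ all scale like $s^{-k-1}$. Hence, after subtracting the equation satisfied by the special solution and multiplying by $s^{k+1}$, we obtain an autonomous system in $\log s$:
\begin{align*}
s\partial_sA_k&=\tfrac{L-k}{4L-1}A_k+A_{k+1}-(2k-\tfrac12)c_kA_1-(2k-\tfrac12)A_1A_k-\tfrac12B_1B_k,\\
s\partial_sB_k&=\tfrac{L-k}{4L-1}B_k+B_{k+1}+\tfrac12c_kB_1-(2k-\tfrac12)A_1B_k+\tfrac12B_1A_k,
\end{align*}
with $A_{L+1}=B_{L+1}=0$. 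Here we used $k-(2k-\tfrac12)c_1=\tfrac{L-k}{4L-1}$. The quadratic remainders are exact homogeneous quadratics with constant coefficients, so they are $O(|\mathfrak A_b|^2+|\mathfrak A_\eta|^2)$. The linear part decouples into $s\partial_s\mathfrak A_b=\mathsf M_b\mathfrak A_b$ and $s\partial_s\mathfrak A_\eta=\mathsf M_\eta\mathfrak A_\eta$. Each of $\mathsf M_b,\mathsf M_\eta$ is the sum of an upper bidiagonal matrix (diagonal $\tfrac{L-k}{4L-1}$, superdiagonal $1$) and a rank-one matrix supported in the first column.

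It then remains to show that $\mathsf M_b$ has the simple real eigenvalues $-1$ and $\tfrac{k}{4L-1}$ for $2\le k\le L$, and that $\mathsf M_\eta$ has the simple real eigenvalues $\tfrac{k}{4L-1}$ for $1\le k\le L$. Distinct real eigenvalues give real invertible $\mathsf S_b,\mathsf S_\eta$ with $\mathsf S\mathsf M\mathsf S^{-1}=\mathsf D$. Applying $\mathsf S$ to the system and noting that the quadratic terms stay quadratic under a fixed linear change of variables yields \eqref{eq:linear maps}.

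For the eigenvalues, we solve the eigenvector equation $(\mathsf M-\mu)v=0$ as a forward recursion. Normalizing $v_1=1$ gives, for $\mathsf M_b$,
\begin{equation*}
v_{k+1}=\big(\mu-\tfrac{L-k}{4L-1}\big)v_k+(2k-\tfrac12)c_k,
\end{equation*}
and the analogous recursion with $-\tfrac12c_k$ for $\mathsf M_\eta$. The terminal condition $v_{L+1}=0$ is then a degree-$L$ polynomial equation in $\mu$ whose roots are the spectrum, so we only need to exhibit $L$ roots. For $\mathsf M_b$, the root $\mu=-1$ comes from time translation of the special solution: differentiating $c_k(s+s_0)^{-k}$ in $s_0$ gives $A_k=-kc_k/s$, i.e.\ a solution of the linearized system decaying like $s^{-1}$. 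For $\mathsf M_\eta$, one can look for a similar explicit solution generated by the phase-type symmetry. For the remaining roots $\mu=\tfrac{j}{4L-1}$, we would use the product formula $c_{k}=c_1\prod_{i<k}\big(-\tfrac{L-i}{4L-1}\big)$ and check directly that $v_{L+1}$ vanishes, by finding a closed form for $v_k$ as a hypergeometric-type sum in $k$.

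We expect this eigenvalue computation to be the main obstacle; everything else is mechanical. Since \eqref{eq:formal mod laws} coincides with the modulation system of \cite{JeongKim2024arXiv}, the first thing we would try is to quote the diagonalization obtained there, after checking that the normalizations of $c_k$ agree. If an independent argument is preferred, we would instead compute the characteristic polynomial through the determinant of the bidiagonal-plus-first-column structure (expansion along the first column gives $\det(\mathsf M-\mu)=\prod_k(\tfrac{L-k}{4L-1}-\mu)+\sum_k(\pm)(2k-\tfrac12)c_k\prod_{i>k}(\tfrac{L-i}{4L-1}-\mu)$). We would then verify the factorization by evaluating this expression at the claimed roots.
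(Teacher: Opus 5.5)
Your linearization matches the paper's (same diagonal $\tfrac{L-k}{4L-1}$, superdiagonal $1$, and first-column entries $-(2k-\tfrac12)c_k$ and $\tfrac12 c_k$), and the paper likewise quotes the diagonalization of these two matrices from \cite{JeongKim2024arXiv} and then applies $\mathsf S_b,\mathsf S_\eta$, so your primary plan is essentially the paper's proof. If you instead want the self-contained spectral check, the time-translation eigenvector for $\mu=-1$ is correct, but the phase rotation does not act on $(b_k,\eta_k)$ and so yields no eigenvector of $\mathsf M_\eta$; there you would have to compute the terminal polynomial $v_{L+1}(\mu)$ directly.
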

\begin{proof}
	Substituting \eqref{eq:fluctuations} into \eqref{eq:formal mod laws} gives
	\begin{equation*}
		\begin{aligned}
			s\partial_s(\mathfrak A_b)_k
			&=\frac{L-k}{4L-1}(\mathfrak A_b)_k+(\mathfrak A_b)_{k+1}
			-(2k-\tfrac12)c_k(\mathfrak A_b)_1+O(|\mathfrak A_b|^2+|\mathfrak A_\eta|^2),\\
			s\partial_s(\mathfrak A_\eta)_k
			&=\frac{L-k}{4L-1}(\mathfrak A_\eta)_k+(\mathfrak A_\eta)_{k+1}
			+\tfrac12c_k(\mathfrak A_\eta)_1+O(|\mathfrak A_b|^2+|\mathfrak A_\eta|^2),
		\end{aligned}
	\end{equation*}
	with $(\mathfrak A_b)_{L+1}=(\mathfrak A_\eta)_{L+1}=0$. These are the two linearizations in \cite{JeongKim2024arXiv}. Following the diagonalization in that work, there exist invertible matrices $\mathsf S_b,\mathsf S_\eta$ which diagonalize their linear parts with \eqref{eq:ODE diagonal}. Multiplying the two systems by these matrices gives \eqref{eq:linear maps}.
\end{proof}
Thus $(\calA_b)_1$ corresponds to the only stable direction. The coordinates $\left((\calA_b)_j\right)_{j=2}^L$ and $\calA_\eta$ give $2L-1$ unstable directions, as in \cite{JeongKim2024arXiv}. We use these diagonal coordinates in the bootstrap below; the unstable eigenmode $Y_+$ is handled separately.

\section{Trapped solutions}

In this section, we construct solutions realizing the formal blow-up dynamics described in the previous section. We first specify the initial data and fix the decomposition into the modified profile and a remainder by orthogonality conditions. We then introduce a trapped regime in which the modulation parameters remain close to the special solution \eqref{eq:special} and the remainder is sufficiently small to preserve the leading modulation laws. The bounds for the remainder and the stable modes are improved using the modulation estimates and the global Morawetz estimate of Proposition~\ref{prop:global spacetime}. The unstable modes are instead controlled by choosing their initial values through a topological argument based on Brouwer's fixed point theorem. This choice involves the $2L-1$ unstable directions in \eqref{eq:ODE diagonal} together with the unstable direction corresponding to $Y_+$.

\subsection{Decomposition}\label{sec:decomposition}

Fix $L\ge1$. We first decompose solutions near the soliton orbit. For $\delta_{\mathrm{dec}}>0$ sufficiently small, set
\begin{equation}\label{eq:decom tube}
	d_W(u)\coloneqq\inf_{\lambda>0, \gamma\in\mathbb R}\|u-W_{\lambda,\gamma}\|_{\dot H^1},\qquad
	\calO_{\mathrm{dec}}\coloneqq\{u\in\dot H^1_{\rad}:d_W(u)<\delta_{\mathrm{dec}}\}.
\end{equation}
We write the renormalized solution as
\begin{equation}\label{eq:w decom}
	w=W+\td\eps=W+P+\eps,
\end{equation}
where $P$ is the modified profile from \eqref{eq:profile form} and Proposition~\ref{prop:finite profile}. The profile $P$ grows at infinity, so $\eps=\td\eps-P$ need not belong to $\dot H^1$. We therefore keep $\td\eps$ in the energy space and first define $\eps$ in the weighted space $\langle r\rangle^{2L+1}L^2$. For $H^{2L+1}$ solutions, this decomposition yields $\eps\in\dot{\calH}^{2L+1}$, which is the space used in the higher-order analysis. 

We define $\calZ^\perp$ by orthogonality to $\calZ$, and $\calZ_{\mathrm h}^\perp$ by imposing two additional orthogonality conditions:
\begin{equation}
	\begin{gathered}
		\calZ\coloneqq\operatorname{span}_{\mathbb R}\{\phi_j,\varphi_j:0\le j\le L\},\\
		\calZ^\perp\coloneqq\{f\in L^2_{\mathrm{loc}}(\mathbb R^3):(\phi_j,f)=(\varphi_j,f)=0\quad(0\le j\le L)\},\\
		\calZ_{\mathrm h}^\perp\coloneqq\{f\in\langle r\rangle^{2L+1}L^2\cap\calZ^\perp:(-HY_-,f)=(-HY_+,f)=0\}.
	\end{gathered}
\end{equation}
The orthogonality conditions fix the modulation parameters and yield the following decomposition.

\begin{lem}[Decomposition]\label{lem:decomposition}
	There exist $0<\delta_{\mathrm{dec}}\ll\delta_p\ll1$ and functions $\psi_+,\psi_-\in C^\infty_{c,\rad}$ such that the following conditions hold. With $\psi_\pm$ fixed, the decomposition maps are unique.
	\begin{enumerate}
		\item[(1)] (Decomposition on $\dot H^1_{\rad}$) The map $(\lambda,\gamma,\td\eps):\calO_{\mathrm{dec}}\to\mathbb R_+\times\mathbb R/2\pi\mathbb Z\times\dot H^1_{\rad}$ satisfies
		\begin{equation}\label{eq:decomposition 1}
			u=[W+\td\eps]_{\lambda,\gamma},\qquad (\phi_0,\eps)=(\varphi_0,\eps)=0,\qquad \|\td\eps\|_{\dot H^1}<\delta_p.
		\end{equation}
		
		\item[(2)] (Decomposition on $\langle r\rangle^{2L+1}L^2_{\rad}$) With the same $(\lambda,\gamma,\td\eps)$ as in (1), the map $(b_1,\eta_1,\ldots,b_L,\eta_L,\eps):\calO_{\mathrm{dec}}\to\mathbb R^{2L}\times(\langle r\rangle^{2L+1}L^2_{\rad}\cap\calZ^\perp)$ satisfies
		\begin{equation}\label{eq:decomposition 2}
			\begin{gathered}
				u=[W+P+\eps]_{\lambda,\gamma},\qquad P+\eps=\td\eps,\qquad \sum_{j=1}^L(|b_j|+|\eta_j|)+\|\td\eps\|_{\dot H^1}<\delta_p,\\
				(\phi_j,\eps)=(\varphi_j,\eps)=0, \qquad 0\leq j\leq L.
			\end{gathered}
		\end{equation}
		
		\item[(3)] (Decomposition along $\psi_\pm$) The remainder $\eps$ from (2) satisfies
		\begin{equation}\label{eq:eigenmode decom}
			\eps-e_0^{-L}\alpha_+\psi_+-(-e_0)^{-L}\alpha_-\psi_-\in\calZ_{\mathrm h}^\perp,
		\end{equation}
		where
		\begin{equation}\label{eq:eigenmode mode def}
			\alpha_\pm\coloneqq(\pm e_0)^L(-HY_\mp,\eps).
		\end{equation}
		
		\item[(4)] ($C^1$-regularity) The parameter map $u\mapsto(\lambda,\gamma,b_1,\eta_1,\ldots,b_L,\eta_L,\alpha_+,\alpha_-)$ is $C^1$ on $\calO_{\mathrm{dec}}$.
	\end{enumerate}
\end{lem}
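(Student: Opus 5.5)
The plan is to prove parts (1)--(4) in order by implicit function theorem arguments around the soliton orbit, using the duality relations \eqref{eq:duality} to get nondegenerate Jacobians.

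\emph{Part (1).} For $u$ near $W$ in $\dot H^1_{\rad}$, consider
\[
\Phi(u,\lambda,\gamma)\coloneqq\big((\phi_0,\lambda^{1/2}e^{-i\gamma}u(\lambda\cdot)-W),\ (\varphi_0,\lambda^{1/2}e^{-i\gamma}u(\lambda\cdot)-W)\big).
\]
Here, in part (1), $\eps$ is understood as $\td\eps-P$ and $P$ plays no role, since $(\phi_0,\mathsf V_k)=(\phi_0,\mathsf U_k)=0$ for $k\ge1$ by \eqref{eq:duality}; I will arrange that $(\phi_0,S_n)=(\varphi_0,S_n)=0$ as well, either by checking it or by absorbing it in part (2). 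Since $\phi_0,\varphi_0\in C_c^\infty\subset L^{6/5}$, $\Phi$ is $C^1$ in all variables. Its derivative in $(\lambda,\gamma)$ at $(W,1,0)$ is, up to sign, the matrix $\big((\phi_0,\Lambda W),(\phi_0,iW);(\varphi_0,\Lambda W),(\varphi_0,iW)\big)$. Recalling $\mathsf U_0=\tfrac i2W$, \eqref{eq:duality} shows this matrix is $\operatorname{diag}(1,2)$, hence invertible. The implicit function theorem gives $(\lambda,\gamma)$ near $u=W$. I then extend to all of $\calO_{\mathrm{dec}}$ by the symmetry covariance $u\mapsto u_{\lambda',\gamma'}$. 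Uniqueness within $\|\td\eps\|_{\dot H^1}<\delta_p$ requires showing that two admissible $(\lambda,\gamma)$ are close modulo $2\pi$ in $\gamma$. This follows from the standard fact that $\|W_{\lambda,\gamma}-W\|_{\dot H^1}$ small forces $\lambda\approx1$ and $\gamma\approx0$. Choosing $\delta_{\mathrm{dec}}\ll\delta_p$ closes this step.

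\emph{Part (2).} With $(\lambda,\gamma,\td\eps)$ fixed, I solve for $(b_j,\eta_j)_{j=1}^L$ from the $2L$ equations
\[
(\phi_j,\td\eps-P(b,\eta))=(\varphi_j,\td\eps-P(b,\eta))=0,\qquad 1\le j\le L.
\]
The linear part of $P$ is $\sum_j(b_j\mathsf V_j+\eta_j\mathsf U_j)$, whose pairing matrix is the identity by \eqref{eq:duality}. The corrections $S_n$ are polynomials of weight $n\ge2$ in the parameters (Proposition~\ref{prop:finite profile}), hence at least quadratic in $(b,\eta)$, and they pair against compact tests. So the map $(b,\eta)\mapsto\big((\phi_j,P),(\varphi_j,P)\big)_j$ is a polynomial map that is a local diffeomorphism near $0$. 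Since $\td\eps\in\dot H^1$ and the tests are compactly supported, the right-hand side is $O(\|\td\eps\|_{\dot H^1})$. The inverse function theorem then gives a unique small solution $(b,\eta)$. For $j=0$, the orthogonality already follows from (1), provided the paired $S_n$ terms vanish against $\phi_0,\varphi_0$. If they do not, I instead solve (1) and (2) jointly as a single system of $2L+2$ equations in $(\lambda,\gamma,b,\eta)$; the Jacobian is still invertible. Finally, $\eps=\td\eps-P$ lies in $\langle r\rangle^{2L+1}L^2$, because $\td\eps\in L^6$ by Sobolev embedding and $P$ grows like $r^{2L-1}$, both integrable against $\langle r\rangle^{-4L-2}r^2\,dr$.

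\emph{Part (3).} I need compactly supported $\psi_\pm$ that satisfy
\[
(\phi_j,\psi_\pm)=(\varphi_j,\psi_\pm)=0\ \ (0\le j\le L),\qquad (-HY_\mp,\psi_\pm)=1,\qquad (-HY_\pm,\psi_\pm)=0.
\]
With these conditions, \eqref{eq:eigenmode decom} follows directly from the definition \eqref{eq:eigenmode mode def}. Such $\psi_\pm$ exist exactly when $-HY_+$ and $-HY_-$ are linearly independent modulo $\calZ$ as functionals on $C_c^\infty$. Suppose instead that some nontrivial combination $aHY_++bHY_-$ lay in $\calZ$. Pairing with $Y_\pm$ and using \eqref{eq:eigenmode normalization} gives $(aHY_++bHY_-,Y_\mp)=-a$ or $-b$, which is nonzero for whichever coefficient is nonzero. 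So I must show that no element of $\calZ$ has these pairings with $Y_\pm$. I would try first to show that elements of $\calZ$ can be modified to annihilate $Y_\pm$; this is a computation involving how $\phi_j$ act on $Y_\pm$, and I expect it to be the main obstacle. A fallback avoids the issue entirely: $\calZ$ is finite-dimensional while $HY_\pm$ are Schwartz functions that are not compactly supported. If $HY_+$ were a finite combination of compactly supported tests, then $Y_+=\mp e_0^{-1}iHY_+$ would be compactly supported, which contradicts unique continuation for the ODE. This gives the independence. Having it, I choose $\psi_\pm$ by Gram-type linear algebra among $C_c^\infty$ functions.

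\emph{Part (4).} All maps above come from the implicit function theorem applied to functionals that are $C^1$ in $u$: pairings with fixed $L^{6/5}$ tests of a rescaled $u$. Hence $(\lambda,\gamma,b,\eta)$ are $C^1$. The coefficients $\alpha_\pm$ are then explicit $C^1$ expressions, since $\alpha_\pm=(\pm e_0)^L(-HY_\mp,\td\eps-P)$ with $HY_\mp$ Schwartz and $P$ polynomial in the parameters.
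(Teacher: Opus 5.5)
Your proposal matches the paper's proof in substance, but you should commit from the start to what you call the fallback. The paper applies the implicit function theorem once, to the full system $\big((\phi_j,\eps),(\varphi_j,\eps)\big)_{j=0}^L$ with $\eps=\td\eps-P$, in the unknowns $(\log\lambda,\gamma,b_1,\eta_1,\dots,b_L,\eta_L)$. Its Jacobian at $(W,1,0,\dots,0)$ is $\operatorname{diag}(1,-2,-1,\dots,-1)$, by \eqref{eq:duality} and because $S_n$ has no linear terms.

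The decoupled version of (1) you begin with is not available. Nothing forces $(\phi_0,S_n)=(\varphi_0,S_n)=0$, and condition (1) is imposed on $\eps$, not on $\td\eps$, so (1) and (2) are genuinely coupled. Your uniqueness argument and your $C^1$ argument are the paper's. For uniqueness, closeness of $W_{\lambda,\gamma}$ to $W$ places any admissible parameters inside the implicit-function-theorem neighbourhood once $\delta_{\mathrm{dec}}\ll\delta_p$.

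The one incomplete step is the existence of $\psi_\pm$, which the paper simply asserts. You correctly reduce it to showing that no nontrivial real combination $aHY_++bHY_-$ lies in $\calZ$. However, your fallback only excludes $HY_+\in\calZ$, which is not enough for independence modulo $\calZ$. The same idea finishes the job:
\begin{itemize}
\item Since $HY_\pm=\pm ie_0Y_\pm$, the combination equals $ie_0(aY_+-bY_-)$.
\item If it is compactly supported, so is $-iH(aY_+-bY_-)=e_0(aY_++bY_-)$, because $-iH$ preserves compact support.
\item Hence $aY_+$ and $bY_-$ are each compactly supported.
\item Now $Y_-=\ol{Y_+}$, and $Y_+\neq0$ solves the regular radial system $H_-\Im Y_+=e_0\Re Y_+$, $-H_+\Re Y_+=e_0\Im Y_+$ on $(0,\infty)$. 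Vanishing for large $r$ forces $Y_+\equiv0$ by ODE uniqueness, so $a=b=0$.
\end{itemize}
Drop your first route via pairing with $Y_\mp$. It would need information on $(\phi_j,Y_\pm)$ and $(\varphi_j,Y_\pm)$ that you do not have. With the independence established, finite-dimensional linear algebra on $C^\infty_{c,\rad}$ produces $\psi_\pm$ satisfying \eqref{eq:eigenmode duality}, and (3) then follows as you say.
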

For $u\in H^{2L+1}_{\rad}\cap\calO_{\mathrm{dec}}$, we have $\td\eps\in\dot H^1_{\rad}\cap\dot H^{2L+1}_{\rad}$ and $\eps\in\dot{\calH}^{2L+1}_{\rad}$, since $W\in \dot H^1_{\rad}\cap\dot H^{2L+1}_{\rad}$ and $P\in \dot{\calH}^{2L+1}_{\rad}$. 
\begin{proof}[Proof of Lemma~\ref{lem:decomposition}]
	For $u\in\calO_{\mathrm{dec}}$, choose $(\mu,\theta)$ such that
	\begin{equation*}
		\|u_{\mu^{-1},-\theta}-W\|_{\dot H^1}
		=\|u-W_{\mu,\theta}\|_{\dot H^1}<\delta_{\mathrm{dec}}.
	\end{equation*}
	Set $\td\eps=(u_{\mu^{-1},-\theta})_{\lambda^{-1},-\gamma}-W$ and $\eps=\td\eps-P$, and define
	\begin{equation*}
		\calG(u_{\mu^{-1},-\theta},\lambda,\gamma,b_1,\eta_1,\ldots,b_L,\eta_L)
		\coloneqq\left((\phi_j,\eps),(\varphi_j,\eps)\right)_{j=0}^L.
	\end{equation*}
	Since $S_n$ has no linear terms, $\calG(W,1,0,\ldots,0)=0$ and \eqref{eq:duality} gives
	\begin{equation}
		\partial_{(\log\lambda,\gamma,b_1,\eta_1,\ldots,b_L,\eta_L)}
		\calG(W,1,0,\ldots,0)
		=\operatorname{diag}(1,-2,-1,\ldots,-1).
	\end{equation}
	By the implicit function theorem, there exist $0<\delta_p\ll\delta_1\ll1$ such that, for $\|u_{\mu^{-1},-\theta}-W\|_{\dot H^1}<\delta_p/100$, the equation $\calG(u_{\mu^{-1},-\theta},\cdot)=0$ has a unique solution in the range
	\begin{equation}\label{eq:parameter range}
		|\log\lambda|+|\gamma|+{\textstyle\sum_{j=1}^L}(|b_j|+|\eta_j|)<\delta_1.
	\end{equation}
	For fixed $(\mu,\theta)$, the parameters depend $C^1$ on $u$. From IFT and $\|\td\eps\|_{\dot H^1}=\|u_{\mu^{-1},-\theta}-W_{\lambda,\gamma}\|_{\dot H^1}$, we have
	\begin{equation*}
		|\log\lambda|+|\gamma|+{\textstyle\sum_{j=1}^L}(|b_j|+|\eta_j|)
		+\|\td\eps\|_{\dot H^1}
		\le C\|u_{\mu^{-1},-\theta}-W\|_{\dot H^1}<C\delta_{\mathrm{dec}}.
	\end{equation*}
	Choosing $\delta_{\mathrm{dec}}\ll\delta_p$, we obtain
	\begin{equation*}
		u=[W+\td\eps]_{\mu\lambda,\theta+\gamma}
		=[W+P+\eps]_{\mu\lambda,\theta+\gamma}
	\end{equation*}
	with the required smallness. Moreover, $\td\eps,P\in\langle r\rangle^{2L+1}L^2$ by Hardy's inequality, \eqref{eq:kernel growth}, and \eqref{eq:correction pointwise}, so $\eps\in\langle r\rangle^{2L+1}L^2$.
	
	Now, we show the uniqueness of the decomposition.
	Suppose that $u=[W+\td\eps']_{\lambda',\gamma'}=[W+P'+\eps']_{\lambda',\gamma'}$ is another decomposition satisfying \eqref{eq:decomposition 1} and \eqref{eq:decomposition 2}, with parameters $b_j',\eta_j'$. Here, we choose $\gamma'$ such that $\gamma'-\theta\in[-\pi,\pi]$. The explicit formula for $W$ gives
	\begin{equation*}
		\begin{aligned}
			\min\{1,|\log(\lambda'/\mu)|+|\gamma'-\theta|\}
			&\lesssim\|W_{\lambda'/\mu,\gamma'-\theta}-W\|_{\dot H^1}\\
			&\le\|\td\eps'\|_{\dot H^1}
			+\|u_{\mu^{-1},-\theta}-W\|_{\dot H^1}
			<\delta_p+\delta_{\mathrm{dec}}.
		\end{aligned}
	\end{equation*}
	Together with $\sum_{j=1}^L(|b_j'|+|\eta_j'|)<\delta_p$, this ensures that $(\lambda'/\mu,\gamma'-\theta,b_1',\eta_1',\ldots,b_L',\eta_L')$ satisfies \eqref{eq:parameter range}. These parameters also solve $\calG(u_{\mu^{-1},-\theta},\cdot)=0$, so uniqueness in IFT gives $\lambda'=\mu\lambda$, $\gamma'=\theta+\gamma$, $b_j'=b_j$, and $\eta_j'=\eta_j$. The decomposition identities then give $\td\eps'=\td\eps$ and $\eps'=\eps$.
	Choose and fix $\psi_+,\psi_-\in C^\infty_{c,\rad}$ such that
	\begin{equation}\label{eq:eigenmode duality}
		(\phi_j,\psi_{\iota})=(\varphi_j,\psi_{\iota})=0,\quad
		(-HY_{-\iota'},\psi_{\iota})=\delta_{\iota\iota'},\quad \iota,\iota'\in\{+,-\},\quad 0\le j\le L.
	\end{equation}
	By \eqref{eq:eigenmode duality} and \eqref{eq:eigenmode mode def}, we conclude (3), its uniqueness, and the $C^1$ dependence of $\alpha_\pm$.
\end{proof}

\subsection{Trapped solutions}
We prescribe initial data near \eqref{eq:special} and use a topological argument to select the parameters in the unstable directions so that the solution remains in the bootstrap regime for all $s\ge s_0$. Fix
\begin{equation}\label{eq:kappa exponent}
	\kappa\coloneqq\frac{1}{64(4L-1)}.
\end{equation}
Fix sufficiently small $\delta_{\mathrm c},\epsilon_{\mathrm h}>0$. We define $\calO_{\mathrm{init}}$ as the set of initial data of the following form for all sufficiently large $s_0$ and all $\gamma_0\in\mathbb R$:
\begin{equation}\label{eq:init form}
	u_0=\left[\chi_{\delta_{\mathrm c}\lambda_0^{-1}}(W+P)+a_+\psi_++a_-\psi_- +\eps_{\mathrm{in}}\right]_{\lambda_0,\gamma_0}
\end{equation}
Here $\psi_\pm$ are the fixed functions satisfying \eqref{eq:eigenmode duality}, and $a_+,a_-\in\mathbb R$ are defined by
\begin{equation*}
	\begin{aligned}
		a_+&\coloneqq e_0^{-L}\alpha_+(s_0)+\big(-HY_-,(1-\chi_{\delta_{\mathrm c}\lambda_0^{-1}})(W+P)\big),\\
		a_-&\coloneqq(-e_0)^{-L}\alpha_-(s_0)+\big(-HY_+,(1-\chi_{\delta_{\mathrm c}\lambda_0^{-1}})(W+P)\big).
	\end{aligned}
\end{equation*}
All profile parameters are evaluated at $s=s_0$, with $t(s_0)=0$. We impose the following conditions:
\begin{enumerate}
	\item Initial scale:
	\begin{equation}\label{eq:init scale}
		\lambda_0=s_0^{-c_1}=s_0^{-\frac{2L}{4L-1}}.
	\end{equation}
	\item Smallness of the stable modes:
	\begin{equation}\label{eq:init para stable}
		s_0^\kappa|(\calA_b)_1(s_0)|\le\frac14,\qquad \epsilon_{\mathrm h}s_0^{L+1}|\alpha_-(s_0)|\le\frac14.
	\end{equation}
	\item Smallness of the unstable modes:
	\begin{equation}\label{eq:init para unstable}
		\left(\big(s_0^\kappa(\calA_b)_j(s_0)\big)_{j=2}^L, s_0^\kappa \calA_\eta(s_0), \epsilon_{\mathrm h}s_0^{L+1}\alpha_+(s_0)\right)\in[-1,1]^{2L}.
	\end{equation}
	\item Smallness of the initial radiation:
	\begin{equation}\label{eq:init radiation}
		\eps_{\mathrm{in}}\in H^{2L+1}_{\rad}\cap\calZ_{\mathrm h}^\perp,\qquad
		\|\eps_{\mathrm{in}}\|_{H^{2L+1}}\le s_0^{-L-1}.
	\end{equation}
\end{enumerate}
Here, $\calA_b$ and $\calA_\eta$ denote the coordinates defined in \eqref{eq:eigen coordinates} with \eqref{eq:fluctuations}. These coordinates determine $b_j$ and $\eta_j$ from the corresponding components of the prescribed tuples in \eqref{eq:init para stable} and \eqref{eq:init para unstable}. Moreover,
\begin{equation}\label{eq:init para relation}
	|b_j(s_0)|\sim b_1(s_0)^j\sim s_0^{-j},\qquad
	|\eta_j(s_0)|\lesssim s_0^{-\kappa}b_1(s_0)^j \sim s_0^{-j-\kappa},
	\qquad 1\le j\le L.
\end{equation}
Recalling \eqref{eq:w decom}, we have
\begin{equation*}
	\td\eps(s_0)=[u_0]_{\lambda_0^{-1},-\gamma_0}-W,\qquad 
	\eps(s_0)=\td\eps(s_0)-P.
\end{equation*}
With the constants chosen as in Remark~\ref{rem:parameter dependence}, these remainders satisfy
\begin{equation}\label{eq:initial bounds}
	\begin{gathered}
		\|\td\eps(s_0)\|_{\dot H^1}\le C\delta_{\mathrm c}^{2L-\frac{1}{2}}\ll\frac{\delta_{\mathrm{dec}}}{4},\quad
		\|\eps(s_0)\|_{\dot{\calH}^{2L+1}}\le C\delta_{\mathrm c}^{-\frac{1}{2}}s_0^{-2Lc_1}
		\ll \frac K4s_0^{-2Lc_1}.
	\end{gathered}
\end{equation}
Indeed, Proposition~\ref{prop:finite profile} and \eqref{eq:init para relation} imply
\begin{equation}\label{eq:profile input}
	|P_j|_k\lesssim s_0^{-j}r^2\langle r\rangle^{2j-3},\qquad
	|\nabla^kP_j|\lesssim s_0^{-j}\langle r\rangle^{2j-1-k}.
\end{equation}
Thus, we have
\begin{equation}\label{eq:init a control}
	|a_\pm|\lesssim |\alpha_\pm(s_0)|+s_0^{-L-1}
	\lesssim_{\epsilon_{\mathrm h}}s_0^{-L-1}.
\end{equation}
Using (4), \eqref{eq:profile input}, and \eqref{eq:init a control}, we obtain
\begin{align*}
	\|\td\eps(s_0)\|_{\dot H^1}
	&\lesssim(\delta_{\mathrm c}\lambda_0^{-1})^{-\frac{1}{2}}+\sum_{j=1}^Ls_0^{-j}(\delta_{\mathrm c}\lambda_0^{-1})^{2j-\frac{1}{2}}+C_{\epsilon_{\mathrm h}}s_0^{-L-1}
	\lesssim\delta_{\mathrm c}^{2L-\frac{1}{2}},
	\\
	\|\eps(s_0)\|_{\dot{\calH}^{2L+1}}
	&\lesssim(\delta_{\mathrm c}\lambda_0^{-1})^{-2L-\frac{1}{2}}+\sum_{j=1}^Ls_0^{-j}(\delta_{\mathrm c}\lambda_0^{-1})^{2j-2L-\frac{1}{2}}+C_{\epsilon_{\mathrm h}}s_0^{-L-1}
	\lesssim\delta_{\mathrm c}^{-\frac{1}{2}}s_0^{-2Lc_1}.
\end{align*}
These estimates prove \eqref{eq:initial bounds}.

We also take $s_0$ large enough that $\chi_{\delta_{\mathrm c}\lambda_0^{-1}}=1$ on the supports of all the compact tests. Then $\eps(s_0)\in\calZ^\perp$, and \eqref{eq:initial bounds} gives $\calO_{\mathrm{init}}\subset H^{2L+1}_{\rad}\cap\calO_{\mathrm{dec}}$. By uniqueness in Lemma~\ref{lem:decomposition}, the decomposition of $u_0$ has exactly the prescribed parameters $\lambda_0,\gamma_0,b_j(s_0),\eta_j(s_0),\alpha_\pm(s_0)$ and remainder $\eps(s_0)$.

\begin{rem}[Parameter dependence]\label{rem:parameter dependence}
	We can track the order in which we choose the parameters. We will presume the size dependence
	\begin{equation}\label{eq:parameter dependence}
		0<s_0^{-1}\ll K^{-1}\ll\delta_{\mathrm c}^{\frac{1}{2}}\ll\delta_{\mathrm{dec}}^{\frac{1}{4L-1}}\ll\min\{1,\delta^*\}^{\frac{1}{4L-1}},\quad
		0<s_0^{-1}\ll\epsilon_{\mathrm h}\ll1.
	\end{equation}
	Fix $L$ and $\delta^*>0$ in Theorem~\ref{thm:main}. First choose the radius $\delta_{\mathrm{dec}}$ of $\calO_{\mathrm{dec}}$ in \eqref{eq:decom tube} as in Lemma~\ref{lem:decomposition}, with $\delta_{\mathrm{dec}}<\delta^*$. Next choose $\delta_{\mathrm c}$ for the profile cutoff in \eqref{eq:init form}, and then the constant $K$ in the higher-order energy bound \eqref{eq:main bootstrap assump}, satisfying the first inequality in \eqref{eq:parameter K s0 choice}. Choose $\epsilon_{\mathrm h}$, which normalizes $\alpha_\pm$ in \eqref{eq:init para stable} and \eqref{eq:init para unstable}, independently of $\delta_{\mathrm c}$ and $K$. Finally take $s_0$ sufficiently large for the second inequality in \eqref{eq:parameter K s0 choice} and the preceding initial estimates. Thus $\lambda_0=s_0^{-c_1}$ ranges over all sufficiently small positive values. These choices are independent of $\gamma_0$ and the length of the bootstrap interval.
\end{rem}

We construct trapped solutions by choosing the initial parameters in the unstable directions in \eqref{eq:init para unstable}. 
\begin{prop}[Trapped regime]\label{prop:trapped}
	Let $u_0(s_0)$ be of the form \eqref{eq:init form} satisfying \eqref{eq:init scale}, \eqref{eq:init para stable}, and \eqref{eq:init radiation}. Then there exist initial parameters in the unstable directions satisfying \eqref{eq:init para unstable} such that the corresponding solution $u$ to \eqref{eq:NLS} admits the decompositions \eqref{eq:decomposition 2} and \eqref{eq:eigenmode decom}, with the following bounds holding for all $s\ge s_0$:
	\begin{equation}\label{eq:main bootstrap assump}
		\begin{gathered}
			\left(s^\kappa\calA_b(s),s^\kappa\calA_\eta(s),\epsilon_{\mathrm h}s^{L+1}\alpha_+(s),\epsilon_{\mathrm h}s^{L+1}\alpha_-(s)\right)\in[-1,1]^{2L+2},\\
			\|\eps(s)\|_{\dot{\calH}^{2L+1}}\le Ks^{-2Lc_1},\qquad
			\|\td\eps(s)\|_{\dot H^1}\le\delta_{\mathrm{dec}}/2.
		\end{gathered}
	\end{equation}
\end{prop}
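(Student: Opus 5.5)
The plan is a standard bootstrap closed by a Brouwer-type topological argument. For initial data with given unstable parameters in $[-1,1]^{2L}$, let $s^*\in[s_0,\infty]$ be the exit time, i.e. the supremum of times for which \eqref{eq:main bootstrap assump} holds. By \eqref{eq:initial bounds} and \eqref{eq:init para stable}, \eqref{eq:init para unstable}, we have $s^*>s_0$. Throughout $[s_0,s^*)$, Lemma~\ref{lem:decomposition} gives the decomposition, and its $C^1$ regularity gives $C^1$ parameters. The goal is to strictly improve every bound in \eqref{eq:main bootstrap assump} except the unstable coordinates. Then, if $s^*<\infty$ for all choices, exit can occur only through the unstable boundary. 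An outgoing-flow property together with Brouwer's theorem then gives a contradiction.

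\textbf{Modulation estimates.} First, differentiate the orthogonality conditions $(\phi_j,\eps)=(\varphi_j,\eps)=0$ along the $\eps$-equation
\[
\partial_s\eps-iH\eps\cdots=-\Psi-\text{(linear and nonlinear terms)},
\]
using \eqref{eq:residual expansion} and \eqref{eq:duality}. This gives
\[
|\mathbf{Mod}|\lesssim s^{-L-1}+\|\eps\|_{\dot{\calH}^{2L+1}}\Bigl(s^{-1}+\|\eps\|_{\dot{\calH}^{2L+1}}\Bigr),
\]
since the tests are compactly supported and $\td\Psi=O(s^{-L-1})$ locally by Proposition~\ref{prop:finite profile}. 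Because $2Lc_1>L+\tfrac12$, we get $|\mathbf{Mod}|\lesssim s^{-L-1-\kappa'}$. Inserting this into \eqref{eq:linear maps} shows that the stable coordinate $s^\kappa(\calA_b)_1$ stays in $[-\tfrac12,\tfrac12]$ (its rate is $-1<\kappa$), while the unstable coordinates have strictly positive derivative of $|s^\kappa\calA|^2$ at the boundary, with rate $k/(4L-1)>\kappa$. For $\alpha_\pm$, the projection of the $\eps_L$-equation onto $-HY_\mp$ gives
\[
\partial_s\alpha_\pm=\pm e_0\alpha_\pm+O(s^{-L-1-}),
\]
where the error comes from the forcing. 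The $\alpha_-$ mode is damped, so $\epsilon_{\mathrm h}s^{L+1}|\alpha_-|\le\tfrac12$; the $\alpha_+$ mode is outgoing.

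\textbf{Higher-order energy and main obstacle.} The hard step is improving $\|\eps\|_{\dot{\calH}^{2L+1}}\le Ks^{-2Lc_1}$. By Lemma~\ref{lem:weighted elliptic}, it suffices to bound $\Pi\eps_L$ in $\dot H^1$. I would write Duhamel for $\Pi\eps_L$ with the flow $U_c$ of \eqref{eq:continuous flow}, in the original time $t$ or $s$, and treat the following as forcing $F$:
\begin{itemize}
\item $(\lambda_s/\lambda)(\Lambda+2L)\eps_L$ and $\gamma_s i\eps_L$;
\item the commutators of Lemma 2.2;
\item $(-iH)^L$ applied to $\td\Psi$ and $\mathbf{Mod}\cdot\mathbf v$;
\item the nonlinear terms.
\end{itemize}
The energy identity $\frac{d}{ds}(H\Pi\eps_L,\Pi\eps_L)=2(H\Pi\eps_L,F)$ combined with coercivity \eqref{eq:projection coercivity} handles the terms with decaying weights, by pairing them against $\|\eps_L\|_{\mathrm{Mor}}$. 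The dangerous term is $b_1(\Lambda+2L)\eps_L$. It gives $(H\eps_L,\Lambda\eps_L)$, which is a virial-type quantity with no sign. Here I would use that $\Lambda$ commutes with $-\Delta$ up to scaling: the main part produces $-2L\,b_1\|\eps_L\|^2_{\dot H^1}$ from rescaling, which matches the gain $s^{-2Lc_1}$ weighting, while the potential commutator is localized. That localized part is bounded by $b_1\|\eps_L\|_{\mathrm{Mor}}^2$.

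To control $\int s^{-1}\|\eps_L\|_{\mathrm{Mor}}^2$ after multiplying by $s^{4Lc_1}$, apply Proposition~\ref{prop:global spacetime}. Use \eqref{eq:weighted smoothing} for the data, and \eqref{eq:retarded kernel bound} for the forcing, with $\psi_1$ built from the Morawetz-dual bounds of each forcing term. Done on dyadic time intervals, this closes with a constant independent of $K$. The $b_1$, $\gamma_s$ perturbation terms cost only a factor $s^{-1}$ relative to the Morawetz norm and are absorbed via $s_0$ large.

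\textbf{Remaining bounds and shooting.} The $\dot H^1$ bound on $\td\eps$ follows from energy conservation and coercivity, modulo the $\Lambda W$, $iW$ directions and $Y_\pm$. The profile tail enters through $\delta_{\mathrm c}^{2L-1/2}$, since the growing part of $P$ stays below scale $\delta_{\mathrm c}\lambda^{-1}$ (region $r\lesssim s^{c_1}$ in renormalized variables). Finally, define the map sending the initial unstable tuple in $[-1,1]^{2L}$ to the rescaled unstable tuple at $s^*$, which lies on the boundary sphere. The outgoing property makes this map continuous, by transversality, and equal to the identity on the boundary. That contradicts Brouwer's theorem, so some choice gives $s^*=\infty$.
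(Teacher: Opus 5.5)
There is a genuine gap in the modulation step, for the last pair $(b_L,\eta_L)$.

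\textbf{The modulation bound is not what you claim.}

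Your displayed bound drops the linear term. Differentiating $(\phi_L,\eps)=0$ produces $(\phi_L,iH\eps)=-(\td\phi,(-iH)\eps_L)$. This is of size $\|\eps\|_{\dot{\calH}^{2L+1}}$ with no $s^{-1}$ gain, so what is actually available is \eqref{eq:mod esti}: $|\mathbf{Mod}|\lesssim\|\eps\|_{\dot{\calH}^{2L+1}}+s^{-L-1}$.

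Your numerical claim is also false: $2Lc_1=L+\frac14+\frac{1}{4(4L-1)}<L+\frac12$ for every $L\ge1$. Even granting your display, the profile-error contribution $(\phi_L,\Theta_{L+1})=O(s^{-L-1})$ has no extra decay.

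In the normalized system \eqref{eq:linear maps}, the $L$-th equation therefore carries an error $s^{L+1}(|\mathbf{Mod}_{b_L}|+|\mathbf{Mod}_{\eta_L}|)$ that is only bounded by $Ks^{L+1-2Lc_1}=Ks^{\frac34-\frac{1}{4(4L-1)}}$. This grows, instead of being $o(s^{-\kappa})$. Since $\mathsf S_b,\mathsf S_\eta$ mix components, this destroys both the improvement of $(\calA_b)_1$ and the outgoing property at the unstable boundary.

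\textbf{The missing idea is the refined parameters \eqref{eq:refined mod para}.}

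Test \eqref{eq:epsL equation} against $\chi_RW$ and $i\chi_R\Lambda W$ with $R=s^{1/2-\kappa}$. Use that $(-iH)^*W=(-iH)^*(i\Lambda W)=0$, and that $\mathbf{Mod}_{b_L},\mathbf{Mod}_{\eta_L}$ enter the $\eps_L$-equation only through $\Lambda W$ and $iW$. Dividing by $D_R\sim R$ turns the bad term into an almost exact time derivative. This gains $R^{-3/2}$ on $\|\eps\|_{\dot{\calH}^{2L+1}}$ and $R^{-1}$ on $\Theta_{L+1}$, giving errors $O_K(s^{-L-1-\kappa'})$.

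Consequently, the exit time and transversality must be formulated in the refined coordinates $\td\bfA$. These differ from the prescribed $\bfA(s_0)$ by $o(1)$, so your exit map is only $o(1)$-close to the identity on the boundary, not equal to it. This is why the paper applies Brouwer's fixed point theorem to $-\Phi$ rather than claiming an exact retraction.

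\textbf{A second gap in the Morawetz step.}

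The scaling term $\frac{\lambda_s}{\lambda}\Lambda\mathfrak f$ cannot be fed into \eqref{eq:retarded kernel bound} as forcing. In the energy identity only the diagonal quantity $(H\mathfrak f,\Lambda\mathfrak f)$ appears, and it is localized as you say. The Duhamel estimate, however, needs $|(Hg,\Lambda\mathfrak f)|\le\psi_1\|g\|_{\mathrm{Mor}}$ for arbitrary $g$. This off-diagonal form involves $r\nabla^2\mathfrak f$, which no Morawetz norm controls.

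Nor can the term be treated as an $L^1_t\dot H^1$ source: $\|\Lambda\mathfrak f\|_{\dot H^1}$ needs one more derivative than $\eps\in\dot{\calH}^{2L+1}$ provides. So the claim that this term ``costs only $s^{-1}$ and is absorbed by $s_0$ large'' is unjustified.

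The paper instead undoes the scaling. On intervals $[s_0,(1+\delta)s_0]$ it works with $\mathfrak f_\mu$, where $\mu=\lambda/\lambda_0$ and $|\mu-1|\lesssim\delta$. The equation then becomes the projected flow $\Pi(-iH)$ in the time $t/\lambda_0^2$, up to two Morawetz-dual error terms of size $\delta$:
\begin{itemize}
\item $i[W^4(x/\mu)-\mu^2W^4(x)](\mathfrak f_\mu+4\Re\mathfrak f_\mu)$;
\item the terms $\Pi(\psi_\mu-\psi)$.
\end{itemize}
These are absorbed for $\delta$ small, and the paper then sums over the geometric sequence of such intervals. Dyadic intervals would not suffice here, since $\mu$ would vary by a fixed factor and the rescaling errors would not be small.
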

We call a solution satisfying \eqref{eq:main bootstrap assump} for all $s\ge s_0$ a trapped solution.

\begin{proof}[Proof of Proposition~\ref{prop:trapped} assuming Proposition~\ref{prop:main bootstrap} and Lemma~\ref{lem:exit map}]
	We fix $s_0$, $\gamma_0$, and $\eps_{\mathrm{in}}$, together with the stable parameters $(\calA_b)_1(s_0),\alpha_-(s_0)$, and vary the unstable initial parameters in \eqref{eq:init para unstable}. For each choice, we consider the corresponding maximal solution $u(s)=u(t(s))$.

	We use the refined parameters in \eqref{eq:refined mod para} and the coordinates $\td\calA_b,\td\calA_\eta$ in \eqref{eq:refined eigen coordinates}, defined later in Section~\ref{sec:refine mod}. They remain close to the original parameters and satisfy sharper modulation estimates, which we use to prove Lemma~\ref{lem:exit map}. We define
	\begin{equation}\label{eq:fast coordinates}
		\begin{aligned}
			\bfA(s)&\coloneqq\left(\big(s^\kappa(\calA_b)_j\big)_{j=2}^L,\ s^\kappa\calA_\eta,\ \epsilon_{\mathrm h}s^{L+1}\alpha_+\right),\\
			\td{\bfA}(s)&\coloneqq\left(\big(s^\kappa(\td\calA_b)_j\big)_{j=2}^L,\ s^\kappa\td\calA_\eta,\ \epsilon_{\mathrm h}s^{L+1}\alpha_+\right).
		\end{aligned}
	\end{equation}
	Thus the initial choices are parametrized by $\bfA(s_0)=\bfA_{\mathrm{init}}\in[-1,1]^{2L}$. Under \eqref{eq:main bootstrap assump}, we apply \eqref{eq:dot calH linear map} and \eqref{eq:W tails} to the definitions \eqref{eq:refined mod para} and \eqref{eq:refined eigen coordinates} to obtain
	\begin{equation}\label{eq:initial map error}
		|\td{\bfA}_j(s)-\bfA_j(s)|\lesssim_K s^{\frac{1}{2} \kappa-\frac{1}{4(4L-1)}}=o(1),\qquad 1\le j\le2L,
	\end{equation}
	uniformly on the interval as $s_0\to\infty$. The same estimate holds at $s=s_0$ for every initial choice $\bfA_{\mathrm{init}}$ by \eqref{eq:initial bounds}. When $\td{\bfA}(s_0;\bfA_{\mathrm{init}})\in(-1/2,1/2)^{2L}$, we define the exit time by
	\begin{equation}\label{eq:first exit time}
		s_{\mathrm{exit}}(\bfA_{\mathrm{init}})\coloneqq\sup\left\{S\ge s_0:\ 
		\begin{aligned}
			&\eqref{eq:decomposition 2},\ \eqref{eq:eigenmode decom},\ \eqref{eq:main bootstrap assump}\text{ hold on }[s_0,S],\\
			&\td{\bfA}(s)\in[-1/2,1/2]^{2L}\text{ for } s_0\le s\le S
		\end{aligned}
		\right\}.
	\end{equation}
	The bounds in \eqref{eq:first exit time} are initially strict by \eqref{eq:init para stable}, \eqref{eq:initial bounds}, and \eqref{eq:initial map error}, so $s_{\mathrm{exit}}>s_0$. It suffices to find a choice with $s_{\mathrm{exit}}=\infty$.

	We argue by contradiction and assume that every such exit time is finite. We first use the following proposition to improve the bounds for the remainder and the stable parameters.
	\begin{prop}[Main bootstrap]\label{prop:main bootstrap}
		Let $u(s)=u(t(s))$ be a solution to \eqref{eq:NLS}, with $u(s_0)\in\calO_{\mathrm{init}}$, that admits the decomposition \eqref{eq:decomposition 2} and \eqref{eq:eigenmode decom} on $[s_0,s_1]$ for some $s_1>s_0$. If \eqref{eq:main bootstrap assump} holds for all $s\in[s_0,s_1]$, then the following bounds hold for all $s\in[s_0,s_1]$:
		\begin{equation}\label{eq:main bootstrap goal}
			\begin{gathered}
				\|\td\eps(s)\|_{\dot H^1}\le\frac{\delta_{\mathrm{dec}}}{4},\qquad
				\|\eps(s)\|_{\dot{\calH}^{2L+1}}\le\frac K2s^{-2Lc_1},\\
				|s^\kappa(\calA_b)_1(s)|\le\frac12,\qquad
				\left|\epsilon_{\mathrm h}s^{L+1}\alpha_-(s)\right|\le\frac12.
			\end{gathered}
		\end{equation}
	\end{prop}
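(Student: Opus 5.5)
The strategy is to improve each of the four bounds in \eqref{eq:main bootstrap goal} separately, on $[s_0,s_1]$ and under \eqref{eq:main bootstrap assump}.

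\textbf{Modulation estimates.} Differentiate the orthogonality conditions $(\phi_j,\eps)=(\varphi_j,\eps)=0$ along the equation for $\eps$. By \eqref{eq:residual expansion}, this equation reads
\begin{equation*}
\partial_s\eps+iH\eps-\tfrac{\lambda_s}{\lambda}\Lambda\eps+i\gamma_s\eps=\mathbf{Mod}\cdot\mathbf v-\td\Psi-\calN\text{-terms}.
\end{equation*}
The duality \eqref{eq:duality} should give
\begin{equation*}
|\mathbf{Mod}|\lesssim s^{-L-1}+\|\eps\|_{\dot{\calH}^{2L+1}}\,s^{-1}.
\end{equation*}
Here the compact support of the tests controls the $\eps$ terms locally, and \eqref{eq:weighted residual} controls $\td\Psi$ on compact sets. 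Since $Ks^{-2Lc_1-1}$ is summable against the ODE weights, the linearization \eqref{eq:linear maps} then yields
\begin{equation*}
s\partial_s(\calA_b)_1=-(\calA_b)_1+O(s^{-2\kappa})+o(s^{-\kappa}).
\end{equation*}
Integrating this from \eqref{eq:init para stable} improves the stable bound to $\tfrac12$.

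\textbf{Eigenmode $\alpha_-$.} Pair the $\eps$ equation with $-HY_+$. Using $-iHY_-=-e_0Y_-$ and \eqref{eq:dual mode tests}, this gives
\begin{equation*}
\partial_s\alpha_-=-e_0\alpha_-+O\big(s^{-L-1-\delta}+|\mathbf{Mod}|\,s^{-L}\big),
\end{equation*}
where the $\mathbf{Mod}$ contribution is pairing with compactly supported or decaying profiles. The damping $-e_0\alpha_-$ gives $|\alpha_-|\lesssim s^{-L-1}$ with a constant independent of $\epsilon_{\mathrm h}$. Choosing $\epsilon_{\mathrm h}$ small then yields $\tfrac12$.

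\textbf{Energy bound for $\td\eps$.} Use conservation of energy and the coercivity of $H$ modulo the directions removed by $\Pi$. The initial bound \eqref{eq:initial bounds} gives $E(u)-E(W)=O(\delta_{\mathrm c}^{2L-1/2})$. This, together with the smallness of $b_j,\eta_j$ and $\alpha_\pm$ and the relation $\td\eps=P+\eps$ (with $P$ controlled via the cutoff at radius $\sim s^{c_1}$), should bound $\|\td\eps\|_{\dot H^1}$ by $C\delta_{\mathrm c}^{\cdot}+o(1)\le\delta_{\mathrm{dec}}/4$. An alternative is to control $\td\eps$ through mass-type and Hardy arguments; I would try the energy route first.

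\textbf{Higher-order bound (main step).} Apply $(-iH)^L$ to get the equation for $\eps_L$, commuting with $\Lambda$ and $i$ via \eqref{eq:lin weighted commutators}. Set $f=\Pi\eps_L$ and write Duhamel for the flow $U_c$ in the time variable $s$. The forcing consists of:
\begin{itemize}
\item the scale and phase terms $\tfrac{\lambda_s}{\lambda}(\Lambda+2L)f$ and $\gamma_s f$, of size $s^{-1}$ in Morawetz-dual form;
\item $(-iH)^L\td\Psi$ and $\Theta_{L+1}$, using \eqref{eq:lin profile decay};
\item the nonlinear terms.
\end{itemize}
Rather than Duhamel alone, I would run an energy identity:
\begin{equation*}
\frac{d}{ds}\big(Hf,f\big)=2\big(Hf,\text{forcing}\big).
\end{equation*}
The $\Lambda$ term gives the weight $\tfrac{\lambda_s}{\lambda}\cdot\big(2L\|f\|^2-\text{Morawetz-type terms}\big)$. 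The Morawetz-type error is bounded using \eqref{eq:weighted smoothing} and \eqref{eq:retarded kernel bound} in $L^2_s\mathrm{Mor}$ on dyadic time blocks, while the $2L\,b_1$ piece produces exactly the decay $s^{-2Lc_1}$ after multiplying by $\lambda^{-4L}$-type weights.

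The main obstacle is closing the contribution of $\tfrac{\lambda_s}{\lambda}\Lambda f$. It is not small in $\dot H^1$, so it must be absorbed as $\psi_1\sim s^{-1}\|f\|_{\mathrm{Mor}}$ localized, using that $x\cdot\nabla$ paired with $Hg$ loses only Morawetz weight up to $b_1$-factor. I would handle it by working on intervals $[\sigma,2\sigma]$, where $\int b_1^2\,ds$ is $O(\sigma^{-1})$, and by gaining from $K$ large versus the $\delta_{\mathrm c}^{-1/2}$ initial size. Combining this with \eqref{eq:weighted reconstruction} and the $\alpha_\pm$ bounds gives $\|\eps\|_{\dot{\calH}^{2L+1}}\le \tfrac K2 s^{-2Lc_1}$.
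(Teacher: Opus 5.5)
\textbf{Gap 1: the stable mode.} Your bound $|\mathbf{Mod}|\lesssim s^{-L-1}+s^{-1}\|\eps\|_{\dot{\calH}^{2L+1}}$ is false, because it drops the linear term. Differentiating $(\phi_L,\eps)=0$ produces $(\phi_L,iH\eps)=-((Hi)^{L+1}\td\phi,\eps)=-(Hi\td\phi,\eps_L)$. This is a local pairing of size $\|\eps\|_{\dot{\calH}^{2L+1}}$, and no orthogonality condition removes it: pairing with $\mathsf V_k,\mathsf U_k$ and unique continuation show $(Hi)^{L+1}\td\phi\notin\calZ$. So $\mathbf{Mod}_{b_L},\mathbf{Mod}_{\eta_L}$ only satisfy \eqref{eq:mod esti}.

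This is fatal for $(\calA_b)_1$. The first row $v$ of $\mathsf S_b$ is a left eigenvector of the linearized matrix, and the recursion $v_{j-1}=(\mu-\frac{L-j}{4L-1})v_j$ forces $v_L\neq0$. Hence the equation for $(\calA_b)_1$ carries the error $s^{L+1}\mathbf{Mod}_{b_L}$, which is of size $Ks^{1-c_1/2}$. Even granting your bound, its $s^{-L-1}$ term (from $(\phi_L,\td\Psi)$) contributes $s^{L+1}s^{-L-1}=O(1)$, not $o(s^{-\kappa})$. Integrating \eqref{eq:linear maps} therefore cannot give $|s^\kappa(\calA_b)_1|\le\frac12$.

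The missing idea is the refined parameters \eqref{eq:refined mod para}, with $R=s^{1/2-\kappa}$:
\begin{itemize}
\item adding $(\chi_RW,\eps_L)/D_R$ and $2(i\chi_R\Lambda W,\eps_L)/D_R$ cancels $\mathbf{Mod}_{b_L},\mathbf{Mod}_{\eta_L}$ exactly, through the terms $-\mathbf{Mod}_{b_L}\Lambda W-\frac12\mathbf{Mod}_{\eta_L}iW$ in \eqref{eq:epsL equation};
\item $(-iH)^*W=(-iH)^*(i\Lambda W)=0$ and $|D_R|\sim R$ then reduce the remainder to $O_K(s^{-L-1-\kappa'})$.
\end{itemize}
One then integrates $s\partial_s(\td\calA_b)_1+(\td\calA_b)_1=O_K(s^{-2\kappa}+s^{-\kappa'})$ and returns to $\calA_b$ via \eqref{eq:refined mod compare normalize}.

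\textbf{Gap 2: the dilation term in the Morawetz step.} The higher-order step breaks exactly where you flag it, and your proposed fix does not work. The diagonal term in your energy identity is fine, since $(Hf,\Lambda f)=2(W^3\Lambda W(f+4\Re f),f)\lesssim\|f\|_{\mathrm{Mor}}^2$. But to bound $\int s^{-1}\|f\|_{\mathrm{Mor}}^2\,ds$ you must apply Proposition~\ref{prop:global spacetime} to an equation containing $\frac{\lambda_s}{\lambda}\Pi\Lambda f$. That term is not of the form \eqref{eq:retarded source}: for $g\neq f$, $(Hg,\Lambda f)$ contains $(\nabla g,(\Lambda+1)\nabla f)$. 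This costs a weight $\langle r\rangle$ and a derivative, so it is not bounded by $\|g\|_{\mathrm{Mor}}\|f\|_{\mathrm{Mor}}$, and $\Lambda f$ is not controlled in $\dot H^1$ either. Dyadic blocks $[\sigma,2\sigma]$ do not help, since $\int_\sigma^{2\sigma}|\lambda_s/\lambda|\,ds\approx c_1\log 2$ is not small.

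The paper instead works on blocks $[\sigma,(1+\delta)\sigma]$ and removes the dilation term exactly by passing to $\Pi(\mathfrak f_\mu)$, with $\mu=\lambda/\lambda(\sigma)$ and $|\mu-1|\lesssim\delta$. The cost is the potential mismatch $i[W^4(x/\mu)-\mu^2W^4](\cdot)$ and the non-commutation of $\Pi$ with scaling. Both are $O(\delta)$ in Morawetz-dual form \eqref{eq:rescaled equation error esti 1} and are absorbed for small $\delta$. The blocks are then summed against the $s^{-1}$ weight to get $\calF_L(s)\le\calF_L(s_0)+C_Ks_0^{-\kappa}$.

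\textbf{What is fine.} Your $\alpha_-$ argument and the energy-conservation bound for $\td\eps$ essentially work. For the latter, the paper simply bounds the localized term $\int W^4|\td\eps|^2$ rather than invoking coercivity modulo $\Pi$.
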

	By local well-posedness, \eqref{eq:decom tube}, and \eqref{eq:main bootstrap goal}, $d_W(u(s_{\mathrm{exit}}))\le\delta_{\mathrm{dec}}/4<\delta_{\mathrm{dec}}$, so $u(s_{\mathrm{exit}})\in\calO_{\mathrm{dec}}$. Lemma~\ref{lem:decomposition} therefore excludes failure of the decomposition at $s_{\mathrm{exit}}$.
	Moreover, \eqref{eq:main bootstrap goal} and \eqref{eq:initial map error} keep all bounds in \eqref{eq:main bootstrap assump} strict at $s_{\mathrm{exit}}$. Thus \eqref{eq:first exit time} forces an exit through an unstable direction:
	\begin{equation*}
		\td{\bfA}(s_{\mathrm{exit}};\bfA_{\mathrm{init}})\in\partial[-1/2,1/2]^{2L}.
	\end{equation*}
	To define the exit map on the entire initial cube, we normalize $\td{\bfA}(s_0;\bfA_{\mathrm{init}})$ when it lies outside $(-1/2,1/2)^{2L}$. We define $\Phi:[-1,1]^{2L}\to\partial[-1,1]^{2L}$ by
	\begin{equation}\label{eq:exit map}
		\Phi(\bfA_{\mathrm{init}})\coloneqq\begin{cases}
			2\td{\bfA}(s_{\mathrm{exit}}(\bfA_{\mathrm{init}});\bfA_{\mathrm{init}}),&\td{\bfA}(s_0;\bfA_{\mathrm{init}})\in(-1/2,1/2)^{2L},\\
			\frac{\td{\bfA}(s_0;\bfA_{\mathrm{init}})}{\max_{1\le j\le2L}|\td{\bfA}_j(s_0;\bfA_{\mathrm{init}})|},&\td{\bfA}(s_0;\bfA_{\mathrm{init}})\notin(-1/2,1/2)^{2L}.
		\end{cases}
	\end{equation}
	\begin{lem}[Exit map]\label{lem:exit map}
		The map $\Phi:[-1,1]^{2L}\to\partial[-1,1]^{2L}$ is continuous.
	\end{lem}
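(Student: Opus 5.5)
The plan is the standard continuity-of-exit-time argument, based on an outgoing (transversality) property of $\td{\bfA}$ at the boundary of the cube. The key input is a refined ODE for $\td{\bfA}$ on the trapped interval. From the refined modulation estimates of Section~\ref{sec:refine mod} and the linearized system \eqref{eq:linear maps}, each unstable coordinate $\td\calA_{b,j}$ ($j\ge2$) and $\td\calA_{\eta,j}$ satisfies
\begin{equation*}
s\partial_s\td\calA=\mathsf D\td\calA+O(s^{-2\kappa}),
\end{equation*}
with positive eigenvalue $k/(4L-1)$. Hence $s\partial_s(s^\kappa\td\calA_j)=(\mathsf D_{jj}+\kappa)s^\kappa\td\calA_j+O(s^{-\kappa})$. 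For the eigenmode coordinate we would derive $\partial_s\alpha_+=e_0\alpha_++O(s^{-L-1-\delta})$ from the $\eps$ equation. Testing against $-HY_-$ kills the linear part up to scale and phase terms of size $O(s^{-1})|\alpha_+|$, and the profile error $\td\Psi$ is small in the dual pairing. This gives $\partial_s(\epsilon_{\mathrm h}s^{L+1}\alpha_+)=(e_0+O(s^{-1}))\epsilon_{\mathrm h}s^{L+1}\alpha_++O(\epsilon_{\mathrm h}s^{-\delta})$. On the boundary face $|\td{\bfA}_j|=1/2$, these relations give outgoing behaviour:
\begin{equation*}
\frac{d}{ds}\big(\td{\bfA}_j^2\big)\ge c\,s^{-1}>0 .
\end{equation*}
This holds because the linear coefficients are bounded below while the errors are $o(1)$ once $s_0$ and $\epsilon_{\mathrm h}^{-1}$ are large.

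From transversality we conclude, first, that once $\td{\bfA}$ touches the boundary of $[-1/2,1/2]^{2L}$ it immediately leaves. Second, $s_{\mathrm{exit}}$ is exactly the first time $\td{\bfA}$ hits that boundary, since Proposition~\ref{prop:main bootstrap} shows that no other bootstrap condition saturates first. Third, if $\td{\bfA}(s_0)\in\partial[-1/2,1/2]^{2L}$, then $s_{\mathrm{exit}}=s_0$, so the two branches of \eqref{eq:exit map} agree on the interface: both equal $2\td{\bfA}(s_0)$.

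Continuity of $\Phi$ then follows in three steps.
\begin{enumerate}
\item The initial data depend continuously on $\bfA_{\mathrm{init}}$ in $H^{2L+1}$. The map $\bfA_{\mathrm{init}}\mapsto(b_j,\eta_j,\alpha_\pm)(s_0)$ is affine through $\mathsf S_b^{-1},\mathsf S_\eta^{-1}$, and $P$ is polynomial in these parameters.
\item By local well-posedness in $H^{2L+1}$ and the $C^1$ regularity of the decomposition in Lemma~\ref{lem:decomposition}(4), the map $(\bfA_{\mathrm{init}},s)\mapsto\td{\bfA}(s;\bfA_{\mathrm{init}})$ is continuous on compact time intervals. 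The refined parameters are continuous functionals of $u$.
\item Transversality makes the first hitting time continuous. Fix $\bfA^*$ with exit time $s^*$. For $\epsilon>0$, on $[s_0,s^*-\epsilon]$ the trajectory stays at positive distance from the boundary, and at $s^*+\epsilon$ it lies strictly outside. Continuous dependence transfers both properties to nearby $\bfA_{\mathrm{init}}$, so $|s_{\mathrm{exit}}-s^*|<\epsilon$. Composing with continuity of $\td{\bfA}$ gives continuity of $\Phi$.
\end{enumerate}
On the outer region where $\td{\bfA}(s_0)\notin(-1/2,1/2)^{2L}$, $\Phi$ is a radial normalization of a continuous nonvanishing map, hence continuous. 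The interface was handled above.

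The main obstacle is the outgoing estimate for $\alpha_+$ and for $\td\calA_\eta,\td\calA_b$ with errors strictly smaller than the linear growth. This is the reason for using the refined parameters rather than $\bfA$ itself. For $\alpha_+$ I would first verify that the pairing of the nonlinear terms and of $\td\Psi$ with the Schwartz function $HY_-$ is $O(s^{-L-1-\delta})$ under \eqref{eq:main bootstrap assump}. This uses $\|\eps\|_{\dot{\calH}^{2L+1}}\le Ks^{-2Lc_1}$ with $2Lc_1=4L^2/(4L-1)>L+1/4$ and the localized size of $\td\Psi$. If this is too tight, the normalization $\epsilon_{\mathrm h}$ absorbs the constant $K$.
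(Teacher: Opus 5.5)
Your proposal is correct and follows essentially the same route as the paper. Both arguments rest on three ingredients: transversality of $\td{\bfA}$ on the faces of the cube (from \eqref{eq:refined mod esti normalize} for the $\td\calA$ coordinates and from \eqref{eq:eigenmode esti} for $\alpha_+$), continuous dependence to get continuity of $s_{\mathrm{exit}}$ (including $s_{\mathrm{exit}}\to s_0$ at interface points), and the explicit normalization on the outer region.

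One correction concerns your primary claim of an $O(s^{-L-1-\delta})$ error in $\partial_s\alpha_+-e_0\alpha_+$. Only $O(s^{-L-1})$ is available, because the pairing $(-HY_-,\Theta_{L+1})$ has no reason to vanish. So it is your fallback that actually works, exactly as in the paper: after normalization the error is $O(\epsilon_{\mathrm h})\ll e_0$. Moreover, $\epsilon_{\mathrm h}$ only has to beat a $K$-independent constant, since the $K$-dependent terms $Ks^{-1-2Lc_1}+K^2s^{-4Lc_1}$ are $o(s^{-L-1})$ once $s_0$ is large.
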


	We prove Proposition~\ref{prop:main bootstrap} and Lemma~\ref{lem:exit map} in Section~\ref{sec:main bootstrap}. By Lemma~\ref{lem:exit map}, $\bfA_{\mathrm{init}}\mapsto-\Phi(\bfA_{\mathrm{init}})$ is a continuous map from $[-1,1]^{2L}$ to itself. By Brouwer's fixed point theorem and Lemma~\ref{lem:exit map}, there exists $\bfA_{\mathrm{init}}\in \partial[-1,1]^{2L}$ such that $\bfA_{\mathrm{init}}=-\Phi(\bfA_{\mathrm{init}})$. Therefore, from \eqref{eq:initial map error}, we deduce
	\begin{equation*}
		\bfA_{\mathrm{init}}=-\Phi(\bfA_{\mathrm{init}})
		=-\bfA_{\mathrm{init}}+o(1),
	\end{equation*}
	which contradicts $\bfA_{\mathrm{init}}\in \partial[-1,1]^{2L}$.
	Hence, for some $\bfA_{\mathrm{init}}$, we have $s_{\mathrm{exit}}(\bfA_{\mathrm{init}})=\infty$. Therefore, the corresponding solution satisfies \eqref{eq:main bootstrap assump} for all $s\ge s_0$.
\end{proof}

\begin{rem}[Codimension $2L$ stability]\label{rem:codimension}
	For every admissible choice of the initial scale $\lambda_0$, phase $\gamma_0$, radiation $\eps_{\mathrm{in}}$, and stable parameters $(\calA_b)_1(s_0),\alpha_-(s_0)$, Proposition~\ref{prop:trapped} ensures the existence of at least one choice of the $2L$ unstable parameters in \eqref{eq:init para unstable} for which the corresponding solution remains trapped. Proposition~\ref{prop:main bootstrap} provides the improved remainder and stable bounds, while the shooting argument determines the unstable parameters. Thus, the remaining parameters are free, and the construction has formal codimension $2L$. We do not pursue the manifold structure here; see \cite{Collot2018MemAmer,KimKwon2020blowup} for related Lipschitz blow-up manifold constructions.
\end{rem}

\begin{quote}
	\emph{In the rest of the paper, we assume the bootstrap assumption \eqref{eq:main bootstrap assump}.}
\end{quote}

\subsection{Modulation estimates}\label{sec:refine mod}
In this subsection, we establish the modulation estimates that justify the formal modulation laws \eqref{eq:formal mod laws}. We derive the estimates for $\lambda,\gamma,b_j,\eta_j$ by differentiating the orthogonality conditions in \eqref{eq:decomposition 2}, and control the coefficients $\alpha_\pm$ by testing \eqref{eq:eps equ} against $-HY_\mp$. For the last parameters $b_L,\eta_L$, we use refined modulation parameters to obtain the additional smallness needed to close the bootstrap argument.

We first establish the following coercivity estimate. We denote
\begin{equation*}
	\eps_L\coloneqq(-iH)^L\eps.
\end{equation*}

\begin{lem}[Higher-order coercivity]\label{lem:energy coercivity}
	For $\eps\in\dot{\calH}^{2L+1}_{\rad}\cap\calZ^\perp$, we have
	\begin{equation}\label{eq:coercivity}
		\begin{gathered}
			\|\eps_L\|_{\dot H^1}\sim\|\eps\|_{\dot{\calH}^{2L+1}},\qquad |(H\eps_L,\eps_L)|\lesssim\|\eps\|_{\dot{\calH}^{2L+1}}^2,\\
			\|\eps\|_{\dot{\calH}^{2L+1}}^2\lesssim_L\tfrac12(H\eps_L,\eps_L)+|\alpha_+|^2+|\alpha_-|^2.
		\end{gathered}
	\end{equation}
\end{lem}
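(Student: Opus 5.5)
The first two claims come from Lemma~\ref{lem:weighted elliptic}, with $-iH$ in place of $iH$; that lemma holds as stated with the sign flipped, since the proof only uses the structure of the operator. Because $\eps\in\calZ^\perp$, all the tests $(\phi_j,\eps)$ and $(\varphi_j,\eps)$ with $j<L$ vanish. The equivalence \eqref{eq:weighted reconstruction} then gives
\[
\|\eps\|_{\dot{\calH}^{2L+1}}\sim\|(iH)^L\eps\|_{\dot H^1}=\|\eps_L\|_{\dot H^1}.
\]
The upper bound on $|(H\eps_L,\eps_L)|$ then follows from the boundedness of $H:\dot H^1\to\dot H^{-1}$, which uses Hardy's inequality with $W^4\lesssim\langle r\rangle^{-4}$.

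For the third estimate, I decompose $\eps_L$ with the projection \eqref{eq:projection}. By \eqref{eq:eigenmode mode def},
\[
(-HY_\mp,\eps_L)=(\pm e_0)^L(-HY_\mp,\eps)=\alpha_\pm,
\]
where I move $(-iH)^L$ onto $HY_\mp$ via $(-iH)^*=Hi$ and use $Hi\,HY_\mp=\pm e_0 HY_\mp$. This gives
\[
\eps_L=\Pi\eps_L+\alpha_+Y_++\alpha_-Y_-+\beta_1\Lambda W+\beta_2 iW,
\]
where $\beta_1,\beta_2$ are the bracketed coefficients in \eqref{eq:projection}.

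Next I expand the quadratic form. Since $H\Lambda W=H(iW)=0$, the kernel terms drop out of $(H\eps_L,\eps_L)$. The identity \eqref{eq:projection bounds} gives $(H\Pi\eps_L,Y_\pm)=0$. The normalization \eqref{eq:eigenmode normalization} gives $(HY_\pm,Y_\pm)=0$ and $(HY_+,Y_-)=-1$. Together these yield
\[
(H\eps_L,\eps_L)=(H\Pi\eps_L,\Pi\eps_L)-2\alpha_+\alpha_-.
\]
By \eqref{eq:projection coercivity}, $(H\Pi\eps_L,\Pi\eps_L)\gtrsim\|\Pi\eps_L\|_{\dot H^1}^2$. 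Hence
\[
\|\Pi\eps_L\|_{\dot H^1}^2\lesssim(H\eps_L,\eps_L)+|\alpha_+|^2+|\alpha_-|^2.
\]

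It remains to control the kernel coefficients $\beta_1,\beta_2$, and I expect this to be the main point. I want to show that the tests $(\td\phi,\eps_L)$ and $(\td\varphi,\eps_L)$ vanish. By \eqref{eq:adapted tests},
\[
(\td\phi,\eps_L)=((Hi)^L\td\phi,\eps)=(\phi_L,\eps).
\]
In Lemma~\ref{lem:dual tests} the tests $\phi_j$ for $j<L$ are built by subtracting $\phi_k,\varphi_k$ with $k>j$, so the top test is the raw $\phi_L$. The same holds for $\varphi_L$. Since $\calZ^\perp$ includes orthogonality to $\phi_L,\varphi_L$, both quantities vanish. Consequently $\beta_1$ and $\beta_2$ reduce to linear combinations of $\alpha_\pm$ with fixed coefficients $(\td\phi,Y_\iota)$ and $(\td\varphi,Y_\iota)$.

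Putting this together,
\[
\|\eps_L\|_{\dot H^1}\lesssim\|\Pi\eps_L\|_{\dot H^1}+|\alpha_+|+|\alpha_-|.
\]
Squaring and combining with the first equivalence gives
\[
\|\eps\|_{\dot{\calH}^{2L+1}}^2\lesssim(H\eps_L,\eps_L)+|\alpha_+|^2+|\alpha_-|^2.
\]
The constant $\tfrac12$ in the statement is harmless: since $|\alpha_+\alpha_-|\le\tfrac12(|\alpha_+|^2+|\alpha_-|^2)$, the bound can be rearranged to the stated form with an $L$-dependent constant.
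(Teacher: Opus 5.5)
Your proposal is correct and follows the paper's proof. The paper likewise uses Lemma~\ref{lem:weighted elliptic} together with the $\calZ^\perp$ conditions for the equivalence, and then decomposes $\eps_L$ via $\Pi$. In that decomposition, $(\td\phi,\eps_L)=(\phi_L,\eps)=0$ and $(\td\varphi,\eps_L)=(\varphi_L,\eps)=0$, so the kernel coefficients reduce to combinations of $\alpha_\pm$. The paper then combines $(H\Pi\eps_L,\Pi\eps_L)=(H\eps_L,\eps_L)+2\alpha_+\alpha_-$ with \eqref{eq:projection coercivity}. You simply spell out the steps the paper compresses into its citation of \eqref{eq:projection}, \eqref{eq:decomposition 2}, and \eqref{eq:eigenmode mode def}.
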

\begin{proof}
	The equivalence $\|\eps_L\|_{\dot H^1}\sim\|\eps\|_{\dot{\calH}^{2L+1}}$ follows from \eqref{eq:weighted reconstruction} and \eqref{eq:decomposition 2}. Boundedness of the quadratic form of $H$ on $\dot H^1$ then gives $|(H\eps_L,\eps_L)|\lesssim\|\eps\|_{\dot{\calH}^{2L+1}}^2$.

	For the last estimate, we use \eqref{eq:projection}, \eqref{eq:projection coercivity}, \eqref{eq:decomposition 2}, and \eqref{eq:eigenmode mode def}:
	\begin{align*}
		\|\eps\|_{\dot{\calH}^{2L+1}}^2\sim \|\eps_L\|_{\dot H^1}^2
		&\lesssim_L\|\Pi\eps_L\|_{\dot H^1}^2+|\alpha_+|^2+|\alpha_-|^2\\
		&\sim(H\Pi\eps_L,\Pi\eps_L)+|\alpha_+|^2+|\alpha_-|^2\\
		&=(H\eps_L,\eps_L)+2\alpha_+\alpha_-+|\alpha_+|^2+|\alpha_-|^2\\
		&\le2[\tfrac12(H\eps_L,\eps_L)+|\alpha_+|^2+|\alpha_-|^2]. \qedhere
	\end{align*}
\end{proof}

We now establish the modulation estimates. Note that, by \eqref{eq:fluctuations}, \eqref{eq:eigen coordinates}, and \eqref{eq:main bootstrap assump}, we have
\begin{equation}\label{eq:mod para s bound}
	b_1\sim s^{-1},\qquad |b_j|+|\eta_j|\lesssim s^{-j}\quad(1\le j\le L).
\end{equation}

\begin{lem}[Modulation estimates]
	We have
	\begin{equation}\label{eq:mod esti}
		|\mathbf{Mod}|\lesssim\|\eps\|_{\dot{\calH}^{2L+1}}+s^{-L-1}.
	\end{equation}
	The coefficients $\alpha_\pm$ satisfy
	\begin{equation}\label{eq:eigenmode esti}
		|\partial_s\alpha_\pm\mp e_0\alpha_\pm|\lesssim
		s^{-1}\|\eps\|_{\dot{\calH}^{2L+1}}
		+\|\eps\|_{\dot{\calH}^{2L+1}}^2
		+s^{-L-1}.
	\end{equation}
\end{lem}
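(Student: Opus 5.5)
We first write the equation for $\eps$. Substituting $w=W+P+\eps$ into \eqref{eq:w equ} and using \eqref{eq:residual expansion}, we obtain
\begin{equation*}
	\partial_s\eps+iH\eps-\frac{\lambda_s}{\lambda}\Lambda\eps+i\gamma_s\eps
	=\mathbf{Mod}\cdot\textbf{v}-\td\Psi-\big[\calN_W(P+\eps)-\calN_W(P)\big].
\end{equation*}
We then differentiate the orthogonality conditions $(\phi_j,\eps)=(\varphi_j,\eps)=0$, $0\le j\le L$, in $s$. This produces a linear system $\mathsf M\,\mathbf{Mod}=\mathbf B$ for the $2L+2$ components of $\mathbf{Mod}$. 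The matrix entries are $(\phi_j,\textbf{v}_k)$ and $(\varphi_j,\textbf{v}_k)$, plus corrections of size $O(\sum|b_j|+|\eta_j|)+O(\|\eps\|_{\dot{\calH}^{2L+1}})$ coming from $P$ and from the terms $\Lambda\eps$, $i\eps$. At leading order $\textbf{v}$ equals $(\Lambda W,-iW,-\mathsf V_1,-\mathsf U_1,\dots,-\mathsf V_L,-\mathsf U_L)$. By the duality \eqref{eq:duality}, and noting $\mathsf U_0=\tfrac i2W$, the leading matrix is the diagonal $\operatorname{diag}(1,-2,-1,\dots,-1)$, exactly as in the proof of Lemma~\ref{lem:decomposition}. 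Since $s^{-1}\ll1$, $\mathsf M$ is invertible with bounded inverse.

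Next we bound the right side $\mathbf B$. It consists of the following pairings.
\begin{itemize}
	\item $(\phi_j,iH\eps)=(Hi\phi_j,\eps)$. For $j<L$ this is $\pm(\phi_{j+1},\eps)$ plus combinations of compact tests. We need to check that the construction of the tests in Lemma~\ref{lem:dual tests} gives $(Hi)\phi_j\in\calZ$ up to terms annihilated on $\calZ^\perp$. If that fails, it is in any case $O(\|\eps\|_{\dot{\calH}^{2L+1}})$, because the tests are compactly supported, which is all we need.
	\item The terms $(\phi_j,\tfrac{\lambda_s}{\lambda}\Lambda\eps)$ and $(\phi_j,\gamma_s i\eps)$. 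These are $O(|\mathbf{Mod}|+s^{-1})\|\eps\|_{\dot{\calH}^{2L+1}}$ and are absorbed.
	\item The nonlinear terms. By \eqref{eq:dot cal pointwise} these are quadratic in $\eps$ on compact sets, so they are $O(\|\eps\|^2+s^{-1}\|\eps\|)$.
	\item The profile error $(\phi_j,\td\Psi)$. By \eqref{eq:correction pointwise}, $\td\Psi$ has weight at least $L+1$, and with \eqref{eq:mod para s bound} this gives $O(s^{-L-1})$ on compact supports.
\end{itemize}
Inverting $\mathsf M$ then yields \eqref{eq:mod esti}.

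For \eqref{eq:eigenmode esti} we compute
\begin{equation*}
	\partial_s(-HY_\mp,\eps)=(-HY_\mp,\partial_s\eps).
\end{equation*}
We use $(HY_\mp,-iH\eps)=(H(iH)Y_\mp,\eps)$ together with $-iHY_\mp=\mp e_0Y_\mp$, which gives $(Hi)HY_\mp=\pm e_0HY_\mp$. The linear term therefore gives exactly $\pm e_0(-HY_\mp,\eps)$, and after multiplying by $(\pm e_0)^L$ it gives $\pm e_0\alpha_\pm$. Since $Y_\pm$ is Schwartz, every remaining pairing is localized:
\begin{itemize}
	\item the scale and phase terms give $O(s^{-1}+|\mathbf{Mod}|)\|\eps\|_{\dot{\calH}^{2L+1}}$;
	\item the nonlinear terms give $O(\|\eps\|^2+s^{-1}\|\eps\|)$;
	\item $\td\Psi$ gives $O(s^{-L-1})$.
\end{itemize}

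The main obstacle is the modulation contribution $(-HY_\mp,\mathbf{Mod}\cdot\textbf{v})$. Its leading part $(HY_\mp,\Lambda W)$, $(HY_\mp,iW)$ vanishes by \eqref{eq:dual mode tests}. However, the pairings $(HY_\mp,\mathsf V_k)$ and $(HY_\mp,\mathsf U_k)$ for $k\ge1$ do not vanish a priori. Computing them via $H\mathsf V_k=i\mathsf V_{k-1}$ gives $(HY_\mp,\mathsf V_k)=(\pm e_0)^{-k}(HY_\mp,\mathsf V_0)$ up to sign conventions, and this is therefore $0$ by induction. The same induction works for $\mathsf U_k$. The remaining parts of $\textbf{v}$ are the derivatives of $S_n$, and they are $O(s^{-1})$. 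Hence this contribution is $O(s^{-1}|\mathbf{Mod}|)$. Inserting \eqref{eq:mod esti} then closes \eqref{eq:eigenmode esti}.
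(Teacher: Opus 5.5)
Your proposal is correct and follows essentially the paper's argument: you differentiate the orthogonality conditions and invert the near-diagonal matrix $\operatorname{diag}(1,-2,-1,\ldots,-1)$. For $\alpha_\pm$ you test against $-HY_\mp$, using $(Hi)(-HY_\mp)=\pm e_0(-HY_\mp)$ and the inductive vanishing $(HY_\mp,\mathsf V_k)=(HY_\mp,\mathsf U_k)=0$ to reduce the modulation term to $O(s^{-1}|\mathbf{Mod}|)$. One small omission: the leftover parts of $\textbf{v}$ are not only the derivatives of $S_n$ but also $\Lambda P$ and $iP$, whose pieces such as $b_n\Lambda\mathsf V_n$ and $b_n i\mathsf V_n$ need not be annihilated by $HY_\mp$. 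These are likewise $O(s^{-1})$ on compact sets, so your conclusion stands.
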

\begin{proof}
	Subtracting \eqref{eq:profile error} from \eqref{eq:w equ} and using \eqref{eq:residual expansion}, we obtain
	\begin{equation}\label{eq:eps equ}
		\partial_s\eps=-iH\eps+\tfrac{\lambda_s}{\lambda}\Lambda\eps-i\gamma_s\eps
		+\calN_W(\td\eps)-\calN_W(P)
		+\mathbf{Mod}\cdot\textbf{v}-\td\Psi.
	\end{equation}
	Differentiating the orthogonality conditions in \eqref{eq:decomposition 2} and using \eqref{eq:eps equ}, we obtain, for $f=\phi_k,\varphi_k$, $0\le k\le L$,
	\begin{align*}
		(f,\mathbf{Mod}\cdot\textbf{v})
		=&(f,iH\eps+b_1\Lambda\eps+\tfrac i2\eta_1\eps)-\mathbf{Mod}_\lambda(f,\Lambda\eps)
		+\mathbf{Mod}_\gamma(f,i\eps)\\
		&-(f,\calN_W(\td\eps)-\calN_W(P))+(f,\td\Psi).
	\end{align*}
	For the profile error $\td\Psi$, Proposition~\ref{prop:finite profile} and \eqref{eq:mod para s bound} give
	\begin{equation}\label{eq:mod esti profile error}
		|(f,\td\Psi)|\lesssim\sum_{j=0}^k\|\langle x\rangle^{-10L+3+j}\nabla^j\td\Psi\|_{L^2}
		\lesssim_k s^{-L-1}.
	\end{equation}
	Next we consider the nonlinear terms. Since $\td\eps=P+\eps$, \eqref{eq:notation nonlinearity} gives
	\begin{equation*}
		|\calN_W(\td\eps)-\calN_W(P)|
		\lesssim |\eps|(|P|+|\eps|)(W+|P|+|\eps|)^3.
	\end{equation*}
	From \eqref{eq:dot cal pointwise}, we have
	\begin{equation}\label{eq:P and eps esti}
		|P(x)|\lesssim\sum_{j=1}^L(|b_j|+|\eta_j|)\langle x\rangle^{2L-1},\qquad
		|\eps(x)|\lesssim\|\eps\|_{\dot{\mathcal H}^{2L+1}}\langle x\rangle^{2L-\frac{1}{2}}.
	\end{equation}
	By \eqref{eq:decomposition 2} and Proposition~\ref{prop:finite profile}, we have
	\begin{equation}\label{eq:mod esti nonlinear part}
		|(f,\calN_W(\td\eps)-\calN_W(P))|
		\lesssim\bigg(\sum_{j=1}^L(|b_j|+|\eta_j|)+\|\eps\|_{\dot{\calH}^{2L+1}}\bigg)
		\|\eps\|_{\dot{\calH}^{2L+1}}.
	\end{equation}
	Thanks to \eqref{eq:decomposition 2} with $\eps=\td\eps-P$ and \eqref{eq:P and eps esti}, we obtain
	\begin{equation}\label{eq:mod esti scale phase}
		|\mathbf{Mod}_\lambda(f,\Lambda\eps)|
		+|\mathbf{Mod}_\gamma(f,i\eps)|
		\lesssim\min(\delta_p,\|\eps\|_{\dot{\mathcal H}^{2L+1}})|\mathbf{Mod}|.
	\end{equation}
	The remaining terms are bounded using \eqref{eq:P and eps esti} and \eqref{eq:mod para s bound}, yielding
	\begin{equation}\label{eq:mod esti linear eps}
		|(f,iH\eps)|\lesssim\|\eps\|_{\dot{\calH}^{2L+1}},\qquad
		|(f,b_1\Lambda\eps+\tfrac i2\eta_1\eps)|\lesssim s^{-1}\|\eps\|_{\dot{\calH}^{2L+1}}.
	\end{equation}
	Collecting \eqref{eq:mod esti profile error} and \eqref{eq:mod esti nonlinear part}--\eqref{eq:mod esti linear eps}, we derive
	\begin{equation*}
		|\mathbf{Mod}|\lesssim\|\eps\|_{\dot{\calH}^{2L+1}}
		+s^{-L-1}+\delta_p|\mathbf{Mod}|.
	\end{equation*}
	Absorbing the last term proves \eqref{eq:mod esti}.
	
	For $\alpha_\pm$, we take $f=-HY_\mp$. By \eqref{eq:eigenfunctions}, \eqref{eq:dual mode tests}, and \eqref{eq:g kernels},
	\begin{equation}\label{eq:mod esti orth}
		(Hi)f=\pm e_0f,\qquad
		(f,\mathsf V_j)=(f,\mathsf U_j)=0\quad(0\le j\le L).
	\end{equation}
	Using \eqref{eq:eigenmode mode def} and \eqref{eq:eps equ}, we obtain
	\begin{align*}
		(\pm e_0)^{-L}(\partial_s\alpha_\pm\mp e_0\alpha_\pm)
		=&(f,\mathbf{Mod}\cdot\textbf{v})
		+\mathbf{Mod}_\lambda(f,\Lambda\eps)
		-\mathbf{Mod}_\gamma(f,i\eps)\\
		&-(f,b_1\Lambda\eps+\tfrac i2\eta_1\eps)
		+(f,\calN_W(\td\eps)-\calN_W(P))-(f,\td\Psi).
	\end{align*}
	From the orthogonality \eqref{eq:mod esti orth}, Proposition~\ref{prop:finite profile}, and \eqref{eq:mod para s bound}, we obtain
	\begin{equation*}
		|(f,\mathbf{Mod}\cdot\textbf{v})|
		\lesssim s^{-1}|\mathbf{Mod}|.
	\end{equation*}
	Combining this bound with \eqref{eq:mod para s bound}, \eqref{eq:mod esti}, \eqref{eq:mod esti profile error},  \eqref{eq:mod esti nonlinear part}--\eqref{eq:mod esti linear eps}, we have \eqref{eq:eigenmode esti},
	\begin{equation*}
		|\partial_s\alpha_\pm\mp e_0\alpha_\pm|
		\lesssim
		s^{-1}\|\eps\|_{\dot{\calH}^{2L+1}}
		+\|\eps\|_{\dot{\calH}^{2L+1}}^2
		+s^{-L-1}. \qedhere
	\end{equation*}
\end{proof}

Following the refined modulation arguments in \cite{JendrejLawrieRodriguez2022ASENS,KimKwonOh2026CMP,JeongKim2024arXiv,JeongKimKimKwon2026arXiv}, we correct $b_L,\eta_L$ using moments of $\eps_L$ against truncated $W$ and $i\Lambda W$. Applying $(-iH)^L$ to \eqref{eq:eps equ}, we obtain
\begin{equation}\label{eq:epsL equation}
	\begin{aligned}
		\partial_s\eps_L=&-iH\eps_L+\frac{\lambda_s}{\lambda}(\Lambda+2L)\eps_L-i\gamma_s\eps_L\\
		&-\mathbf{Mod}_{b_L}\Lambda W-\tfrac 12\mathbf{Mod}_{\eta_L} iW
		-(-iH)^L\Theta_{L+1}+\td E_L.
	\end{aligned}
\end{equation}
The expression for $\td E_L$ is given in \eqref{eq:epsL error formula}. The following estimates will be used in the refined modulation estimates and the higher-order energy estimate.

\begin{lem}[Nonlinear estimates]\label{lem:nonlin esti}
	We have
	\begin{equation}\label{eq:nonlin esti}
		\|\td E_L\|_{\dot H^1}\lesssim s^{-1}(\|\eps\|_{\dot{\calH}^{2L+1}}+s^{-2Lc_1}),
	\end{equation}
	and, for $f\in\dot H^1$,
	\begin{equation}\label{eq:nonlin esti Mor}
		|(Hf,\td E_L)|
		\lesssim s^{-1}(\|\eps\|_{\dot{\calH}^{2L+1}}+s^{-2Lc_1})\|f\|_{\mathrm{Mor}}
		+C_Ks^{-2(L+1)c_1}\|f\|_{\dot H^1}.
	\end{equation}
	Here $C_K$ may depend on the fixed bootstrap constant $K$.
\end{lem}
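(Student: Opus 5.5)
First I would write $\td E_L$ out explicitly by applying $(-iH)^L$ to \eqref{eq:eps equ} and collecting everything not displayed in \eqref{eq:epsL equation}. By the commutator identities behind \eqref{eq:lin commutator coefficients}, $\td E_L$ splits into four groups. The first is the commutator terms $\frac{\lambda_s}{\lambda}([(-iH)^L,\Lambda]-2L(-iH)^L)\eps$ and $-\gamma_s[(-iH)^L,i]\eps$. The second is the nonlinear term $(-iH)^L(\calN_W(\td\eps)-\calN_W(P))$. The third is the modulation remainder $(-iH)^L(\mathbf{Mod}\cdot\textbf{v})$ minus its leading part $-\mathbf{Mod}_{b_L}\Lambda W-\frac12\mathbf{Mod}_{\eta_L}iW$; by \eqref{eq:g kernels} the lower $b_j,\eta_j$ directions and $\Lambda W,iW$ are killed by $(-iH)^L$, so only corrections from the $S_n$ and the scale/phase acting on $P$ remain. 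The fourth is the profile error $(-iH)^L(\td\Psi-\Theta_{L+1})$.

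Then I would bound each group in $\dot H^1$.
\begin{itemize}
\item \emph{Commutator group.} I would use $|\lambda_s/\lambda|+|\gamma_s|\lesssim s^{-1}$, which follows from \eqref{eq:mod esti} and \eqref{eq:mod para s bound}, together with \eqref{eq:lin weighted commutators}. This gives $s^{-1}\|\eps\|_{\dot{\calH}^{2L+1}}$.
\item \emph{Modulation group.} The remaining coefficients carry at least one extra factor $b_1,\eta_1$ or one $\partial_{b_j}S_n$, which is weight-lowering. Combined with \eqref{eq:lin profile decay}, each such term is $O(s^{-1}|\mathbf{Mod}|)$ with a profile whose $\dot H^1$ norm is bounded. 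By \eqref{eq:mod esti}, $s^{-1}|\mathbf{Mod}|\lesssim s^{-1}(\|\eps\|_{\dot{\calH}^{2L+1}}+s^{-L-1})$, and $s^{-L-1}\le s^{-2Lc_1}$.
\item \emph{Profile error group.} I would use \eqref{eq:lin iterated products} with \eqref{eq:correction pointwise}: $\{\Psi\}_n$ with $n\ge L+2$ contributes $s^{-n}\le s^{-L-2}$. That is acceptable once the region where $r\gg1$ is handled by the growth bound $r^{2n-5}$ against $\langle r\rangle^{-2L-1}$ weights, possibly using the $\lambda$-localization. I expect this point to need care, since $\{\Psi\}_n$ grows polynomially. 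The fix I would try is that only $n\le 5L$ occurs, so the growth is polynomial, and I would measure via \eqref{eq:weighted residual} in a weighted sense.
\item \emph{Nonlinear group.} I would expand $\calN_W(P+\eps)-\calN_W(P)$ as a sum of terms that are linear in $\eps$ with coefficients $W^3P, W^2P^2,\dots$, plus terms that are at least quadratic in $\eps$. For the linear-in-$\eps$ terms, \eqref{eq:lin iterated products} and \eqref{eq:profile input} show the coefficient supplies decay $\langle r\rangle^{-4}$ times $s^{-1}\langle r\rangle^{1}$ at worst. This yields $s^{-1}\|\eps\|_{\dot{\calH}^{2L+1}}$, using \eqref{eq:P and eps esti} for the pointwise bounds on $P$ and $\eps$ and weighted Hardy for derivatives. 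The quadratic terms give $\|\eps\|^2_{\dot{\calH}^{2L+1}}\lesssim K s^{-2Lc_1}\|\eps\|_{\dot{\calH}^{2L+1}}\ll s^{-1}\|\eps\|_{\dot{\calH}^{2L+1}}$, since $2Lc_1>1$.
\end{itemize}
The genuinely hard part is the far region $r\gtrsim s^{1/2}$-type scales. There the pointwise growth $\langle x\rangle^{2L-1/2}$ of $\eps$ and $\langle x\rangle^{2L-1}$ of $P$ make the quintic products non-perturbative, since $|P|$ is not small where $b_1 r^2\gtrsim1$. I would control this by not expanding there. Instead I would rewrite the nonlinear term via $\td\eps=P+\eps$ and use $\|\td\eps\|_{\dot H^1}\le\delta_{\mathrm{dec}}$ together with Sobolev and higher Sobolev control of $\td\eps\in\dot H^{2L+1}$. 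This produces the $C_K s^{-2(L+1)c_1}$-type losses.

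For \eqref{eq:nonlin esti Mor}, I would split $\td E_L=E^{\mathrm{loc}}+E^{\mathrm{far}}$. For the local, spatially decaying parts (commutator, modulation and linear nonlinear terms, whose coefficients carry $\langle r\rangle^{-2}$ extra decay), I would integrate by parts in $(Hf,\cdot)$ so that $f$ appears only through $\langle r\rangle^{-1}\nabla f$ and $\langle r\rangle^{-2}f$. Since the source is bounded in the dual weighted norm by $s^{-1}(\|\eps\|_{\dot{\calH}^{2L+1}}+s^{-2Lc_1})$, this gives the $\mathrm{Mor}$ term. The far-field and higher-order nonlinear contributions, which need no decay, I would bound by Cauchy--Schwarz in $\dot H^1$. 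Their size there is $C_K s^{-2(L+1)c_1}$, obtained from the quadratic gain $\|\eps\|^2_{\dot{\calH}^{2L+1}}\le K^2s^{-4Lc_1}$ and from the extra $s^{-1}\sim\lambda^{1/c_1}$ factors. The main obstacle I expect is exactly the bookkeeping of which terms admit the $\langle r\rangle^{-1}$ weight gain needed for the Morawetz pairing.
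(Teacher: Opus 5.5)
Your decomposition matches the paper's in most respects: the commutator piece via \eqref{eq:lin weighted commutators}, the modulation piece of size $s^{-1}|\mathbf{Mod}|$, no expansion in the far region, and Morawetz pairing for localized pieces with $\dot H^1$ pairing for the rest. There is, however, a genuine gap in how you handle the profile error, and you do it by estimating your groups separately.

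\textbf{The gap.} Your fourth group is $(-iH)^L(\Theta_{L+1}-\td\Psi)=(-iH)^L\sum_{n=L+2}^{5L}\{\calN_W(P)\}_n$, and this function is not in $\dot H^1$ at all. The top weight $n=5L$ term is $i|P_L|^4P_L\sim s^{-5L}r^{10L-5}$, and $(-iH)^L$ of it grows like $s^{-5L}r^{8L-5}$ (compare \eqref{eq:lin source symbol}). So no weighted bound such as \eqref{eq:weighted residual} can yield \eqref{eq:nonlin esti}, whose norm is unweighted. For the same reason your nonlinear group $(-iH)^L(\calN_W(\td\eps)-\calN_W(P))$ has infinite $\dot H^1$ norm. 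Rewriting only $\calN_W(\td\eps)$ through $\td\eps$ in the far region leaves $\calN_W(P)$ untouched.

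The missing observation is the exact cancellation, immediate from \eqref{eq:Psi td def}:
\begin{equation*}
	\calN_W(P)+\td\Psi-\Theta_{L+1}=\sum_{n=2}^{L+1}\{\calN_W(P)\}_n .
\end{equation*}
The two groups must therefore be merged into $(-iH)^L\bigl(\calN_W(\td\eps)-\sum_{n=2}^{L+1}\{\calN_W(P)\}_n\bigr)$ before any estimate. Only then does the near/far split work:
\begin{itemize}
\item On $r\lesssim R$ you expand $\calN_W(P+\eps)$. The high-weight profiles $\sum_{n\ge L+2}\{\calN_W(P)\}_n$ reappear here, but harmlessly, since they are only estimated on $r\lesssim R$.
\item On $r\gtrsim R$ you write $\calN_W(P+\eps)=i|w|^4w-3iW^4w-2iW^4\ol w+4iW^5$ and use $\|(1-\chi_{R/2})w\|_{\dot{\calH}^{2L+1}}\lesssim_K R^{-2L}$.
\item The finitely many subtracted profiles with $n\le L+1$ have exterior tails of size $\lesssim s^{-1}R^{-2L-2}$.
\end{itemize}

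\textbf{Two further corrections.}

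First, the cut must be at $R=s^{c_1}$ (the scale $|x|\sim1$ in the original variables), not at $s^{1/2}$. The obstruction is not $|P|\gtrsim W$, since on $r\lesssim s^{c_1}$ one still has $|W+P|+|\eps|\lesssim_K\langle r\rangle^{-1/2}$. The radius $R=s^{c_1}$ is where three things happen:
\begin{itemize}
\item $\|\eps\|_{\dot{\calH}^{2L+1}}\le KR^{-2L}$ absorbs the growth loss $R^{2L(q-1)-2}$ of the $q$-fold $\eps$ terms;
\item the top-weight interior bound $s^{-5L}R^{8L-9/2}$ equals $R^{-2L-2}$;
\item the exterior bound $R^{-2L-2}=s^{-2(L+1)c_1}$ becomes admissible.
\end{itemize}
The same exterior argument with a cut at $s^{1/2}$ gives only $s^{-L-1}$, which is not $\lesssim s^{-1-2Lc_1}$.

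Second, in the Morawetz pairing the modulation group does not carry extra $\langle r\rangle^{-2}$ decay. The terms $(-iH)^L\Lambda P$ and $(-iH)^L(iP)$ contain $b_L\Lambda W$, $\eta_L iW$, $b_L i\Lambda W$ and $\eta_L W$, which decay only like $r^{-1}$, so the weighted norm $\|\langle r\rangle h\|_{L^2}$ of these pieces diverges. For $L=1$, pairing them in $\dot H^1$ would instead produce $s^{-1}\|\eps\|_{\dot{\calH}^{2L+1}}\|f\|_{\dot H^1}$, which is not admissible. You need to move $H$ onto them and use
\begin{equation*}
	H\Lambda W=H(iW)=0,\qquad H(i\Lambda W)=4iW^4\Lambda W,\qquad HW=-4W^5,
\end{equation*}
which gives $\|\langle r\rangle^2 Hh\|_{L^2}\lesssim s^{-1}|\mathbf{Mod}|$ for this group $h$.
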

We postpone the proof of Lemma~\ref{lem:nonlin esti} to the end of this subsection.

Recall the exponent $\kappa=\frac1{64(4L-1)}$ in \eqref{eq:kappa exponent}. We define
\begin{equation}\label{eq:reference scale}
	R=R(s)\coloneqq s^{\frac{1}{2}-\kappa},\qquad D_R\coloneqq(\chi_RW,\Lambda W).
\end{equation}
We define the refined parameters by
\begin{equation}\label{eq:refined mod para}
	\td b_L\coloneqq b_L+\frac{(\chi_RW,\eps_L)}{D_R},\qquad
	\td\eta_L\coloneqq\eta_L+\frac{2(i\chi_R\Lambda W,\eps_L)}{D_R}.
\end{equation}
For the following definition only, we set $\td b_j\coloneqq b_j$ and $\td\eta_j\coloneqq\eta_j$ for $j<L$. Using the matrices in \eqref{eq:eigen coordinates}, we define
\begin{equation}\label{eq:refined eigen coordinates}
	\td\calA_b\coloneqq\mathsf S_b\big(s^k(\td b_k-b_k^e)\big)_{k=1}^L,\qquad
	\td\calA_\eta\coloneqq\mathsf S_\eta\big(s^k\td\eta_k\big)_{k=1}^L.
\end{equation}
We set
\begin{equation*}
	\kappa'\coloneqq\frac1{4(4L-1)}-\frac32\kappa>0.
\end{equation*}
\begin{lem}[Refined modulation estimates]\label{lem:refined mod esti}
	The refined parameters satisfy
	\begin{equation}\label{eq:refined mod compare}
		|\td b_L-b_L|+|\td\eta_L-\eta_L|
		\lesssim_K s^{-L-\kappa'-2\kappa}
	\end{equation}
	and
	\begin{equation}\label{eq:refined mod esti}
		\begin{aligned}
			\big|\partial_s\td b_L+(2L-\tfrac12)b_1\td b_L+\tfrac12\eta_1\td\eta_L\big|&\lesssim_K s^{-L-1-\kappa'},\\
			\big|\partial_s\td\eta_L+(2L-\tfrac12)b_1\td\eta_L-\tfrac12\eta_1\td b_L\big|&\lesssim_K s^{-L-1-\kappa'}.
		\end{aligned}
	\end{equation}
	The refined coordinates satisfy
	\begin{equation}\label{eq:refined mod compare normalize}
		|\td\calA_b-\calA_b|+|\td\calA_\eta-\calA_\eta|\lesssim_K s^{-\kappa'-2\kappa}
	\end{equation}
	and
	\begin{equation}\label{eq:refined mod esti normalize}
		s\partial_s\begin{pmatrix}\td\calA_b\\\td\calA_\eta\end{pmatrix}
		=\begin{pmatrix}\mathsf D_b\td\calA_b\\\mathsf D_\eta\td\calA_\eta\end{pmatrix}
		+O_K\big(|\td\calA_b|^2+|\td\calA_\eta|^2+s^{-\kappa'}\big).
	\end{equation}
\end{lem}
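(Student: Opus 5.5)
The plan is to read off \eqref{eq:refined mod compare} from Hardy's inequality and the bootstrap bound. For \eqref{eq:refined mod esti}, I will differentiate the correction moments using the equation \eqref{eq:epsL equation} for $\eps_L$. The key structural fact is that $(\chi_RW,\cdot)$ and $(i\chi_R\Lambda W,\cdot)$ test exactly the two kernel directions $\Lambda W,iW$ in which $\mathbf{Mod}_{b_L},\mathbf{Mod}_{\eta_L}$ appear. I treat $\td b_L$ in detail; $\td\eta_L$ is identical, with the roles of $W,\Lambda W$ and of $H_+,H_-$ exchanged.

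\textbf{Step 1 (comparison).} Since $W\sim\sqrt3r^{-1}$ and $\Lambda W\sim-\frac{\sqrt3}2r^{-1}$ by \eqref{eq:W tails}, we have $|D_R|\sim R$ and $\|r\chi_RW\|_{L^2}\lesssim R^{3/2}$. By Hardy's inequality and Lemma~\ref{lem:energy coercivity},
\begin{equation*}
|(\chi_RW,\eps_L)|\le\|r\chi_RW\|_{L^2}\|r^{-1}\eps_L\|_{L^2}\lesssim R^{3/2}\|\eps\|_{\dot{\calH}^{2L+1}}.
\end{equation*}
Dividing by $D_R$ gives $|\td b_L-b_L|\lesssim R^{1/2}Ks^{-2Lc_1}$. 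Now $2Lc_1=L+\frac14+\frac1{4(4L-1)}$ and $R^{1/2}=s^{\frac14-\frac\kappa2}$, and the exponents combine to exactly $-L-\kappa'-2\kappa$. This proves \eqref{eq:refined mod compare}. Multiplying by $s^L$ and applying the fixed matrices $\mathsf S_b,\mathsf S_\eta$ then yields \eqref{eq:refined mod compare normalize}.

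\textbf{Step 2 (time derivative).} I compute $\partial_s(\chi_RW,\eps_L)$ from \eqref{eq:epsL equation} and treat the resulting terms one at a time.
\begin{itemize}
\item[(a)] The kernel term gives $-\mathbf{Mod}_{b_L}(\chi_RW,\Lambda W)=-D_R\mathbf{Mod}_{b_L}$, while $(\chi_RW,iW)=0$ for the real inner product. This reproduces the law $\partial_sb_L+(2L-\frac12)b_1b_L+\frac12\eta_1\eta_L=\mathbf{Mod}_{b_L}$, since $b_{L+1}=0$.
\item[(b)] For the linear term, I write $(\chi_RW,-iH\eps_L)=-(H(i\chi_RW),\eps_L)$ up to sign. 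Since $H_-W=0$, $H_-(\chi_RW)$ equals $-(\Delta\chi_R)W-2\nabla\chi_R\cdot\nabla W=O(R^{-3})$ on $R\le r\le2R$. Pairing via Hardy gives $O(R^{-1/2}\|\eps\|_{\dot{\calH}^{2L+1}})$, and after division by $D_R$ this is $O(R^{-3/2}Ks^{-2Lc_1})\lesssim s^{-L-1-\kappa'}$.
\item[(c)] For the scaling and phase terms, I use \eqref{eq:notation dilation adjoint} to move $\Lambda+2L$ onto $\chi_RW$, and then replace $\lambda_s/\lambda,\gamma_s$ by $-b_1,\eta_1/2$ using \eqref{eq:mod esti}. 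The contribution $(\Lambda+\frac12)W=W^3$ and the cutoff derivatives $r\partial_r\chi_R$ are harmless. Since $W$ behaves like $r^{-1}$, the leading part of this term is $(2L-\frac12)b_1(\chi_RW,\eps_L)$ up to those harmless pieces, and the phase term produces $\frac{\eta_1}{2}\cdot$ (a multiple of) $(i\chi_R\Lambda W,\eps_L)$, up to replacing $W$ by $-2\Lambda W$ in the tail. Together these assemble $(2L-\frac12)b_1(\td b_L-b_L)+\frac12\eta_1(\td\eta_L-\eta_L)$. The errors are $s^{-1}$ times quantities bounded as in Step 1, or involve $W+2\Lambda W=O(r^{-3})$, which is better.
\item[(d)] The term $\partial_s\chi_R$ has $|R_s/R|\lesssim s^{-1}$ and contributes $s^{-1}R^{1/2}Ks^{-2Lc_1}$. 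Likewise, $\partial_sD_R^{-1}$ times the moment is bounded by $s^{-1}$ times the Step 1 bound.
\item[(e)] The profile term is bounded using \eqref{eq:lin profile decay} and \eqref{eq:mod para s bound}: $|(\chi_RW,(-iH)^L\Theta_{L+1})|\lesssim s^{-L-1}\int\langle r\rangle^{-4}r^2dr\lesssim s^{-L-1}$. Divided by $R$, this is $s^{-L-\frac32+\kappa}$.
\item[(f)] For $\td E_L$, I use \eqref{eq:nonlin esti} together with $\|r\chi_RW\|_{L^2}\lesssim R^{3/2}$, which gives $s^{-1}R^{1/2}(Ks^{-2Lc_1})$.
\end{itemize}
Summing, every error is $O_K(s^{-L-1-\kappa'})$, which gives \eqref{eq:refined mod esti}. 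The quadratic differences such as $b_1(\td b_L-b_L)$ are of this size by Step 1.

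\textbf{Step 3 (normalized coordinates).} I rerun the linearization behind \eqref{eq:linear maps} with the tilde parameters. For $j<L$, \eqref{eq:mod esti} gives $\mathbf{Mod}_{b_j}=O(Ks^{-2Lc_1})=o(s^{-j-1-\kappa'})$. The equations involving $b_{L}$ in $\mathbf{Mod}_{b_{L-1}}$ change only by $|\td b_L-b_L|\lesssim s^{-L-\kappa'}$. After multiplying by $s^{k+1}$, all these errors are $O_K(s^{-\kappa'})$, which gives \eqref{eq:refined mod esti normalize}.

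The main obstacle is the bookkeeping in step (c). The cancellation has to reproduce exactly the coefficients $(2L-\frac12)b_1$ and $\frac12\eta_1$ acting on the \emph{refined} parameters. This relies on the $r^{-1}$ tails of $W$ and $\Lambda W$ matching, with ratio $-2$, which is what forces the factor $2$ in the definition of $\td\eta_L$. If the naive adjoint computation leaves a mismatch, I would first try absorbing it by using $W+2\Lambda W=O(r^{-3})$, so that its pairing with $\eps_L$ is $O(\|\eps\|_{\dot{\calH}^{2L+1}})$ without any $R$ loss.
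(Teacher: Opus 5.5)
Your proposal is correct and follows essentially the paper's proof. Both test \eqref{eq:epsL equation} against $\chi_RW$ and $i\chi_R\Lambda W$, and use $H(iW)=H\Lambda W=0$ so the linear term only sees the cutoff, at cost $R^{-1/2}\|\eps\|_{\dot{\calH}^{2L+1}}$. Everything else is bounded by $s^{-1}R^{3/2}(\|\eps\|_{\dot{\calH}^{2L+1}}+s^{-2Lc_1})+s^{-L-1}$, followed by division by $|D_R|\sim R$. Your Hardy pairing $\|r\chi_RW\|_{L^2}\|r^{-1}\eps_L\|_{L^2}$ is an equivalent substitute for the paper's $L^{6/5}$--$L^6$ pairing.

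The one correction is that your ``main obstacle'' is not an obstacle. The paper attempts no cancellation in step (c). It bounds the scale, phase and $R_s$ terms crudely, then swaps $b_L,\eta_L$ for $\td b_L,\td\eta_L$ in the coefficients at the admissible cost $s^{-1}(|\td b_L-b_L|+|\td\eta_L-\eta_L|)$ given by \eqref{eq:refined mod compare}.

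In fact, the exact assembly you assert holds only for the phase part, via $W+2\Lambda W=2W^3$. The scaling and $R_s$ terms leave roughly $(b_1-R_s/R)\,(\partial_r(r\chi_R)W,\eps_L)$, and this does not vanish since $R_s/R=(\tfrac12-\kappa)s^{-1}\ne b_1$. That leftover is harmless only because it is already $O(s^{-1}R^{3/2}\|\eps\|_{\dot{\calH}^{2L+1}})$, which is within the error budget.
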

\begin{proof}[Proof of Lemma~\ref{lem:refined mod esti} assuming Lemma~\ref{lem:nonlin esti}]
	By \eqref{eq:W tails},
	\begin{equation}\label{eq:moment denominator}
		D_R=-\frac32R\int_0^\infty\chi(r)dr+O(1),\qquad
		\|\chi_RW\|_{L^{6/5}}+\|\chi_R\Lambda W\|_{L^{6/5}}\lesssim R^{\frac{3}{2}}.
	\end{equation}
	Combining \eqref{eq:moment denominator}, Sobolev embedding, \eqref{eq:dot calH linear map}, and \eqref{eq:main bootstrap assump} with $2Lc_1=L+\frac14+\kappa'+\frac32\kappa$, we obtain \eqref{eq:refined mod compare}. By \eqref{eq:refined eigen coordinates}, multiplication by $s^L$ also gives \eqref{eq:refined mod compare normalize}.

	We now prove \eqref{eq:refined mod esti}. For $\calZ_R=\chi_RW$, $i\chi_R\Lambda W$, we test \eqref{eq:epsL equation} and use \eqref{eq:notation dilation adjoint} to obtain
	\begin{equation*}
		\begin{aligned}
			\partial_s&(\calZ_R,\eps_L)+\mathbf{Mod}_{b_L}(\calZ_R,\Lambda W)+\tfrac12\mathbf{Mod}_{\eta_L}(\calZ_R,iW)\\
			=&((-iH)^*\calZ_R,\eps_L)+(\calZ_R,\td E_L-(-iH)^L\Theta_{L+1})\\
			&+\tfrac{\lambda_s}{\lambda}((2L-2-\Lambda)\calZ_R,\eps_L)
			+\gamma_s(i\calZ_R,\eps_L)+R_s(\partial_R\calZ_R,\eps_L).
		\end{aligned}
	\end{equation*}
	On the other hand, from \eqref{eq:refined mod para}, the first line becomes
	\begin{equation*}
		\begin{aligned}
			\partial_s&(\calZ_R,\eps_L)+\mathbf{Mod}_{b_L}(\calZ_R,\Lambda W)
			+\tfrac12\mathbf{Mod}_{\eta_L}(\calZ_R,iW)\\
			=&(\calZ_R,\Lambda W)\big[\partial_s\td b_L+(2L-\tfrac12)b_1b_L+\tfrac12\eta_1\eta_L\big]\\
			&+\tfrac12(\calZ_R,iW)[\partial_s\td\eta_L+(2L-\tfrac12)b_1\eta_L-\tfrac12\eta_1b_L]
			+D_R^{-1}R_s\partial_RD_R(\calZ_R,\eps_L).
		\end{aligned}
	\end{equation*}
	Now, we estimate the remainder.
	Since $(-iH)^*W=(-iH)^*(i\Lambda W)=0$, we have
	\begin{equation*}
		|((-iH)^*\calZ_R,\eps_L)|\lesssim 
		 R^{-3}\|\chi_{2R}\|_{L^{6/5}}\|\eps_L\|_{L^6}
		\lesssim R^{-\frac{1}{2}}\|\eps\|_{\dot{\calH}^{2L+1}}.
	\end{equation*}
	By \eqref{eq:lin profile decay} and \eqref{eq:mod para s bound},
	\begin{equation*}
		\|\langle r\rangle(-iH)^L\Theta_{L+1}\|_{L^2}\lesssim s^{-L-1}.
	\end{equation*}
	Since $\|\langle r\rangle^{-1}\calZ_R\|_{L^2}\lesssim1$, \eqref{eq:nonlin esti} and \eqref{eq:moment denominator} yield
	\begin{equation*}
		|(\calZ_R,\td E_L-(-iH)^L\Theta_{L+1})|
		\lesssim s^{-1}R^{\frac{3}{2}}(\|\eps\|_{\dot{\calH}^{2L+1}}+s^{-2Lc_1})+s^{-L-1}.
	\end{equation*}
	By \eqref{eq:mod para s bound}, \eqref{eq:mod esti}, and \eqref{eq:reference scale},
	\begin{equation}\label{eq:scale phase bound}
		|\tfrac{\lambda_s}{\lambda}|+|\gamma_s|+|\tfrac{R_s}{R}|\lesssim s^{-1}.
	\end{equation}
	Hence,
	\begin{equation*}
		|\tfrac{\lambda_s}{\lambda}((2L-2-\Lambda)\calZ_R,\eps_L)+\gamma_s(i\calZ_R,\eps_L)|
		\lesssim s^{-1}R^{\frac{3}{2}}\|\eps\|_{\dot{\calH}^{2L+1}}.
	\end{equation*}
	Since $|\partial_RD_R|\lesssim1$ and $\|\partial_R\calZ_R\|_{L^{6/5}}\lesssim R^{\frac{1}{2}}$ with $|R_s|\lesssim s^{-1}R$, we get
	\begin{equation*}
		|R_s(\partial_R\calZ_R,\eps_L)|+|D_R^{-1}R_s\partial_RD_R(\calZ_R,\eps_L)|
		\lesssim s^{-1}R^{\frac{3}{2}}\|\eps\|_{\dot{\calH}^{2L+1}}.
	\end{equation*}
	Thus, dividing by $D_R$, with $|D_R|\sim R$ and using \eqref{eq:refined mod compare}, we derive \eqref{eq:refined mod esti},
	\begin{equation*}
		\begin{aligned}
			&|\partial_s\td b_L+(2L-\tfrac12)b_1\td b_L+\tfrac12\eta_1\td\eta_L|
			+|\partial_s\td\eta_L+(2L-\tfrac12)b_1\td\eta_L-\tfrac12\eta_1\td b_L|\\
			&\lesssim R^{-\frac{3}{2}}\|\eps\|_{\dot{\calH}^{2L+1}}
			+s^{-1} R^{\frac{1}{2}}(\|\eps\|_{\dot{\calH}^{2L+1}}+s^{-2Lc_1})+s^{-L-1}R^{-1}\lesssim_K s^{-L-1-\kappa'}.
		\end{aligned}
	\end{equation*}
	Using \eqref{eq:mod esti} for $j<L$ and \eqref{eq:refined mod esti} for $j=L$, we multiply the errors by $s^{j+1}$ and apply $\mathsf S_b,\mathsf S_\eta$. Then, \eqref{eq:linear maps}, \eqref{eq:main bootstrap assump}, \eqref{eq:refined mod compare}, and \eqref{eq:refined mod compare normalize} prove \eqref{eq:refined mod esti normalize}.
\end{proof}

Now, we finish the proof of Lemma~\ref{lem:nonlin esti}.

\begin{proof}[Proof of Lemma~\ref{lem:nonlin esti}]
	We set $R\coloneqq s^{c_1}$.
	Note that, by \eqref{eq:g kernels} and Proposition~\ref{prop:finite profile},
	\begin{equation*}
		\begin{gathered}
			(-iH)^L\partial_{b_L}P=\Lambda W,\quad
			(-iH)^L\partial_{\eta_L}P=\tfrac{i}{2}W,\quad
			(-iH)^L\Lambda W=(-iH)^L(iW)=0.
		\end{gathered}
	\end{equation*}
	Applying $(-iH)^L$ to \eqref{eq:eps equ}, we obtain \eqref{eq:epsL equation} with
	\begin{equation}\label{eq:epsL error formula}
		\td E_L=E_1+E_2+E_3,
	\end{equation}
	where
	\begin{align*}
		E_1\coloneqq&(-iH)^L(\calN_W(P+\eps)-\calN_W(P)-\td\Psi+\Theta_{L+1}),\\
		E_2\coloneqq&\frac{\lambda_s}{\lambda}([(-iH)^L,\Lambda]-2L(-iH)^L)\eps
		-\gamma_s[(-iH)^L,i]\eps,\\
		E_3\coloneqq&\mathbf{Mod}_\lambda(-iH)^L\Lambda P-\mathbf{Mod}_\gamma(-iH)^L(iP)\\
		&-\sum_{j=1}^{L-1}(\mathbf{Mod}_{b_j}(-iH)^L\partial_{b_j}P
		+\mathbf{Mod}_{\eta_j}(-iH)^L\partial_{\eta_j}P).
	\end{align*}
	We first estimate $E_1$.
	By \eqref{eq:Psi td def} and \eqref{eq:notation nonlinearity},
	\begin{equation}
		\begin{aligned}
			E_1=&(-iH)^L\bigg(\calN_W(P+\eps)-\sum_{n=2}^{L+1}\{\calN_W(P)\}_n\bigg)
			\\=&(-iH)^L(E_{1,1}+E_{1,2}+E_{1,3}+\calP_1),
		\end{aligned}
	\end{equation}
	where
	\begin{align*}
		E_{1,1}&\coloneqq\sum_{n=L+2}^{5L}\{\calN_W(P)\}_n,\qquad \calP_1\coloneqq4iW^3[3\Re(P_1)\eps+(P_1+\Re P_1)\ol\eps],\\
		E_{1,2}&\coloneqq3i(|W+P|^4-W^4)\eps+2i(|W+P|^2(W+P)^2-W^4)\ol\eps-\calP_1,\\
		E_{1,3}&\coloneqq\calN_{W+P}(\eps).
	\end{align*}
	Here, $\calP_1$ consists of the terms in $\calN_W(P+\eps)$ that are linear in $\eps$ and have parameter weight $1$.
	Since $|\nabla^m(W^3P_1)|\lesssim_m s^{-1}\langle r\rangle^{-2-m}$, \eqref{eq:lin iterated products} gives
	\begin{equation}\label{eq:localized source norm}
		\|\langle r\rangle(-iH)^L\calP_1\|_{L^2}
		+\|\langle r\rangle^{2}\nabla(-iH)^L\calP_1\|_{L^2}\lesssim s^{-1}\|\eps\|_{\dot{\calH}^{2L+1}}.
	\end{equation}
	
	Next, we estimate $E_{1,1}$.
	Each term in $\{\calN_W(P)\}_n$ is of the form $W^{5-d}P_{j_1}\cdots P_{j_d}$, up to complex conjugation, where $2\le d\le5$ and $j_1+\cdots+j_d=n$. Thus \eqref{eq:kernel growth}, \eqref{eq:correction pointwise}, and \eqref{eq:mod para s bound}, together with \eqref{eq:lin iterated products}, yield
	\begin{equation}\label{eq:lin source symbol}
		|\nabla^m(-iH)^L\{\calN_W(P)\}_n|\lesssim_m s^{-n}\langle r\rangle^{2n-2L-5-m}
		\quad(2\le n\le5L,\ m=0,1).
	\end{equation}
	Consequently, $E_{1,1}$ satisfies
	\begin{equation}
		\begin{aligned}
			\|\chi_R\nabla(-iH)^LE_{1,1}\|_{L^2}
			&\lesssim\sum_{n=L+2}^{5L}s^{-n}\big(1+R^{\max\{2n-2L-\frac92,0\}}\big)\\
			&\lesssim s^{-L-2}+s^{-5L}R^{8L-\frac92}\lesssim R^{-2L-2}.
		\end{aligned}\label{eq:E11 interior}
	\end{equation}
	All terms in $E_{1,2}$ have parameter weights $2\le n\le4L$, so
	\begin{equation}
		\begin{aligned}
			\|\chi_R\nabla(-iH)^LE_{1,2}\|_{L^2}
			&\lesssim\sum_{n=2}^{4L}s^{-n}\sum_{m=0}^{2L+1}\|\chi_R\langle r\rangle^{2n-2L-5+m}\nabla^m\eps\|_{L^2}\\
			&\lesssim R^{-2}\|\eps\|_{\dot{\calH}^{2L+1}}\lesssim_K R^{-2L-2}.
		\end{aligned}\label{eq:E12 interior}
	\end{equation}
	For $E_{1,3}$, which contains at least two factors of $\eps$, we use
	\begin{equation}\label{eq:nonlin pointwise derivatives}
		\begin{aligned}
			\chi_{R}|\nabla^m(W+P)|&\lesssim\langle r\rangle^{-\frac12-m}&& (0\le m\le2L+1),\\
			|\nabla^m\eps|&\lesssim \|\eps\|_{\dot{\calH}^{2L+1}}\langle r\rangle^{2L-\frac12-m}&& (0\le m\le2L-1).
		\end{aligned}
	\end{equation}
	By \eqref{eq:kernel growth}, \eqref{eq:correction pointwise}, and \eqref{eq:mod para s bound}, we have $|\nabla^m P_j|\lesssim_m s^{-j}\langle r\rangle^{2j-1-m}$. Since $r\lesssim R=s^{c_1}$, this gives the first bound. The second follows by applying \eqref{eq:dot cal pointwise} componentwise to $f=\nabla^m\eps$ with $k=2L+1-m$.
	
	Up to complex conjugation, each term in $\nabla^m E_{1,3}$ is of the form
	\begin{equation*}
		\prod_{\ell=1}^{5-q}\nabla^{m_\ell}(W+P)
		\prod_{\ell=1}^q\nabla^{j_\ell}\eps,
	\end{equation*}
	where $2\le q\le5$ and
	$\sum_\ell m_\ell+\sum_\ell j_\ell=m\le2L+1$.
	We may assume $j_1=\max_\ell j_\ell$, so $j_\ell\le L$ for $\ell\ge2$.
	By \eqref{eq:dot calH def} and \eqref{eq:nonlin pointwise derivatives},
	\begin{equation*}
		\bigg\|\chi_R\langle r\rangle^{m-2L-1}
		\prod_{\ell=1}^{5-q}\nabla^{m_\ell}(W+P)
		\prod_{\ell=1}^q\nabla^{j_\ell}\eps\bigg\|_{L^2}
		\lesssim R^{2L(q-1)-2}\|\eps\|_{\dot{\calH}^{2L+1}}^q.
	\end{equation*}
	Thus, \eqref{eq:lin iterated products} gives
	\begin{equation}\label{eq:E13 interior}
		\|\chi_R\nabla(-iH)^LE_{1,3}\|_{L^2}\lesssim
		\sum_{q=2}^5 R^{2L(q-1)-2}\|\eps\|_{\dot{\calH}^{2L+1}}^q
		\lesssim_K R^{-2L-2}.
	\end{equation}
	On the exterior region, we estimate $E_{1,1}+E_{1,2}+E_{1,3}+\calP_1$ together: the polynomial tails of $P$ cancel only in the sum. We retain $w=W+P+\eps$ and use
	\begin{equation}\label{eq:exterior identity}
		\calN_W(P+\eps)=i|w|^4w-3iW^4w-2iW^4\ol w+4iW^5.
	\end{equation}
	To control this, we first combine \eqref{eq:kernel growth}, \eqref{eq:correction pointwise}, \eqref{eq:mod para s bound}, and \eqref{eq:main bootstrap assump} to obtain
	\begin{equation}
		\begin{aligned}
			\|(1-\chi_{R/2})w\|_{\dot H^1}&\lesssim1,\\
			\|(1-\chi_{R/2})w\|_{\dot{\calH}^{2L+1}}
			&\lesssim \|\eps\|_{\dot{\calH}^{2L+1}}+R^{-2L-\frac12}+s^{-L}R^{-\frac12}\lesssim_K R^{-2L}.
		\end{aligned}\label{eq:exterior w}
	\end{equation}
	The same bounds hold with $w$ replaced by $W$. 
	By \eqref{eq:exterior identity}, each term in $\nabla^m\calN_W(P+\eps)$ is of the form $\prod_{j=1}^5\nabla^{m_j}\calW_j$, up to complex conjugation, where $\calW_j\in\{w,W\}$ and $\sum_{j=1}^5m_j=m$.
	By \eqref{eq:exterior w}, interpolation, Sobolev embedding, and Hardy's inequality, we deduce
	\begin{equation*}
		\bigg\|(1-\chi_R)\langle r\rangle^{m-2L-1}
		\prod_{j=1}^5\nabla^{m_j}\calW_j\bigg\|_{L^2}
		\lesssim_K R^{-2L-2}
		\qquad(0\le m\le2L+1).
	\end{equation*}
	Thus, \eqref{eq:lin iterated products} gives
	\begin{equation}\label{eq:exterior nonlinear}
		\|(1-\chi_R)\nabla(-iH)^L\calN_W(P+\eps)\|_{L^2}
		\lesssim_K R^{-2L-2}.
	\end{equation}
	For the subtracted profiles, \eqref{eq:lin source symbol} gives
	\begin{equation}\label{eq:exterior pure}
		\sum_{n=2}^{L+1}\|(1-\chi_R)\nabla(-iH)^L\{\calN_W(P)\}_n\|_{L^2}
		\lesssim s^{-L-1}R^{-\frac52}=s^{-1}R^{-2L-2}.
	\end{equation}
	Collecting \eqref{eq:localized source norm}, \eqref{eq:E11 interior}, \eqref{eq:E12 interior}, \eqref{eq:E13 interior}, \eqref{eq:exterior nonlinear}, and \eqref{eq:exterior pure}, we arrive at
	\begin{equation}\label{eq:E1 estimate}
		\|E_1-(-iH)^L\calP_1\|_{\dot H^1}
		\lesssim_K R^{-2L-2},\quad
		\|E_1\|_{\dot H^1}\lesssim s^{-1}\|\eps\|_{\dot{\calH}^{2L+1}}+C_KR^{-2L-2}.
	\end{equation}

	The estimate for $E_2$ follows directly from \eqref{eq:lin weighted commutators} and \eqref{eq:scale phase bound}:
	\begin{equation}\label{eq:E2 estimate}
		\|\nabla E_2\|_{L^2}\lesssim \|\langle r\rangle E_2\|_{L^2}+\|\langle r\rangle^2\nabla E_2\|_{L^2}\lesssim s^{-1}\|\eps\|_{\dot{\calH}^{2L+1}}.
	\end{equation}

	Finally, we estimate $E_3$. 
	First, by \eqref{eq:g kernels} and \eqref{eq:lin profile decay}, we have
	\begin{equation}\label{eq:iHP compute}
		(-iH)^LP=b_L\Lambda W+\tfrac{\eta_L}{2}iW
		+O(s^{-1}\langle r\rangle^{-3}).
	\end{equation}
	Moreover, combining \eqref{eq:lin commutator coefficients}, \eqref{eq:kernel growth}, \eqref{eq:correction pointwise}, and \eqref{eq:mod para s bound}, we get
	\begin{equation}\label{eq:iH Lambda P compute}
		\big([(-iH)^L,\Lambda]-2L(-iH)^L\big)P=O(s^{-1}\langle r\rangle^{-3}),\qquad
		[(-iH)^L,i]P=O(s^{-1}\langle r\rangle^{-3}).
	\end{equation}
	Since $(\Lambda+\tfrac12)\Lambda W$ and $(\Lambda+\tfrac12)iW$ decay like $\langle r\rangle^{-3}$, \eqref{eq:iHP compute} and \eqref{eq:iH Lambda P compute} imply
	\begin{equation*}
		\begin{aligned}
			(-iH)^L\Lambda P
			&=(2L-\tfrac12)(b_L\Lambda W+\tfrac{\eta_L}{2}iW)
			+O(s^{-1}\langle r\rangle^{-3}),\\
			(-iH)^L(iP)
			&=b_Li\Lambda W-\tfrac{\eta_L}{2}W
			+O(s^{-1}\langle r\rangle^{-3}).
		\end{aligned}
	\end{equation*}
	For $j<L$, the generalized-kernel parts of $\partial_{b_j}P$ and $\partial_{\eta_j}P$ vanish under $(-iH)^L$. The remaining terms come from derivatives of $S_n$ with parameter weight at least one. Hence
	\begin{equation*}
		(-iH)^L\partial_{b_j}P,\quad
		(-iH)^L\partial_{\eta_j}P
		=O(s^{-1}\langle r\rangle^{-3}).
	\end{equation*}
	The $O(s^{-1}\langle r\rangle^{-3})$ remainders in the preceding estimates also satisfy $\nabla^m O(s^{-1}\langle r\rangle^{-3})=O(s^{-1}\langle r\rangle^{-3-m})$. Hence, applying $H$ to these remainders yields $O(s^{-1}\langle r\rangle^{-5})$. Finally, we have
	\begin{equation*}
		H\Lambda W=H(iW)=0,\qquad
		H(i\Lambda W)=4iW^4\Lambda W,\qquad HW=-4W^5.
	\end{equation*}
	Thus, applying $H$ either eliminates the $r^{-1}$ tails or improves their decay to $O(\langle r\rangle^{-5})$. Together with the derivative bounds for the $O(s^{-1}\langle r\rangle^{-3})$ remainders and \eqref{eq:mod esti}, we obtain
	\begin{equation}\label{eq:E3 estimate}
		\|E_3\|_{\dot H^1}+\|\langle r\rangle^2HE_3\|_{L^2}
		\lesssim s^{-1}|\mathbf{Mod}|\lesssim s^{-1}(\|\eps\|_{\dot{\calH}^{2L+1}}+s^{-L-1}).
	\end{equation}
	Combining \eqref{eq:epsL error formula}, \eqref{eq:E1 estimate}, \eqref{eq:E2 estimate}, and \eqref{eq:E3 estimate}, we obtain
	\begin{equation*}
		\|\td E_L\|_{\dot H^1}\lesssim s^{-1}(\|\eps\|_{\dot{\calH}^{2L+1}}+s^{-L-1})+C_KR^{-2L-2}.
	\end{equation*}
	Since $L+1>2Lc_1$ and $R^{-2L-2}=s^{-1-\frac1{4L-1}}s^{-2Lc_1}$, taking $s_0$ sufficiently large depending on $K$ proves \eqref{eq:nonlin esti}.

	We next prove \eqref{eq:nonlin esti Mor}. We have
	\begin{equation}\label{eq:weighted duality}
		|(Hf,h)|\lesssim\|f\|_{\mathrm{Mor}}\min\big\{\|\langle r\rangle^2\nabla h\|_{L^2}+\|\langle r\rangle h\|_{L^2},\ \|\langle r\rangle^2Hh\|_{L^2}\big\}.
	\end{equation}
	By \eqref{eq:weighted duality}, \eqref{eq:localized source norm}, and \eqref{eq:E1 estimate}, we have
	\begin{equation*}
		\begin{aligned}
			|(Hf,E_1)|
			&\le |(Hf,(-iH)^L\calP_1)|+|(Hf,E_1-(-iH)^L\calP_1)|\\
			&\lesssim s^{-1}\|\eps\|_{\dot{\calH}^{2L+1}}\|f\|_{\mathrm{Mor}}+C_KR^{-2L-2}\|f\|_{\dot H^1}.
		\end{aligned}
	\end{equation*}
	Combining this with \eqref{eq:E2 estimate} and \eqref{eq:E3 estimate} using \eqref{eq:weighted duality}, we obtain
	\begin{equation*}
		|(Hf,\td E_L)|\lesssim s^{-1}(\|\eps\|_{\dot{\calH}^{2L+1}}+s^{-L-1})\|f\|_{\mathrm{Mor}}+C_KR^{-2L-2}\|f\|_{\dot H^1}.
	\end{equation*}
	Using $L+1>2Lc_1$ and $R^{-2L-2}=s^{-2(L+1)c_1}$ proves \eqref{eq:nonlin esti Mor}.
\end{proof}

\subsection{Higher-order energy estimate}
In this subsection, we close the bootstrap conclusion for the higher-order energy in \eqref{eq:main bootstrap assump}. By Lemma~\ref{lem:energy coercivity}, it suffices to estimate $\Pi\eps_L$ in $\dot H^1$, since the directions removed by $\Pi$ are controlled by the bootstrap bounds on $\alpha_\pm$. We estimate $\Pi\eps_L$ using the global spacetime estimates of Proposition~\ref{prop:global spacetime} together with the nonlinear estimate \eqref{eq:nonlin esti Mor}.

\begin{prop}[Higher-order energy estimate]\label{prop:higher order energy}
	For $K$ sufficiently large and then $s_0$ sufficiently large,
	\begin{equation}\label{eq:energy improvement}
		\|\eps(s)\|_{\dot{\calH}^{2L+1}}\le\tfrac 12 Ks^{-2Lc_1}\qquad(s_0\le s\le s_1).
	\end{equation}
\end{prop}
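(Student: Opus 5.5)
By Lemma~\ref{lem:energy coercivity} and the bootstrap bounds $|\alpha_\pm|\le\epsilon_{\mathrm h}^{-1}s^{-L-1}$ from \eqref{eq:main bootstrap assump}, it suffices to show
\begin{equation*}
\|\Pi\eps_L(s)\|_{\dot H^1}\le C_0 s^{-2Lc_1}+C_K s^{-2Lc_1-\kappa_0},
\end{equation*}
with $C_0$ independent of $K$ and some $\kappa_0>0$. Since $L+1>2Lc_1$, the $\alpha_\pm$ contribution is $O_{\epsilon_{\mathrm h}}(s^{-L-1})$, which is negligible against $s^{-2Lc_1}$ once $s_0$ is large. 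Choosing $K\gg C_0$ and then $s_0$ large then gives \eqref{eq:energy improvement}.

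\emph{Step 1: projected equation and a time-weighted variable.} Apply $\Pi$ to \eqref{eq:epsL equation}. The terms $\mathbf{Mod}_{b_L}\Lambda W$ and $\mathbf{Mod}_{\eta_L}iW$ lie in $\ker\Pi$ and drop out. Since $\Pi$ commutes with $-iH$ modulo $\ker\Pi$ directions, $\Pi(-iH)\eps_L=\Pi(-iH)\Pi\eps_L$ up to terms carrying $\alpha_\pm$ that are already controlled. Setting $f\coloneqq\Pi\eps_L$, this gives
\begin{equation*}
\partial_sf=\Pi(-iH)f+\Pi\Big[\tfrac{\lambda_s}{\lambda}(\Lambda+2L)\eps_L-i\gamma_s\eps_L-(-iH)^L\Theta_{L+1}+\td E_L\Big]+O(\alpha_\pm\text{-terms}).
\end{equation*}
Passing to the weighted variable $g\coloneqq s^{2Lc_1}f$ makes the target a uniform bound $\|g\|_{\dot H^1}\lesssim1$.

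\emph{Step 2: energy identity with Morawetz control.} Compute $\frac{d}{ds}(Hf,f)$. The linear flow term vanishes by \eqref{eq:projection bounds} and the antisymmetry behind \eqref{eq:energy conservation}. The scaling term contributes
\begin{equation*}
\tfrac{\lambda_s}{\lambda}(Hf,(\Lambda+2L)f)=-b_1\big(2L-\tfrac12\big)(Hf,f)+\text{(commutator with }W^4\text{, localized)},
\end{equation*}
where I used the Pohozaev-type identity $(-\Delta f,\Lambda f)=-\tfrac12\|\nabla f\|^2$. Since $b_1=c_1/s+o(1/s)$ and $(2L-\tfrac12)c_1=2Lc_1-\tfrac{c_1}{2}\cdot$ is the relevant constant, the leading part should combine with the weight $s^{2Lc_1}$ to leave a nonpositive or harmless contribution; I expect the exact constant to match, since this is why $2Lc_1$ was chosen. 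The localized commutator and phase terms are bounded by $s^{-1}\|f\|_{\mathrm{Mor}}^2$, and the source terms are bounded via \eqref{eq:nonlin esti Mor} and \eqref{eq:lin profile decay} by
\begin{equation*}
s^{-1}\big(\|\eps\|_{\dot{\calH}^{2L+1}}+s^{-2Lc_1}\big)\|f\|_{\mathrm{Mor}}+C_Ks^{-2(L+1)c_1}\|f\|_{\dot H^1}.
\end{equation*}
Integrating in $s$ and using Cauchy--Schwarz in time, the problem reduces to bounding $\int_{s_0}^s \sigma^{-1}\|f\|_{\mathrm{Mor}}^2$, with the appropriate weights, by $\sup\|g\|^2\cdot o(1)+C_0$.

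\emph{Step 3: Morawetz control via Duhamel (main obstacle).} Write $f$ by Duhamel with $U_c$ over $[s_0,s]$. By Proposition~\ref{prop:global spacetime}, the homogeneous part is bounded in $L^2_s\mathrm{Mor}$ by $\|f(s_0)\|_{\dot H^1}$, which is controlled by \eqref{eq:initial bounds}. The forcing part is handled by \eqref{eq:retarded kernel bound} with $\psi_1(\sigma)=\sigma^{-1}(\|\eps\|+\sigma^{-2Lc_1})+\sigma^{-1}\|f\|_{\mathrm{Mor}}$. The $\dot H^1$-type piece $C_K\sigma^{-2(L+1)c_1}$ must instead be put in the energy term, where it is integrable with decay gain.

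The scaling term $\tfrac{\lambda_s}{\lambda}\Lambda f$ is not of Morawetz-dual type, because $\Lambda$ produces growth at infinity. My plan is to absorb it by working in the self-similar variable, that is, by conjugating with the rescaling so that the flow becomes $\partial_s-\Pi(-iH)-b_1\Lambda$. I would treat only its localized part as a source. Its nonlocalized part is handled in the energy identity of Step 2, where it appears with the favorable sign coming from $2Lc_1$.

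Bootstrapping the weighted quantity $\|\sigma^{2Lc_1}f\|_{L^2_\sigma(\sigma^{-1}d\sigma)\mathrm{Mor}}$ then yields a bound of the form $C_0+o(1)\cdot K$, which closes the argument. The most delicate point is this separation of the dilation term between the Kato smoothing part and the energy part.
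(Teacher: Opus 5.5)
The gap is in Step~3, at exactly the point you flag as delicate. Proposition~\ref{prop:global spacetime} controls only the flow $\partial_t-\Pi(-iH)$. Your equation for $g=s^{2Lc_1}\Pi\eps_L$ carries $\frac{\lambda_s}{\lambda}\Pi\Lambda g$, which is neither Morawetz-dual nor controlled in $L^1_s\dot H^1$.

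Splitting this term so that its ``nonlocalized part is handled in the energy identity'' does not work. The cancellation $(-\Delta f,\Lambda f)=0$ exists only in the energy pairing, whereas the Duhamel formula that yields the $L^2_s\mathrm{Mor}$ bound must contain the full operator. Keeping the flow as $\partial_s-\Pi(-iH)-b_1\Lambda$ leaves you with an operator for which no smoothing estimate is available. Undoing the scaling globally would replace $W$ by the concentrating $W_{\lambda/\lambda_0}$, which is also not covered.

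The missing idea is to localize in time:
\begin{itemize}
\item On $[S,(1+\delta)S]$ put $\mu=\lambda/\lambda(S)$, which stays within $O(\delta)$ of $1$ by \eqref{eq:scale phase bound}, and work with $\mathfrak f_\mu$ in the time $t/\lambda(S)^2$. The dilation term then disappears.
\item The remaining errors are $i[W^4(x/\mu)-\mu^2W^4](\mathfrak f_\mu+4\Re\mathfrak f_\mu)$ and $\Pi[((1-\Pi)\,\cdot\,)_\mu]=\sum_\psi c_\psi\Pi(\psi_\mu-\psi)$, $\psi\in\{Y_\pm,\Lambda W,iW\}$. Both are Morawetz-dual with a factor $\delta$, hence absorbable.
\item This gives $\int_S^{(1+\delta)S}\|g\|_{\mathrm{Mor}}^2\,ds\lesssim\|g(S)\|_{\dot H^1}^2+\dots\lesssim_K S^{c_1/4}$. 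The homogeneous term is bounded by the bootstrap at time $S$, not by the initial data.
\item One then sums over $S=(1+\delta)^js_0$.
\end{itemize}

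This also shows that your bootstrap quantity $\int\sigma^{-1}\|g\|_{\mathrm{Mor}}^2\,d\sigma$ cannot close the argument. Take the source $s^{-1}K\|g\|_{\mathrm{Mor}}$ coming from \eqref{eq:nonlin esti Mor}: Cauchy--Schwarz against $\sigma^{-1}d\sigma$ leaves the factor $\big(\int_{s_0}^s\sigma^{-1}d\sigma\big)^{1/2}$, which diverges logarithmically. What closes the argument is the gain in the $ds$-measure. On each $(1+\delta)$-adic interval, $\|g\|_{\mathrm{Mor}}$ is on average $O_K(S^{-1/2+c_1/8})$, so $\int_S^{(1+\delta)S}s^{-1}K\|g\|_{\mathrm{Mor}}\,ds\lesssim_K S^{-1/2+c_1/8}$, which is summable.

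The same gain is needed for $(-iH)^L\Theta_{L+1}$, which you mis-sized. By \eqref{eq:lin profile decay} it is $O(s^{-L-1}\langle r\rangle^{-3})$. It therefore contributes $s^{-1+c_1/2}\|g\|_{\mathrm{Mor}}$, not $s^{-1}\|g\|_{\mathrm{Mor}}$, and its weighted $\dot H^1$ size $s^{-1+c_1/2}$ is not integrable in $s$. The paper tames this term by subtracting $\chi(s^{-c_1/4}r)(-iH)^{L-1}\Theta_{L+1}$ inside $\mathfrak f$.

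Finally, the identity in your Step~2 is wrong. Since $\Lambda=\frac12+x\cdot\nabla$ generates the $\dot H^1$-invariant scaling in $\mathbb R^3$, one has $(-\Delta f,\Lambda f)=0$. Hence $(Hf,(\Lambda+2L)f)=2L(Hf,f)$ plus localized terms, and the weight produces exactly $4L(\frac{\lambda_s}{\lambda}+\frac{c_1}{s})\calF_L=O_K(s^{-1-\kappa})\calF_L$. With your constant $2L-\frac12$ you would instead get a net term $\frac{c_1}{s}(Hg,g)$ in $\frac{d}{ds}(Hg,g)$. That is growth like $s^{c_1}$, which breaks the bootstrap rather than being harmless.
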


\begin{proof}
	We define the normalized higher-order variable
	\begin{equation}\label{eq:epsL projected}
		\mathfrak f\coloneqq s^{2Lc_1}\Pi\big[\eps_L-\chi(s^{-\frac{1}{4} c_1}r)(-iH)^{L-1}\Theta_{L+1}\big],\qquad
		\calF_L\coloneqq\tfrac12(H\mathfrak f,\mathfrak f).
	\end{equation}
	Here, $\Theta_{L+1}$ is the weight-$L+1$ profile error defined in \eqref{eq:Theta L1 def}.
	Since $\mathfrak f\in\Pi\dot H^1_{\rad}$, \eqref{eq:projection coercivity} gives $\calF_L\sim\|\mathfrak f\|_{\dot H^1}^2$.
	
	We first claim
	\begin{equation}\label{eq:combined coercivity}
		s^{4Lc_1}\|\eps\|_{\dot{\calH}^{2L+1}}^2\lesssim s^{4Lc_1}\|\eps_L\|_{\dot H^1}^2
		\lesssim\calF_L+s^{-2+\frac{5}{4} c_1}.
	\end{equation}
	Note that the first estimate comes from \eqref{eq:coercivity}.
	By \eqref{eq:adapted tests}, \eqref{eq:projection}, \eqref{eq:decomposition 2}, and \eqref{eq:eigenmode mode def},
	\begin{equation}\label{eq:continuous decomposition}
		\eps_L-s^{-2Lc_1}\mathfrak f=\Pi[\chi(s^{-\frac{1}{4} c_1}r)(-iH)^{L-1}\Theta_{L+1}]
		+\sum_{\iota=\pm}\alpha_\iota[Y_\iota-(\td\phi,Y_\iota)\Lambda W-\tfrac12(\td\varphi,Y_\iota)iW].
	\end{equation}
	For the first term in \eqref{eq:continuous decomposition}, we use \eqref{eq:lin iterated products}, \eqref{eq:correction pointwise}, \eqref{eq:mod para s bound}, and \eqref{eq:mod esti} to obtain
	\begin{equation}\label{eq:profile L plus 1 error}
		\begin{aligned}
			|\nabla^m(-iH)^{L-1}\Theta_{L+1}|&\lesssim_m s^{-L-1}\langle r\rangle^{-m},\\
			|\nabla^m(-iH)^{L-1}\partial_s\Theta_{L+1}|&\lesssim_m\big(s^{-L-2}+s^{-1}\|\eps\|_{\dot{\calH}^{2L+1}}\big)\langle r\rangle^{-m}.
		\end{aligned}
	\end{equation}
	Here, we used the fact that $\Theta_{L+1}$ has weight $L+1$ in the sense of \eqref{eq:weighted homogeneity}. Thus, we have
	\begin{equation}\label{eq:correction size}
		\|\Pi[\chi(s^{-\frac{1}{4} c_1}r)(-iH)^{L-1}\Theta_{L+1}]\|_{\dot H^1}
		\lesssim s^{-L-1+\frac{1}{8} c_1}.
	\end{equation}
	Using \eqref{eq:continuous decomposition}, \eqref{eq:correction size}, and $|\alpha_+|+|\alpha_-|\lesssim s^{-L-1}$ with $4Lc_1=2L+c_1$, we obtain
	\begin{align*}
		s^{4Lc_1}\|\eps_L\|_{\dot H^1}^2
		&\lesssim\|\mathfrak f\|_{\dot H^1}^2
		+s^{4Lc_1}\big\|\Pi[\chi(s^{-\frac{1}{4} c_1}r)(-iH)^{L-1}\Theta_{L+1}]\big\|_{\dot H^1}^2
		+s^{-2+c_1}\\
		&\lesssim\calF_L+s^{-2+\frac{5}{4} c_1},
	\end{align*}
	which proves \eqref{eq:combined coercivity}.

	To estimate $\calF_L$, we first derive the evolution of $\mathfrak f$. Differentiating \eqref{eq:epsL projected}, we get
	\begin{equation}\label{eq:epsL projected equ}
		\partial_s\mathfrak f=\Pi(-iH)\mathfrak f+\frac{\lambda_s}{\lambda}\Pi\Lambda\mathfrak f-\gamma_s\Pi(i\mathfrak f)
		+2L\bigg(\frac{\lambda_s}{\lambda}+\frac{c_1}{s}\bigg)\mathfrak f+F,
	\end{equation}
	where $F=F_1+F_2+F_3$ and
	\begin{align*}
		F_1&\coloneqq s^{2Lc_1}\Pi\big[-(1-\chi(s^{-\frac{1}{4} c_1}r))(-iH)^L\Theta_{L+1}
		+[-iH,\chi(s^{-\frac{1}{4} c_1}r)](-iH)^{L-1}\Theta_{L+1}\big],\\
		F_2&\coloneqq s^{2Lc_1}\tfrac{\lambda_s}{\lambda}\Pi(\Lambda+2L)(\eps_L-s^{-2Lc_1}\mathfrak f)
		-s^{2Lc_1}\gamma_s\Pi\big[i(\eps_L-s^{-2Lc_1}\mathfrak f)\big],\\
		F_3&\coloneqq s^{2Lc_1}\Pi\big[\td E_L-\partial_s\big(\chi(s^{-\frac{1}{4} c_1}r)(-iH)^{L-1}\Theta_{L+1}\big)\big].
	\end{align*}
	The projection removes the profiles $\Lambda W$ and $iW$ in \eqref{eq:epsL equation}. By \eqref{eq:projection bounds},
	\begin{equation}\label{eq:direct identity}
		\partial_s\calF_L=\frac{\lambda_s}{\lambda}(H\mathfrak f,\Lambda\mathfrak f)-\gamma_s(H\mathfrak f,i\mathfrak f)
		+4L\bigg(\frac{\lambda_s}{\lambda}+\frac{c_1}{s}\bigg)\calF_L+(H\mathfrak f,F).
	\end{equation}
	For the first two terms, integration by parts gives
	\begin{equation*}
		(H\mathfrak f,\Lambda\mathfrak f)=2(W^3\Lambda W(\mathfrak f+4\Re\mathfrak f),\mathfrak f), \qquad
		(H\mathfrak f,i\mathfrak f)=4(iW^4\Re\mathfrak f,\mathfrak f),
	\end{equation*}
	and hence
	\begin{equation}\label{eq:epsL projected Mor}
		|\tfrac{\lambda_s}{\lambda}(H\mathfrak f,\Lambda\mathfrak f)|+|\gamma_s(H\mathfrak f,i\mathfrak f)|\lesssim s^{-1}\|\mathfrak f\|_{\mathrm{Mor}}^2.
	\end{equation}

	For the third term in \eqref{eq:direct identity}, we use \eqref{eq:mod esti} and \eqref{eq:special}:
	\begin{equation}\label{eq:scale comparison}
		\frac{\lambda_s}{\lambda}+\frac{c_1}{s}
		=\bigg(\frac{\lambda_s}{\lambda}+b_1\bigg)-\bigg(b_1-\frac{c_1}{s}\bigg)
		=O_K(s^{-1-\kappa}).
	\end{equation}

	We estimate $|(H\mathfrak f,F)|$ and $\|F\|_{\dot H^1}$ term by term. For $F_1$, \eqref{eq:lin profile decay} and \eqref{eq:profile L plus 1 error}, with $2Lc_1=L+\frac{1}{2}c_1$, give
	\begin{equation*}
		\|F_1\|_{\dot H^1}\lesssim s^{-1+\frac{1}{8} c_1},\qquad
		|(H\mathfrak f,F_1)|\lesssim \|\langle r\rangle^2 HF_1\|_{L^2}\|\mathfrak f\|_{\mathrm{Mor}}\lesssim s^{-1+\frac{3}{8} c_1}\|\mathfrak f\|_{\mathrm{Mor}}.
	\end{equation*}
	For $F_2$, we use \eqref{eq:continuous decomposition}. For the first part of \eqref{eq:continuous decomposition}, we apply \eqref{eq:profile L plus 1 error}. For the profile parts, by \eqref{eq:lin commutator coefficients}, \eqref{eq:energy kernel}, and \eqref{eq:eigenfunctions}, we get
	\begin{equation}\label{eq:frak f nonlin profile}
		|H(\Lambda+2L)\psi|+|H(i\psi)|\lesssim\langle r\rangle^{-5},\qquad \psi=Y_+,Y_-,\Lambda W,iW.
	\end{equation}
	Using \eqref{eq:profile L plus 1 error}, \eqref{eq:frak f nonlin profile} with $|\alpha_+|+|\alpha_-|\lesssim s^{-L-1}$, and \eqref{eq:scale phase bound}, we obtain
	\begin{equation*}
		\|F_2\|_{\dot H^1}\lesssim s^{-2+\frac{5}{8} c_1},\qquad
		|(H\mathfrak f,F_2)|\lesssim s^{-2+\frac{7}{8} c_1}\|\mathfrak f\|_{\mathrm{Mor}}.
	\end{equation*}
	For $F_3$, combining \eqref{eq:nonlin esti}, \eqref{eq:nonlin esti Mor}, and \eqref{eq:profile L plus 1 error}, we derive
	\begin{align*}
		\|F_3\|_{\dot H^1}
		&\lesssim s^{2Lc_1}\|\td E_L\|_{\dot H^1}
		+s^{2Lc_1+\frac{1}{8} c_1}(s^{-L-2}+s^{-1}\|\eps\|_{\dot{\calH}^{2L+1}})
		\lesssim_K s^{-1+\frac{1}{8} c_1},
		\\
		|(H\mathfrak f,F_3)|
		&\lesssim s^{2Lc_1}|(H\mathfrak f,\td E_L)|+s^{2Lc_1+\frac{3}{8} c_1}(s^{-L-2}+s^{-1}\|\eps\|_{\dot{\calH}^{2L+1}})\|\mathfrak f\|_{\mathrm{Mor}}
		\\
		&\lesssim_K s^{-1+\frac{3}{8} c_1}\|\mathfrak f\|_{\mathrm{Mor}}
		+s^{-1-\frac{1}{4L-1}}\|\mathfrak f\|_{\dot H^1}.
	\end{align*}
	Thus, we have
	\begin{equation}\label{eq:energy boots nonlinear esti goal}
		\|F\|_{\dot H^1}\lesssim_K s^{-1+\frac{1}{8} c_1},
		\qquad
		|(H\mathfrak f,F)|\lesssim_K s^{-1+\frac{3}{8} c_1}\|\mathfrak f\|_{\mathrm{Mor}}+s^{-1-\frac{1}{2L}c_1}.
	\end{equation}
	Here, we used \eqref{eq:dot calH linear map} and \eqref{eq:correction size} to obtain $\|\mathfrak f\|_{\dot H^1}\lesssim_K1$. 
	Integrating \eqref{eq:direct identity}, we use \eqref{eq:epsL projected Mor}, \eqref{eq:scale comparison}, and \eqref{eq:energy boots nonlinear esti goal} with $\calF_L\lesssim_K1$ and $\kappa<\frac{c_1}{2L}$ to deduce
	\begin{equation}\label{eq:higher energy calF esti}
		|\calF_L(s_1)-\calF_L(s_0)|
		\lesssim\int_{s_0}^{s_1}s^{-1}\|\mathfrak f\|_{\mathrm{Mor}}^2 ds
		+C_K\int_{s_0}^{s_1}s^{-1+\frac{3}{8} c_1}\|\mathfrak f\|_{\mathrm{Mor}}ds+C_Ks_0^{-\kappa}.
	\end{equation}

	We estimate the two integrals in \eqref{eq:higher energy calF esti} using Proposition~\ref{prop:global spacetime}. We first assume $s_1\le(1+\delta)s_0$, where $\delta>0$ is sufficiently small. We undo the scaling in \eqref{eq:epsL projected equ} using the time $t$ in \eqref{eq:renormal}. We define $\mu$ and use \eqref{eq:scale phase bound} to obtain
	\begin{equation}\label{eq:energy boots mu}
		\mu\coloneqq\frac{\lambda}{\lambda_0},\qquad \mu(s_0)=1, \qquad
		|\mu-1|\lesssim\delta,\qquad \frac{\mu_s}{\mu}=\frac{\lambda_s}{\lambda} \qquad\frac{dt}{\lambda_0^2}=\mu^2 ds\sim ds.
	\end{equation}
	Using $\Pi(-iH)(1-\Pi)=0$, we have
	\begin{equation*}
		\mu^{2}[\lambda_0^2\partial_t-\Pi(-iH)]\Pi(\mathfrak f_\mu)
		=\lambda^2\partial_t\Pi(\mathfrak f_\mu)-\mu^2\Pi(-iH)(\mathfrak f_\mu).
	\end{equation*}
	Applying the scaling to \eqref{eq:epsL projected equ}, we obtain
	\begin{equation}\label{eq:rescaled equation}
		\begin{aligned}
			\mu^{2}&[\lambda_0^2\partial_t-\Pi(-iH)]\Pi(\mathfrak f_\mu)=\lambda^2\partial_t\Pi(\mathfrak f_\mu)-\mu^2\Pi(-iH)(\mathfrak f_\mu)
			\\
			=&\Pi\big[(-iH\mathfrak f)_\mu-\mu^2(-iH)(\mathfrak f_\mu)\big]
			-\Pi\big[\{(1-\Pi)(-iH\mathfrak f+\tfrac{\lambda_s}{\lambda}\Lambda\mathfrak f)\}_\mu\big]\\
			&-\gamma_s\Pi\big[(\Pi(i\mathfrak f))_\mu\big]
			+2L(\tfrac{\lambda_s}{\lambda}+\tfrac{c_1}{s})\Pi(\mathfrak f_\mu)+\Pi(F_\mu).
		\end{aligned}
	\end{equation}
	We estimate the first two terms on the right in the form required by \eqref{eq:retarded source}. For the first term,
	\begin{gather*}
		(-iH\mathfrak f)_\mu-\mu^2(-iH)(\mathfrak f_\mu)
		=i\big[W^4(x/\mu)-\mu^2W^4(x)\big](\mathfrak f_\mu+4\Re\mathfrak f_\mu),\\
		|\nabla^j[W^4(x/\mu)-\mu^2W^4(x)]|
		\lesssim\delta\langle r\rangle^{-4-j}\qquad(j=0,1).
	\end{gather*}
	For the second term, writing $(1-\Pi)f=\sum_{\psi} c_{\psi,f}\psi$ and using $\Pi\psi=0$ for $\psi=Y_+,Y_-,\Lambda W,iW$, we obtain
	\begin{align*}
		\Pi\big[\big((1-\Pi)f\big)_\mu\big]
		=\sum_{\psi} c_{\psi,f}\Pi(\psi_\mu)=\sum_{\psi} c_{\psi,f}\Pi(\psi_\mu-\psi).
	\end{align*}
	The decay of these profiles and the identities $\Delta(iW)=-iW^5, \Delta(\Lambda W)=-5W^4\Lambda W$ yield
	\begin{align*}
		|\nabla^j(\psi_\mu-\psi)|\lesssim\delta\langle r\rangle^{-1-j},\quad
		|\Delta(\psi_\mu-\psi)|\lesssim\delta\langle r\rangle^{-5},\quad
		\|\langle r\rangle^2H(\psi_\mu-\psi)\|_{L^2}\lesssim\delta.
	\end{align*}
	Moreover, by the definition of $\Pi$ \eqref{eq:projection} with $f=-iH\mathfrak f+\tfrac{\lambda_s}{\lambda}\Lambda\mathfrak f$, we get $|c_{\psi,f}|\lesssim \|\mathfrak f\|_{\mathrm{Mor}}$.
	Thus, for $g\in\Pi\dot H^1_{\rad}$, we estimate the first two terms on the right side of \eqref{eq:rescaled equation}:
	\begin{equation}\label{eq:rescaled equation error esti 1}
		\begin{aligned}
			&\big|(Hg,\Pi[(-iH\mathfrak f)_\mu-\mu^2(-iH)(\mathfrak f_\mu)])\big|
			+|(Hg,\Pi\big[((1-\Pi)(-iH\mathfrak f+\tfrac{\lambda_s}{\lambda}\Lambda\mathfrak f))_\mu\big])|
			\\
			&\lesssim\delta\|g\|_{\mathrm{Mor}}\|\mathfrak f\|_{\mathrm{Mor}}.
		\end{aligned}
	\end{equation}
	For the remaining three terms, we have
	\begin{equation}\label{eq:rescaled equation error esti 2}
		\|-\gamma_s\Pi\big[(\Pi(i\mathfrak f))_\mu\big]
		+2L(\tfrac{\lambda_s}{\lambda}+\tfrac{c_1}{s})\Pi(\mathfrak f_\mu)+\Pi(F_\mu)\|_{\dot H^1}\lesssim_K s^{-1+\frac{1}{8}c_1}.
	\end{equation}
	Here we used \eqref{eq:scale phase bound}, \eqref{eq:energy boots nonlinear esti goal}, and $\|\mathfrak f\|_{\dot H^1}\lesssim_K1$.
	
	Moving the scaling to the tests in \eqref{eq:projection} and using $\Pi\mathfrak f=\mathfrak f$ with $|\mu-1|\lesssim \delta$, we have $\|(1-\Pi)\mathfrak f_\mu\|_{\mathrm{Mor}}\lesssim\delta\|\mathfrak f\|_{\mathrm{Mor}}$. By this, we obtain
	\begin{equation*}
		\|\mathfrak f\|_{\mathrm{Mor}}\sim \|\mathfrak f_\mu\|_{\mathrm{Mor}}
		\leq \delta\|\mathfrak f\|_{\mathrm{Mor}}+\|\Pi(\mathfrak f_\mu)\|_{\mathrm{Mor}}
		\lesssim\|\Pi(\mathfrak f_\mu)\|_{\mathrm{Mor}}.
	\end{equation*}
	We apply Duhamel's formula to \eqref{eq:rescaled equation} with time variable $t/\lambda_0^2$. Since $dt/\lambda_0^2\sim ds$ by \eqref{eq:energy boots mu}, estimates \eqref{eq:rescaled equation error esti 1} and \eqref{eq:rescaled equation error esti 2}, together with Proposition~\ref{prop:global spacetime}, yield
	\begin{align*}
		\int_{s_0}^{s_1}\|\mathfrak f\|_{\mathrm{Mor}}^2 ds
		\lesssim\|\mathfrak f(s_0)\|_{\dot H^1}^2 +C_K\bigg(\int_{s_0}^{s_1}s^{-1+\frac{1}{8}c_1}ds\bigg)^2
		+\delta\int_{s_0}^{s_1}\|\mathfrak f\|_{\mathrm{Mor}}^2 ds.
	\end{align*}
	Using $\|\mathfrak f(s_0)\|_{\dot H^1}\lesssim_K1$ and $\int_{s_0}^{s_1}s^{-1+\frac{1}{8}c_1}ds\lesssim\delta s_0^{\frac{1}{8}c_1}$, we absorb the last term for $\delta$ sufficiently small and obtain
	\begin{equation}\label{eq:local spacetime}
		\int_{s_0}^{s_1}\|\mathfrak f\|_{\mathrm{Mor}}^2 ds\lesssim_K s_0^{\frac{1}{4} c_1}
		\qquad\big(s_1\le(1+\delta)s_0\big).
	\end{equation}
	For general $s_1$, we apply \eqref{eq:local spacetime} on $[(1+\delta)^js_0,\min\{(1+\delta)^{j+1}s_0,s_1\}]$ whenever $(1+\delta)^js_0<s_1$. The implicit constant is independent of $j$, and hence
	\begin{equation}\label{eq:higher energy Mor esti}
		\int_{s_0}^{s_1}s^{-\frac{1}{2}+\frac{1}{4} c_1}\|\mathfrak f\|_{\mathrm{Mor}}^2 ds
		\lesssim_K\sum_{j\ge0}\big((1+\delta)^js_0\big)^{-\frac{1}{2}+\frac{1}{2} c_1}
		\lesssim_K s_0^{-\frac{1-c_1}{2}}.
	\end{equation}
	Using $s\ge s_0$ and Cauchy--Schwarz with $c_1\le\frac{2}{3}$ and $\kappa<\frac{1}{6}$, \eqref{eq:higher energy calF esti} and \eqref{eq:higher energy Mor esti} give
	\begin{equation}\label{eq:higher summed energy}
		\calF_L(s)\le\calF_L(s_0)+C_Ks_0^{-\kappa}\qquad(s_0\le s\le s_1).
	\end{equation}
	Thanks to the initial estimate \eqref{eq:initial bounds} with \eqref{eq:correction size}, we arrive at
	\begin{equation}\label{eq:higher energy init control}
		\calF_L(s_0)\lesssim s_0^{4Lc_1}\|\eps(s_0)\|_{\dot{\calH}^{2L+1}}^2+s_0^{-2+\frac{5}{4} c_1}
		\lesssim_L\delta_{\mathrm c}^{-1}+s_0^{-2+\frac{5}{4} c_1}.
	\end{equation}
	Using \eqref{eq:combined coercivity}, \eqref{eq:higher summed energy}, and \eqref{eq:higher energy init control}, we conclude
	\begin{equation}
		s^{4Lc_1}\|\eps(s)\|_{\dot{\calH}^{2L+1}}^2
		\le C_L\delta_{\mathrm c}^{-1}+C_Ks_0^{-\kappa}\qquad(s_0\le s\le s_1).
	\end{equation}
	We choose $K$ and then $s_0$ sufficiently large that
	\begin{equation}\label{eq:parameter K s0 choice}
		C_L(\delta_{\mathrm c}^{-1}+1)\le\tfrac{1}{16}K^2,\qquad C_Ks_0^{-\kappa}\le \tfrac{1}{16}K^2.
	\end{equation}
	These choices are independent of $s_1$ and prove \eqref{eq:energy improvement}.
\end{proof}

\subsection{Closing the bootstrap}\label{sec:main bootstrap}

We first improve the remainder and stable bounds in \eqref{eq:main bootstrap goal}, then prove the continuity of $\Phi$ needed in the proof of Proposition~\ref{prop:trapped}. We finally deduce Theorem~\ref{thm:main}.

\begin{proof}[Proof of Proposition~\ref{prop:main bootstrap}]
	First, the estimate for $\|\eps(s)\|_{\dot{\calH}^{2L+1}}$ in \eqref{eq:main bootstrap goal} follows from Proposition~\ref{prop:higher order energy}.
	We next improve the estimate on $\td\eps$. We have
	\begin{equation*}
		|E(W+\td\eps)-E(W)-2\pi(H\td\eps,\td\eps)|
		\lesssim\|W\|_{L^6}^{3}\|\td\eps\|_{L^6}^3+\|\td\eps\|_{L^6}^6
		\lesssim\delta_{\mathrm{dec}}\|\td\eps\|_{\dot H^1}^2.
	\end{equation*}
	From this estimate with energy conservation, the scaling, and \eqref{eq:definition H},
	we absorb the last term to obtain
	\begin{equation*}
		\|\td\eps\|_{\dot H^1}^2
		\lesssim|E(u_0)-E(W)|+\int_0^\infty W^4|\td\eps|^2r^2dr
		\lesssim \|\td\eps(s_0)\|_{\dot H^1}^2+\|\chi_R\td\eps\|_{L^2}^2+R^{-2}\|\td\eps\|_{\dot H^1}^2.
	\end{equation*}
	With $R=100\delta_{\mathrm{dec}}^{-1}$, combining \eqref{eq:initial bounds}, the decomposition \eqref{eq:w decom}, \eqref{eq:kernel growth}, \eqref{eq:correction pointwise}, \eqref{eq:mod para s bound}, and the parameter dependence \eqref{eq:parameter dependence}, we derive
	\begin{equation*}
		\|\td\eps(s)\|_{\dot H^1}^2\le C\delta_{\mathrm c}^{4L-1}+C\delta_{\mathrm{dec}}^{-4L-2}(s^{-2}+K^2s^{-4Lc_1})
		+\tfrac{1}{10^2}\delta_{\mathrm{dec}}^2
		\le\tfrac{1}{16}\delta_{\mathrm{dec}}^2.
	\end{equation*}

	For $(\calA_b)_1$ in \eqref{eq:main bootstrap goal}, using \eqref{eq:refined mod compare normalize}, \eqref{eq:refined mod esti normalize}, and \eqref{eq:ODE diagonal}, we get
	\begin{equation}\label{eq:b1 closing}
		s\partial_s(\td \calA_b)_1+(\td \calA_b)_1=O_K(s^{-2\kappa}+s^{-\kappa'}).
	\end{equation}
	By \eqref{eq:init para stable} and \eqref{eq:refined mod compare normalize}, $s_0^\kappa|(\td \calA_b)_1(s_0)|\le\frac14+C_Ks_0^{-\kappa'-\kappa}$. Integrating \eqref{eq:b1 closing} and using $2\kappa<\kappa'<1$, we obtain
	\begin{align*}
		s^\kappa|(\calA_b)_1(s)|\le& s^\kappa|(\td \calA_b)_1(s)|+C_Ks_0^{-\kappa}\\
		\le& (\tfrac14+C_Ks_0^{-\kappa'-\kappa})(\tfrac{s_0}s)^{1-\kappa}
		+C_K(s^{-\kappa}+s^{-\kappa'+\kappa})+C_Ks_0^{-\kappa}\leq \tfrac12.
	\end{align*}

	Finally, for $\alpha_-$, we integrate \eqref{eq:eigenmode esti} using \eqref{eq:main bootstrap assump} to obtain
	\begin{equation*}
		|\alpha_-|\le e^{-e_0(s-s_0)}|\alpha_-(s_0)|+Cs^{-L-1}.
	\end{equation*}
	From $\epsilon_{\mathrm h}\ll1$ and \eqref{eq:init para stable}, we conclude
	\begin{equation*}
		|\epsilon_{\mathrm h}s^{L+1}\alpha_-(s)|\leq \tfrac12. \qedhere
	\end{equation*}
\end{proof}

\begin{proof}[Proof of Lemma~\ref{lem:exit map}]
	Recall the definition of $\bfA$ and $\td{\bfA}$ in \eqref{eq:fast coordinates}.
	We first consider $\bfA_{\mathrm{init}}$ such that $\td{\bfA}(s_0;\bfA_{\mathrm{init}})\in(-1/2,1/2)^{2L}$. By continuous dependence and Lemma~\ref{lem:decomposition}, it suffices to show that $\bfA_{\mathrm{init}}\mapsto s_{\mathrm{exit}}(\bfA_{\mathrm{init}})$ is continuous.

	By \eqref{eq:ODE diagonal}, \eqref{eq:main bootstrap assump}, \eqref{eq:refined mod compare normalize}, and \eqref{eq:refined mod esti normalize}, we obtain the following estimates for $\td\calA_b$ with $2\le k\le L$ and $\td\calA_\eta$ with $1\le k\le L$:
	\begin{align*}
		\frac{d}{ds}|s^\kappa(\td\calA_b)_k|^2
		&=\frac2s\left(\kappa+\frac{k}{4L-1}\right)|s^\kappa(\td\calA_b)_k|^2
		+O_K(s^{-1-\kappa}+s^{-1-\kappa'+\kappa}),\\
		\frac{d}{ds}|s^\kappa(\td\calA_\eta)_k|^2
		&=\frac2s\left(\kappa+\frac{k}{4L-1}\right)|s^\kappa(\td\calA_\eta)_k|^2
		+O_K(s^{-1-\kappa}+s^{-1-\kappa'+\kappa}).
	\end{align*}
	By \eqref{eq:main bootstrap assump} and \eqref{eq:eigenmode esti}, we also have
	\begin{equation*}
		\frac{d}{ds}|\epsilon_{\mathrm h}s^{L+1}\alpha_+|^2
		=2\left(e_0+\frac{L+1}{s}\right)|\epsilon_{\mathrm h}s^{L+1}\alpha_+|^2+O(\epsilon_{\mathrm h}).
	\end{equation*}
	From \eqref{eq:parameter dependence} and $\kappa'>\kappa$, we deduce, for $1\le j\le2L$,
	\begin{equation}\label{eq:boundary transversality}
		\frac{d}{ds}|\td{\bfA}_j(s)|^2\gtrsim s^{-1}>0
		\quad\text{whenever}\quad \frac14\le|\td{\bfA}_j(s)|\le\frac34.
	\end{equation}

	By \eqref{eq:initial map error} and \eqref{eq:main bootstrap goal}, the bounds in \eqref{eq:main bootstrap assump} are strict at $s_{\mathrm{exit}}(\bfA_{\mathrm{init}})$.  Thus, continuous dependence and Lemma~\ref{lem:decomposition} extend the nearby solutions and decompositions to a common interval beyond this exit time, with \eqref{eq:main bootstrap assump}. By \eqref{eq:boundary transversality}, $\td{\bfA}(s)\in(-1/2,1/2)^{2L}$ for $s_0\le s<s_{\mathrm{exit}}$. If $|\td{\bfA}_j(s_{\mathrm{exit}})|=1/2$, then $|\td{\bfA}_j(s)|>1/2$ for $s>s_{\mathrm{exit}}$ sufficiently close to $s_{\mathrm{exit}}$. By continuous dependence, $s_{\mathrm{exit}}(\bfA_{\mathrm{init}}')\to s_{\mathrm{exit}}(\bfA_{\mathrm{init}})$, and hence $\Phi(\bfA_{\mathrm{init}}')\to\Phi(\bfA_{\mathrm{init}})$ as $\bfA_{\mathrm{init}}'\to\bfA_{\mathrm{init}}$.

	If $\td{\bfA}(s_0;\bfA_{\mathrm{init}})\in\partial[-1/2,1/2]^{2L}$, then \eqref{eq:init para stable}, \eqref{eq:initial bounds}, and \eqref{eq:initial map error} make the conditions in \eqref{eq:main bootstrap assump} strict at $s_0$. By continuous dependence and Lemma~\ref{lem:decomposition}, \eqref{eq:main bootstrap assump} holds on a common interval after $s_0$ for nearby initial choices. Choose $1\le j\le2L$ such that $|\td{\bfA}_j(s_0;\bfA_{\mathrm{init}})|=1/2$. By \eqref{eq:boundary transversality}, for any fixed $s_1>s_0$ sufficiently close to $s_0$, we have
	\begin{equation*}
		|\td{\bfA}_j(s_1;\bfA_{\mathrm{init}})|>1/2.
	\end{equation*}
	By continuous dependence, for $\bfA_{\mathrm{init}}'$ sufficiently close to $\bfA_{\mathrm{init}}$ with $\td{\bfA}(s_0;\bfA_{\mathrm{init}}')\in(-1/2,1/2)^{2L}$, we have
	\begin{equation*}
		|\td{\bfA}_j(s_1;\bfA_{\mathrm{init}}')|>1/2,\qquad
		s_0<s_{\mathrm{exit}}(\bfA_{\mathrm{init}}')<s_1.
	\end{equation*}
	Letting $s_1\to s_0$, we obtain
	\begin{equation*}
		s_{\mathrm{exit}}(\bfA_{\mathrm{init}}')\to s_0,\qquad
		\Phi(\bfA_{\mathrm{init}}')\to2\td{\bfA}(s_0;\bfA_{\mathrm{init}})
	\end{equation*}
	as $\bfA_{\mathrm{init}}'\to\bfA_{\mathrm{init}}$ with $\td{\bfA}(s_0;\bfA_{\mathrm{init}}')\in(-1/2,1/2)^{2L}$.
	
	When $\max_{1\le j\le2L}|\td{\bfA}_j(s_0;\bfA_{\mathrm{init}})|\ge1/2$, the definition of $\Phi$ \eqref{eq:exit map} directly implies the continuity.
	Hence, $\Phi$ is continuous.
\end{proof}

We establish an estimate for $F(w)$ needed in the proof of Theorem~\ref{thm:main}.
\begin{lem}
	We have
	\begin{equation}\label{eq:energy error}
		\left\|F(w)-b_1\Lambda W-\tfrac{i}{2}\eta_1W\right\|_{\dot H^1}
		\lesssim s^{-2c_1}.
	\end{equation}
\end{lem}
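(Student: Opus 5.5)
We write $F(w)=F(W+\td\eps)$ and expand around $W$ using \eqref{eq:nonlinearity}: since $F(W)=0$, we have $F(W+\td\eps)=-iH\td\eps+\calN_W(\td\eps)$. With $\td\eps=P+\eps$, we split
\begin{equation*}
	F(w)=-iHP+\calN_W(P)-iH\eps+\big[\calN_W(P+\eps)-\calN_W(P)\big].
\end{equation*}
By \eqref{eq:g kernels}, $-iH(b_1\mathsf V_1+\eta_1\mathsf U_1)=b_1\Lambda W+\tfrac{i}{2}\eta_1W$. This is exactly the subtracted term, so it remains to bound the other pieces in $\dot H^1$ by $s^{-2c_1}$. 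The difficulty is that $P$ grows at infinity while each individual piece need not lie in $\dot H^1$; only the full $F(w)$ does. So, as in the proof of Lemma~\ref{lem:nonlin esti}, I would split space with a cutoff $\chi_R$, $R=s^{c_1}$ (the scale where $|P|\sim W$, since $b_j R^{2j-1}\sim s^{-j}s^{c_1(2j-1)}$).

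\emph{Interior region $r\lesssim R$.} For $n\ge2$, the pieces $b_n\mathsf V_n+\eta_n\mathsf U_n$ and $S_n$ give $-iH$ of them $=i(b_{n-1}\mathsf V_{n-1}+\ldots)+\Theta_n$-type terms of size $s^{-n}\langle r\rangle^{2n-3}$ by \eqref{eq:kernel growth} and \eqref{eq:correction pointwise}. Their $\dot H^1(r<R)$ norm is $\lesssim s^{-n}R^{2n-5/2}$, plus $s^{-n}$ for low $n$. Note $\mathsf V_{n-1}$ with $n=2$ is $\mathsf V_1\sim r$, which gives $s^{-2}R^{3/2}$. The worst term is $n=L$, giving $s^{-L}R^{2L-5/2}=s^{-L}s^{c_1(2L-5/2)}$. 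Using $2Lc_1=L+\tfrac12c_1$, this equals $s^{-2c_1}$. Likewise $\calN_W(P)$ is quadratic and bounded by \eqref{eq:nonlinear global weight}. The $\eps$ terms are handled by \eqref{eq:dot cal pointwise}, \eqref{eq:lin iterated products} and \eqref{eq:main bootstrap assump}: $\|\chi_R\nabla H\eps\|_{L^2}\lesssim R^{2L-2}\|\eps\|_{\dot\calH^{2L+1}}\lesssim_K R^{2L-2}s^{-2Lc_1}=s^{-2c_1}$. The difference of nonlinearities is bounded similarly, as for $E_{1,2},E_{1,3}$.

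\emph{Exterior region $r\gtrsim R$.} Here I would not split. Instead I would write $F(w)=i(\Delta w+|w|^4w)$ and use \eqref{eq:exterior w}, namely $\|(1-\chi_{R/2})w\|_{\dot\calH^{2L+1}}\lesssim_K R^{-2L}$ and $\|(1-\chi_{R/2})w\|_{\dot H^1}\lesssim1$. Interpolating, the $\dot\calH^3$-type norm, which controls $\|\langle r\rangle^{0}\nabla^3 w\|$ and the weighted lower derivatives, is bounded by $R^{-2}$ up to $K$-dependence. This gives $\|(1-\chi_R)\nabla\Delta w\|_{L^2}\lesssim R^{-2}=s^{-2c_1}$. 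The quintic term is treated by Hardy and Sobolev as in \eqref{eq:exterior nonlinear}. The exterior contributions of $b_1\Lambda W$ and $\eta_1W$ are $O(s^{-1}R^{-1/2})$, which is acceptable. Cutoff commutator terms sit at $r\sim R$ and are bounded by the same pointwise sizes.

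\emph{Main obstacle.} I expect the exterior estimate to be the main obstacle. The interpolation between \eqref{eq:exterior w}'s $\dot H^1$ and $\dot\calH^{2L+1}$ bounds must produce exactly the $R^{-2}$ gain in the $\dot\calH^3$ weighted norm. A concern is that the implicit constant depends on $K$. That dependence would be absorbed by writing the final estimate as $\lesssim_K s^{-2c_1}$, or by lowering the exponent slightly if needed.
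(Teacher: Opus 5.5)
Your proposal is correct and follows essentially the paper's proof. You use the same cutoff $R=s^{c_1}$ and the same interior bounds: the worst profile contributions, $s^{-L}R^{2L-5/2}$ from $b_L\mathsf V_{L-1}$ and $s^{-5L}R^{10L-9/2}$ from the weight-$5L$ part of $\calN_W(P)$, both equal exactly $s^{-2c_1}$, and the $\eps$-terms give $R^{2L-2}\|\eps\|_{\dot{\calH}^{2L+1}}$. In the exterior you use, as the paper does, $\|(1-\chi_{R/2})w\|_{\dot H^3}\lesssim R^{-2}$ obtained by interpolating \eqref{eq:exterior w}, and $\|(1-\chi_{R/2})w\|_{L^\infty}\lesssim R^{-1/2}$ for the quintic term.

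The only difference is bookkeeping: you expand $-iHP$ directly via $-iH\mathsf V_n=\mathsf V_{n-1}$ and $-iHS_n=\Theta_n$, where the paper uses $F(W+P)=\mathbf{Mod}_0\cdot\textbf{v}-\td\Psi$. Two small points. First, $R=s^{c_1}$ is not where $|P|\sim W$ (that happens at $r\sim s^{1/2}$); it is where $|P|\sim r^{-1/2}$, which is what your bounds actually use. Second, \eqref{eq:nonlinear global weight} only covers weights $\le L$, so the borderline weights up to $5L$ should be checked via \eqref{eq:correction pointwise}; the implicit constant depends on $K$, as in the paper.
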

\begin{proof}
	Set $R\coloneqq s^{c_1}$. From \eqref{eq:mod para s bound}, \eqref{eq:nonlin pointwise derivatives}, and \eqref{eq:main bootstrap assump}, we deduce
	\begin{equation}\label{eq:enegy error 1}
		\chi_R(|\eps|+|W+P|+\langle r\rangle|\nabla(W+P)|)\lesssim\langle r\rangle^{-\frac12}.
	\end{equation}
	Combining $\td\eps=P+\eps$, \eqref{eq:notation nonlinearity}, and \eqref{eq:enegy error 1}, we get
	\begin{equation}\label{eq:enegy error 2}
		\chi_R|\nabla(\calN_W(\td\eps)-\calN_W(P))|
		\lesssim\langle r\rangle^{-2}|\nabla\eps|+\langle r\rangle^{-3}|\eps|.
	\end{equation}
	Since $F(w)-F(W+P)=-iH\eps+\calN_W(\td\eps)-\calN_W(P)$, \eqref{eq:dot calH def}, \eqref{eq:main bootstrap assump}, and \eqref{eq:enegy error 2} give
	\begin{equation}\label{eq:enegy error goal2}
		\|\chi_R\nabla(F(w)-F(W+P))\|_{L^2}
		\lesssim R^{2L-2}\|\eps\|_{\dot{\calH}^{2L+1}}\lesssim s^{-2c_1}.
	\end{equation}
	By \eqref{eq:nonlinearity} and \eqref{eq:Psi td def}, we have
	\begin{equation*}
		F(W+P)=\mathbf{Mod}_0\cdot\textbf{v}-\td\Psi.	
	\end{equation*}
	Using \eqref{eq:tail cancellations}, \eqref{eq:correction pointwise}, and \eqref{eq:mod para s bound}, we obtain
	\begin{equation}\label{eq:enegy error goal1}
		\begin{aligned}
			&\|\chi_R\nabla(F(W+P)-b_1\Lambda W-\tfrac{i}{2}\eta_1W)\|_{L^2}\\
			&\lesssim s^{-L}R^{2L-\frac52}+s^{-L-1}R^{2L-\frac32}+s^{-5L}R^{10L-\frac92}
			\lesssim s^{-2c_1}.
		\end{aligned}
	\end{equation}
	
	Now, we estimate the exterior part.
	By \eqref{eq:main bootstrap assump} and \eqref{eq:exterior w}, interpolation and Sobolev embedding yield
	\begin{equation*}
		\|(1-\chi_{\frac{R}2})w\|_{\dot H^3}\lesssim s^{-2c_1},\qquad \|(1-\chi_{\frac{R}2})w\|_{L^\infty}\lesssim s^{-c_1/2}.
	\end{equation*}
	Since $(1-\chi_{\frac{R}2})w=w$ on the support of $1-\chi_R$, we obtain
	\begin{equation}\label{eq:enegy error goal3}
		\|(1-\chi_R)\nabla F(w)\|_{L^2}\lesssim s^{-2c_1}.
	\end{equation}
	In addition, we have
	\begin{equation}\label{eq:enegy error goal4}
		\left\|(1-\chi_R)\nabla\left(b_1\Lambda W+\tfrac{i}{2}\eta_1W\right)\right\|_{L^2}
		\lesssim s^{-1}R^{-\frac12}\le s^{-2c_1}.
	\end{equation}
	Collecting \eqref{eq:enegy error goal1}, \eqref{eq:enegy error goal2}, \eqref{eq:enegy error goal3}, and \eqref{eq:enegy error goal4}, we conclude the proof.
\end{proof}

Finally, we finish the proof of the main theorem.

\begin{proof}[Proof of Theorem~\ref{thm:main}]
	We apply Proposition~\ref{prop:trapped} with $\delta_{\mathrm{dec}}<\delta^*$ and $\eps_{\mathrm{in}}\in C^\infty_{c,\rad}\cap\calZ_{\mathrm h}^\perp$ satisfying \eqref{eq:init radiation}. Then, the corresponding initial data is smooth. From \eqref{eq:renormal}, we have
	\begin{equation}\label{eq:t variable}
		u(t)=[w(s)]_{\lambda(s),\gamma(s)}|_{s=s(t)},\qquad \frac{dt}{ds}=\lambda^2(s)
	\end{equation}
	By Lemma~\ref{lem:decomposition}, \eqref{eq:fluctuations}, \eqref{eq:eigen coordinates}, and \eqref{eq:main bootstrap assump}, the trapped solution satisfies
	\begin{equation}\label{eq:thm proof input}
		\|w-W\|_{\dot H^1}\le\delta_{\mathrm{dec}}/2,\quad \|\eps\|_{\dot{\calH}^{2L+1}}\lesssim s^{-2Lc_1},\quad
		|s^{j}b_j-c_j|+|s^{j}\eta_j|\lesssim s^{-\kappa}.
	\end{equation}
	Thus, \eqref{eq:t variable} implies \eqref{eq:thm tube stability}.

	We next determine the blow-up rate. Integrating \eqref{eq:scale comparison}, we obtain $\ell_0>0$ such that
	\begin{equation}\label{eq:lambda s asymptotic}
		|\log(s^{c_1}\lambda(s))-\log\ell_0|
		\lesssim\int_s^\infty\sigma^{-1-\kappa}\,d\sigma\lesssim s^{-\kappa},\quad 
		\lambda(s)=\ell_0s^{-c_1}(1+O(s^{-\kappa}))
	\end{equation}
	Since $2c_1-1=1/(4L-1)>0$, from \eqref{eq:t variable}, we have
	\begin{equation}
		T=\int_{s_0}^\infty\lambda(s)^2\,ds<\infty,\qquad
		T-t(s)=(4L-1)\ell_0^2s^{1-2c_1}(1+O(s^{-\kappa})).
	\end{equation}
	Eliminating $s$, we obtain
	\begin{equation}
		\lambda(t)=\ell(T-t)^{2L}(1+o_{t\to T}(1)),\qquad
		\ell\coloneqq \big[(4L-1)^{2L}\ell_0^{4L-1}\big]^{-1}.
	\end{equation}
	For the phase $\gamma$, \eqref{eq:mod esti} and \eqref{eq:thm proof input} give
	\begin{equation}
		|\gamma_s|\le\left|\gamma_s-\tfrac{1}{2}\eta_1\right|+\tfrac{1}{2}|\eta_1|
		\lesssim s^{-2Lc_1}+s^{-1-\kappa}\lesssim s^{-1-\kappa},
	\end{equation}
	which implies $\gamma=\gamma_*+o_{t\to T}(1)$ for some $\gamma_*\in\mathbb R$. Hence we conclude \eqref{eq:thm main law}.

	We next prove \eqref{eq:thm main remainder}. By \eqref{eq:energy error}, \eqref{eq:mod esti}, \eqref{eq:thm proof input}, and \eqref{eq:lambda s asymptotic},
	\begin{equation}\label{eq:remainder derivative}
		\|F(w)+\tfrac{\lambda_s}{\lambda}\Lambda W-i\gamma_sW\|_{\dot H^1}
		\lesssim s^{-2c_1}\sim\lambda^2.
	\end{equation}
	Returning to the original variables with $s=s(t)$, we use \eqref{eq:remainder derivative} to obtain
	\begin{equation}\label{eq:remainder with scale}
		\partial_t\td\eps_{\lambda,\gamma}
		=\lambda^{-2}\left[F(w)+\tfrac{\lambda_s}{\lambda}\Lambda W-i\gamma_sW\right]_{\lambda,\gamma},\qquad
		\|\partial_t\td\eps_{\lambda,\gamma}\|_{\dot H^1}\lesssim1.
	\end{equation}
	By \eqref{eq:remainder with scale} and \eqref{eq:thm proof input}, $\td\eps_{\lambda,\gamma}(t)$ converges to some $u^*\in\dot H^1_{\rad}$ with
	\begin{equation}\label{eq:moving limit}
		\|\td\eps_{\lambda,\gamma}(t)-u^*\|_{\dot H^1}\lesssim T-t,\qquad
		\|u^*\|_{\dot H^1}\le\delta_{\mathrm{dec}},
	\end{equation}
	which proves $\dot H^1$ convergence of \eqref{eq:thm main remainder}.

	We next prove the $L^2$ convergence. By \eqref{eq:thm proof input}, $u$ is uniformly bounded in $\dot H^1$. Thus, Sobolev embedding implies
	\begin{equation*}
		\|\partial_tu\|_{H^{-1}}
		=\|\Delta u+|u|^4u\|_{H^{-1}}
		\lesssim\|u\|_{\dot H^1}+\|u\|_{L^6}^5\lesssim1.
	\end{equation*}
	By \eqref{eq:conservation laws} and the uniform boundedness of $u$ in $\dot H^1$, we arrive at
	\begin{equation}
		\|u(t)-u(t')\|_{L^2}^2
		\le\|u(t)-u(t')\|_{H^{-1}}\|u(t)-u(t')\|_{H^1}\lesssim|t-t'|.
	\end{equation}
	Hence $u(t)$ converges in $L^2$. Moreover, for any $R>0$, we have $\|\chi_RW_\lambda\|_{L^2}^2\lesssim \lambda R \to 0$. This implies $u\to u^*$ and $u^*\in H^1_{\rad}$. Therefore, we conclude \eqref{eq:thm main remainder}.

	Finally, we prove the regularity of $u^*$ and \eqref{eq:thm origin asymptotics}. For each fixed $A>0$, collecting \eqref{eq:kernel growth}, \eqref{eq:profile form}, \eqref{eq:correction pointwise}, \eqref{eq:mod para s bound}, and \eqref{eq:lambda s asymptotic}, we derive
	\begin{equation}\label{eq:profile rescaling bounds}
		\left\|\chi_A(P-b_L\mathsf V_L-\eta_L\mathsf U_L)_{\lambda,\gamma}\right\|_{L^\infty}
		\lesssim_A s^{-(2c_1-1)}+\lambda\to0.
	\end{equation}
	By \eqref{eq:kernel tails}, \eqref{eq:thm proof input}, and \eqref{eq:lambda s asymptotic}, $(b_L\mathsf V_L+\eta_L\mathsf U_L)_{\lambda,\gamma}\to a|x|^{2L-1}$ in $L^2_{\mathrm{loc}}$, where $a\coloneqq e^{i\gamma_*}C_Lc_L\ell_0^{\frac12-2L}\ne0$. Combining this convergence with \eqref{eq:moving limit} and \eqref{eq:profile rescaling bounds}, we obtain
	\begin{equation}\label{eq:physical profile limit}
		P_{\lambda,\gamma}(t)\to a|x|^{2L-1},\qquad
		\eps_{\lambda,\gamma}(t)\to u^*-a|x|^{2L-1}
		\quad\text{in }L^2_{\mathrm{loc}}.
	\end{equation}
	By \eqref{eq:thm proof input} with \eqref{eq:lambda s asymptotic} and \eqref{eq:dot cal pointwise}, we obtain
	\begin{equation}\label{eq:critical norm}
		\begin{gathered}
			\|\eps_{\lambda,\gamma}(t)\|_{\dot H^{2L+1}}
			=\lambda(t)^{-2L}\|\eps(s(t))\|_{\dot H^{2L+1}}\lesssim1,\\
			|\eps_{\lambda,\gamma}(t,x)|\lesssim\lambda^{2L-\frac12}\langle x/\lambda\rangle^{2L-\frac12}
			=(\lambda^2+|x|^2)^{L-\frac14}.
		\end{gathered}
	\end{equation}
	By \eqref{eq:critical norm}, $\nabla^{2L+1}\eps_{\lambda,\gamma}(t)$ is uniformly bounded in $L^2$ and hence converges weakly in $L^2$, up to a subsequence. By \eqref{eq:physical profile limit}, the weak limit is $\nabla^{2L+1}(u^*-a|x|^{2L-1})$. Therefore, weak lower semicontinuity yields $\|\nabla^{2L+1}(u^*-a|x|^{2L-1})\|_{L^2}\lesssim1$. Since $\nabla^{2L+1}((1-\chi)|x|^{2L-1})\in L^2$ and $u^*\in L^2$, we conclude
	\begin{equation}\label{eq:asymptotic profile expansion}
		u^*-a\chi|x|^{2L-1}\in H^{2L+1}.
	\end{equation}
	Passing to the limit along a subsequence in the pointwise bound of \eqref{eq:critical norm}, we conclude \eqref{eq:thm origin asymptotics}.
	Finally, since $\chi|x|^{2L-1}\in H^{2L+\frac12-}$ by Sobolev embedding, \eqref{eq:asymptotic profile expansion} yields $u^*\in H^{2L+\frac12-}_{\rad}$.
\end{proof}

\bibliographystyle{abbrv}
\bibliography{reference}

@article {Dodson2019ASENS,
	AUTHOR = {Dodson, Benjamin},
	TITLE = {Global well-posedness and scattering for the focusing, cubic {S}chr\"odinger equation in dimension {$d=4$}},
	JOURNAL = {Ann. Sci. \'Ec. Norm. Sup\'er. (4)},
	FJOURNAL = {Annales Scientifiques de l'\'Ecole Normale Sup\'erieure. Quatri\`eme S\'erie},
	VOLUME = {52},
	YEAR = {2019},
	NUMBER = {1},
	PAGES = {139--180},
	ISSN = {0012-9593,1873-2151},
	DOI = {10.24033/asens.2385},
	URL = {https://doi.org/10.24033/asens.2385},
}

@article{Martel2022PMP,
	AUTHOR = {Martel, Yvan},
	TITLE = {Asymptotic stability of solitary waves for the {1D} cubic-quintic {S}chr\"odinger equation with no internal mode},
	JOURNAL = {Probab. Math. Phys.},
	FJOURNAL = {Probability and Mathematical Physics},
	VOLUME = {3},
	YEAR = {2022},
	NUMBER = {4},
	PAGES = {839--867},
	DOI = {10.2140/pmp.2022.3.839},
	URL = {https://doi.org/10.2140/pmp.2022.3.839},
}

@article{Kim2026arXivNoEmbeeded,
	title={Absence of non-threshold embedded eigenvalues for nonlinear {S}chr\"{o}dinger equations}, 
	author={Kim, Taegyu},
	year={2026},
	journal = {preprint}
}

@article{MMMZ2025arXiv,
	title={Dynamics of subcritical threshold solutions for the {$4d$} energy-critical {NLS}},
	journal={preprint arXiv:2508.02608, to appear in Anal. PDE},
	author={Ma, Zuyu and Miao, Changxing and Murphy, Jason and Zheng, Jiqiang},
	year={2025}
}

@article{JXYZ2026arXiv,
	author = {Jendrej, Jacek and Xu, Jitao and Yuan, Xu and Zhao, Lifeng},
	journal = {preprint arXiv:2610.20313},
	title = {Quantized slow blowup dynamics for the energy-critical {S}chr\"{o}dinger map flow},
	year = {2026}
}

@article{SuZhao2026arXivScattering,
	title={Scattering below the ground state for the three-dimensional focusing energy-critical {NLS}},
	author={Su, Qingtang and Zhao, Zehua},
	journal = {preprint arXiv:2609.30228},
	year={2026}
}

@article{ChungHan2026arXivScattering,
	title={Threshold scattering for the three-dimensional focusing energy-critical nonlinear {S}chr{\"o}dinger equation},
	author={Chung, Pang-hung and Han, Dan},
	journal = {preprint arXiv:2610.10658},
	year={2026}
}

@article{JendrejLawrieRodriguez2022ASENS,
	author = {Jendrej, Jacek and Lawrie, Andrew and Rodriguez, Casey},
	doi = {10.24033/asens.2514},
	fjournal = {Annales Scientifiques de l'\'{E}cole Normale Sup\'{e}rieure. Quatri\`eme S\'{e}rie},
	issn = {0012-9593},
	journal = {Ann. Sci. \'{E}c. Norm. Sup\'{e}r. (4)},
	mrclass = {58E20 (35A21 37J51 58E05)},
	mrnumber = {4468859},
	number = {4},
	pages = {1135--1198},
	title = {Dynamics of bubbling wave maps with prescribed radiation},
	volume = {55},
	year = {2022}}

@article{OrtolevaPerelman2013,
	author = {Ortoleva, C. and Perelman, G.},
	doi = {10.1090/S1061-0022-2014-01290-3},
	fjournal = {Rossi\u{\i}skaya Akademiya Nauk. Algebra i Analiz},
	issn = {0234-0852},
	journal = {Algebra i Analiz},
	mrclass = {35Q55 (35B44)},
	mrnumber = {3114854},
	mrreviewer = {Amin\ Esfahani},
	number = {2},
	pages = {162--192},
	title = {Nondispersive vanishing and blow up at infinity for the energy critical nonlinear {S}chr\"{o}dinger equation in {$\mathbb{R}^3$}},
	url = {https://doi.org/10.1090/S1061-0022-2014-01290-3},
	volume = {25},
	year = {2013}}

@article {Schmid2026CVPDE,
    AUTHOR = {Schmid, Tobias},
     TITLE = {Blow up dynamics for the 3{D} energy-critical nonlinear
              {S}chr\"odinger equation},
   JOURNAL = {Calc. Var. Partial Differential Equations},
  FJOURNAL = {Calculus of Variations and Partial Differential Equations},
    VOLUME = {65},
      YEAR = {2026},
    NUMBER = {3},
     PAGES = {Paper No. 69, 114},
      ISSN = {0944-2669,1432-0835},
   MRCLASS = {35B44 (35Q55)},
  MRNUMBER = {5032421},
       DOI = {10.1007/s00526-025-03240-x},
       URL = {https://doi.org/10.1007/s00526-025-03240-x},
}

@article{JeongKim2024arXiv,
      title={Quantized blow-up dynamics for {C}alogero--{M}oser derivative nonlinear {S}chr\"odinger equation}, 
      author={Jeong, Uihyeon and Kim, Taegyu},
    journal = {preprint arXiv:2412.12518},
      year={2024}
}

@article {KimKwonOh2026CMP,
	AUTHOR = {Kim, Kihyun and Kwon, Soonsik and Oh, Sung-Jin},
	TITLE = {Blow-up dynamics for radial self-dual {C}hern--{S}imons--{S}chr{\"o}dinger equation with prescribed asymptotic profile},
	JOURNAL = {Comm. Math. Phys.},
	FJOURNAL = {Communications in Mathematical Physics},
	VOLUME = {407},
	YEAR = {2026},
	NUMBER = {4},
	PAGES = {Paper No. 70},
	ISSN = {0010-3616,1432-0916},
	DOI = {10.1007/s00220-026-05577-6},
	URL = {https://doi.org/10.1007/s00220-026-05577-6},
}

@article{HerreroVelazquez1992,
  title={A blow up result for semilinear heat equations in the supercritical case},
  author={Herrero, Miguel A. and Vel\'{a}zquez, Juan J. L.},
  journal={preprint},
  year={1992}
}

@article {HerreroVelazquez1994CRASPSI,
    AUTHOR = {Herrero, Miguel A. and Vel\'{a}zquez, Juan J. L.},
     TITLE = {Explosion de solutions d'\'{e}quations paraboliques
              semilin\'{e}aires supercritiques},
   JOURNAL = {C. R. Acad. Sci. Paris S\'{e}r. I Math.},
  FJOURNAL = {Comptes Rendus de l'Acad\'{e}mie des Sciences. S\'{e}rie I.
              Math\'{e}matique},
    VOLUME = {319},
      YEAR = {1994},
    NUMBER = {2},
     PAGES = {141--145},
      ISSN = {0764-4442},
   MRCLASS = {35B40 (35K57)},
  MRNUMBER = {1288393},
MRREVIEWER = {Sergey\ A.\ Vakulenko},
}

@article{FilippasHerreroVelazquez2000,
	author = {Filippas, Stathis and Herrero, Miguel A. and Vel\'{a}zquez, Juan J. L.},
	doi = {10.1098/rspa.2000.0648},
	fjournal = {The Royal Society of London. Proceedings. Series A. Mathematical, Physical and Engineering Sciences},
	issn = {1364-5021},
	journal = {R. Soc. Lond. Proc. Ser. A Math. Phys. Eng. Sci.},
	mrclass = {35K55 (35B33 35B40 35C20)},
	mrnumber = {1843848},
	mrreviewer = {Juli\'{a}n Aguirre},
	number = {2004},
	pages = {2957--2982},
	title = {Fast blow-up mechanisms for sign-changing solutions of a semilinear parabolic equation with critical nonlinearity},
	url = {https://doi.org/10.1098/rspa.2000.0648},
	volume = {456},
	year = {2000}}

@article {Mizoguchi2004AdvDE,
    AUTHOR = {Mizoguchi, Noriko},
     TITLE = {Type-{II} blowup for a semilinear heat equation},
   JOURNAL = {Adv. Differential Equations},
  FJOURNAL = {Advances in Differential Equations},
    VOLUME = {9},
      YEAR = {2004},
    NUMBER = {11-12},
     PAGES = {1279--1316},
      ISSN = {1079-9389},
   MRCLASS = {35K55 (35B40 35K15 35K57)},
  MRNUMBER = {2099557},
MRREVIEWER = {Varga\ Kalantarov},
}

@article {Mizoguchi2007Math.Ann.,
    AUTHOR = {Mizoguchi, Noriko},
     TITLE = {Rate of type {II} blowup for a semilinear heat equation},
   JOURNAL = {Math. Ann.},
  FJOURNAL = {Mathematische Annalen},
    VOLUME = {339},
      YEAR = {2007},
    NUMBER = {4},
     PAGES = {839--877},
      ISSN = {0025-5831,1432-1807},
   MRCLASS = {35K55 (35B40 35K15)},
  MRNUMBER = {2341904},
MRREVIEWER = {Juli\'{a}n\ Aguirre},
       DOI = {10.1007/s00208-007-0133-z},
       URL = {https://doi.org/10.1007/s00208-007-0133-z},
}

@article {Mizoguchi2011TranAMS,
    AUTHOR = {Mizoguchi, Noriko},
     TITLE = {Blow-up rate of type {II} and the braid group theory},
   JOURNAL = {Trans. Amer. Math. Soc.},
  FJOURNAL = {Transactions of the American Mathematical Society},
    VOLUME = {363},
      YEAR = {2011},
    NUMBER = {3},
     PAGES = {1419--1443},
      ISSN = {0002-9947,1088-6850},
   MRCLASS = {35K91 (35B07 35B44 35K20 57M07)},
  MRNUMBER = {2737271},
MRREVIEWER = {Christian\ Stinner},
       DOI = {10.1090/S0002-9947-2010-04784-1},
       URL = {https://doi.org/10.1090/S0002-9947-2010-04784-1},
}

@article{HadzicRaphael2019JEMS,
	author = {Had\v{z}i\'{c}, Mahir and Rapha\"{e}l, Pierre},
	doi = {10.4171/JEMS/904},
	fjournal = {Journal of the European Mathematical Society (JEMS)},
	issn = {1435-9855},
	journal = {J. Eur. Math. Soc. (JEMS)},
	mrclass = {35R35 (35A20 35K05)},
	mrnumber = {4012340},
	mrreviewer = {Rodica Luca},
	number = {11},
	pages = {3259--3341},
	title = {On melting and freezing for the 2{D} radial {S}tefan problem},
	url = {https://doi.org/10.4171/JEMS/904},
	volume = {21},
	year = {2019}}

@article {CollotGhoulMasmoudiNguyen2022CPAM,
    AUTHOR = {Collot, Charles and Ghoul, Tej-Eddine and Masmoudi, Nader and
              Nguyen, Van Tien},
     TITLE = {Refined description and stability for singular solutions of
              the 2{D} {K}eller-{S}egel system},
   JOURNAL = {Comm. Pure Appl. Math.},
  FJOURNAL = {Communications on Pure and Applied Mathematics},
    VOLUME = {75},
      YEAR = {2022},
    NUMBER = {7},
     PAGES = {1419--1516},
      ISSN = {0010-3640,1097-0312},
   MRCLASS = {35Q92 (92C17)},
  MRNUMBER = {4438587},
}

@article {GhoulIbrahimNguyen2018JDE,
    AUTHOR = {Ghoul, T. and Ibrahim, S. and Nguyen, V. T.},
     TITLE = {Construction of type {II} blowup solutions for the
              1-corotational energy supercritical wave maps},
   JOURNAL = {J. Differential Equations},
  FJOURNAL = {Journal of Differential Equations},
    VOLUME = {265},
      YEAR = {2018},
    NUMBER = {7},
     PAGES = {2968--3047},
      ISSN = {0022-0396,1090-2732},
   MRCLASS = {35L71 (35B40 35B44 35L51 35R01)},
  MRNUMBER = {3812220},
       DOI = {10.1016/j.jde.2018.04.058},
       URL = {https://doi.org/10.1016/j.jde.2018.04.058},
}

@article {MerleRaphaelRodnianskiSzeftel2022Invention,
    AUTHOR = {Merle, Frank and Rapha\"el, Pierre and Rodnianski, Igor and
              Szeftel, Jeremie},
     TITLE = {On blow up for the energy super critical defocusing nonlinear
              {S}chr\"odinger equations},
   JOURNAL = {Invent. Math.},
  FJOURNAL = {Inventiones Mathematicae},
    VOLUME = {227},
      YEAR = {2022},
    NUMBER = {1},
     PAGES = {247--413},
      ISSN = {0020-9910,1432-1297},
   MRCLASS = {35Q55},
  MRNUMBER = {4359478},
       DOI = {10.1007/s00222-021-01067-9},
       URL = {https://doi.org/10.1007/s00222-021-01067-9},
}

@article {RS2011JAMSinhomo,
	AUTHOR = {Rapha\"el, Pierre and Szeftel, Jeremie},
	TITLE = {Existence and uniqueness of minimal blow-up solutions to an
	inhomogeneous mass critical {NLS}},
	JOURNAL = {J. Amer. Math. Soc.},
	FJOURNAL = {Journal of the American Mathematical Society},
	VOLUME = {24},
	YEAR = {2011},
	NUMBER = {2},
	PAGES = {471--546},
	ISSN = {0894-0347,1088-6834},
	MRCLASS = {35Q55 (35B35 35B44)},
	MRNUMBER = {2748399},
	MRREVIEWER = {Masahito\ Ohta},
	DOI = {10.1090/S0894-0347-2010-00688-1},
	URL = {https://doi.org/10.1090/S0894-0347-2010-00688-1},

}

@article{Jeong2025AnalPDE,
     TITLE = {Quantized slow blow-up dynamics for the energy-critical corotational wave map problem}, 
    AUTHOR = {Uihyeon Jeong},   
   JOURNAL = {Anal. PDE},
  FJOURNAL = {Analysis \& PDE},
    VOLUME = {18},
      YEAR = {2025},
    NUMBER = {10},
     PAGES = {2415--2480},
       DOI = {10.2140/apde.2025.18.2415},
       URL = {https://doi.org/10.2140/apde.2025.18.2415},
}

@article{KimKwon2020blowup,
	author = {Kim, Kihyun and Kwon, Soonsik},
	doi = {10.1007/s40818-023-00147-8},
	fjournal = {Annals of PDE. Journal Dedicated to the Analysis of Problems from Physical Sciences},
	issn = {2524-5317,2199-2576},
	journal = {Ann. PDE},
	mrclass = {35B44 (35Q55)},
	mrnumber = {4564552},
	mrreviewer = {Van\ Duong\ Dinh},
	number = {1},
	pages = {Paper No. 6, 129},
	title = {Construction of blow-up manifolds to the equivariant self-dual {C}hern-{S}imons-{S}chr\"{o}dinger equation},
	url = {https://doi.org/10.1007/s40818-023-00147-8},
	volume = {9},
	year = {2023}}

@article{RodnianskiSterbenz2010,
	author = {Rodnianski, Igor and Sterbenz, Jacob},
	doi = {10.4007/annals.2010.172.187},
	fjournal = {Annals of Mathematics. Second Series},
	issn = {0003-486X,1939-8980},
	journal = {Ann. of Math. (2)},
	mrclass = {58E20 (35B44 35L70 35Q40 81T10)},
	mrnumber = {2680419},
	mrreviewer = {Andreas\ Gastel},
	number = {1},
	pages = {187--242},
	title = {On the formation of singularities in the critical {${\rm O}(3)$} {$\sigma$}-model},
	url = {https://doi.org/10.4007/annals.2010.172.187},
	volume = {172},
	year = {2010}}

@article{RaphaelRodnianski2012,
	author = {Rapha\"{e}l, Pierre and Rodnianski, Igor},
	doi = {10.1007/s10240-011-0037-z},
	fjournal = {Publications Math\'{e}matiques. Institut de Hautes \'{E}tudes Scientifiques},
	issn = {0073-8301,1618-1913},
	journal = {Publ. Math. Inst. Hautes \'{E}tudes Sci.},
	mrclass = {58E20 (35A20 35B44 35L70 58E15 81T13)},
	mrnumber = {2929728},
	mrreviewer = {Andreas\ Gastel},
	pages = {1--122},
	title = {Stable blow up dynamics for the critical co-rotational wave maps and equivariant {Y}ang-{M}ills problems},
	url = {https://doi.org/10.1007/s10240-011-0037-z},
	volume = {115},
	year = {2012}}

@article{Merle1993Duke,
	author = {Merle, F.},
	doi = {10.1215/S0012-7094-93-06919-0},
	fjournal = {Duke Mathematical Journal},
	issn = {0012-7094,1547-7398},
	journal = {Duke Math. J.},
	mrclass = {35Q55 (34G20 35B35)},
	mrnumber = {1203233},
	mrreviewer = {Song\ Mu\ Zheng},
	number = {2},
	pages = {427--454},
	title = {Determination of blow-up solutions with minimal mass for nonlinear {S}chr\"{o}dinger equations with critical power},
	url = {https://doi.org/10.1215/S0012-7094-93-06919-0},
	volume = {69},
	year = {1993}}

@article{MerleRaphaelRodnianski2013Invention,
	author = {Merle, Frank and Rapha\"{e}l, Pierre and Rodnianski, Igor},
	doi = {10.1007/s00222-012-0427-y},
	fjournal = {Inventiones Mathematicae},
	issn = {0020-9910,1432-1297},
	journal = {Invent. Math.},
	mrclass = {35K45 (35B44 35Q55 35Q60 58J99)},
	mrnumber = {3090180},
	mrreviewer = {Dimitrios\ E.\ Tzanetis},
	number = {2},
	pages = {249--365},
	title = {Blowup dynamics for smooth data equivariant solutions to the critical {S}chr\"{o}dinger map problem},
	url = {https://doi.org/10.1007/s00222-012-0427-y},
	volume = {193},
	year = {2013}}

@article{MerleRaphaelRodnianski2015CambJMath,
	author = {Merle, Frank and Rapha\"{e}l, Pierre and Rodnianski, Igor},
	doi = {10.4310/CJM.2015.v3.n4.a1},
	fjournal = {Cambridge Journal of Mathematics},
	issn = {2168-0930,2168-0949},
	journal = {Camb. J. Math.},
	mrclass = {35Q55 (35B44 35C06)},
	mrnumber = {3435273},
	mrreviewer = {Thomas\ Duyckaerts},
	number = {4},
	pages = {439--617},
	title = {Type {II} blow up for the energy supercritical {NLS}},
	url = {https://doi.org/10.4310/CJM.2015.v3.n4.a1},
	volume = {3},
	year = {2015}}

@article{RaphaelSchweyer2013CPAMHeat,
	author = {Rapha\"{e}l, Pierre and Schweyer, Remi},
	doi = {10.1002/cpa.21435},
	fjournal = {Communications on Pure and Applied Mathematics},
	issn = {0010-3640,1097-0312},
	journal = {Comm. Pure Appl. Math.},
	mrclass = {35R01 (35B44 35K91 53Cxx 58J35)},
	mrnumber = {3008229},
	mrreviewer = {Shu-Yu\ Hsu},
	number = {3},
	pages = {414--480},
	title = {Stable blowup dynamics for the 1-corotational energy critical harmonic heat flow},
	url = {https://doi.org/10.1002/cpa.21435},
	volume = {66},
	year = {2013}}

@article{RaphaelSchweyer2014APDE,
	author = {Rapha\"{e}l, Pierre and Schweyer, Remi},
	doi = {10.2140/apde.2014.7.1713},
	fjournal = {Analysis \& PDE},
	issn = {2157-5045,1948-206X},
	journal = {Anal. PDE},
	mrclass = {35K91 (35B44)},
	mrnumber = {3318739},
	mrreviewer = {Christopher\ P.\ Grant},
	number = {8},
	pages = {1713--1805},
	title = {Quantized slow blow-up dynamics for the corotational energy-critical harmonic heat flow},
	url = {https://doi.org/10.2140/apde.2014.7.1713},
	volume = {7},
	year = {2014}}

@article{Collot2017AnalPDE,
	author = {Collot, Charles},
	doi = {10.2140/apde.2017.10.127},
	fjournal = {Analysis \& PDE},
	issn = {2157-5045,1948-206X},
	journal = {Anal. PDE},
	mrclass = {35K91 (35B20 35B44 35C06 35K58)},
	mrnumber = {3611015},
	mrreviewer = {Hongwei\ Chen},
	number = {1},
	pages = {127--252},
	title = {Nonradial type {II} blow up for the energy-supercritical semilinear heat equation},
	url = {https://doi.org/10.2140/apde.2017.10.127},
	volume = {10},
	year = {2017}}

@article{Collot2018MemAmer,
	author = {Collot, Charles},
	doi = {10.1090/memo/1205},
	fjournal = {Memoirs of the American Mathematical Society},
	isbn = {978-1-4704-2813-6; 978-1-4704-4379-5},
	issn = {0065-9266,1947-6221},
	journal = {Mem. Amer. Math. Soc.},
	mrclass = {35L71 (35B44 58B99)},
	mrnumber = {3778126},
	mrreviewer = {Herbert\ Koch},
	number = {1205},
	pages = {v+163},
	title = {Type {II} blow up manifolds for the energy supercritical semilinear wave equation},
	url = {https://doi.org/10.1090/memo/1205},
	volume = {252},
	year = {2018}}

@article{delPinoMussoWeiZhang2020HeatQuantized,
	author = {del Pino, Manuel and Musso, Monica and Wei, Juncheng and Zhang, Qidi and Zhang, Yifu},
	journal = {preprint arXiv:2002.05765},
	title = {Type {II} Finite time blow-up for the three dimensional energy critical heat equation},
	year = {2020}}

@article{RaphaelSchweyer2014MathAnn,
	author = {Rapha\"{e}l, Pierre and Schweyer, R\'{e}mi},
	doi = {10.1007/s00208-013-1002-6},
	fjournal = {Mathematische Annalen},
	issn = {0025-5831,1432-1807},
	journal = {Math. Ann.},
	mrclass = {35M31 (35B20 35B35 35B44 92C17)},
	mrnumber = {3201901},
	mrreviewer = {Adrien\ Blanchet},
	number = {1-2},
	pages = {267--377},
	title = {On the stability of critical chemotactic aggregation},
	url = {https://doi.org/10.1007/s00208-013-1002-6},
	volume = {359},
	year = {2014}}

@article{MerleRaphaelRodnianskiSzeftel2022-1Ann.Math.,
	author = {Merle, Frank and Rapha\"{e}l, Pierre and Rodnianski, Igor and Szeftel, Jeremie},
	doi = {10.4007/annals.2022.196.2.3},
	fjournal = {Annals of Mathematics. Second Series},
	issn = {0003-486X},
	journal = {Ann. of Math. (2)},
	mrclass = {35Q35 (34C37)},
	mrnumber = {4445442},
	number = {2},
	pages = {567--778},
	title = {On the implosion of a compressible fluid {I}: smooth self-similar inviscid profiles},
	url = {https://doi.org/10.4007/annals.2022.196.2.3},
	volume = {196},
	year = {2022}}

@article{MerleRaphaelRodnianskiSzeftel2022-2Ann.Math.,
	author = {Merle, Frank and Rapha\"{e}l, Pierre and Rodnianski, Igor and Szeftel, Jeremie},
	doi = {10.4007/annals.2022.196.2.4},
	fjournal = {Annals of Mathematics. Second Series},
	issn = {0003-486X},
	journal = {Ann. of Math. (2)},
	mrclass = {35Q35 (35B44)},
	mrnumber = {4445443},
	number = {2},
	pages = {779--889},
	title = {On the implosion of a compressible fluid {II}: singularity formation},
	url = {https://doi.org/10.4007/annals.2022.196.2.4},
	volume = {196},
	year = {2022}}

@article {CollotMerleRaphael2020JAMS,
    AUTHOR = {Collot, Charles and Merle, Frank and Rapha\"el, Pierre},
     TITLE = {Strongly anisotropic type {II} blow up at an isolated point},
   JOURNAL = {J. Amer. Math. Soc.},
  FJOURNAL = {Journal of the American Mathematical Society},
    VOLUME = {33},
      YEAR = {2020},
    NUMBER = {2},
     PAGES = {527--607},
      ISSN = {0894-0347,1088-6834},
   MRCLASS = {35K91 (35B44 35K15 35K58)},
  MRNUMBER = {4073868},
MRREVIEWER = {Alain\ Brillard},
       DOI = {10.1090/jams/941},
       URL = {https://doi.org/10.1090/jams/941},
}

@article {Harada2020AIHPC,
    AUTHOR = {Harada, Junichi},
     TITLE = {A higher speed type {II} blowup for the five dimensional
              energy critical heat equation},
   JOURNAL = {Ann. Inst. H. Poincar\'e{} C Anal. Non Lin\'eaire},
  FJOURNAL = {Annales de l'Institut Henri Poincar\'e{} C. Analyse Non
              Lin\'eaire},
    VOLUME = {37},
      YEAR = {2020},
    NUMBER = {2},
     PAGES = {309--341},
      ISSN = {0294-1449,1873-1430},
   MRCLASS = {35K91 (35B44)},
  MRNUMBER = {4072807},
MRREVIEWER = {Daniele\ Andreucci},
       DOI = {10.1016/j.anihpc.2019.09.006},
       URL = {https://doi.org/10.1016/j.anihpc.2019.09.006},
}

@article{Kim2025JEMS,
	author = {Kim, Kihyun},
	journal = {J. Eur. Math. Soc. (JEMS), published online first},
	title = {Rigidity of smooth finite-time blow-up for equivariant self-dual {C}hern-{S}imons-{S}chr\"{o}dinger equation},
  url = {https://doi.org/10.4171/jems/1569},
	year = {2025}}

@article {KimKimKwon2024arxiv,
	author={Kim, Kihyun and Kim, Taegyu and Kwon, Soonsik},
	journal = {preprint arXiv:2404.09603, to appear in Mem. Amer. Math. Soc.},
	title={Construction of smooth chiral finite-time blow-up solutions to {C}alogero--{M}oser derivative nonlinear {S}chr\"{o}dinger equation}, 
	year={2024}
}

@article {Raphael2005MathAnnalen,
	AUTHOR = {Raphael, Pierre},
	TITLE = {Stability of the log-log bound for blow up solutions to the
	critical non linear {S}chr\"odinger equation},
	JOURNAL = {Math. Ann.},
	FJOURNAL = {Mathematische Annalen},
	VOLUME = {331},
	YEAR = {2005},
	NUMBER = {3},
	PAGES = {577--609},
	ISSN = {0025-5831,1432-1807},
	MRCLASS = {35Q55 (35B35 35B40)},
	MRNUMBER = {2122541},
	MRREVIEWER = {Masahito\ Ohta},
	DOI = {10.1007/s00208-004-0596-0},
	URL = {https://doi.org/10.1007/s00208-004-0596-0},
}

@article {MerleRaphael2005CMP,
	AUTHOR = {Merle, Frank and Raphael, Pierre},
	TITLE = {Profiles and quantization of the blow up mass for critical
	nonlinear {S}chr\"odinger equation},
	JOURNAL = {Comm. Math. Phys.},
	FJOURNAL = {Communications in Mathematical Physics},
	VOLUME = {253},
	YEAR = {2005},
	NUMBER = {3},
	PAGES = {675--704},
	ISSN = {0010-3616,1432-0916},
	MRCLASS = {35Q55 (35B40)},
	MRNUMBER = {2116733},
	MRREVIEWER = {Justin\ A.\ Holmer},
	DOI = {10.1007/s00220-004-1198-0},
	URL = {https://doi.org/10.1007/s00220-004-1198-0},
}

@article {KenigMerle2006Invent,
	AUTHOR = {Kenig, Carlos E. and Merle, Frank},
	TITLE = {Global well-posedness, scattering and blow-up for the
	energy-critical, focusing, non-linear {S}chr\"odinger equation
	in the radial case},
	JOURNAL = {Invent. Math.},
	FJOURNAL = {Inventiones Mathematicae},
	VOLUME = {166},
	YEAR = {2006},
	NUMBER = {3},
	PAGES = {645--675},
	ISSN = {0020-9910,1432-1297},
	MRCLASS = {35Q55 (35B30 35B40 35P25)},
	MRNUMBER = {2257393},
	MRREVIEWER = {Justin\ A.\ Holmer},
	DOI = {10.1007/s00222-006-0011-4},
	URL = {https://doi.org/10.1007/s00222-006-0011-4},
}

@article {Jendrej2017AnalPDE,
	AUTHOR = {Jendrej, Jacek},
	TITLE = {Construction of two-bubble solutions for the energy-critical
	{NLS}},
	JOURNAL = {Anal. PDE},
	FJOURNAL = {Analysis \& PDE},
	VOLUME = {10},
	YEAR = {2017},
	NUMBER = {8},
	PAGES = {1923--1959},
	ISSN = {2157-5045,1948-206X},
	MRCLASS = {35Q55 (35B40)},
	MRNUMBER = {3694010},
	MRREVIEWER = {Yuichiro\ Kawahara},
	DOI = {10.2140/apde.2017.10.1923},
	URL = {https://doi.org/10.2140/apde.2017.10.1923},
}

@article {MerleRaphael2005AnnMath,
	AUTHOR = {Merle, Frank and Raphael, Pierre},
	TITLE = {The blow-up dynamic and upper bound on the blow-up rate for
	critical nonlinear {S}chr\"odinger equation},
	JOURNAL = {Ann. of Math. (2)},
	FJOURNAL = {Annals of Mathematics. Second Series},
	VOLUME = {161},
	YEAR = {2005},
	NUMBER = {1},
	PAGES = {157--222},
	ISSN = {0003-486X,1939-8980},
	MRCLASS = {35Q55 (35B40)},
	MRNUMBER = {2150386},
	MRREVIEWER = {John\ Albert},
	DOI = {10.4007/annals.2005.161.157},
	URL = {https://doi.org/10.4007/annals.2005.161.157},
}

@article {MerleRaphael2003GAFA,
	AUTHOR = {Merle, F. and Raphael, P.},
	TITLE = {Sharp upper bound on the blow-up rate for the critical
	nonlinear {S}chr\"odinger equation},
	JOURNAL = {Geom. Funct. Anal.},
	FJOURNAL = {Geometric and Functional Analysis},
	VOLUME = {13},
	YEAR = {2003},
	NUMBER = {3},
	PAGES = {591--642},
	ISSN = {1016-443X,1420-8970},
	MRCLASS = {35Q55 (35B40)},
	MRNUMBER = {1995801},
	DOI = {10.1007/s00039-003-0424-9},
	URL = {https://doi.org/10.1007/s00039-003-0424-9},
}

@article {MerleRaphael2004Invent,
	AUTHOR = {Merle, Frank and Raphael, Pierre},
	TITLE = {On universality of blow-up profile for {$L^2$} critical
	nonlinear {S}chr\"odinger equation},
	JOURNAL = {Invent. Math.},
	FJOURNAL = {Inventiones Mathematicae},
	VOLUME = {156},
	YEAR = {2004},
	NUMBER = {3},
	PAGES = {565--672},
	ISSN = {0020-9910,1432-1297},
	MRCLASS = {35Q55 (35B33 35B40 35B65 35Q51)},
	MRNUMBER = {2061329},
	MRREVIEWER = {Masahito\ Ohta},
	DOI = {10.1007/s00222-003-0346-z},
	URL = {https://doi.org/10.1007/s00222-003-0346-z},
}

@article {MerleRaphael2006JAMS,
	AUTHOR = {Merle, Frank and Raphael, Pierre},
	TITLE = {On a sharp lower bound on the blow-up rate for the {$L^2$}
	critical nonlinear {S}chr\"odinger equation},
	JOURNAL = {J. Amer. Math. Soc.},
	FJOURNAL = {Journal of the American Mathematical Society},
	VOLUME = {19},
	YEAR = {2006},
	NUMBER = {1},
	PAGES = {37--90},
	ISSN = {0894-0347,1088-6834},
	MRCLASS = {35Q55 (35B40)},
	MRNUMBER = {2169042},
	MRREVIEWER = {Masahito\ Ohta},
	DOI = {10.1090/S0894-0347-05-00499-6},
	URL = {https://doi.org/10.1090/S0894-0347-05-00499-6},
}

@article{JendrejKrieger2025arXiv,
	author = {Jendrej, Jacek and Krieger, Joachim},
	journal = {preprint arXiv:2501.08396},
	title = {Concentric bubbles concentrating in finite time for the energy critical wave maps equation},
	year = {2025}
}

@article{JeongKimKimKwon2026arXiv,
	author = {Jeong, Uihyeon and Kim, Kihyun and Kim, Taegyu and Kwon, Soonsik},
	journal = {preprint arXiv:2601.07410},
	title = {Classification of single-bubble blow-up solutions for {C}alogero--{M}oser derivative nonlinear {S}chr\"odinger equation},
	year = {2026}
}

@article{JendrejLiXu2026arXivNLS,
	author = {Jendrej, Jacek and Li, Xuemei and Xu, Guixiang},
	journal = {preprint arXiv:2608.16186},
	title = {Construction of two-bubble solutions for the energy-critical {NLS} in dimension 6},
	year = {2026}
}

@article {Schweyer2012JFA,
    AUTHOR = {Schweyer, R\'emi},
     TITLE = {Type {II} blow-up for the four dimensional energy critical
              semi linear heat equation},
   JOURNAL = {J. Funct. Anal.},
  FJOURNAL = {Journal of Functional Analysis},
    VOLUME = {263},
      YEAR = {2012},
    NUMBER = {12},
     PAGES = {3922--3983},
      ISSN = {0022-1236,1096-0783},
   MRCLASS = {35K91 (35A20 35B44 35B45 35K15)},
  MRNUMBER = {2990063},
MRREVIEWER = {Juli\'an\ Aguirre},
       DOI = {10.1016/j.jfa.2012.09.015},
       URL = {https://doi.org/10.1016/j.jfa.2012.09.015},
}

@article {PMW2019,
    AUTHOR = {del Pino, Manuel and Musso, Monica and Wei, Jun Cheng},
     TITLE = {Type {II} blow-up in the 5-dimensional energy critical heat
              equation},
   JOURNAL = {Acta Math. Sin. (Engl. Ser.)},
  FJOURNAL = {Acta Mathematica Sinica (English Series)},
    VOLUME = {35},
      YEAR = {2019},
    NUMBER = {6},
     PAGES = {1027--1042},
      ISSN = {1439-8516,1439-7617},
   MRCLASS = {35K91 (35B40 35B44 35K15)},
  MRNUMBER = {3952701},
MRREVIEWER = {Hongwei\ Chen},
       DOI = {10.1007/s10114-019-8341-5},
       URL = {https://doi.org/10.1007/s10114-019-8341-5},
}

@article{DuyckaertsMerle2009Threshold,
	author = {Duyckaerts, Thomas and Merle, Frank},
	title = {Dynamic of threshold solutions for energy-critical {NLS}},
	journal = {Geom. Funct. Anal.},
	volume = {18},
	number = {6},
	pages = {1787--1840},
	year = {2009},
	doi = {10.1007/s00039-009-0707-x},
	url = {https://doi.org/10.1007/s00039-009-0707-x}
}

@book{Teschl2014Quantum,
	author = {Teschl, Gerald},
	title = {Mathematical Methods in Quantum Mechanics: With Applications to {Schr{\"o}dinger} Operators},
	edition = {Second},
	series = {Graduate Studies in Mathematics},
	volume = {157},
	publisher = {American Mathematical Society},
	address = {Providence, RI},
	year = {2014},
	url = {https://www.mat.univie.ac.at/~gerald/ftp/book-schroe/schroe2.pdf}
}

@article {GermainPusateri2022Pi,
    AUTHOR = {Germain, Pierre and Pusateri, Fabio},
     TITLE = {Quadratic {K}lein-{G}ordon equations with a potential in one
              dimension},
   JOURNAL = {Forum Math. Pi},
  FJOURNAL = {Forum of Mathematics. Pi},
    VOLUME = {10},
      YEAR = {2022},
     PAGES = {Paper No. e17, 172},
      ISSN = {2050-5086},
   MRCLASS = {43A32 (35P25 42B37)},
  MRNUMBER = {4451296},
MRREVIEWER = {Valeri\ S.\ Serov},
       DOI = {10.1017/fmp.2022.9},
       URL = {https://doi.org/10.1017/fmp.2022.9},
}

@article {KairzhanPusateri2023PAA,
    AUTHOR = {Kairzhan, Adilbek and Pusateri, Fabio},
     TITLE = {Asymptotic stability near the soliton for quartic
              {K}lein-{G}ordon equation in 1{D}},
   JOURNAL = {Pure Appl. Anal.},
  FJOURNAL = {Pure and Applied Analysis},
    VOLUME = {5},
      YEAR = {2023},
    NUMBER = {4},
     PAGES = {795--832},
      ISSN = {2578-5893,2578-5885},
   MRCLASS = {35P25 (35C08 35Q55 42B37 43A32)},
  MRNUMBER = {4680525},
MRREVIEWER = {Luis\ Vazquez},
       DOI = {10.2140/paa.2023.5.795},
       URL = {https://doi.org/10.2140/paa.2023.5.795},
}

@article {CazenaveWeissler1990,
    AUTHOR = {Cazenave, Thierry and Weissler, Fred B.},
     TITLE = {The {C}auchy problem for the critical nonlinear
              {S}chr\"odinger equation in {$H^s$}},
   JOURNAL = {Nonlinear Anal.},
  FJOURNAL = {Nonlinear Analysis. Theory, Methods \& Applications. An
              International Multidisciplinary Journal},
    VOLUME = {14},
      YEAR = {1990},
    NUMBER = {10},
     PAGES = {807--836},
      ISSN = {0362-546X,1873-5215},
   MRCLASS = {35Q55 (45K05)},
  MRNUMBER = {1055532},
MRREVIEWER = {Victor\ V.\ Zharinov},
       DOI = {10.1016/0362-546X(90)90023-A},
       URL = {https://doi.org/10.1016/0362-546X(90)90023-A},
}

@article {BuslaevPerelman1992,
    AUTHOR = {Buslaev, V. S. and Perelman, G. S.},
     TITLE = {Scattering for the nonlinear {S}chr\"odinger equation: states
              that are close to a soliton},
   JOURNAL = {Algebra i Analiz},
  FJOURNAL = {Rossi\u iskaya Akademiya Nauk. Algebra i Analiz},
    VOLUME = {4},
      YEAR = {1992},
    NUMBER = {6},
     PAGES = {63--102},
      ISSN = {0234-0852},
   MRCLASS = {35Q55 (35Q51 47N20)},
  MRNUMBER = {1199635},
MRREVIEWER = {Meng\ Ru\ Li},
}

@incollection {BuslaevPerelman1995,
    AUTHOR = {Buslaev, V. S. and Perelman, G. S.},
     TITLE = {On the stability of solitary waves for nonlinear
              {S}chr\"odinger equations},
 BOOKTITLE = {Nonlinear evolution equations},
    SERIES = {Amer. Math. Soc. Transl. Ser. 2},
    VOLUME = {164},
     PAGES = {75--98},
 PUBLISHER = {Amer. Math. Soc., Providence, RI},
      YEAR = {1995},
      ISBN = {0-8218-4123-8},
   MRCLASS = {35Q55 (35B40)},
  MRNUMBER = {1334139},
MRREVIEWER = {Wen-Xiu\ Ma},
       DOI = {10.1090/trans2/164/04},
       URL = {https://doi.org/10.1090/trans2/164/04},
}

@article {Perelman2001,
    AUTHOR = {Perelman, Galina},
     TITLE = {On the formation of singularities in solutions of the critical nonlinear {S}chr\"odinger equation},
   JOURNAL = {Ann. Henri Poincar\'e},
  FJOURNAL = {Annales Henri Poincar\'e. A Journal of Theoretical and
              Mathematical Physics},
    VOLUME = {2},
      YEAR = {2001},
    NUMBER = {4},
     PAGES = {605--673},
      ISSN = {1424-0637,1424-0661},
   MRCLASS = {35Q55 (35A20)},
  MRNUMBER = {1852922},
MRREVIEWER = {Markus\ Kunze},
       DOI = {10.1007/PL00001048},
       URL = {https://doi.org/10.1007/PL00001048},
}

@article {Schlag2009annals,
    AUTHOR = {Schlag, Wilhelm},
     TITLE = {Stable manifolds for an orbitally unstable nonlinear
              {S}chr\"odinger equation},
   JOURNAL = {Ann. of Math. (2)},
  FJOURNAL = {Annals of Mathematics. Second Series},
    VOLUME = {169},
      YEAR = {2009},
    NUMBER = {1},
     PAGES = {139--227},
      ISSN = {0003-486X,1939-8980},
   MRCLASS = {35Q55},
  MRNUMBER = {2480603},
MRREVIEWER = {Olivier\ J.\ Goubet},
       DOI = {10.4007/annals.2009.169.139},
       URL = {https://doi.org/10.4007/annals.2009.169.139},
}

@article {KriegerSchlag2006JAMS,
    AUTHOR = {Krieger, J. and Schlag, W.},
     TITLE = {Stable manifolds for all monic supercritical focusing
              nonlinear {S}chr\"odinger equations in one dimension},
   JOURNAL = {J. Amer. Math. Soc.},
  FJOURNAL = {Journal of the American Mathematical Society},
    VOLUME = {19},
      YEAR = {2006},
    NUMBER = {4},
     PAGES = {815--920},
      ISSN = {0894-0347,1088-6834},
   MRCLASS = {35Q55 (35Q51 37K10 37K40 37K45)},
  MRNUMBER = {2219305},
MRREVIEWER = {Arthur\ H.\ Vartanian},
       DOI = {10.1090/S0894-0347-06-00524-8},
       URL = {https://doi.org/10.1090/S0894-0347-06-00524-8},
}

@article {ErdoganSchlag2,
    AUTHOR = {Erdo{\u{g}}an, M. Burak and Schlag, Wilhelm},
     TITLE = {Dispersive estimates for {S}chr\"odinger operators in the
              presence of a resonance and/or an eigenvalue at zero energy in
              dimension three. {II}},
   JOURNAL = {J. Anal. Math.},
  FJOURNAL = {Journal d'Analyse Math\'ematique},
    VOLUME = {99},
      YEAR = {2006},
     PAGES = {199--248},
      ISSN = {0021-7670,1565-8538},
   MRCLASS = {35J10 (35Q40 47F05 47N50)},
  MRNUMBER = {2279551},
MRREVIEWER = {Gabriele\ Grillo},
       DOI = {10.1007/BF02789446},
       URL = {https://doi.org/10.1007/BF02789446},
}

@article {KillipVisan2010AJM,
    AUTHOR = {Killip, Rowan and Visan, Monica},
     TITLE = {The focusing energy-critical nonlinear {S}chr\"odinger
              equation in dimensions five and higher},
   JOURNAL = {Amer. J. Math.},
  FJOURNAL = {American Journal of Mathematics},
    VOLUME = {132},
      YEAR = {2010},
    NUMBER = {2},
     PAGES = {361--424},
      ISSN = {0002-9327,1080-6377},
   MRCLASS = {35Q55 (35P25)},
  MRNUMBER = {2654778},
MRREVIEWER = {Benedetta\ Pellacci},
       DOI = {10.1353/ajm.0.0107},
       URL = {https://doi.org/10.1353/ajm.0.0107},
}

@article {LiZhang2009JFA,
    AUTHOR = {Li, Dong and Zhang, Xiaoyi},
     TITLE = {Dynamics for the energy critical nonlinear {S}chr\"odinger
              equation in high dimensions},
   JOURNAL = {J. Funct. Anal.},
  FJOURNAL = {Journal of Functional Analysis},
    VOLUME = {256},
      YEAR = {2009},
    NUMBER = {6},
     PAGES = {1928--1961},
      ISSN = {0022-1236,1096-0783},
   MRCLASS = {35Q55 (35B44)},
  MRNUMBER = {2498565},
MRREVIEWER = {Arthur\ H.\ Vartanian},
       DOI = {10.1016/j.jfa.2008.12.007},
       URL = {https://doi.org/10.1016/j.jfa.2008.12.007},
}

@article {RaphaelSzeftel2009CMP,
    AUTHOR = {Rapha\"el, Pierre and Szeftel, J\'er\'emie},
     TITLE = {Standing ring blow up solutions to the {$N$}-dimensional
              quintic nonlinear {S}chr\"odinger equation},
   JOURNAL = {Comm. Math. Phys.},
  FJOURNAL = {Communications in Mathematical Physics},
    VOLUME = {290},
      YEAR = {2009},
    NUMBER = {3},
     PAGES = {973--996},
      ISSN = {0010-3616,1432-0916},
   MRCLASS = {35Q55 (35B44)},
  MRNUMBER = {2525647},
       DOI = {10.1007/s00220-009-0796-2},
       URL = {https://doi.org/10.1007/s00220-009-0796-2},
}

@article {HolmerRoudenko2012APDE,
    AUTHOR = {Holmer, Justin and Roudenko, Svetlana},
     TITLE = {Blow-up solutions on a sphere for the 3{D} quintic {NLS} in
              the energy space},
   JOURNAL = {Anal. PDE},
  FJOURNAL = {Analysis \& PDE},
    VOLUME = {5},
      YEAR = {2012},
    NUMBER = {3},
     PAGES = {475--512},
      ISSN = {2157-5045,1948-206X},
   MRCLASS = {35Q55 (35B44)},
  MRNUMBER = {2994505},
MRREVIEWER = {Nai-Heng\ Chang},
       DOI = {10.2140/apde.2012.5.475},
       URL = {https://doi.org/10.2140/apde.2012.5.475},
}

@article {Zwiers2011APDE,
	AUTHOR = {Zwiers, Ian},
	TITLE = {Standing ring blowup solutions for cubic nonlinear {S}chr\"odinger equations},
	JOURNAL = {Anal. PDE},
	FJOURNAL = {Analysis \& PDE},
	VOLUME = {4},
	YEAR = {2011},
	NUMBER = {5},
	PAGES = {677--727},
	ISSN = {2157-5045,1948-206X},
	MRNUMBER = {2901563},
	DOI = {10.2140/apde.2011.4.677},
	URL = {https://doi.org/10.2140/apde.2011.4.677},
}

@article {DuyckaertsRoudenko2015CMP,
    AUTHOR = {Duyckaerts, Thomas and Roudenko, Svetlana},
     TITLE = {Going beyond the threshold: scattering and blow-up in the
              focusing {NLS} equation},
   JOURNAL = {Comm. Math. Phys.},
  FJOURNAL = {Communications in Mathematical Physics},
    VOLUME = {334},
      YEAR = {2015},
    NUMBER = {3},
     PAGES = {1573--1615},
      ISSN = {0010-3616,1432-0916},
   MRCLASS = {35Q55 (35B44 35P25)},
  MRNUMBER = {3312444},
MRREVIEWER = {Tohru\ Ozawa},
       DOI = {10.1007/s00220-014-2202-y},
       URL = {https://doi.org/10.1007/s00220-014-2202-y},
}

@article {NakanishiRoy2016CPAA,
    AUTHOR = {Nakanishi, Kenji and Roy, Tristan},
     TITLE = {Global dynamics above the ground state for the energy-critical
              {S}chr\"odinger equation with radial data},
   JOURNAL = {Commun. Pure Appl. Anal.},
  FJOURNAL = {Communications on Pure and Applied Analysis},
    VOLUME = {15},
      YEAR = {2016},
    NUMBER = {6},
     PAGES = {2023--2058},
      ISSN = {1534-0392,1553-5258},
   MRCLASS = {35Q55 (35B07 35B33)},
  MRNUMBER = {3565931},
       DOI = {10.3934/cpaa.2016026},
       URL = {https://doi.org/10.3934/cpaa.2016026},
}

@article {SuZhao2023,
    AUTHOR = {Su, Qingtang and Zhao, Zehua},
     TITLE = {Dynamics of subcritical threshold solutions for
              energy-critical {NLS}},
   JOURNAL = {Dyn. Partial Differ. Equ.},
  FJOURNAL = {Dynamics of Partial Differential Equations},
    VOLUME = {20},
      YEAR = {2023},
    NUMBER = {1},
     PAGES = {37--72},
      ISSN = {1548-159X,2163-7873},
   MRCLASS = {35Q55 (35R01 37K06 37L50)},
  MRNUMBER = {4530060},
       DOI = {10.4310/dpde.2023.v20.n1.a3},
       URL = {https://doi.org/10.4310/dpde.2023.v20.n1.a3},
}

@article {Cuccagna2001CPAM,
    AUTHOR = {Cuccagna, Scipio},
     TITLE = {Stabilization of solutions to nonlinear {S}chr\"odinger
              equations},
   JOURNAL = {Comm. Pure Appl. Math.},
  FJOURNAL = {Communications on Pure and Applied Mathematics},
    VOLUME = {54},
      YEAR = {2001},
    NUMBER = {9},
     PAGES = {1110--1145},
      ISSN = {0010-3640,1097-0312},
   MRCLASS = {35Q55 (35B35 93D15)},
  MRNUMBER = {1835384},
MRREVIEWER = {Markus\ Kunze},
       DOI = {10.1002/cpa.1018},
       URL = {https://doi.org/10.1002/cpa.1018},
}

@article {MartelNaumkin2023Tunis,
    AUTHOR = {Martel, Yvan and Naumkin, Ivan},
     TITLE = {Nonflat conformal blow-up profiles for the 1-dimensional critical nonlinear {S}chr\"odinger equation},
   JOURNAL = {Tunis. J. Math.},
  FJOURNAL = {Tunisian Journal of Mathematics},
    VOLUME = {5},
      YEAR = {2023},
    NUMBER = {3},
     PAGES = {505--572},
      ISSN = {2576-7658,2576-7666},
   MRCLASS = {35B44 (35Q41 37K40)},
  MRNUMBER = {4662321},
       DOI = {10.2140/tunis.2023.5.505},
       URL = {https://doi.org/10.2140/tunis.2023.5.505},
}

@article {Kato1965,
    AUTHOR = {Kato, Tosio},
     TITLE = {Wave operators and similarity for some non-selfadjoint operators},
   JOURNAL = {Math. Ann.},
  FJOURNAL = {Mathematische Annalen},
    VOLUME = {162},
      YEAR = {1965/66},
     PAGES = {258--279},
      ISSN = {0025-5831,1432-1807},
   MRCLASS = {47.48},
  MRNUMBER = {190801},
MRREVIEWER = {J.\ T.\ Schwartz},
       DOI = {10.1007/BF01360915},
       URL = {https://doi.org/10.1007/BF01360915},
}

@article {Martel2024invent,
    AUTHOR = {Martel, Yvan},
     TITLE = {Asymptotic stability of small standing solitary waves of the one-dimensional cubic-quintic {S}chr\"odinger equation},
   JOURNAL = {Invent. Math.},
  FJOURNAL = {Inventiones Mathematicae},
    VOLUME = {237},
      YEAR = {2024},
    NUMBER = {3},
     PAGES = {1253--1328},
      ISSN = {0020-9910,1432-1297},
   MRCLASS = {35C08 (35B35 35Q55)},
  MRNUMBER = {4777089},
       DOI = {10.1007/s00222-024-01270-4},
       URL = {https://doi.org/10.1007/s00222-024-01270-4},
}

@book {LinaresPonce,
    AUTHOR = {Linares, Felipe and Ponce, Gustavo},
     TITLE = {Introduction to nonlinear dispersive equations},
    SERIES = {Universitext},
   EDITION = {Second},
 PUBLISHER = {Springer, New York},
      YEAR = {2015},
     PAGES = {xiv+301},
      ISBN = {978-1-4939-2180-5; 978-1-4939-2181-2},
   MRCLASS = {35-01 (35B30 35Q53 35Q55 47F05 47G30)},
  MRNUMBER = {3308874},
       DOI = {10.1007/978-1-4939-2181-2},
       URL = {https://doi.org/10.1007/978-1-4939-2181-2},
}

@article {BourgainWang1997,
	AUTHOR = {Bourgain, Jean and Wang, W.},
	TITLE = {Construction of blowup solutions for the nonlinear
	{S}chr\"odinger equation with critical nonlinearity},
	NOTE = {Dedicated to Ennio De Giorgi},
	JOURNAL = {Ann. Scuola Norm. Sup. Pisa Cl. Sci. (4)},
	FJOURNAL = {Annali della Scuola Normale Superiore di Pisa. Classe di
	Scienze. Serie IV},
	VOLUME = {25},
	YEAR = {1997},
	NUMBER = {1-2},
	PAGES = {197--215},
	ISSN = {0391-173X,2036-2145},
	MRCLASS = {35Q55 (35B99 35C99)},
	MRNUMBER = {1655515},
	MRREVIEWER = {Marcel\ Oliver},
	URL = {http://www.numdam.org/item?id=ASNSP_1997_4_25_1-2_197_0},
}

@article {MRS2013AJM,
	AUTHOR = {Merle, Frank and Rapha\"el, Pierre and Szeftel, Jeremie},
	TITLE = {The instability of {B}ourgain-{W}ang solutions for the {$L^2$}
	critical {NLS}},
	JOURNAL = {Amer. J. Math.},
	FJOURNAL = {American Journal of Mathematics},
	VOLUME = {135},
	YEAR = {2013},
	NUMBER = {4},
	PAGES = {967--1017},
	ISSN = {0002-9327,1080-6377},
	MRCLASS = {35Q55 (35B44)},
	MRNUMBER = {3086066},
	MRREVIEWER = {Corentin\ Audiard},
	DOI = {10.1353/ajm.2013.0033},
	URL = {https://doi.org/10.1353/ajm.2013.0033},
}

@misc{KwakKwon2026arXiv,
      title={A universal bound on the blow-up rate for the focusing mass-critical nonlinear {S}chr\"odinger equation}, 
      author={Kwak, Beomjong and Kwon, Soonsik},
      howpublished = {arXiv:2602.21968},
      year={2026},
}

@article{Kim2026arXivNobubbletree,
    title={No bubble trees for the $1$-equivariant harmonic map heat flow and the radial energy-critical nonlinear heat equation in low dimensions}, 
    author={Kim, Taegyu},
    journal = {preprint arXiv:2608.22447},
    year={2026}
}

\end{document}